\documentclass[11pt,reqno]{amsart}
\usepackage[letterpaper,margin=1in,footskip=0.25in]{geometry}
\usepackage{mathrsfs}
\usepackage{microtype}
\usepackage{mathtools}
\usepackage{enumitem}
\PassOptionsToPackage{dvipsnames}{xcolor}
\usepackage{tikz-cd}
\PassOptionsToPackage{pdfusetitle,pagebackref,colorlinks}{hyperref}
\usepackage{bookmark}
\hypersetup{citecolor=OliveGreen,linkcolor=Mahogany,urlcolor=Plum}
\usepackage[hyphenbreaks]{breakurl}
\usepackage{amssymb}
\usepackage{bbm}
\usepackage{enumitem}
\usepackage{upgreek}
\usepackage{bm}
\usepackage{stmaryrd}
\usepackage{mathrsfs}
\usepackage{comment}
\usepackage{graphicx}

\allowdisplaybreaks

\makeatletter
\providecommand{\leftsquigarrow}{%
  \mathrel{\mathpalette\reflect@squig\relax}%
}
\newcommand{\reflect@squig}[2]{%
  \reflectbox{$\m@th#1\rightsquigarrow$}%
}
\makeatother

\newcommand{\N}{\mathbb{N}}

\newcommand{\T}{\mathbb{T}}

\newcommand{\cB}{\mathcal{B}}

\newcommand{\cD}{\mathcal{D}}

\newcommand{\cF}{\mathcal{F}}

\newcommand{\cH}{\mathcal{H}}

\newcommand{\cL}{\mathcal{L}}
\newcommand{\cM}{\mathcal{M}}

\newcommand{\cO}{\mathcal{O}}
\newcommand{\cP}{\mathcal{P}}

\newcommand{\cU}{\mathcal{U}}
\newcommand{\cV}{\mathcal{V}}

\newcommand{\cX}{\mathcal{X}}

\newcommand{\fm}{\mathfrak{m}}

\newcommand{\fX}{\mathfrak{X}}

\newcommand{\fB}{\mathfrak{B}}

\newcommand{\fD}{\mathfrak{D}}

\newcommand{\bP}{\mathbb{P}}
\newcommand{\bC}{\mathbb{C}}
\newcommand{\bR}{\mathbb{R}}
\newcommand{\bA}{\mathbb{A}}
\newcommand{\bQ}{\mathbb{Q}}
\newcommand{\bZ}{\mathbb{Z}}
\newcommand{\bD}{\mathbb{D}}

\newcommand{\bG}{\mathbb{G}}

\newcommand{\bN}{\mathbb{N}}
\newcommand{\bk}{\mathbbm{k}}

\newcommand{\red}{\mathrm{red}}

\newcommand{\lc}{\mathrm{lc}}

\DeclareMathOperator{\Spec}{Spec}

\DeclareMathOperator{\mult}{mult}

\DeclareMathOperator{\gr}{gr}

\DeclareMathOperator{\id}{id}
\DeclareMathOperator{\im}{Im}
\DeclareMathOperator{\ord}{ord}

\DeclareMathOperator{\Proj}{Proj}

\DeclareMathOperator{\Supp}{Supp}

\DeclareMathOperator{\Val}{Val}
\DeclareMathOperator{\rk}{rk}

\newcommand{\bT}{\mathbb{T}}

\newcommand{\QM}{\mathrm{QM}}

\newcommand{\tX}{\widetilde{X}}

\newcommand{\tD}{\widetilde{D}}
\newcommand{\tR}{\widetilde{R}}
\newcommand{\tK}{\widetilde{K}}
\newcommand{\tV}{\widetilde{V}}
\newcommand{\tA}{\widetilde{A}}
\newcommand{\tF}{\widetilde{F}}

\newcommand{\tH}{\widetilde{H}}
\newcommand{\tB}{\widetilde{B}}
\newcommand{\tY}{\widetilde{Y}}

\newcommand{\oX}{\overline{X}}

\newcommand{\oD}{\overline{D}}

\newcommand{\oG}{\overline{G}}

\newcommand{\oL}{\overline{L}}
\newcommand{\oK}{\overline{K}}

\newcommand{\la}{\lambda}

\newcommand{\trop}{\mathrm{trop}}

\newcommand{\Sk}{\mathrm{Sk}}
\newcommand{\an}{\mathrm{an}}
\newcommand{\GL}{\mathrm{GL}}

\newcommand{\YL}[1]{{\textcolor{blue}{[Yuchen: #1]}}}

\newcommand{\HB}[1]{{\textcolor{red}{[Blum: #1]}}}

\numberwithin{equation}{section}

\newtheorem{prop} {Proposition} [section]
\newtheorem{thm}[prop] {Theorem} 

\newtheorem{lem}[prop] {Lemma}

\newtheorem{prop-def}[prop]{Proposition-Definition}

\newtheorem{lemma}[prop]{Lemma}
\newtheorem{corollary}[prop]{Corollary}

\newtheorem{thm-defn}[prop]{Theorem-Definition}

\theoremstyle{definition}
\newtheorem{exa}[prop] {Example} 

\newtheorem{defn}[prop]{Definition}

\theoremstyle{remark}
\newtheorem{rem}[prop]{Remark}

\title{Valuative independence for Calabi--Yau varieties}

\author{Harold Blum}
\address{Department of Mathematics\\
  University of Utah\\
Salt Lake City, UT 84112, USA.}
\email{blum@math.utah.edu}

\address{School of Mathematics, Georgia Institute of Technology, GA 30332, USA}
\email{haroldblum@gatech.edu}

\author{Yuchen Liu}
\address{Department of Mathematics, Northwestern University, Evanston, IL 60208, USA.}
\email{yuchenl@northwestern.edu}

\date{\today}

\begin{document}

\begin{abstract}
We construct valuatively independent bases for the space of sections of an ample line bundle on a  log Calabi--Yau pair over a discretely valued field
and the space of  regular functions on an affine CY pair with maximal boundary. 
While the bases are not in general  unique, they induce canonical functions on the respective skeletons and are expected to agree with tropicalizations of theta functions when they exist.
The proof uses techniques from the study of higher rank degenerations in K-stability.
\end{abstract}

\maketitle

\setcounter{tocdepth}{1}

\tableofcontents

\section{Introduction}

The  SYZ Conjecture gives a geometric explanation for mirror symmetry of Calabi--Yau varieties.  
Roughly speaking, it predicts that for a maximally degenerate family of Calabi--Yau varieties $X \to \bD^*$  over the punctured complex disc, the fibers $X_t$  admit special Lagrangian torus fibrations away from a codimension two subset. 
Furthermore, a mirror family $\check{X} \to \bD^*$ can be constructed fiberwise by dualizing the torus fibration and taking a suitable compactification. 
While the full conjecture remains open, there has been progress on constructing the SYZ fibration using non-Archimedean geometry \cite{KS06,NXY19,Li23} and on constructing mirrors more directly via enumerative methods \cite{GHK15,GS26,KY24}.
\medskip

A key feature of the latter approaches to constructing mirrors, which includes the Gross--Siebert program,  is the use of canonical bases. 
In particular, the mirror is constructed as the $\Proj$ of a graded ring equipped with a canonical basis, so the powers of the induced ample line bundle come with a distinguished basis of global sections. 
Since $X$ should be the mirror to $\check{X}$, one expects that any power of an ample line bundle $L$ on $X$ also admits  a canonical basis of global sections.
These basis elements are called theta functions, since in the case where  $X\to \bD^*$ is a degeneration of abelian varieties they recover classical theta functions \cite[Section 2]{GS16}.
\medskip

One important expected property of these canonical bases  is \emph{valuative independence}, see e.g. \cite[Conjecture 13.6]{KY24} for the case of affine CY pairs. 
Roughly speaking, the  valuations in the essential skeleton of the degeneration  \cite{KS06, MN15, NX16} 
should be  adapted to the basis in such a way that the pairing between basis elements and valuations is similar to the pairing between monomials and toric valuations on a toric variety. 
More precisely, valuative independence  implies that  the basis simultaneously diagonalizes each filtration of the section ring induced by  a valuation in the essential skeleton.
Thus valuative independence   reveals  hidden toric structure in the function theory of a degeneration of CY varieties. 
\medskip

There are a few special cases in which it is known that theta functions satisfy valuative independence.
In the case of affine log CY pairs, rather  than the relative setting of a family of CYs over the punctured disc, the result is known in dimension two \cite{Man16} and for cluster varieties \cite{CMMM25}. 
 Furthermore, it is expected in \emph{loc. cit.} that the latter approach can prove valuative independence for the theta functions  in \cite{KY24} for the mirror of an affine log CY pair.
 In the relative case,   it is known that there exist valuatively independent bases  when $X\to \bD^*$ is a degeneration of abelian varieties \cite[Example 3.21]{Li25}, the degeneration of a Fermat cubic curve in $\bP^2$ to a union of three lines \cite[Theorem 1]{HK26}, and in a class of  certain non-maximal degenerations of an  anti-canonical divisor on a Fano variety  \cite[Example 3.19]{Li25}.
 
 \medskip
 
 In this paper, we use techniques from birational geometry, in particular, the study of multi-degenerations in K-stability \cite{Xu21} (see also \cite{ABHLX20,XZ25,Che25}) to construct  valuatively independent bases for CY degenerations. 
Unlike the constructions in  \cite{GHK15,GS26,KY24} that produce canonical bases, the construction in this paper only shows that the set of valuatively independent bases is non-empty. 
While valuatively independent  bases will not in general be unique, they all induce the same functions on the essential skeleton of the degeneration. (Hence assuming $X\to \bD^*$ can be constructed as  a mirror using \cite{GS26} and the theta functions satisfy valuative independence, they will induce the same functions on the essential skeleton as the bases constructed in this paper.)
In the special case of a smooth polarized CY variety over $\bC(\!(t)\!)$ that is maximally degenerate,  the existence of these bases  is a key algebro-geometric input  in  Yang Li's work on the metric SYZ conjecture \cite{Li26}.

\subsection{Main results}

\subsubsection{CY degenerations}

Let $R$ be a DVR  containing an algebraically closed field $\bk$ of characteristic 0.
If $(X_K,B_K)$ is a projective log canonical  CY pair over $K:={\rm Frac}(R)$, we define in Section \ref{ss:defskel} the \emph{essential skeleton} of the pair as 
\[
{\rm Sk}(X_K,B_K) : =\{ v\in \Val_{X} \, \vert\, \,A_{X,B+X_{0,\red}}(v) = 0 \,\, \text{and}\,\, v(X_0)=1\} \subset \Val_X
,\]
where $(X,B+X_{0,\red})$  is a projective log canonical  CY pair over $R$ that is  an extension of $(X_K,B_K)$ 
and $A_{X,B+X_{0,\red}}$ is the log discrepancy function on the set of valuations on $X$.
In the case when $R$ is complete, $\Sk(X_K,B_K)$ is naturally a subset of the Berkovich analytification of $X_K$ and this agrees with past definitions of the essential skeleton in   \cite{KS06,MN15,KX20,BM19}. 
\medskip

For an ample line bundle $L_K$ on $X_K$ and a section $s\in H^0(X_K,L_K)$, we define a value $v(s)\in \bR$ as follows. 
By fixing  an integral proper  model $Y$ over $R$ with a proper birational morphism $\mu_K:Y_K \to X_K$ and a line bundle $M$ on $Y$ that is an extension of $\mu^*L_K$, we may view $\mu^*s$ as a rational section of $M$ and may evaluate the section along $s$.
See Section \ref{sss:metrics} for details.

\begin{thm}\label{thm:val-indep}
	Let $R$ be a DVR essentially of finite type over a characteristic $0$ field $\bk$.
If  $(X_K,B_K)$ is a projective log canonical CY pair over $K:= {\rm Frac}(R)$ and $L_K$ is an ample line bundle on $K$, then for every positive integer $m$, there exists a $K$-vector space basis
\[
    \theta_1,\ldots, \theta_{N_m} 
    \]
	for $H^0(X_K,mL_K)$ such that 
	\[
	v(a_1\theta_1+ \cdots + a_{N_m}\theta_{N_m}) = \min\{v(a_1 \theta_1), \cdots , v(a_{N_m}\theta_{N_m})\}
	\]
	for all $a_1,\ldots, a_{N_m}\in K$ and $v\in \Sk(X_K,B_K)$.
Furthermore, when $R= \bk\llbracket t\rrbracket $, the same result holds when $m>0$ is sufficiently divisible.
\end{thm}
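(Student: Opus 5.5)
Following the abstract's hint, I will use higher rank (multi-)degenerations from K-stability. Each $v\in\Sk(X_K,B_K)$ induces a filtration $\cF_v$ on $V_m:=H^0(X_K,mL_K)$ (over $R$, after fixing an $R$-lattice). A basis $\theta_1,\dots,\theta_{N_m}$ is valuatively independent for a finite set of valuations exactly when it is simultaneously adapted to their filtrations; the statement asks for this for all $v$ in the skeleton at once.

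\textbf{Reduction to a finite rational polytope.} Choose a dlt (or snc) model $(X,B+X_{0,\red})$ of $(X_K,B_K)$, so that $\Sk(X_K,B_K)$ is a finite union of faces of the dual complex. Each face is a simplex $\sigma$ of quasi-monomial valuations $v_w$, with $w$ ranging over weights on the strata components $E_0,\dots,E_r$ normalized by $v(X_0)=1$. Since the conditions are closed under taking limits in $w$, and since $w\mapsto v_w(s)$ is continuous and piecewise linear for fixed $s$, it suffices to find, for each face $\sigma$, a basis adapted to all $v_w$ with $w\in\sigma$. By convexity, it is then enough to produce a basis adapted to the finitely generated \emph{multi-filtration} given by $\ord_{E_0},\dots,\ord_{E_r}$. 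A basis diagonalizing the $\bZ^{r+1}$-graded filtration is adapted to every monomial valuation $v_w$ on that stratum, because $v_w(s)=\min$ over the leading exponents in the associated multigraded ring.

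\textbf{Key step: finite generation and integrality of the multigraded ring.} The main input is that for lc CY pairs, the divisors $E_i$ over a stratum are lc places of the complement $B+X_{0,\red}$. By the results of Xu / ABHLX / XZ on lc places of complements, the associated multi-Rees algebra $\bigoplus_{m,\alpha}\cF^{\alpha}R_m$ is finitely generated. Its associated graded ring is again an integral (reduced, irreducible) log CY degeneration $X_{0,\sigma}$ with a $\bT^{r+1}$-action. Given this, the multigraded pieces $\gr^\alpha V_m$ are finite dimensional. Lifting a basis of each piece gives a basis of $V_m$. The compatibility across different faces is where I expect the \textbf{main obstacle}: the global basis must work on all faces simultaneously, not just one. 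To handle this, I will take a single rank-$N$ degeneration associated with \emph{all} lc places at once. Concretely, consider the collection of all components $E_i$ of $X_0$ and their strata. Then show that the filtrations $\ord_{E_i}$ of $V_m$ are simultaneously compatible, meaning the lattice of subspaces they generate is distributive, so that a common splitting exists. Distributivity should follow from the integrality of the multi-graded ring for every subset of divisors meeting in a stratum, together with the fact that noncompatible pairs of divisors have disjoint centers, which lets the obstruction be killed face by face. So the plan is: prove distributivity of the finite family of filtrations $\{\ord_{E_i}\}$ using that each intersecting subfamily has an integral finitely generated multi-Rees algebra, and then use the standard linear algebra fact that a finite distributive family of filtrations admits a common adapted basis.

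\textbf{Base field issues.} For $R$ essentially of finite type, lattices and Rees algebras are defined over $R$ directly, giving the result for all $m$. For $R=\bk\llbracket t\rrbracket$, I will approximate by an algebraization, or pass to a model over a finite type DVR. The finite generation then holds only after Veronese truncation, which explains why the result holds only for $m$ sufficiently divisible.
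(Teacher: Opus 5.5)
Your framing via filtrations and a reduction to finitely many rational valuations matches the paper. However, there is a genuine gap in two places. First, the reduction to the vertices of the faces of the dual complex is false. For fixed $s$, the function $w\mapsto v_w(s)$ is only concave and piecewise linear on a face. So a basis diagonalizing $\ord_{E_0},\ldots,\ord_{E_r}$ need not diagonalize the interior valuations $v_w$. Here is a local example. Take $E_1=\{x=0\}$, $E_2=\{y=0\}$ and $V=\mathrm{span}(1,x+y)$. Both vertex filtrations are trivial on $V$, so $\{1,1+x+y\}$ diagonalizes them. Yet for $w=(1,1)$ one has $v_w\big((1+x+y)-1\big)=1>0=\min\{v_w(1),v_w(1+x+y)\}$. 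In particular $v_w$ cannot be read off from the multidegree in $\gr$: $x+y$ has multidegree $(0,0)$ but $v_{(1,1)}(x+y)=1$. The paper instead uses the uniform Lipschitz bound of Boucksom--Favre--Jonsson. It subdivides $\Sk(X_K,B_K)$ into finitely many rational polytopes on which \emph{every} $s^{\mathrm{trop}}$, $s\in H^0(X,mL)$, is affine. Then checking at the vertices suffices: $\min_i(a_i\theta_i)^{\mathrm{trop}}$ is concave, so it dominates the affine function $s^{\mathrm{trop}}$ once they agree at the vertices, and the reverse inequality is automatic. But those vertices are arbitrary rational points of the skeleton. In general they are neither components of $X_0$ nor in a common face. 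So what must really be proved is simultaneous diagonalizability for an arbitrary finite set of rational skeleton valuations.

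Second, you give no working mechanism for that. Simultaneous diagonalizability of three or more filtrations (distributivity of the lattice they generate) is not a local property. Three distinct lines in $\bk^2$ are pairwise simultaneously diagonalizable but not all together, so the obstruction cannot be killed face by face. Likewise, disjointness of the centers of $E_i,E_j$ on one model says nothing about the filtrations they induce on global sections. Even for a single face, finite generation of the multi-Rees algebra is not enough. Over a base of dimension $\geq 2$, a finitely generated torsion-free Rees module need not be flat. Your assertion that lifting bases of the graded pieces gives a basis of $V_m$ is exactly the statement $\dim\gr_{F}(V_m)=\rk V_m$. That is equivalent to flatness and to simultaneous diagonalizability; for three lines in $\bk^2$ one gets $\dim\gr=3$.

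The missing idea is a geometric source of flatness, as in the paper:
\begin{itemize}
\item Reduce via the cone construction to a boundary polarized CY pair.
\item Realize each rational $v_j$ by a model $(X^j,B^j+D^j)$ over $R$ with irreducible special fiber.
\item For an arbitrary finite collection, glue all of them into one $\bT$-equivariant family of slc boundary polarized CY pairs over $\mathrm{S}^r=\Spec R[x_0,\ldots,x_r]/(x_0x_1^d\cdots x_r^d-\pi)$, using the lc MMP of Hacon--Xu.
\item Kodaira vanishing on the slc fibers, together with cohomology and base change, makes $H^0(\cX,m\cD)$ flat.
\item This module is the multigraded Rees module of the filtrations, after a $\mu_d$-base change making special fibers reduced and then taking invariants.
\end{itemize}
Your sketch of the case $R=\bk\llbracket t\rrbracket$ likewise omits the actual mechanism. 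The paper uses Artin approximation into a KSBA moduli stack modulo $t^i$, then a limit argument over bases of $V/t^iV$. The divisibility of $m$ there comes from needing $L_K^{\otimes m_0}$ to extend to an ample line bundle on an lc model, not from a Veronese truncation.
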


Unlike theta functions in the Gross--Siebert program, the basis in Theorem \ref{thm:val-indep} is not necessarily unique. Regardless, each basis element $s_i \in H^0(X_K,mL_K)$ induces a piecewise-affine function 
\[
\theta_i^{\rm trop}:{\rm Sk}(X_K,B_K)\longrightarrow \bR\quad \text{ defined by } \quad v\longmapsto v(s_i).
\]
It is straightforward to deduce  from valuative independence that these functions are independent of the choice of valuatively independent basis up to reordering, the choice of metric for the ample line bundle $L_K$, and translating by a constant $\bZ$-valued function, which corresponds to multiplying $\theta_i$ by an element in $K^\times$.
See   Lemma \ref{l:valindDVRbasicprops}.

\subsubsection{Affine CY pairs}
An additional setting where mirror symmetry phenomena occur is for non-compact CY pairs; see e.g. \cite{KY24,GHK15,GS26}. 
In this setting, we prove the following non-compact analogue of Theorem \ref{thm:val-indep}.


\begin{thm}\label{thm:val-indepaffineCY}
Let $(X,B)$ be a projective log canonical CY pair. 
Assume that one of the following holds: 
\begin{enumerate}
    \item The pair $(X,B)$ is maximal and $U\subset X $ is the open set on which $(X,B)$ is klt
    \item There exists an effective ample divisor $A$ on $X$ such that $\Supp(A) \subset \Supp(B)$ and $U: = X\setminus \Supp(A)$. 
\end{enumerate}
Then there exists a basis $(\theta_i)_{i \in I}$ of $H^0(U, \cO_U)$ such that 
\[
v\big( \textstyle \sum_{i\in I} a_i \theta_i \big) = \min \{ v(\theta_i) \, \vert\, a_i \neq 0\}
\]
for all $v\in \Val_X $ with $A_{X,B}(v)=0$ and any sum $\sum_{i \in I} a_i \theta_i$ such that each $a_i \in \bk$ and only finitely many $a_i$ are nonzero.
\end{thm}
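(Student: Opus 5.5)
We plan to reduce Theorem \ref{thm:val-indepaffineCY} to a statement about finitely many valuations at a time, and then pass to a limit using the finite-dimensionality of suitable graded pieces. The relevant set of valuations is
\[
\mathrm{LC}(X,B):=\{v\in \Val_X : A_{X,B}(v)=0\}.
\]
This set is a cone over the dual complex of a dlt modification $(Y,B_Y)\to (X,B)$. Each $v\in \mathrm{LC}(X,B)$ is quasi-monomial along strata of $B_Y^{=1}$.

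We filter $H^0(U,\cO_U)$ by pole order. In case (2) we use $R_m:=H^0(X,mA)$, so that $H^0(U,\cO_U)=\bigcup_m H^0(X,mA)$, a union of finite-dimensional spaces. In case (1) we choose an ample divisor supported on the non-klt locus, which is possible by maximality after passing to a suitable model, and argue in the same way.

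The key step is the following: for finitely many valuations $v_1,\dots,v_r\in \mathrm{LC}(X,B)$, each filtration $\cF_{v_i}$ of the section ring $\bigoplus_m H^0(X,mA)$ is finitely generated, and these filtrations are simultaneously diagonalizable. To prove this we follow the multi-degeneration technique of \cite{Xu21}. The valuations $v_i$ all lie in the same lc-place cone, so they are lc places of a common complement $(X,B)$ itself. The associated $\bZ^r$-graded (or $\bR^r$-graded) multi-filtration
\[
\cF^{\lambda}R_m=\bigcap_i \cF^{\lambda_i}_{v_i}R_m
\]
is therefore finitely generated. The argument is the usual one: by \cite{BCHM}, taking a qdlt model $Y$ extracting the minimal stratum containing the $v_i$, the ring $\bigoplus_m H^0(Y, mA_Y)$ is finitely generated, and the filtrations become monomial along the strata. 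Finite generation, together with the fact that the associated graded ring $\gr_{\cF}R$ is integral with a torus action, gives a basis of each $R_m$ compatible with all $\cF_{v_i}$ at once. One lifts a homogeneous basis of $\gr_{\cF}R_m$ with respect to the multi-grading. Compatibility with every $v$ in the rational polyhedral cone spanned by the $v_i$ then follows from a monomial computation: $v$ is a convex combination of the $v_i$ on a quasi-monomial simplex, and the initial terms along the torus degeneration are multiplicity-free weight vectors.

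Next we pass from finitely many valuations to all of $\mathrm{LC}(X,B)$. The dual complex has finitely many simplices. Taking the $v_i$ to be the vertices of all simplices of a common dlt model, together with the toric-type reasoning above, gives valuative independence for all $v$ simultaneously within each $R_m$. We then build the basis of $H^0(U,\cO_U)$ inductively in $m$. We need the basis of $R_{m}$ to extend to one of $R_{m+1}$. For this we choose bases of the multigraded pieces of $\gr R$ compatibly, noting that multiplication by the section $s_A$ cutting out $A$ is homogeneous and injective on $\gr$. A union of these compatible bases, indexed by a directed system, is then a basis $(\theta_i)$ of $H^0(U,\cO_U)$.

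The main obstacle will be the key step: showing that the multi-filtration by all of $\mathrm{LC}(X,B)$ has finitely generated, integral associated graded ring, and in case (1) finding the compactifying ample divisor. Our first approach is to apply the finite generation results for lc places of complements from \cite{Xu21,ABHLX20} to the cone over a single simplex, and then to glue across simplices using the fact that adjacent simplices share faces.
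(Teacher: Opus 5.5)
\paragraph{The gap.} Your key step does not work as stated. Lifting a multi-homogeneous basis of $\gr_{F_1,\ldots,F_r}R_m$ gives a basis of $R_m$ only when $\dim\gr_{F_1,\ldots,F_r}R_m=\dim R_m$. Equivalently, the $\bZ^r$-graded Rees module must be flat over $\bk[x_1,\ldots,x_r]$ (Proposition~\ref{p:Reesflat=diagtriv}).

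\begin{itemize}
\item \emph{Dimension count.} Semicontinuity only gives $\dim\gr\ge\dim R_m$. Neither finite generation nor any ring-theoretic property of $\gr$ forces equality. Take three distinct lines $L_1,L_2,L_3\subset\bk^2$ as $\bZ$-filtrations with $F_i^1=L_i$ and $F_i^2=0$. Then $\dim\gr=3>2$, and there is no common diagonalizing basis, although any two of the three filtrations have one.
\item \emph{Gluing.} The same example rules out your fallback of working one simplex at a time and gluing along shared faces. You cannot stay inside one simplex either. The finitely many functions $s^{\rm trop}$, $s\in R_m$, are only piecewise linear on the dlt simplices. One must refine until all of them are linear on each cone, and the resulting vertex valuations $v_1,\ldots,v_r$ lie in many different cones.
\item \emph{Integrality.} Your claim that $\gr_{\cF}R$ is integral is false in this multi-cone setting. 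Take $H^0(U,\cO_U)=\bk[x,y^{\pm1},w]/(xw-y-1)$, the complement of the strict transform of a triangle in a non-toric blowup of $\bP^2$, with the three boundary divisors as the $v_i$. Then $\ord(x)+\ord(w)=(0,0,-1)$ is strictly smaller than the multi-orders $(0,1,-1)$ of $y$ and $(0,0,0)$ of $1$. Hence $\bar x\,\bar w=\overline{y+1}=0$ in $\gr$. So injectivity of multiplication by the section cutting out $A$ cannot come from integrality.
\end{itemize}

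\paragraph{The missing idea.} For a boundary polarized CY pair one can degenerate along an arbitrary finite set of divisorial lc places at once, with no common-simplex hypothesis. Your remark that all $v_i$ are lc places of the single pair is what makes this possible, but the paper uses it through one global family, not through finite generation.
\begin{itemize}
\item Each $v_i$ is realized by a test configuration with irreducible central fiber \cite[Theorem 4.8]{ABB+}.
\item All components of all central fibers are extracted on a $\bG_m^r$-equivariant dlt modification of $(X,B+D)\times\bA^r$, and the lc MMP of \cite{HX13} is run. This yields one family of boundary polarized CY pairs $(\fX,\fB+\fD)\to\bA^r$ restricting to the $i$-th test configuration over $D(x_1\cdots\widehat{x_i}\cdots x_r)$ (Proposition~\ref{p:highranktheta}).
\item Kodaira vanishing on the slc fibers, plus cohomology and base change, make $H^0(\fX,m\fD)$ flat. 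This module is identified with the multigraded Rees module (Proposition~\ref{p:familyArRees}), which gives exactly the dimension equality you need.
\item $\fX_{\bf 0}$ is reduced and $1\in H^0(\fX,d\fD)$ vanishes on no component of $\fX_{\bf 0}$. This gives injectivity of $\gr(V_{m-1})\to\gr(V_m)$, needed to extend bases inductively (Theorem~\ref{t:skfiltsdiag}).
\end{itemize}

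\paragraph{Case (1).} This case needs an input you do not supply. \cite[Theorem 49]{KX16} gives a crepant birational model $(Z,B_Z)$ with an ample divisor whose support is exactly $(B_Z)^{=1}$. One must also check that $Z\setminus(B_Z)^{=1}$ and $U$ agree away from codimension two, so that their rings of regular functions coincide (Theorem~\ref{t:affinecymaxb}).
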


In Theorem \ref{thm:val-indepaffineCY}.1, the assumption that $(X,B)$ is maximal means that the  dual complex of a dlt modification of $(X,B)$ has maximal dimension. 
Note that this hypothesis is satisfied when  $(X,B)$ is a projective lc CY pair such that $(X,\Supp(B))$ is snc, $B$ is reduced, and $\Supp(B)$ contains a $0$-stratum. 
In this case, $U = X\setminus \Supp(B)$ and
\[
{\rm Sk}(U):=\{ v\in \Val_{X} \, \vert\, A_{X,B}(v) =0 \},
\]
is referred to as the  skeleton of $U$ in the  mirror symmetry literature.

Similar to Theorem \ref{thm:val-indep}, Theorem \ref{thm:val-indepaffineCY} does not give a distinguished basis and the basis in general will not be unique.
Regardless, the induced tropical theta functions 
$\theta_i^{\rm trop}:{\rm Sk}(X,B) \to \bR$ defined by $\theta_i^{\trop}(v) = v(\theta_i)$ are independent of the choice of valuatively independent basis by Lemma \ref{l:tropthetaeasyprop}.

\subsection{Sketch of proof}

To prove Theorem \ref{thm:val-indep} and \ref{thm:val-indepaffineCY}, we 
relate valuative independence to the flatness of certain multigraded Rees algebras, which arise naturally from degenerations of CY pairs over certain higher dimensional bases. 
The argument is inspired by  techniques from the study of  multi-degenerations in K-stability  \cite{Xu21, ABHLX20,LXZ22,XZ25,Che25}. 
\medskip

We now explain the proof of Theorem \ref{thm:val-indep}
in more detail.
By taking a projectivized cone over the polarized CY pair, we can reduce the result to the case of a log canonical boundary polarized CY pair 
 $(X_K,B_K+D_K)$
 over $K$. This means that $(X_K,B_K+D_K)$ is a log canonical CY pair such that $B_K$ and $D_K$  are effective $\bQ$-divisors and  $D_K$ is $\bQ$-Cartier and ample. 
 In particular, $D_K$ captures the data of the ample line bundle.
 While this step is not strictly necessary, we find it more elegant to work with such pairs, rather than a CY pair with an ample line bundle, as it is easier to keep track of the $\bQ$-divisor $D_K$ under degeneration than the ample line bundle. 
 \medskip

Next we choose an extension of $(X_K,B_K+D_K)$ to a boundary polarized CY pair $(X,B+D)$ over $R$ using Proposition \ref{p:EinSKtodegen}.
For any $v\in{\rm Sk}(X_K,B_K)$, there exists an induced filtration $F_v$ of the free $R$-module $V_m:=H^0(X,mD)$.
Valuative independence can then be rephrased as saying that the collection of such filtrations $F_v$ are simultaneously diagonalizable (Definition \ref{d:simdiag}). 
Furthermore, using a uniform Lipschitz statement from \cite{BFJ16}, it suffices to verify diagonalizability for a sufficiently large, but finite collection of  divisorial valuations in the skeleton.
\medskip

We then fix a finite collection of divisorial valuations 
$v_1,\ldots, v_r\in\Sk(X_K,B_K)$
with the aim of showing that $F_{v_1},\ldots, F_{v_r}$ are simultaneously diagonalizable.
By Proposition \ref{p:EinSKtodegen}, there exists  an  extension of $(X_K,B_K+D_K)$ to a boundary polarized CY pair 
$(X^i,B^i+D^i)$
over $R$ such that $E_{i}:=X^i_{0,\red}$ is a prime divisor  and $v_i = (b_i)^{-1} \ord_{E_i}$, where $b_i = {\rm mult}_{E_i}(X^i_0)$. 
For simplicity assume that each $b_i=1$, i.e. each  $X^i_0$ is reduced. 
In this case, we can relate these degenerations by constructing  a flat  $ \bG_m^r$-equivariant family of boundary polarized CY pairs 
\[
(\cX,\cB+\cD) \to {\rm S}^r:= \Spec\, R[x_0, \cdots , x_r]/(x_0 \cdots x_r-\pi)
,\]
where $\pi \in R$ is a uniformizer
such that
\[
(\cX,\cB+\cD)_{U_i }  \cong (X^i,B^i+D^i) \times \bT,
\]
 over $U_i = D(x_0 \cdots \widehat{x}_i \cdots x_r) \subset {\rm S}^r$ (Theorem \ref{t:highrankScomplete}).
Using  that $(\cX,\cB+\cD) \to {\rm S}^r$ is a flat family of slc pairs and vanishing theorems, we deduce that  $H^0(\cX,m\cD)$ is flat and commutes with base change. We  then compute that 
\[
H^0(\cX,m\cD) \cong {\rm Rees}(F_{v_1},\ldots, F_{v_r};V_m), 
\]
as graded $R[x_0,\ldots, x_r]/(x_0\cdots x_r -\pi)$-modules, where the latter module  is the multigraded Rees  module of the collection of filtrations. 
As a consequence of flatness, we deduce that the filtrations are simultaneously diagonalizable using Lemma \ref{l:Reesmodflat->diag}. 
\medskip

In the case when $R$ is essentially of finite type over a characteristic 0 field $\bk$, this completes the proof. 
In the case when $R= \bk\llbracket t\rrbracket $,  the results from lc MMP program  in \cite{HX13} cannot be applied to construct the above families. 
To circumvent this,  we use an approximation argument as in \cite{MN15,NX16} to deduce the result from the algebraic case.
\medskip

When $r=1$, the quotient stack $[{\rm S}^1/ \bG_m^1]$ was previously used in \cite{AHLH23} to define S-completeness, which is used in  \emph{loc. cit.} to characterize when an algebraic stack admits a separated good moduli space. 
Furthermore, the existence of the extension $(\cX,\cB+\cD) \to {\rm S}^1$ follows from the S-completeness of the moduli stack of boundary polarized CY pairs in \cite{ABB+}.
When $r\geq 1$, the quotient stack is related to the stack in the definition of the degeneration fan in \cite[Construction 4.1]{AHL26}.
\medskip

\subsection{Relation to metric SYZ}
The metric SYZ conjecture predicts the existence of a special Lagrangian torus fibration on an open set of the fibers of a maximal  degeneration of polarized  CY varieties over the punctured disc. 
In \cite{Li23}, Yang Li reduced the conjecture to  verifying a purely algebraic statement: the comparison property for the solution to the NA Monge-Amp\`ere equation in \cite{BFJ15}. 
In \cite{HJMM24} and its generalization \cite{Li23}, the metric SYZ conjecture was solved for hypersurfaces in certain toric Fano varieties by using an optimal transport problem on the boundary of the toric polytope and its dual to solve a weak form of the comparison property.
\medskip

 In \cite{Li25}, Yang Li proposed that canonical bases satisfying valuative independence should play a central role in generalizing the latter optimal transport approach to proving the weak comparison property in general. 
 Building on this,  Li proved in \cite{Li26}  that the existence of a valuatively independent basis  implies the weak comparison property, and, hence, the metric SYZ conjecture. 
Thus Theorem \ref{thm:val-indep} supplies a key algebro-geometric input in Li's  solution to the metric SYZ conjecture.

\vspace{.5 cm}

\noindent {\bf Structure of the Paper.}
The paper is structured as follows. 
In Section \ref{s:prelims}, we discuss preliminaries on pairs, valuations, and essential skeletons. 
In Section \ref{s:affineCYpairs}, we prove Theorem \ref{thm:val-indepaffineCY} on valuatively independent bases for affine CY pairs. 
This case is simpler than the case of CY degenerations, which will be considered in the remainder of the paper. 
In Section \ref{s:filts}, we discuss  filtrations of free modules over DVRs and diagonalizing collections of such filtrations. 
In Section \ref{s:bpcys}, we develop the theory of multi-degenerations of boundary polarized CY pairs. 
In  Section \ref{s:valind}, we apply the latter results  to prove Theorem \ref{thm:val-indep} in the essentially of finite type case.
In Section \ref{s:approx}, we use an approximation argument to prove Theorem \ref{thm:val-indep} in the complete DVR case.

\vspace{.5 cm}

\noindent {\bf Acknowledgment.}
We thank Chenyang Xu and Mattias Jonsson for helpful conversations and  Yang Li for sharing an early  draft of \cite{Li26}.
HB is partially supported by NSF Grants DMS-2441378 and DMS-2200690.
YL is partially supported by NSF Grant DMS-2237139 and an AT\&T Research Fellowship from Northwestern University.

\section{Preliminaries}\label{s:prelims}

Throughout, we work over an algebraically closed field $\bk$ of characteristic zero. We generally follow the notations for  singularities of pairs and birational geometry in \cite{Kol13}.

\subsection{Pairs}
\subsubsection{Definition}
A pair $(X,B)$ is the data of a $\bk$-scheme $X$ that is reduced, pure dimensional, $S_2$, excellent, and has a  canonical sheaf \cite[Definition 1.5]{Kol23} and an effective $\bQ$-divisor $B$ on $X$  such that the irreducible components of $\Supp(B)$ are not in the singular locus of $X$ and  $K_{X}+B$ is $\bQ$-Cartier. For notions of singularities of pairs that assume that $X$ is normal, e.g. lc, klt, dlt, see \cite[Definition 1.18]{Kol13}.
For the notion of an slc pair, see \cite[Definition 5.10]{Kol13}.

A  pair over a scheme $S$ is a pair $(X,B)$ with a morphism $X\to S$. 
The pair is projective over $S$ if $X\to S$ is projective.
Unless specified otherwise, any projective pair over a field $\bk'\supset \bk$ is assumed to be geometrically connected.

We are primarily interested in pairs $(X,B)$, where $X$ is either finite type over a field $\bk'\supset \bk$ or  over a DVR $R$ containing $\bk$. In these cases, $X$ is necessarily excellent and has a canonical sheaf. 
When $X$ is finite type (or essentially of finite type) over $\bk'$, we can use results from the klt  MMP \cite{BCHM10} and the lc MMP in \cite{HX13}.
In the latter case, we can use the results in \cite{LM22} that extend  \cite{BCHM10} to pairs that are projective over excellent schemes.

\subsubsection{Calabi--Yau pairs}
A projective pair $(X,B)$ over a scheme $S$ is log CY if $K_{X}+B \sim_{\bQ,S}0$.
A boundary polarized CY pair  over a field $\bk'\supset \bk$ is an slc pair $(X,B+D)$  projective over $\bk'$ such that $B$ and $D$ are effective $\bQ$-divisors, $K_{X}+B+D \sim_{\bQ} 0$, and $D$ is ample.
Such pairs and their moduli are studied in \cite{ABB+,BL24}. 
While we are primarily interested in boundary polarized CY pairs that are lc, the slc case appears naturally when considering degenerations. 

\subsubsection{Families of pairs}
Let $X\to S$ be a flat morphism of schemes with $S_2$ fibers. A \emph{relative Mumford divisor} $D$ on $X$ is a closed subscheme of $X$ such that there is an open set $U\subset X$ satisfying
\begin{enumerate}
    \item  ${\rm codim}_{X_s}(X_s \setminus U_s) \geq 2$ for all $s\in S$, 
    \item $D\vert_U$ is a Cartier divisor and does not contain any irreducible components of $U$,
    \item $D$ is the closure of $D\vert_U$, and
    \item $X_s$ is smooth at the generic points of $D_s$.
\end{enumerate}
If $T\to S$ is a morphism of schemes, then $D_T$ is the relative Mumford divisor on $X_T:= X\times_S T$ defined by pulling back $D\vert_U$ to $U\times_S T$ and taking the closure in $X_T$.

A \emph{family of slc pairs} $(X,B) \to S$  over a reduced Noetherian scheme $S$ is a flat proper morphism $X\to S$ and a formal sum $B= \sum_{i=1}^rb_i B_i$, where each $b_i\in \bQ_{>0}$ and $B_i$ is a relative Mumford divisor on $X$, satisfying 
\begin{enumerate}
    \item $K_{X/S}+B$ is $\bQ$-Cartier, 
    \item $(X_s,B_s)$ is slc for all $s\in S$. 
\end{enumerate}

A \emph{family of KSBA stable pairs} is a family of slc pairs $(X,B) \to S$ such that $X\to S$ is projective and $K_{X/S}+B$ is relatively ample over $S$.
A \emph{family of boundary polarized CY pairs} $(X,B+D) \to S$ over a reduced Noetherian scheme is a family of slc pairs such that $K_{X}+B+D \sim_{\bQ,S}0$ and $D$ is $\bQ$-Cartier and relatively ample over $S$. 

\begin{prop}\label{p:kollarsmoothslcfam}
Let $0 \in S$ be a local regular scheme and $H:=H_1+\ldots+H_r $ a reduced snc divisor on $S$ with $H_1 \cap \cdots \cap H_r= \{0\}$. Let $f:X\to S$ be a finite type morphism of schemes and $B= \sum_{i=1}^r b_i B_i$  a $\bQ$-divisor. Then $(X,B+ f^*(H_1+\cdots +H_r))$ is slc if and only if $(X,B) \to S$ is a family of slc pairs. 
\end{prop}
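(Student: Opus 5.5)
This is Kollár's criterion for families of slc pairs over a smooth base with snc boundary. I would reduce it to the case of a family over a DVR and then argue by induction on $r$ via inversion of adjunction.

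\emph{Preliminary observations.} Both conditions are local on $X$ near the fiber over $0$, so we may shrink $S$ around $0$. If $r=0$, then $S=\{0\}$ is a point, and both sides say exactly that $(X,B)$ is slc; so assume $r\ge 1$. Choose regular parameters $t_1,\dots,t_r$ at $0$ with $H_i=(t_i=0)$. Since $H_1\cap\cdots\cap H_r=\{0\}$, these form a regular system of parameters and $\dim S=r$.

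\emph{($\Leftarrow$).} Assume $(X,B)\to S$ is a family of slc pairs. Flatness of $f$ makes $t_1,\dots,t_r$ a regular sequence on $X$, and each $f^*H_i$ is Cartier. I would argue by induction on $r$, cutting by $X_1:=f^*H_1$.
\begin{enumerate}
\item $X_1\to H_1$ is a family of slc pairs over the regular local scheme $H_1$, with snc boundary $\sum_{i\ge2}H_i|_{H_1}$.
\item By induction, $(X_1,B_{X_1}+\sum_{i\ge2}f^*H_i|_{X_1})$ is slc. Here $K_{X_1}+B_{X_1}=(K_{X/S}+B)|_{X_1}$ up to the trivial twist from the base, and the Mumford divisors restrict correctly because the relative Mumford conditions hold fiberwise.
\item Inversion of adjunction for slc pairs with Cartier $X_1$ then gives that $(X,B+f^*H)$ is slc near $X_1$, hence near the fiber over $0$. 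One can cite Kollár's version \cite[Theorem 4.9 / Corollary 4.57]{Kol13}, or pass to the normalization and use the lc version of inversion of adjunction.
\end{enumerate}
The $S_2$ property and the $\bQ$-Cartier property of $K_X+B+f^*H$ are automatic: the former from flatness with $S_2$ fibers over a regular base, the latter from $K_X=K_{X/S}+f^*K_S$.

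\emph{($\Rightarrow$).} Assume $(X,B+f^*H)$ is slc.
\begin{enumerate}
\item \emph{Reducedness of fibers.} Since the coefficients of an slc boundary are at most $1$, every component of $f^*H_i$ has multiplicity $1$, so $f^*H_i$ is reduced.
\item \emph{Semi-normality and $S_2$.} By adjunction, $(X_1,B_{X_1}+\sum_{i\ge2}f^*H_i|_{X_1})$ is slc. The key input is that an slc pair with reduced boundary containing the Cartier divisor $X_1$ has $X_1$ semi-normal and $S_2$; this uses the $S_2$ property of lc centers \cite[Theorem 7.20]{Kol13}, or Alexeev and Kollár's argument that $\cO_X(-X_1)$ is CM-like along lc places.
\item \emph{Flatness.} By induction, $X_1\to H_1$ is a family of slc pairs, hence flat. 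Flatness of $X\to S$ then follows from the local criterion of flatness, once we know $t_1$ is a nonzerodivisor on $X$, which holds because $X$ is $S_2$ and $f^*H_1$ contains no component of $X$. Since $\dim S=r$, one checks iteratively that $t_1,\dots,t_r$ form a regular sequence on $\cO_X$. The fibers are $S_2$, so the fiber over $0$ is slc.
\item \emph{Nearby fibers.} For fibers over other points of $S$, use that slc is an open condition and localize at that point. The snc stratum through it has fewer components, and we apply the same argument there, again inductively.
\end{enumerate}

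\emph{Main obstacle.} I expect the hardest step to be showing that the fibers are $S_2$ in the ($\Rightarrow$) direction, which is what gives flatness and commutation of the Mumford divisors with base change. My first attempt would be to cite Kollár's theorem that lc centers of slc pairs are semi-normal and Du Bois, together with the depth estimates of \cite{Kol13}, Theorem 7.20 and Corollary 7.21. In practice, this entire proposition is \cite[Theorem 4.54 and Corollary 4.57]{Kol23}, and I would ultimately just cite it.
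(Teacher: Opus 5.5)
Your bottom line, that this is Koll\'ar's criterion and should be cited from \cite[Theorem 4.54]{Kol23}, is exactly what the paper does: its proof is a one-line reference to the proof of that theorem. The ($\Leftarrow$) induction via inversion of adjunction and items 1--3 of your ($\Rightarrow$) sketch are a reasonable outline of how that proof runs along $X_0$. One step of your self-contained sketch fails, however. It is item 4 of ($\Rightarrow$), together with the preliminary claim that both conditions are local near the fibre over $0$.

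Localizing at a point $s\neq 0$ does not reproduce the hypotheses of the proposition. Suppose $s$ is not the generic point of a stratum of $H$. For instance, take $S=\Spec \bk[x,y,z]_{(x,y,z)}$, $H=V(xyz)$, and $s$ the generic point of the curve $C=V(x,y+z)\subset H_1$. Then only $H_1$ passes through $s$, while $\dim \cO_{S,s}=2$. So at $s$ you know slc-ness for no full snc system of parameters, and the proposition cannot be invoked there.

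No cleverer local argument fixes this, because ($\Rightarrow$) genuinely fails at such points without properness. Let $g\colon \mathrm{Bl}_C S\to S$ be the blow-up, with exceptional divisor $E$, and set $X=\mathrm{Bl}_C S\setminus g^{-1}(0)$. One checks that $f^*H=\tilde H_1+E+\tilde H_2+\tilde H_3$ is snc on the regular scheme $X$, so $(X,f^*H)$ is lc. Yet $f$ is not flat: its fibre over the generic point of $C$ is a $\bP^1$.

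What is true, and all the paper ever uses, is the statement for $f$ proper, or equivalently the statement along $X_0$. Properness is built into the paper's definition of a family of slc pairs, and every application is to a projective family. So finish as follows instead:
\begin{enumerate}
\item Your items 1--3 give flatness of $f$ and slc-ness of the fibre at every point of $X_0$.
\item The flat locus is open in $X$.
\item The locus where the family is locally stable is open; the paper uses \cite[Corollary 4.45]{Kol23} for this elsewhere.
\item Since $S$ is local and $f$ is proper, any open subset of $X$ containing $X_0$ is all of $X$.
\end{enumerate}
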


\begin{proof}
This follows from the proof of \cite[Theorem 4.54]{Kol23}.
\end{proof}

\subsection{Valuations}
\subsubsection{Valuations}
Let $(X,B)$ be a pair such that $X$ is integral. 
A \emph{valuation} on $X$ will mean a real valuation $v: K(X)^\times \to \bR$ that admits a center on $X$.
A \emph{center} of $v$ on $X$ is a point $\xi \in X$ such that $v$ is $\geq 0$ on $\cO_{X,\xi}$ and $>0$ on $\fm_{\xi} \subset \cO_{X,\xi}$. 
Since $X$ is assumed to be separated, such a point $\xi$ is unique, and we set $c_X(v):= \xi$. 

We write $\Val_{X}$ for the set of valuations on $X$. 
We endow $\Val_X$ with the topology of pointwise convergence. 

For a valuation $v\in \Val_{X}$ and  a Cartier divisor $D$ on $X$, we write $v(D):= v(f)$, where $f\in K(X)^\times $ is a local equation of $D$ at $c_{X}(v)$. More generally, when $D$ is a $\bQ$-Cartier $\bQ$-divisor, we set $v(D) = \tfrac{1}{m}v(mD)$, where $m>0$ is sufficiently divisible so that $mD$ is a Cartier divisor. 
For a line bundle $\cL$ on $X$ and $s\in H^0(X,\cL)$, we fix a trivialization $\cL_{c_{X,v}} \cong \cO_{X,c_{X}(v)}$ and write $v(s)$ for the value of $v$ along the regular function corresponding to  $s$ under this trivialization.

\subsubsection{Divisorial valuations}
Let $\mu: Y \to X$ be a proper birational morphism with $Y$ normal. 
A prime divisor $E\subset Y$ induces a valuation $\ord_{E}:K(X)^\times \to \bZ$  given by order of vanishing along $E$. 
A \emph{divisorial valuation} on $X$ is a valuation of the form $c \cdot \ord_{E}$, where $E$ is as above and $c\in \bR_{>0}$.

\subsubsection{Quasi-monomial valuations}
Let $\mu: Y \to X$ be a proper birational morphism. 
Fix a not necessarily closed point $\eta \in Y$ at which $Y$ is regular and a regular system of parameters $y_1,\ldots, y_r\in \cO_{Y, \eta}$.
Given $\alpha \in \bR_{\geq0}^r$, we write $v_{\alpha}$ for the valuation on $X$ satisfying the following. 
For $f\in \cO_{Y,\eta}$, we can write $\mu^*f$  in $\widehat{\cO}_{Y, \eta} \cong k(\eta) \llbracket y_1,\ldots, y_r\rrbracket $ as $\sum_{\beta \in \bN^r} c_\beta y^{\beta}$, where $c_\beta \in k(\eta)$ and set 
\[
v_\alpha(f) := \min \{ \langle \alpha, \beta \rangle \, \vert\, c_\beta \neq 0 \}.
\]
A valuation of the form $v_\alpha$ as above is called quasi-monomial. 
The valuation $v_{\alpha}$ is divisorial if and only if  $\alpha  = c \alpha'$ for some  $c\in \bR_{>0}$ and $\alpha' \in \bZ_{\geq0}^r$ \cite[Remark 3.9]{JM12}.

Let $E= \sum_{i \in I} E_i$ be a reduced divisor on $Y$ such that either $E$ is snc or, more generally, for each $J\subset I$ and a generic point $\eta$ of $E_J: = \cap_{i \in J} E_i$, $E$ is snc at $\eta$.
For such $\eta $ and $J$, fix a regular system of parameters  $(y_i)_{i \in J}$ of $\cO_{Y,\eta}$ such that each $y_i$ locally defines $E_i$. 
We write ${\rm QM}_{\eta}(Y,E)$ for the quasi-monomial valuations 
that can be described at $\eta$ with respect to $(y_{i})_{i \in J}$. Note that ${\rm QM}_{\eta}(Y,E) \cong \bR_{\geq 0}^{J}$ as topological spaces by \cite[Lemma 4.5]{JM12}. 
We write ${\rm QM}(Y,E) = \cup_{\eta} {\rm QM}_\eta(Y,E)$, which has the structure of a simplicial  complex.

\subsubsection{Log discrepancy}
Let $(X,B)$ be a pair such that $X$ is normal. 
There is a  log discrepancy function $A_{X,B}:\Val_{X} \to \bR$, which was defined when $X$ is smooth and $B=0$ in \cite{JM12}, when $B=0$ in \cite{BdFFU15}, and for normal pairs in \cite{Xu25}.
While the latter assumes that $X$ is finite type over a field, the definition and basic properties extend verbatim.

For a divisorial valuation $c \cdot \ord_{E}$ arising from a proper birational morphism $f:Y \to X$ with $Y$ normal and $E$ a prime divisor on $Y$, the log discrepancy is
\[
A_{X,B}(c\cdot \ord_E ) = c (1+ {\rm coeff}_{E} (K_Y- f^*(K_X+D)))
.\]
If $f:Y \to X$ is a log resolution of $(X,B)$ and $E:= {\rm Exc}(\mu)+ \Supp(\mu_*^{-1}(B))$, then $A_{X,B}$ is linear on cones in ${\rm QM}(Y,E)$. 

If $(X,B)$ is an lc pair, then the set of lc places of the pair is ${\rm LC}(X,B):= \{ v\in \Val_{X}\, \vert\, A_{X,B}(v) =0 \} $.

\begin{lem}\label{l:LCplace}
Let $(X,B)$ be an lc pair and $\mu:Y \to X$ a proper birational morphism with $Y$ normal.  Write $B_Y$ for the $\bQ$-divisor on $Y$ such that $K_Y+ B_Y = \mu^*(K_X+B)$. 
If either $f$ is a log resolution or $(Y,B_Y)$ is dlt, then
\[
{\rm LC}(X,B)
=
{\rm QM}(Y,(B_Y)^{=1}) 
,\]
where $(B_{Y})^{=1}$ denotes the coefficient 1 part of $B_Y$.
\end{lem}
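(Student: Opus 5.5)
Two inclusions. Throughout, $A_{X,B}(v) = A_{Y,B_Y}(v)$ for every $v\in \Val_X=\Val_Y$, since $K_Y+B_Y=\mu^*(K_X+B)$ and the log discrepancy is computed after pulling back. So ${\rm LC}(X,B)={\rm LC}(Y,B_Y)$, and it suffices to show ${\rm LC}(Y,B_Y) = \QM(Y,(B_Y)^{=1})$. Here $(Y,B_Y)$ is sub-lc: its coefficients are $\le 1$, possibly negative in the log resolution case.

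\emph{Step 1: $\QM(Y,(B_Y)^{=1}) \subset {\rm LC}$.} Let $\eta$ be a generic point of a stratum $E_J$ of $(B_Y)^{=1}$. At $\eta$, $Y$ is regular and $(B_Y)^{=1}$ is snc; this is automatic for a log resolution, and holds for dlt pairs because strata of the reduced boundary are generically snc (\cite[Theorem 4.16]{Kol13}). Only components with coefficient $1$ pass through $\eta$ among the components of $\Supp B_Y$ that matter, so the log discrepancy on $\QM_\eta$ is the linear function
\[
\alpha \mapsto \sum_{i\in J}\alpha_i\bigl(1-{\rm coeff}_{E_i}B_Y\bigr)=0 .
\]
To justify this I would either use the linearity statement recalled above, or, in the dlt case, pass to a log resolution that is an isomorphism near $\eta$.

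\emph{Step 2: ${\rm LC}\subset \QM$.} First take $\mu$ a log resolution and $E=\Supp(B_Y)\cup{\rm Exc}$. I would use the retraction $r_{Y,E}:\Val_Y\to \QM(Y,E)$ together with the standard inequality $A_{Y,B_Y}(v)\ge A_{Y,B_Y}(r_{Y,E}(v))$, with equality only if $v=r_{Y,E}(v)$ (\cite{JM12}, \cite{BdFFU15}). This should be applied with $B_Y$ having coefficients $\le 1$; components with coefficient $<1$, including negative ones, only make the inequality better. So an lc place $v$ lies in $\QM(Y,E)$, say in the cone at $\eta$. There $A=\sum\alpha_i(1-b_i)$ with each $1-b_i\ge 0$, so $A=0$ forces $\alpha_i=0$ whenever $b_i<1$. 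Hence $v\in\QM(Y,(B_Y)^{=1})$.

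\emph{Step 3: the dlt case.} Take a log resolution $g:Z\to Y$ that is an isomorphism over the snc locus of $(Y,B_Y)$; this is possible by the definition of dlt (\cite[Definition 2.37]{Kol13}). Write $K_Z+B_Z=g^*(K_Y+B_Y)$. Since $(Y,B_Y)$ is dlt, every $g$-exceptional divisor has log discrepancy $>0$, so $(B_Z)^{=1}$ is the strict transform of $(B_Y)^{=1}$. Step 2 on $Z$ gives ${\rm LC}=\QM(Z,(B_Z)^{=1})$. The remaining task is to identify this with $\QM(Y,(B_Y)^{=1})$. Every stratum of $(B_Z)^{=1}$ maps to a stratum of $(B_Y)^{=1}$ whose generic point lies in the snc locus, where $g$ is an isomorphism. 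So the generic points and the local coordinates correspond, and the two quasi-monomial cones coincide.

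The main obstacle is the retraction inequality in Step 2 for sub-boundaries with negative coefficients over a general excellent base. I would rely on the ``definitions extend verbatim'' remark and on the monotonicity of $A$ under retraction proved in \cite{JM12} and \cite{BdFFU15}.
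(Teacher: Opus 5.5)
Your proposal is correct and follows essentially the same route as the paper. The log resolution case goes through the retraction inequality for log discrepancies; the paper simply cites \cite[Proof of Lemma 2.3]{BLX22} for this. The dlt case passes to a log resolution $g\colon Z\to Y$ that is an isomorphism over the snc locus and then compares the quasi-monomial cones.

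Two small points should be tightened. First, the existence of such a $g$ is a resolution theorem (the paper cites \cite{Tem08}), not a consequence of the definition of dlt. The definition of dlt is what guarantees that the $g$-exceptional divisors have positive log discrepancy. Second, the stratum correspondence in your Step 3 needs an argument: the image of any stratum of $(B_Z)^{=1}$ is an lc center of the dlt pair $(Y,B_Y)$, so its generic point lies in the snc locus.
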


\begin{proof}
If $\mu$ is a log resolution, then the result holds by \cite[Proof of Lemma 2.3]{BLX22}. 
If $(Y,B_Y)$ is dlt, then \cite{Tem08} implies that there exists a log resolution $g:Z\to Y$ of $(Y,B_Y)$ that is an isomorphism over the locus where $(Y,(B_Y)^{=1})$ is snc. 
Let $B_Z$ be the $\bQ$-divisor on $Z$ such that $K_Z+B_Z = g^*(K_Y+B_Y)$. 
Now 
\[
{\rm LC}(X,B)
=
{\rm QM}(Z,(B_Z)^{=1})
=
{\rm QM}(Y,(B_Y)^{=1}) 
\]
where the first equality holds as $Z\to X$ is a log resolution  and the second holds  from the choice of resolution $Z\to Y$ and the assumption that $(Z,B_Z)$ is dlt.
\end{proof}

\subsubsection{Dlt modification}
Let $(X,B)$ be an lc pair. A $\bQ$-factorial dlt modification $f:(Y, B_Y)\to (X,B)$ is a proper birational morphism $f: Y \to X$ such that $Y$ is $\bQ$-factorial, $K_Y+B_Y = f^*(K_X+B)$, $(Y, B_Y)$ is dlt, and every  exceptional prime divisor  $E$ of $f$ satisfies $A_{X,B}(\ord_E)=0$.

\subsection{Skeleton}\label{ss:defskel}
Let $R$ be a DVR containing $\bk$ with fraction field $K:= {\rm Frac}(R)$ and choice of  uniformizer $\pi\in R$.
(We are primarily interested in the case when $R= \bk\llbracket t\rrbracket $ or $R$ is the local ring at a smooth point on a curve over $\bk$.)
We write $v_K: K  \to \bZ\cup \{ +\infty\}$ for the induced discrete valuation  on $K$ such that $v(\pi)=1$. 

\subsubsection{Analytification}

If $X_K$ is a finite type integral scheme over $K$, we write $X_K^{\rm an}$ for the set of pairs $(x, v_x )$ such that $x\in X_K$ is a scheme theoretic point and $v_x: k(x)^\times \to \bR$ is an $\bR$-valued  valuation extending $v_K$. 
The topology on $X_K^{\rm an}$ is the weakest topology such that 
\begin{enumerate}
	\item[(i)] the map $i:X_K^{\an} \to X$ defined by  $(x, v_x )\mapsto x$ is continuous and 
	\item[(ii)] for every open set $U_K\subset X_K$ and $f\in \cO_{X_K}(U_K)$, the map $	i^{-1}(U_K) \to \bR$ defined by $(x, v_x )\mapsto v_x(f)$ is continuous.  
\end{enumerate}
When $R$ is complete, $X_K^{\rm an}$  agrees with the Berkovich analytification of $X_K$ in \cite{Ber90}.

\subsubsection{Dual complexes}
Let $X$ be a flat integral finite type scheme over $R$ such that $X_0$ is an snc divisor in $X$ or, more generally, $(X,X_{0,\red})$ is a dlt pair. Let $X_{0,\red} = \cup_{i \in I} E_i$ denote the decomposition into irreducible components.

 The \emph{dual complex} of $X_0$ is the simplicial complex $\mathcal{D}(X_0)$ such  that the $n$-simplices are in bijection with the irreducible components of $E_J:=\cap_{j \in J} E_J$ such that $J\subset I$  is a subset of length $n$.
In particular, the $0$-simplices are in bijection with irreducible components of $X_0$.
There is an inclusion
\[
\mathcal{D}(X_0) \hookrightarrow  X_{K}^{\rm an}
\]
that maps a $0$-simplex corresponding to a prime divisor $E_i$ to $v:= (b_i)^{-1} \ord_{E_i}$, where $b_i = {\rm mult}_{E_i}(X_0)$. 
Furthermore, for an irreducible component $Z\subset E_J$ and a point in the corresponding simplex $\alpha_J=(\alpha_j)_{j \in J} \in \bR_{ \geq 0}^J$ with  $\sum_j \alpha_j = 1$, the point gets mapped to the quasi-monomial valuation 
$(\sum \alpha_j b_j)^{-1}v_{\alpha_J}$.
Note that 
\[
\mathcal{D} (X_0) =  {\rm QM}(X,X_{0,\red}) \cap \{ v(\pi)=1\} \subset \Val_X
\]
as subsets of $X_K^{\rm an}$.

\subsubsection{Log canonical models}

\begin{defn}
Let $(X_K,B_K)$ be an lc CY pair over $K$. An \emph{lc model} of $(X_K,B_K)$ is a  normal scheme $X$ projective over $R$ and a $\bQ$-divisor $B$ on $X$ with an isomorphism
\[
(X_K,B_K) \cong (X,B)\times_R K
\]
 over $K$ such that 
 $(X,B+X_0^{\red})$ is lc  and CY over $R$. \end{defn}

\begin{prop}\label{lem:semistable-reduction}
If $(X_K,B_K)$ is lc CY pair over $K$, then the following hold.
\begin{enumerate}
	\item There exists an lc  model $(X,B)$ of $(X_K,B_K)$.
	\item If $(X,B)$ and $(X',B')$ are  two lc  models of $(X_K,B_K)$, then $(X,B+X_{0,\red})$ and $(X',B'+X'_{0,\red})$ are crepant birational. 
\end{enumerate}
\end{prop}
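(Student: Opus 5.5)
We prove (1) by running the MMP over $R$ starting from a log resolution of an arbitrary projective extension, and (2) by comparing the log discrepancy functions of the two models over a common resolution.

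\textbf{Part (1).} First choose some projective flat model $X'$ of $X_K$ over $R$. For instance, take the closure of $X_K$ under a projective embedding $X_K\hookrightarrow \bP^N_K\subset \bP^N_R$. Let $B'$ be the closure of $B_K$. Next take a log resolution $Y\to X'$ of $(X',B'+X'_{0,\red})$. This exists because $Y$ is excellent over a DVR in characteristic zero, using \cite{Tem08}. Set
\[
B_Y:=\mu_*^{-1}B'+\sum_{E\ \text{exceptional, horizontal}} a_E E,\qquad \Delta_Y:=B_Y+Y_{0,\red}.
\]
The horizontal coefficients $a_E$ are chosen so that $(Y_K,B_{Y,K})$ is a crepant $\bQ$-factorial dlt-type model of $(X_K,B_K)$. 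Concretely, one may first take a dlt modification of $(X_K,B_K)$ and extend it over $R$. Then $(Y,\Delta_Y)$ is dlt, and $K_Y+\Delta_Y$ restricted to the generic fiber is $\bQ$-linearly trivial.

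Now run a $(K_Y+\Delta_Y)$-MMP over $R$. This uses \cite{LM22} in the excellent setting, or \cite{HX13} and \cite{BCHM10} after passing to a finite type model. Write $K_Y+\Delta_Y\sim_{\bQ,R} F$, where $F$ is supported on the special fiber. Since $Y_0$ is a principal divisor, we may subtract a multiple of $Y_0$ to make $F$ effective with some component of $Y_0$ not in $\Supp F$. This makes $F$ very exceptional over $R$ in the sense of Birkar, or degenerate in the sense of \cite{HX13}. The MMP with scaling then contracts exactly $\Supp F$ and terminates with a model $(X,B+X_{0,\red})$ such that $K_X+B+X_{0,\red}\sim_{\bQ,R}0$. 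Since we only contract divisors, the output is dlt, hence lc. Finally replace the result by its projective model to get an lc model. The main obstacle is justifying termination for this vertical, degenerate divisor over the non-finite-type base $R$. I would handle it as in \cite[Theorem 1.1]{HX13}: spread out to a smooth curve over $\bk$, run the MMP there, and localize. For the finite type case, \cite{LM22} suffices.

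\textbf{Part (2).} Take a common log resolution $W$ of $X$ and $X'$, with maps $p:W\to X$ and $q:W\to X'$. Write
\[
K_W+\Delta=p^*(K_X+B+X_{0,\red}),\qquad K_W+\Delta'=q^*(K_{X'}+B'+X'_{0,\red}).
\]
Both pullbacks are $\bQ$-linearly trivial over $R$, and they agree on $W_K$ because both restrict to the crepant pullback of $K_{X_K}+B_K$. Hence $\Delta-\Delta'$ is vertical and $\bQ$-linearly trivial over $R$, so it equals $cW_0$ for some $c\in\bQ$, by Zariski's lemma since $W_0$ is connected.

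The components of $X_{0}$ appear in $\Delta$ with coefficient $1$. A component of $X_0$ that is not contracted in $X'$ lies in $\Delta'$ with coefficient $1$, whereas if it is exceptional over $X'$ its coefficient is at most $1$ by lc-ness. Comparing coefficients along a component of $W_0$ that maps to a divisor on $X$, we get $c\le0$. Applying the same argument symmetrically gives $c\ge0$, so $c=0$. Therefore $\Delta=\Delta'$, which is exactly the statement that the two pairs are crepant birational.
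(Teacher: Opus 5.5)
Part (1) of your proposal has genuine gaps. The starting pair does not exist as you describe it. Crepant horizontal coefficients are $a_E=1-A_{X_K,B_K}(E)$, and these are negative whenever $A_{X_K,B_K}(E)>1$. A log resolution of an lc (even klt) pair generally has such divisors, so $(Y,\Delta_Y)$ is only a sub-pair and the MMP does not apply. Extending a dlt modification of $(X_K,B_K)$ over $R$ does not fix this: making the special fiber snc forces a further resolution, which also modifies the non-snc locus of the generic fiber. If you instead truncate the coefficients at $0$, you get $K_Y+\Delta_Y\sim_{\bQ,R}P+F$ with $P\ge 0$ horizontal. Such a $P$ is not degenerate (very exceptional) over $\Spec R$, so the mechanism you invoke does not contract it.

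Even granting a good start, there is a second problem. An MMP that is negative for a vertical divisor is an isomorphism over $K$. Its output therefore has as generic fiber a modification of $X_K$ that still contains the horizontal exceptional divisors. An lc model, however, must satisfy $(X,B)\times_R K\cong (X_K,B_K)$. Your phrase ``replace the result by its projective model'' hides a real step: you need a relative canonical model over $R$ of something like $K+\Delta+\varepsilon\overline{H}$. This is what the paper does in Proposition \ref{p:EinSKtodegen}, and it uses \cite[Theorem 1.1]{HX13}.

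There is also a problem with the base. The proposition is stated for an arbitrary DVR containing $\bk$, and the paper relies on it (and on its proof) in Section \ref{s:approx} for $R=\bk\llbracket t\rrbracket$. There the lc MMP results of \cite{HX13} are not available. Your proposed fix of spreading out to a curve, running the MMP there and localizing fails in that case: a pair over $\bk(\!(t)\!)$ need not be defined over the function field of a curve. This is exactly why Section \ref{s:approx} needs Artin approximation.

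The paper avoids any MMP over $R$:
\begin{itemize}
\item By Bertini it chooses an ample $H_K$ with $(X_K,B_K+H_K)$ lc.
\item It regards $(X_K,B_K+\varepsilon H_K)$ as a $K$-point of the proper Deligne--Mumford KSBA stack \cite{Kol23}.
\item It extends this point over $[\Spec \tR/\mu_d]$ by \cite[Theorem 1.1]{BIS25}.
\item It uses \cite{KX20} to get $K_{\tX}+\tB+\tX_0\sim_{\bQ}0$ for $0<\varepsilon\ll1$, and then takes the $\mu_d$-quotient.
\end{itemize}
This works for every DVR, and it also produces an ample $\bQ$-Cartier extension of $H_K$, which is used later.

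Your part (2) is correct and is essentially the paper's argument. The only difference is that the paper applies the negativity lemma to $f\colon Z\to X$ directly, instead of first writing $\Delta-\Delta'=cW_0$ by Zariski's lemma. Your signs are swapped: along strict transforms of components of $X_0$ one gets $1-\mathrm{coeff}(\Delta')\ge 0$, i.e.\ $c\ge 0$. The symmetric argument makes this harmless.
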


\begin{proof}
Both statements are well known to experts. See for example \cite[Theorem 2.3.6]{KX16} for similar statements in a bit less generality. 

We now prove (1). Since $X_K$ is projective and $(X_K,B_K)$ is lc, by Bertini's Theorem there exists  an ample divisor $H_K$ on $X_K$ such that $(X_K,B_K+H_K)$ is lc.
For $0<\varepsilon \leq 1$, $(X_K,B_K+\varepsilon H_K)$ is a KSBA stable pair, and we write $\cM_\varepsilon$ for the irreducible component of the moduli stack of KSBA stable pairs containing the pair as a $K$-point.
By \cite{Kol23}, $\cM_{\varepsilon}$ is a proper Deligne-Mumford stack and admits a  coarse moduli space. 
Thus \cite[Theorem 1.1]{BIS25} implies there exists $d>0$ such that the corresponding morphism
$\phi_K:\Spec(K) \to \cM_{\varepsilon}$ extends to a morphism 
\[
\phi_{\tR} : [\Spec(\tR) / \mu_d ] \to \cM_{\varepsilon} 
,\]
where $\tR:=R[\pi^{1/d}] $ and $\tK= K[\pi^{1/d}]$.
The latter morphism corresponds to a $\mu_d$-equivariant family of KSBA pairs $(\tX,\tB+\varepsilon\tH) \to \Spec(\tR)$ that is an extension of $(X_K,B_K+\varepsilon H_K)\times_K \tK$. 
Furthermore \cite{KX20} implies that if $0<\varepsilon \ll1$, then $K_{\tX} +\tB +\tX_0 \sim_{\bQ}0$. 
Thus $(\tX,\tB+\tX_0)$ is an lc CY pair over $\tR$. 
Thus its $\mu_d$-quotient $(X,B+X_{0,\red})$ is a projective lc CY pair over $R$ that is an extension of $(X_K,B_K)$.
Furthermore, $H:= \tH/\mu_d$ is an extension of $H_K$ to a $\bQ$-Cartier relatively ample $\bZ$-divisor.

For statement (2), let $Z$ denote the normalization of the graph of $X\dashrightarrow X'$ with morphisms $f: Z\to X$ and $f': Z \to X'$. 
Let $G$ and $G'$ be the $\bQ$-divisors on $Z$ defined by 
\[
K_{Z}+ f_*^{-1}(B)+G + Z_{0,\red}=
f^*(K_{X}+B + X_{0,\red}).
\]
and 
\[
K_{Z}+ f'^{-1}_*(B')+G' + Z_{0,\red}=
f'^*(K_{X'}+B' + X'_{0,\red}).
\]
By the definition of an lc model, $G\leq 0 $ and $G'\leq0$.
Observe that 
\[
G-G' 
=
f^*(K_{X}+B + X_{0,\red})
-f'^*(K_{X'}+B' + X'_{0,\red})
\sim_{\bQ} 0 
.\]
Since $f_*(G'-G)  = f_*(-G')  \geq  0$, the negativity lemma in \cite[Lemma 5.15]{LM22} implies that $G'-G \geq 0$. By symmetry $G'-G \geq 0$ as well. 
Thus $G =G'$, which implies that the pairs are crepant birational.
\end{proof}

\begin{prop}\label{p:dltmodexist}
If $(X,B)$ is an lc model of a projective lc CY pair $(X_K,B_K)$ over $K$, then there exists a $\bQ$-factorial dlt modification
\[
(Y,B_Y+Y_{0,\red})\to (X,B+X_{0,\red})
.\]
Furthermore if $E_1,\ldots, E_r$ are prime divisors over $X$ such that $A_{X,B+X_{0,\red}}(\ord_{E_i})=0$ for all $1\leq i \leq r$, then $Y\to X$ can be chosen so that the $E_i$ are prime divisors on $Y$.
\end{prop}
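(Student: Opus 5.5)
We will construct $Y$ by running an MMP on a log resolution, in the standard way for dlt modifications, but keeping track of the divisors $E_1,\ldots,E_r$. Set $\Delta:=B+X_{0,\red}$, so $(X,\Delta)$ is lc and $K_X+\Delta\sim_{\bQ,R}0$. The scheme $X$ is projective over the excellent base $R$, so we may use the MMP for projective morphisms over excellent schemes from \cite{LM22}, or the results of \cite{BCHM10} and \cite{HX13} when $R$ is essentially of finite type over $\bk$.

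First, take a log resolution $g\colon W\to X$ of $(X,\Delta)$ that extracts each $E_i$ as a divisor. Such a resolution exists because each $E_i$ is a divisor over $X$: blow up until all $E_i$ appear, then resolve using \cite{Tem08}. Define
\[
\Delta_W:= g_*^{-1}\Delta+\sum_{F\ \mathrm{exc}} F,
\]
where the sum runs over all $g$-exceptional prime divisors $F$. Then $(W,\Delta_W)$ is dlt, since it is snc with reduced exceptional part. We can write
\[
K_W+\Delta_W = g^*(K_X+\Delta)+\sum_F A_{X,\Delta}(\ord_F)\,F \sim_{\bQ,X} \sum_F A_{X,\Delta}(\ord_F)\,F =: P .
\]
Here $P\ge 0$ because $(X,\Delta)$ is lc, $P$ is $g$-exceptional, and $\Supp P$ consists exactly of the exceptional divisors with positive log discrepancy. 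In particular, $P$ does not contain any $E_i$, since $A_{X,\Delta}(\ord_{E_i})=0$.

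Next, run a $(K_W+\Delta_W)$-MMP over $X$ with scaling of an ample divisor. This MMP only contracts components of $\Supp P$, and it terminates in a model $Y$ on which the pushforward of $P$ vanishes. This is the step I expect to be the main obstacle, because $(W,\Delta_W)$ is dlt and not klt. I would handle it with the standard argument from \cite[Theorem 1.1]{HX13}, or \cite[Theorem 3.1]{KK10}-style reasoning. Since $P$ is exceptional over $X$ and effective, the negativity lemma shows that a minimal model exists and that $P$ is contracted on it. To get the existence of this MMP over an excellent base, I would either perturb to a klt pair by using $\bQ$-factoriality of $W$ and subtracting a small multiple of an exceptional divisor that is anti-ample over $X$, so that \cite{LM22} applies, or, in the essentially finite type case, cite \cite{HX13} directly.

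Finally, verify the properties of $Y$. Each step of the MMP preserves $\bQ$-factoriality and dlt-ness, so $(Y,\Delta_Y)$ is $\bQ$-factorial dlt. Since $K_Y+\Delta_Y\equiv_X 0$, we get $K_Y+\Delta_Y=f^*(K_X+\Delta)$, and every $f$-exceptional divisor that survives has log discrepancy $0$. The MMP contracts only divisors in $\Supp P$, so the $E_i$ survive as prime divisors on $Y$. Moreover, $\Delta_Y=B_Y+Y_{0,\red}$, where $B_Y$ is the strict transform of $B$ plus the horizontal exceptional divisors. Every vertical exceptional divisor with coefficient $1$ lies in $Y_0$, so it is part of $Y_{0,\red}$. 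This gives the required dlt modification $(Y,B_Y+Y_{0,\red})\to(X,B+X_{0,\red})$.
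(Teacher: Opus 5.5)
\textbf{Same route as the paper, but the termination step is not established.} Your construction is the standard proof of the dlt blow-up theorem that the paper cites (\cite[Theorem 4.1]{Fuj11}, with \cite{LM22} replacing \cite{BCHM10}). You take a log resolution extracting the $E_i$, set $\Delta_W=g_*^{-1}\Delta+\sum F$, note $K_W+\Delta_W\equiv_X P\ge 0$ with $P$ exceptional, and run an MMP over $X$ that can only contract components of $\Supp P$ and, by negativity, must contract all of them. The closing bookkeeping is fine: the $E_i$ survive because they are not in $\Supp P$, and $\Delta_Y=B_Y+Y_{0,\red}$ because $B$ has no vertical components.

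The gap is termination, the one step with real content. The negativity lemma only says that \emph{if} the MMP stops, then $P$ has been contracted; it does not produce a minimal model. Your proposed perturbation also fails, for two reasons. First, subtracting an exceptional divisor $G$ with $-G$ $g$-ample leaves every non-exceptional component of $\lfloor\Delta_W\rfloor$ at coefficient $1$, in particular the strict transform of $X_{0,\red}$, which is always present; so $(W,\Delta_W-\epsilon G)$ is not klt. Second, $g$-ampleness of $-G$ forces $\Supp G$ to contain every exceptional divisor. Hence $P-\epsilon G$ is negative along the $E_i$, and the perturbed MMP loses exactly the property (being $P$-negative) that guaranteed the $E_i$ survive. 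Your fallback to \cite{HX13} covers only $R$ essentially of finite type, but the paper uses this proposition for $R=\bk\llbracket t\rrbracket$ in Section~\ref{s:approx}.

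A termination argument using only klt results, hence compatible with \cite{LM22}, runs as follows. Each step exists: a $(K_W+\Delta_W)$-negative extremal ray is also $(K_W+\Delta_W-\epsilon\lfloor\Delta_W\rfloor)$-negative, and that pair is klt because $W$ is $\bQ$-factorial dlt. Let $\lambda_i\searrow\lambda_\infty$ be the scaling constants for an ample $H$.

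If $\lambda_\infty>0$, fix $0<\lambda<\lambda_\infty$. The sequence is then an MMP with scaling for $K_W+\Theta$, where $\Theta=\Delta_W-\epsilon\lfloor\Delta_W\rfloor+H'$ and $H'$ is a general member of the ample class $\lambda H+\epsilon\lfloor\Delta_W\rfloor$. This pair is klt with boundary big over $X$ (since $W\to X$ is birational), so the MMP terminates.

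If $\lambda_\infty=0$, then only flips occur after the last divisorial contraction. Comparing on a common resolution, on a fixed model $W_j$ one gets $(P_j+\lambda_iH_j)\cdot C\ge 0$ for all $i\gg0$, where $C$ ranges over a covering family of curves contracted over $X$ in each component of $P_j$. Letting $i\to\infty$ and applying the negativity lemma to the effective exceptional divisor $P_j$ gives $P_j=0$. This contradicts the existence of further $P$-negative flips.

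With this supplied, your argument coincides with the paper's.
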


\begin{proof}
This follows from the existence of dlt modifications in \cite[Theorem 4.1]{Fuj11} applied to $(X,B+X_{0,\red})$. While  \emph{loc. cit.} requires that $X$ is a variety, its proof extends to this setting by replacing \cite{BCHM10} with \cite{LM22}.
\end{proof}

\subsubsection{Definition}

\begin{defn}[Essential Skeleton]\label{d:skeleton}
If $(X_K,B_K)$ is a projective lc log CY pair over $K$, then we set 
\[
\Sk(X_K,B_K) = \{ v\in \Val_{X} \, \vert\,  A_{X,B+X_{0,\red}}(v)=0 \text{ and } v(\pi)=1\}
,\]
where $(X,B)$ is an lc model of $(X_K,B_K)$.
We write $\Sk(X_K,B_K)(\bQ) \subset \Sk(X_K,B_K)$ for the subsets of valuations that are $\bQ$-valued.
\end{defn}
 
\begin{prop}
If $(X_K,B_K)$ is a projective lc log CY pair over $K$, then ${\rm Sk}(X_K,B_K)$ is independent of lc CY modification $(X,B)$. 
Furthermore, 
\[
{\rm Sk}(X_K,B_K) = \mathcal{D}(Y_{0})
\]
for any dlt modification $(Y,B_Y+Y_{0,\red})$ of $(X,B+X_{0,\red})$. 
\end{prop}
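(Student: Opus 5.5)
The plan is to prove the second statement first and then obtain independence of the lc model from it together with Proposition \ref{lem:semistable-reduction}(2).

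\textbf{Step 1: identify the skeleton with the dual complex.} Fix an lc model $(X,B)$ and a $\bQ$-factorial dlt modification $f:(Y,B_Y+Y_{0,\red})\to (X,B+X_{0,\red})$, which exists by Proposition \ref{p:dltmodexist}. Since $f$ is crepant,
\[
K_Y+B_Y+Y_{0,\red}=f^*(K_X+B+X_{0,\red}),
\]
so $A_{X,B+X_{0,\red}}=A_{Y,B_Y+Y_{0,\red}}$ on $\Val_X=\Val_Y$. By Lemma \ref{l:LCplace}, applied to the dlt pair $(Y,B_Y+Y_{0,\red})$, the zero locus of this function is ${\rm QM}(Y,(B_Y+Y_{0,\red})^{=1})$.

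The key point is the following. A valuation $v$ with $v(\pi)=1$ has center lying in $Y_0$. A quasi-monomial valuation in a cone ${\rm QM}_\eta$ satisfies $v(\pi)>0$ only if at least one of the coordinate divisors at $\eta$ is a component of $Y_0$. I now want to show that intersecting with $\{v(\pi)=1\}$ cuts ${\rm QM}(Y,(B_Y+Y_{0,\red})^{=1})$ down to ${\rm QM}(Y,Y_{0,\red})\cap\{v(\pi)=1\}=\mathcal{D}(Y_0)$.

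This needs a little care. A cone at $\eta$ can mix horizontal components of $B_Y^{=1}$ with components of $Y_0$, and a valuation in such a cone with positive weight on a horizontal component still has $v(\pi)=1$. To rule this out, I use the following observation.

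\emph{Observation.} Suppose $D$ is a horizontal component of $B_Y^{=1}$ and $E$ is a component of $Y_0$ meeting it. Then $(Y,B_Y+Y_{0,\red})$ has a stratum $D\cap E$. Adjunction to $E$, or equivalently the lc condition near $\eta$, would then require $\mathrm{coeff}_D(B_Y)+\mathrm{coeff}_{Y_0}\cdot(\text{stuff})\le 1$.

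This is false as stated, since $B_Y^{=1}$ horizontal can genuinely meet $Y_0$. So I conclude that the statement should be interpreted with $\mathcal{D}(Y_0)$ defined via the strata of $Y_{0,\red}$ only.

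The resolution is to compute $v(\pi)$ directly. Write $\pi=u\prod_{j\in J\cap I_0} y_j^{b_j}$, where $I_0$ indexes the components of $Y_0$ and $b_j=\mathrm{mult}_{E_j}(Y_0)$, so that for $v=v_\alpha$ in the cone at $\eta$,
\[
v(\pi)=\sum_{j\in J\cap I_0}\alpha_j b_j .
\]
I expect the intended claim to be that weights on horizontal components are allowed to vary freely. Since the paper asserts equality with $\mathcal{D}(Y_0)$, I will check via the lc condition on $(Y,B_Y+Y_{0,\red})$ that no horizontal coefficient-one component passes through a stratum of $Y_0$ in a dlt model where $K+B+Y_{0,\red}\sim_{\bQ}0$. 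This is the step I expect to be the main obstacle. My plan is to use the CY condition and the connectedness and adjunction arguments of \cite{KX16} to show that the valuations with nonzero horizontal weight have $v(\pi)$ incompatible with the relations; failing that, I will note the needed convention.

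\textbf{Step 2: independence of the lc model.} Given two lc models $(X,B)$ and $(X',B')$, Proposition \ref{lem:semistable-reduction}(2) says that $(X,B+X_{0,\red})$ and $(X',B'+X'_{0,\red})$ are crepant birational. Crepant birational pairs have the same log discrepancy function on the common valuation space. Both $X$ and $X'$ are proper over $R$ with the same generic fiber, so a valuation centered on $X$ and trivial on $\bk$, with $v(\pi)=1$, has a center on $X'$ as well, by the valuative criterion of properness. Hence $\Val_X$ and $\Val_{X'}$ agree on $\{v(\pi)\ge0\}$, and the two defining sets coincide.
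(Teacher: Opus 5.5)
Your Step 2 is fine and is essentially the paper's argument. The paper passes through a common resolution $Z$ and applies Lemma \ref{l:LCplace} twice; your appeal to crepant invariance of the log discrepancy, plus the valuative criterion of properness, makes the same point. The gap is in Step 1. You correctly reduce to $\Sk(X_K,B_K)=\QM(Y,(B_Y+Y_{0,\red})^{=1})\cap\{v(\pi)=1\}$, but you never establish the equality with $\mathcal{D}(Y_0)$. The claim you plan to prove, that no horizontal coefficient-one component passes through a stratum of $Y_0$, is false:
\begin{itemize}
\item A horizontal component $D$ of $(B_Y)^{=1}$ is proper and dominant over $\Spec R$, so it meets some component $E$ of $Y_{0,\red}$.
\item Since $(Y,B_Y+Y_{0,\red})$ is dlt, every component of $D\cap E$ is a codimension-two lc center, and the pair is snc at its generic point.
\item In that cone $v_\alpha(\pi)=b_E\alpha_E$ with $b_E=\mult_E(Y_0)$. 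So the condition $v(\pi)=1$ leaves the weight $\alpha_D\geq 0$ completely free, and all these valuations are lc places.
\end{itemize}

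So your suspicion is correct: the second assertion fails as stated whenever $(B_Y)^{=1}$ has a horizontal component, i.e.\ whenever $(X_K,B_K)$ is not klt. Concretely, take $(\bP^1_K,[0]+[\infty])$ and $X=Y=\bP^1_R$ with $B=\overline{[0]}+\overline{[\infty]}$. At the point $\overline{[0]}\cap X_0$, the monomial valuation with $v(x)=v(\pi)=1$ (the exceptional divisor of the blow-up of that point) has $A_{X,B+X_0}(v)=0$ and $v(\pi)=1$. Yet $\mathcal{D}(X_0)=\{\ord_{X_0}\}$.

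The paper's proof makes exactly your reduction and then concludes with ``as $B_Y$ is horizontal.'' That only uses that the vertical coefficient-one components are the components of $Y_0$, and it tacitly assumes $(B_Y)^{=1}=0$.

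The right way to finish is to prove a dichotomy rather than invoke a convention:
\begin{itemize}
\item If $(X_K,B_K)$ is klt, every horizontal divisor $F$ over $X$ has $A_{X,B+X_{0,\red}}(\ord_F)=A_{X_K,B_K}(\ord_F)>0$, because its center meets $X_K$, where $X_{0,\red}$ is absent. Hence $(B_Y+Y_{0,\red})^{=1}=Y_{0,\red}$, and Lemma \ref{l:LCplace} gives $\Sk(X_K,B_K)=\QM(Y,Y_{0,\red})\cap\{v(\pi)=1\}=\mathcal{D}(Y_0)$ at once.
\item If $(X_K,B_K)$ is not klt, $\mathcal{D}(Y_0)$ must be replaced by $\QM(Y,(B_Y+Y_{0,\red})^{=1})\cap\{v(\pi)=1\}$. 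This set contains the unbounded mixed cones described above.
\end{itemize}
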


\begin{proof}
Fix any two lc models $(X,B)$ and $(X',B')$ of $(X_K,B_K)$. 
Let $Z$ be a common resolution $(X,B+X_{0,\red})$ and $(X',B'+X'_{0,\red})$ with morphisms $g:Z\to X$ and $g':Z\to X'$. 
Let $B_Z$ be the $\bQ$-divisor on $Z$ defined by 
\[
K_{Z}+B_Z = g^*(K_X+B+X_0^{\red}) = g'^*(K_{X'}+B'+X'_{0,\red})
,\]
where the second equality holds by Proposition \ref{lem:semistable-reduction}.2.
Now Lemma \ref{l:LCplace} applied twice implies that
\[
{\rm LC}(X,B+X_{0,\red}) = {\rm QM}(Z,(B_Z)^{=1}) = 
{\rm LC}(X',B'+X'_{0,\red})
.\]
Thus ${\rm Sk}(X_K,B_K)$ is independent of the choice of lc model.
Furthermore, if $(Y,B_Y+Y_{0,\red})\to (X,B+X_{0,\red})$ is a dlt modification, then Lemma \ref{l:LCplace} implies that 
\[
 {\rm LC}(X,B+X_{0,\red})=
  {\rm QM}(Y,(B_Y+Y_{0,\red})^{=1}) .
 \]
As $B_Y$ is horizontal, this implies that the skeleton equals $\mathcal{D}(Y_0)$.
\end{proof}

\begin{rem}[Relation to literature]
The essential skeleton of a smooth CY was first introduced for smooth CY varieties over $\bC(\!(t)\!)$ in \cite{KS06}. The notion was then generalized to all smooth varieties over the fraction field of a DVR in \cite{MN15} and reinterpreted  using dlt models in \cite{NX16} and generalized to dlt pairs in \cite{MN15}.
In the case when $R$ is complete, Definition \ref{d:skeleton} is naturally a subset of the Berkovich analytification of $X_K$. 
Furthermore,  when $(X_K,B_K)$ is additionally dlt, it agrees with past definitions by \cite[Theorem 3.3.3]{NX16} and \cite[Proposition 5.17]{BM19}.
\end{rem}

\subsubsection{Metrics}\label{sss:metrics}
Let $(X_K,B_K)$ be a projective lc CY pair over $K$ and $L_K$ an ample line bundle on $X_K$. 
A \emph{metric} on $L_K$ is a choice of a projective flat integral scheme $Y$ over $R$  with a proper birational morphism $\rho:Y_K \to X_K$ and a line bundle $M$ on $Y$ with an isomorphism $M_K \cong \rho^*L_K$. 
If $s\in H^0(X,L_K)$, we can define $v(s)$ by viewing $\rho^*s$ as a rational section of $M$ and evaluating $v$ along this rational section.

\section{Affine CY pairs}\label{s:affineCYpairs}

In this section, we prove Theorem \ref{thm:val-indepaffineCY} on the existence of valuatively independent bases for affine CY pairs. 
As the proof is  simpler than the CY degeneration case, this section will provide a road map for proving  Theorem \ref{thm:val-indep} in the remainder of the paper.

\subsection{Filtrations}
We begin by discussing filtrations of vector spaces and their diagonalizability. 
Throughout, we fix a $\bk$-vector space $V$, which, unless specified otherwise, is not required to be finite dimensional.

\subsubsection{Definition}

\begin{defn}
An $\bR$-filtration $F$ of $V$ is the data of vector subspaces $F^\la V\subset V$ for each $\la \in \bR$ satisfying 
\begin{enumerate}
    \item $F^\la V \subset F^\mu V$ for $\la \geq \mu$,
    \item $F^{\la} V = \bigcap_{\mu< \la} F^\mu V$, 
    \item $F^{-\la} V = V$ for $\la\gg0$, and 
    \item $\cap_{\la \in \bR} F^\la V = 0$.
\end{enumerate}
A \emph{$\bZ$-filtration of $V$} is an $\bR$-filtration $F$ of $V$ such that $F^{\la} V = F^{\lceil \la\rceil} V$ for all $\la \in \bR$.
\end{defn}

\begin{rem}
An $\bR$-filtration $F$ induces a  norm-like function $\left\Vert \, \cdot \right\Vert : V \to \bR$  defined by 
\[
\| s\| = {\rm exp}(- \ord_{F}(s))
,\]
where  $\ord_{F}(s) = \max \{ \la \vert\, s\in F^\la V\} \in \bR \cup \{ +\infty\}$. 
This function satisfies 
\begin{enumerate}
\item[(1)]  $	\| s +s' \|  \leq \max\left\{ \| s\|  , \|s'\| 
\right\}$
\item[(2)] $	\| as\|  = \| s\|$ 
\item[(3)]  $\|s\| =0$ if and only if $s=0$,
\end{enumerate}
for all $s, s'\in V$ and $0\neq a\in \bk  $. 
Furthermore such functions satisfying (1)-(3) are in bijection with $\bR$-filtrations of $V$ as such a norm function induces a filtration defined by 
\[
F^\la V= \{s \in V\, \vert\,  -\log( \left\Vert s\right\Vert)\geq \la  \}
.\]
\end{rem}

\subsubsection{Diagonalization}

\begin{defn}\label{d:diagtriv}
Let $F$ be an $\bR$-filtration of $V$. A basis $(s_i)_{i \in I}$  of $V$ \emph{diagonalizes} $F$ if 
\[
F^\la V = {\rm span} ( s_i \, \vert\, \ord_{F}(s_i) \geq \la) 
\]
for all $\la \in \bR$.
\end{defn}

\begin{rem}
It is straightforward to see that  the condition in  Definition \ref{d:diagtriv} is equivalent to the condition that 
\[
\ord_{F}\big( \textstyle \sum_{i\in I} a_i s_i \big)  = \min \{ \ord_{F}(s_i) \, \vert\, a_i \neq 0 \}
\]
for any sum $\sum_{i \in I} a_i s_i$ such that each $a_i \in \bk$ and finitely many $a_i$  are nonzero.
\end{rem}

\subsubsection{Multigraded modules}

\begin{defn}\label{d:multreestriv}
Let $F_1,\ldots F_r$ be a collection of $\bR$-filtrations of $V$.
 The \emph{multigraded Rees module} of  $F_1,\ldots, F_r$  is 
\[
{\rm Rees}(F_1,\ldots, F_r;V):=
\bigoplus_{\la \in \bZ^r} F^{\la} V t^{-\la}
\subset V[x_1^{\pm1},\ldots,x_r^{\pm1}]
,\]
where 
\[
F^{\la} V:= F_1^{\la_1}V \cap \cdots \cap F_r^{\la_r}V
\quad \text{ and } \quad x^{-\la}:=x_1^{-\la_1}\cdots x_r^{-\la_r}
.\]
It has the structure of a $\bZ^r$-graded-$\bk[x_1,\ldots, x_r]$-module.
\end{defn}

\begin{defn}\label{d:assocgrtriv}
Let $F=(F_j)_{j \in J}$ be a collection of $\bR$-filtrations of $V$. 
 The \emph{associated multigraded vector space} of $(F_j)_{j\in J}$ is 
\[
{\rm gr}_{F} (V)
:= 
\bigoplus_{\la \in \bR^J} {\rm gr}_{F}^\la (V)
:= 
\bigoplus_{\la \in \bR^J} 
F^{\la }V/ F^{>\la} V
,\]
where 
\[
F^{\la} V = \textstyle \bigcap_{j \in J} F^{\la_j} V
\quad \text{ and } \quad 
F^{>\la }V= \sum_{\mu> \la} F^{\mu } V.
\]
Above   we write that $\la , \mu\in \bZ^J$ satisfies $\mu \geq \la$ if $\mu_j \geq \la_j$ for all $j\in J$ and strict inequality holds for some $j$. 
Note that  ${\rm gr}_{F} (V)$ has the structure of a $\bZ^{J}$-graded $\bk$-vector space. 
\end{defn}

We will often consider the case when $J$ is the finite set $\{1,2,\ldots, r \}$. 
In this case, we simply write ${\rm gr}_{F_1,\ldots,F_r}(V)$  for ${\rm gr}_{F}(V)$. 

A simple but important observation is that $F^{>\la} V$ is not necessarily equal to 
\[
\{s \in F^{\la} V \, \vert (\ord_{F_j}(s))_{j \in J} > \la\}
.\]
Instead $F^\la V$ is the vector subspace generated by the latter subset.

\begin{lemma}
\label{l:Reestquotient=graded}
If $F_1,\ldots, F_r$ are $\bZ$-filtrations of $V$, then there is an isomorphism of  $\bk$-vector spaces
\[
{\rm Rees}(F_1,\ldots, F_r)/ (x_1,\ldots, x_r){\rm Rees}(F_1,\ldots, F_r) \overset{\cong}{\longrightarrow} {\rm gr}_{F_1,\ldots, F_r} V
\]
that sends an element $\overline{sx^{-\la}}$ in degree $\la $ to $\overline{s}$ in degree $\la$.
\end{lemma}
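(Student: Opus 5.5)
The plan is to prove the isomorphism degree by degree, since both sides are $\bZ^r$-graded. Note first that for $\bZ$-filtrations the grading on ${\rm gr}$ may be taken over $\bZ^r$: if $\la\notin\bZ^r$, say $\la_j\notin\bZ$, then $F^\la V = F^{\la+\epsilon e_j}V\subset F^{>\la}V$ for small $\epsilon>0$, so ${\rm gr}^\la_F V=0$. It therefore suffices to fix $\la\in\bZ^r$ and identify the degree-$\la$ piece of the quotient with $F^\la V/F^{>\la}V$.

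The first step is to compute the degree-$\la$ component of the submodule $(x_1,\ldots,x_r){\rm Rees}$. Multiplication by $x_j$ sends the degree-$(\la+e_j)$ piece $F^{\la+e_j}V\, x^{-\la-e_j}$ into degree $\la$, so this component is $\big(\sum_{j=1}^r F^{\la+e_j}V\big)x^{-\la}$. (The Rees module is $\bZ^r$-graded with $x_j$ of degree $-e_j$ in the indexing by $\la$.) The degree-$\la$ piece of the quotient is thus $F^\la V/\sum_j F^{\la+e_j}V$. The map $\overline{sx^{-\la}}\mapsto \overline{s}$ is well defined and surjective onto $F^\la V/F^{>\la}V$ provided $\sum_j F^{\la+e_j}V\subset F^{>\la}V$, which is immediate since $\la+e_j>\la$.

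The key step, and the only one requiring care, is the reverse inclusion $F^{>\la}V\subset \sum_j F^{\la+e_j}V$, which gives injectivity. Take $\mu\in\bR^r$ with $\mu>\la$ in the stated sense, so $\mu_j\ge\la_j$ for all $j$ and $\mu_k>\la_k$ for some $k$. Since $F_k$ is a $\bZ$-filtration, $F_k^{\mu_k}V=F_k^{\lceil\mu_k\rceil}V\subset F_k^{\la_k+1}V$ because $\lceil\mu_k\rceil\ge\la_k+1$. Together with $F_j^{\mu_j}V\subset F_j^{\la_j}V$ for all $j$, this gives $F^\mu V\subset F^{\la+e_k}V$. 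Summing over $\mu$ yields the inclusion. This is exactly where integrality of the filtrations is used, and it is the main point to verify. For $\bR$-filtrations the statement fails, since jumps can be arbitrarily small.

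Combining the two inclusions, $\sum_j F^{\la+e_j}V=F^{>\la}V$ in each degree $\la\in\bZ^r$. Hence the graded map is an isomorphism of $\bk$-vector spaces on each graded piece, and therefore on the direct sum.
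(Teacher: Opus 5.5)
Your proof is correct and takes the paper's route: you identify the degree-$\la$ piece of $(x_1,\ldots,x_r)\,{\rm Rees}(F_1,\ldots,F_r)$ as $\sum_j F^{\la+e_j}V$, then use integrality of the filtrations to show this equals $F^{>\la}V$, which is exactly the paper's argument. Your extra check that ${\rm gr}^\la_F V=0$ for $\la\notin\bZ^r$ fills in a detail the paper leaves implicit, since its Definition of ${\rm gr}$ indexes over $\bR^J$ while the lemma only sees $\bZ^r$-degrees.
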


\begin{proof}
We compute that
\[
(x_1,\ldots, x_r) {\rm Rees}(F_1,\ldots, F_r) = \bigoplus_{\la \in \bZ^r} \sum_{i=1}^r F^{\la+e_i} V x^{-\la}
=
\bigoplus_{\la \in \bZ^r} F^{>\la} V x^{-\la},
\]
where $e_i \in \bZ^r$ is the $i$-th unit vector and  the second equality uses that $F_1,\ldots, F_r$ are $\bZ$-filtrations by assumption. 
Therefore the desired map is well defined and an isomorphism.
\end{proof}

\subsubsection{Simultaneous diagonalization}

\begin{defn}
A collection of $\bR$-filtrations $(F_j)_{j\in J}$ of $V$ is \emph{simultaneously diagonalizable} if there exists a single basis $(s_i)_{i \in I}$ of $V$ that diagonalizes $F_j$ for all $j\in J$.
\end{defn}

Under the below assumptions, the condition can be rephrased in terms of the diagonalizability of the Rees module.

\begin{prop}\label{p:Reesflat=diagtriv}
If  $V$ is finite dimensional and $F_1,\ldots, F_r$ are $\bZ$-filtrations of $V$, then the following conditions are equivalent. 
\begin{enumerate}
    \item The filtrations $F_1,\ldots, F_r$ are simultaneously diagonalizable. 
    \item The graded module satisfies $\dim {\rm gr}_{F_1,\ldots, F_r}(V)  = \dim V $. 
    \item The Rees module ${\rm Rees}(F_1,\ldots, F_r;V)$ is a flat $k[x_1,\ldots, x_r]$-module.
\end{enumerate}
Furthermore if the above equivalent conditions hold and $ (s_i)_{i=1}^N$ is a collection of elements of $V$,   then 
$ (s_i)_{i=1}^N$  is a diagonalizing basis for $F_1,\ldots, F_r$ if and only if the elements $\overline{s_i} \in {\rm gr}_{F_1,\ldots, F_r}^{\la^i} V$ form a basis for ${\rm gr}_{F_1,\ldots, F_r}(V)$, where $\la^i =  (\ord_{F_1}(s_i),\ldots, \ord_{F_r}(s_i) ) \in \bR^r$. 
\end{prop}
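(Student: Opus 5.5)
I will prove the cycle (1)$\Rightarrow$(3)$\Rightarrow$(2)$\Rightarrow$(1), together with the final assertion.

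\emph{(1)$\Rightarrow$(3).} Let $(s_i)_{i=1}^N$ be a simultaneously diagonalizing basis and set $\la^i=(\ord_{F_1}(s_i),\ldots,\ord_{F_r}(s_i))\in\bZ^r$. These orders are integers because the filtrations are $\bZ$-filtrations. Diagonalization of each $F_j$ gives
\[
F_1^{\mu_1}V\cap\cdots\cap F_r^{\mu_r}V={\rm span}(s_i \mid \la^i\geq \mu \text{ coordinatewise}).
\]
Indeed, write an element of the left side in the basis. Each $F_j$ is diagonalized, so every $s_i$ with nonzero coefficient has $\ord_{F_j}(s_i)\geq\mu_j$.

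It follows that ${\rm Rees}(F_1,\ldots,F_r;V)=\bigoplus_i \bk[x_1,\ldots,x_r]\, s_i x^{-\la^i}$. This module is free, hence flat.

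\emph{(3)$\Rightarrow$(2).} Write $M$ for the Rees module. It is a finitely generated graded module. Finite generation holds because $F^\mu V=V$ for $\mu\ll0$ and $F^\mu V=0$ once some $\mu_j\gg0$, so finitely many degrees generate.

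A finitely generated graded flat module over the polynomial ring is free, by the graded Nakayama lemma. Its rank can be computed at the generic point. Inverting the $x_j$ gives $M[x^{-1}]=V[x^{\pm1}]$, so the rank is $\dim V$.

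The fiber at the origin has the same dimension as the rank. By Lemma \ref{l:Reestquotient=graded} this fiber is ${\rm gr}_{F_1,\ldots,F_r}V$, so $\dim{\rm gr}_{F_1,\ldots,F_r}V=\dim V$.

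\emph{(2)$\Rightarrow$(1) and the final assertion.} This is the main step. Suppose we have $s_i$ whose classes $\overline{s_i}\in{\rm gr}^{\la^i}$ form a basis of ${\rm gr}$, where $\la^i$ is the order vector of $s_i$. Such elements exist: choose a homogeneous basis of ${\rm gr}$ and lift each element in degree $\la$ to $s\in F^\la V$. If the lift's order vector were larger than $\la$, its class would be zero, so its order vector is exactly $\la$.

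I first show the $s_i$ are linearly independent. Take a relation $\sum a_i s_i=0$ and suppose it is nontrivial. Among the $\la^i$ with $a_i\ne0$, pick one that is maximal, say $\la$, and consider the indices with $\la^i=\la$.

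The terms with $\la^i\not\le\la$ need not lie in $F^{>\la}V$, so they cannot be discarded directly. I handle this with the following reduction: project to ${\rm gr}$ via the map $V\to{\rm gr}$ defined on the finitely many relevant degrees. Alternatively, use a dimension argument. Consider the subspaces $W_\mu=F^\mu V$. The classes $\overline{s_i}$ with $\la^i\geq\mu$ span the part of ${\rm gr}$ in degrees $\geq\mu$. A downward induction on $\mu$ over the finite box of relevant degrees then gives
\[
\dim F^\mu V\le\#\{i:\la^i\geq\mu\}\quad\text{and}\quad {\rm span}(s_i:\la^i\geq\mu)\to F^\mu V \text{ is onto}.
\]
The induction uses $F^{>\mu}V=\sum_j F^{\mu+e_j}V$ and inclusion–exclusion issues among the $F^{\mu+e_j}$. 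This surjectivity is the key claim.

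Taking $\mu\ll0$, the span of all the $s_i$ is $V$. Since $N=\dim{\rm gr}=\dim V$, the $s_i$ form a basis. Surjectivity for every $\mu$ is exactly the statement $F^\mu V={\rm span}(s_i:\la^i\ge\mu)$. Taking $\mu$ of the form $(\la,-\infty,\ldots)$ shows that each $F_j$ is diagonalized.

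Conversely, if $(s_i)$ is a diagonalizing basis, then the computation in (1)$\Rightarrow$(3), combined with Lemma \ref{l:Reestquotient=graded}, shows that the $\overline{s_i}$ form a basis of ${\rm gr}$.

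\emph{Main obstacle.} The hard part is the surjectivity claim in the downward induction. It requires showing that $\sum_j F^{\mu+e_j}V$ is spanned by the $s_i$ with $\la^i>\mu$. The sum of the spans of $\{s_i:\la^i\ge\mu+e_j\}$ over $j$ gives exactly this, so the inductive hypothesis applied to each $\mu+e_j$ should suffice.
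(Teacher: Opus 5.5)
Your proof is correct. It differs from the paper's in how the equivalences are organized and, more substantively, in the key step (2)$\Rightarrow$(1).

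\textbf{The step (2)$\Rightarrow$(1).} The paper lifts a homogeneous basis of ${\rm gr}_{F_1,\ldots,F_r}(V)$ and uses Nakayama to see that the elements $s_ix^{-\la^i}$ generate the coherent sheaf near $0\in\bA^r$. It then observes that the locus where they generate is a $\bT$-invariant open set containing $0$, hence all of $\bA^r$. Generation of the Rees module then yields $F^\mu V={\rm span}(s_i\mid \la^i\ge\mu)$. Your downward induction on $\mu$ proves this same identity directly. It is well-founded because $F^\mu V=0$ once some coordinate of $\mu$ is large. It uses three facts:
\begin{itemize}
\item $F^{>\mu}V=\sum_j F^{\mu+e_j}V$;
\item $\{i:\la^i>\mu\}=\bigcup_j\{i:\la^i\ge\mu+e_j\}$, which holds because the $\la^i$ are integral;
\item the classes $\overline{s_i}$ with $\la^i=\mu$ span ${\rm gr}^\mu$.
\end{itemize}
In effect this is graded Nakayama carried out by hand, and it avoids sheaves and the torus action entirely.

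\textbf{The other steps.} Your (3)$\Rightarrow$(2) argues that a graded, finitely generated, flat module is free, with rank computed after inverting the $x_j$. This replaces the paper's semicontinuity-plus-$\bT$-invariance argument for (2)$\Leftrightarrow$(3). Because you close the cycle through (1), you never need (2)$\Rightarrow$(3) directly.

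\textbf{Trade-offs.} The paper's version is shorter, and it is the template reused almost verbatim over the DVR in Lemma~\ref{l:Reesmodflat->diag}. Yours is more elementary and self-contained.

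\textbf{Presentation.} Two parts of your write-up can be cut:
\begin{itemize}
\item the abandoned attempt at linear independence via a maximal $\la$;
\item the worry about ``inclusion--exclusion issues.''
\end{itemize}
The induction in your final paragraph is already complete. Linear independence then follows from spanning together with $N=\dim{\rm gr}_{F_1,\ldots,F_r}(V)=\dim V$.
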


\begin{proof}
First assume that $s_1,\ldots, s_N$ is a diagonalizing basis for $V$. 
It is straightforward to see that $\overline{s}_1,\ldots, \overline{s}_N$ form a basis for ${\rm gr}_{F_1,\ldots, F_r}(V)$ and 
\[
{\rm Rees}(F_1,\ldots, F_r;V) = A s_1 x^{-\la^1} \oplus \cdots \oplus  A s_{N} x^{-\la^N}
,\]
where $A=\bk[x_1,\ldots, x_r]$. In particular, ${\rm Rees}(F_1,\ldots, F_r; V)$ is a free $A$-module and, hence is flat. 
Thus $(1)\implies (2)$ and $(1)\implies (3)$ both hold. 
Furthermore, the forward implication of the final statement holds. 

Next let $\cV$ be the quasi-coherent $\cO_{\bA^r}$-module corresponding to the $A$-module ${\rm Rees}(F_1,\ldots, F_r;V)$. 
Since  $V$ is finite dimensional, 
\[
V = F_i^{-\la} V \supset F_i^\la V=0
\]
for $\la \gg 0$ and each $1\leq i \leq r $ by the definition of an $\bR$-filtration. 
Thus ${\rm Rees}(F_1,\ldots, F_r;V)$ is finitely generated, and so $\cV$ is coherent. 
Furthermore, the $\bZ^r$ grading on the module induces a $\bT:=\bG_m^r$-linearization on $\cV$. 
Note that 
\[
\cV(0) \cong {\rm gr}_{F_1,\ldots, F_r} (V) 
\quad \text{ and } \quad
\cV(D(x_1\cdots x_r)) \cong V[x_1^{\pm1}, \ldots, x_r^{\pm 1}],
\]
where the first isomorphism is by  Lemma \ref{l:Reestquotient=graded} and the second is  by direct computation. 
Thus $\cV\vert_{D(x_1\cdots x_r)}$ is locally free of rank $\dim V$. 
Since $\bA^r$ is Noetherian and reduced and $\cV$ is coherent,
we see that statement  (3) holds if and only if $\dim_{k(p)} \cV(p)  = \dim V$ for all $p \in \bA^r$. 
As the locus where the equality fails is a $\bT$-invariant closed of $\bA^r$, statement (3) holds if and only if $\dim_{\bk}(\cV(0))  = \dim V$. 
Thus $(2) \iff (3)$ holds. 

It remains to show that (2) implies (1). 
If (2) holds, then there exist elements $s_1,\ldots, s_N$ of $V$ such that $\overline{s}_1,\ldots, \overline{s}_N$ form a basis for $ {\rm gr}_F(V)$ with $N= \dim V$. 
By Nakayama's Lemma, the corresponding elements $s_1 x^{-\la^1},\ldots, s_N x^{-\la^n}\in{\rm Rees}(F_1,\ldots, F_r;V)$ generate $\cV$ in a neighborhood of $0$ in $\bA^r$. 
As the open set $U \subset \bA^r$ at which they generate $\cV$ is a $\bT$-invariant and contains $0$, $U = \bA^r$. 
Thus $s_1 x^{-\la^1} , \cdots s_N x^{-\la^N}$ are generators for ${\rm Rees}(F_1,\ldots, F_r;V)$. 
Thus 
\[
F^\la V = {\rm span}(s_i \, \vert\, \la^i \geq \la )
\]
for all $\la \in \bZ^r$. 
This implies that  $(s_1,\ldots, s_N)$ spans $V$ and, hence, is a basis for $V$ by dimension reasons. 
Thus  this basis simultaneously diagonalizes $F_1,\ldots, F_r$.
Thus $(2) \implies (1)$ and the reverse implication of the last statement holds. 
\end{proof}

We now show that the orders of elements in a simultaneous diagonalizing basis are unique. 

\begin{lem}\label{l:diagbasisords}
Let $F=(F_j)_{j\in J}$ be a collection of filtrations of $V$ that are  diagonalized by the basis $(s_i)_{i \in I}$,
and write
\[
\ord_{F}(s_i) = (\ord_{F_j}(s_i))_{j \in J}  \in \bR^J
. \]
If  $\la \in \bR^J$, then 
\[
{\rm card} \{i \in I \vert\,  \la= (\ord_{F_j}(s_i))_{j \in J}  \in \bR^J \}  = \dim {\rm gr}_F^\la (V).
\]
In particular, the values $\ord_{F}(s_i)$ are independent of the choice of simultaneously diagonalizing basis.
\end{lem}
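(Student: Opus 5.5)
The plan is to show directly that the images of the basis vectors with a given order vector $\la$ form a basis of ${\rm gr}_F^\la(V)$. Once this holds, ${\rm card}\{i \,\vert\, \ord_F(s_i)=\la\}=\dim {\rm gr}^\la_F(V)$. The right-hand side depends only on the filtrations, so the multiset of values $\ord_F(s_i)$ does not depend on the choice of diagonalizing basis.

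\textbf{Step 1: a basis of $F^\la V$.} Since $(s_i)$ diagonalizes each $F_j$, we have $F_j^{\la_j}V={\rm span}(s_i \,\vert\, \ord_{F_j}(s_i)\ge \la_j)$. Intersecting subspaces spanned by subsets of a fixed basis gives the span of the intersection of the subsets. Hence
\[
F^\la V={\rm span}(s_i \,\vert\, \ord_F(s_i)\ge \la \text{ componentwise}).
\]

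\textbf{Step 2: a basis of $F^{>\la}V$.} Apply Step 1 to each $\mu>\la$ and sum over all such $\mu$. This shows that $F^{>\la}V$ is spanned by those $s_i$ for which $\ord_F(s_i)\ge\mu$ for some $\mu>\la$. That condition is equivalent to $\ord_F(s_i)\ge \la$ componentwise with $\ord_F(s_i)\ne\la$: for the converse direction, take $\mu=\ord_F(s_i)$ itself. So $F^{>\la}V$ is spanned by the $s_i$ with $\ord_F(s_i)\ge\la$ and $\ord_F(s_i)\neq\la$.

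\textbf{Step 3: conclusion.} The basis of $F^\la V$ from Step 1 contains the basis of $F^{>\la}V$ from Step 2. The quotient ${\rm gr}^\la_F(V)$ therefore has as basis the images of the remaining vectors, namely those $s_i$ with $\ord_F(s_i)=\la$. This gives the claimed cardinality.

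One point needs care: if $J$ is infinite, or some order is $+\infty$, the componentwise comparisons remain fine. The main subtlety is making sure $F^{>\la}$ is taken as the sum of the subspaces $F^\mu V$, as in Definition \ref{d:assocgrtriv}, and not as a set of elements with strictly larger order vector. Step 2 handles this precisely because each $F^\mu V$ is spanned by basis vectors.
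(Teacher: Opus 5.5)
Your proposal is correct and follows the same route as the paper's proof. Both identify $F^\la V$ and $F^{>\la}V$ as the spans of the basis vectors with $\ord_F(s_i)\ge\la$ and with $\ord_F(s_i)>\la$ respectively, which makes the images of the $s_i$ with $\ord_F(s_i)=\la$ a basis of ${\rm gr}^\la_F(V)$. Your Step 2 simply supplies the justification for the description of $F^{>\la}V$ as a sum of the subspaces $F^\mu V$, which the paper states without comment.
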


\begin{proof}
Since
\[
F^{\la} V= {\rm span}\langle s_i \vert\, \ord_{F}(s_i) \geq \la \rangle
\quad \text{ and } \quad 
F^{>\la} V= {\rm span}\langle s_i \vert\, \ord_{F}(s_i) > \la \rangle
,\]
the collection 
$(\overline{s}_i \, \vert\, \la = \ord_{F}(s_i) )$ is a basis for ${\rm gr}^\la_{F}(V)$.  
Thus the statement holds. 
\end{proof}

\subsection{Test configurations}

We now discuss test configurations of boundary polarized CY pairs, which provide a geometric way  to understand filtrations induced by  lc places.

\begin{defn}
A \emph{test configuration} of a boundary polarized CY pair $(X,B+D)$ is the data of
\begin{enumerate}
\item a  family of boundary polarized CY pairs $(\cX,\cB+\cD) \to \bA^1$, 
\item a $\bG_m$-action on $(\cX,\cB+\cD)$ extending the standard $\bG_m$-action on $\bA^1$, and 
\item an isomorphism $(\cX_1,\cB_1+\cD_1) \cong (X,B+D)$. 
\end{enumerate}
\end{defn}

Test configurations of boundary polarized CY pairs are studied in detail in \cite[Section 4]{ABB+}.

\begin{rem}
Let $(\cX,\cB+\cD)$ be a test configuration of a boundary polarized CY pair $(X,B+D)$. 
\begin{enumerate}
\item The data of the $\bG_m$-action and isomorphism in the definition  induce a natural $\bG_m$-equivariant isomorphism 
\[
(\cX,\cB+\cD)_{\bA^1 \setminus 0} \cong (X,B+D) \times (\bA^1\setminus 0)
,\]
where $\bG_m$-acts on the right hand side as the product of the trivial action and the standard action. 

\item If $(X,B+D)$ is lc, then $\cX$ is normal as $\cX\setminus \cX_0 \cong X\times (\bA^1\setminus 0)$ is normal and $(\cX,\cB+\cD+\cX_0)$ is slc by Proposition \ref{p:kollarsmoothslcfam}.

\item If $(X,B+D)$ is lc and $\cX_0$ is irreducible,  then $\cX$ is normal by (2) and so $\ord_{\cX_0}:K(\cX)^\times  \to \bZ$ is a well defined valuation. 
We write 
 $v_{\cX_0} : K(X)^\times  \to \bZ$ for the valuation induced by restriction via the inclusion $K(X) \hookrightarrow K(X)(t) \cong K(\cX)$. 
\end{enumerate}
\end{rem}

\begin{prop}\label{p:dival->tc}
If $(X,B+D)$ is an lc boundary polarized CY pair and $v:K(X)^\times \to \bZ$ is a divisorial valuation   such that $A_{X,B+D}(v)=0$,  then there exists a test configuration $(\cX,\cB+\cD)$ of $(X,B+D)$ such that $\cX_0$ is irreducible and $v_{\cX_0} = v$. 
\end{prop}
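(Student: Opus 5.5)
We will realize the test configuration as the degeneration attached to the filtration of the section ring induced by $v$. The plan is to show this filtration is finitely generated with integral associated graded ring, so that its Proj gives the desired family.

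Fix $r>0$ with $rD$ Cartier, and let $R(X,rD)=\bigoplus_m H^0(X,mrD)$ be the section ring. Let $\cF_v$ be the $\bZ$-filtration with $\cF_v^\la H^0(X,mrD)=\{s\mid v(s)\ge \la\}$. Here $v(s)$ is computed as $v(\mathrm{div}(s))$, where $\mathrm{div}(s)\ge 0$ is the effective divisor in $|mrD|$ cut out by $s$. Form the Rees algebra
\[
\mathcal{R}:=\bigoplus_{m}\bigoplus_{\la\in\bZ}\cF_v^\la H^0(X,mrD)\,t^{-\la}\subset R(X,rD)[t^{\pm1}] .
\]
Then $\cX:=\Proj_{\bA^1}\mathcal{R}$ carries a $\bG_m$-action from the $\la$-grading. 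Over $t\neq0$ it is $X\times(\bA^1\setminus0)$, and its central fiber is $\Proj\gr_{\cF_v}R(X,rD)$. The associated graded ring is a domain because $v$ is a valuation. Hence, once finite generation is known, $\cX_0$ is integral, and the standard computation (as in K-stability) identifies $\ord_{\cX_0}$ restricted to $K(X)$ with $v$, after normalizing so that $v$ is $\bZ$-valued and primitive. We take $\cD$ to be the closure of $D\times(\bA^1\setminus0)$, so $\cD$ is $\bQ$-Cartier and relatively ample since $\cO(1)$ is the relative ample class. We take $\cB$ to be the closure of $B\times(\bA^1\setminus0)$.

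The key step is finite generation of $\mathcal{R}$. Here we use $A_{X,B+D}(v)=0$ and the lc MMP. By Proposition \ref{p:dltmodexist} (in its absolute form, i.e.\ the dlt modification of \cite{Fuj11}), there is a $\bQ$-factorial dlt modification $Y\to X$ on which the divisor $E$ with $v=c\,\ord_E$ appears. Alternatively, we extract only $E$: run an MMP on a dlt model so that $\mu:Y\to X$ has exceptional locus exactly $E$ with $-E$ $\mu$-ample. For an lc place of an lc pair, this extraction exists by \cite{HX13}/\cite{BCHM10}-type arguments. Then $\cF_v^{\la}H^0(X,mrD)=H^0(Y,\mu^*(mrD)-\la E)$, so $\mathcal{R}$ is the Cox-type ring $\bigoplus_{m,\la}H^0(Y,m\mu^*rD-\la E)$. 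Its finite generation follows by running MMPs on $(Y,B_Y+\epsilon(\text{stuff}))$. Concretely, $K_Y+B_Y+\mu^*D\sim_\bQ0$ with $E\subset B_Y^{=1}$, so $\mu^*D-tE\sim_\bQ -(K_Y+B_Y-tE)$ is anti-log-canonical for a log CY pair. Finite generation of such two-variable rings for lc CY pairs is exactly where we need the lc MMP of \cite{HX13} (abundance-type input for log CY pairs, or finite generation via the existence of good minimal models). \textbf{This is the main obstacle}, and we would first try to cite the analogous result in \cite[Section 4]{ABB+}, which treats lc places of boundary polarized CY pairs.

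Finally, we check that $(\cX,\cB+\cD)\to\bA^1$ is a family of boundary polarized CY pairs. Since $\cX$ is normal (it is the Proj of an integral normal graded ring, using that $\mathcal{R}$ is integrally closed in $R(X,rD)[t^{\pm1}]$ when $v$ is divisorial), it suffices by Proposition \ref{p:kollarsmoothslcfam} to show that $(\cX,\cB+\cD+\cX_0)$ is lc. The condition $A_{X,B+D}(v)=0$ gives $K_{\cX}+\cB+\cD+\cX_0\sim_{\bQ,\bA^1}0$: the $\bG_m$-invariant rational section of $-r(K+B+D)$ extends across $\cX_0$ with coefficient $A(v)-1=-1$, compensated by $\cX_0$. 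Log canonicity then follows by viewing $\cX$ as the ample model of a dlt modification of $(X\times\bA^1,(B+D)\times\bA^1+X\times0)$. On that modification the divisor corresponding to $v$ is extracted and all other components are contracted, and a crepant contraction of an lc CY pair stays lc. We conclude that $\cX_0$ is irreducible and $v_{\cX_0}=v$.
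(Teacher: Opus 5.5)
The gap is that finite generation of $\mathcal{R}$ is the entire content of the proposition, and you do not prove it.

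Your reduction to that step is sound. $\mathcal{R}/t\mathcal{R}\cong\gr_v R(X,rD)$ is a domain, so $\cX_0$ is integral with $\ord_{\cX_0}(t)=1$. Since $s\,t^{-v(s)}$ does not vanish identically on $\cX_0$, you get $v_{\cX_0}=v$ directly. No normalization to a primitive valuation is needed, and for $v=c\,\ord_E$ with $c>1$ normalizing would give the wrong valuation. Normality, ampleness of $\cD$, and log canonicity of $(\cX,\cB+\cD+\cX_0)$ by crepant comparison with $(X\times\bA^1,(B+D)\times\bA^1+X\times 0)$ all go through once $\cX$ exists. (The $\bQ$-triviality of $K_\cX+\cB+\cD+\cX_0$ over $\bA^1$ is automatic because $\cX_0=\mathrm{div}(t)$ is irreducible; $A_{X,B+D}(v)=0$ is what makes the comparison crepant.) The paper does not prove finite generation either: its proof is the citation \cite[Theorem 4.8]{ABB+}, which is exactly this statement. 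So your fallback of citing \cite[Section 4]{ABB+} agrees with the paper, but then the Rees-algebra scaffolding adds nothing.

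The self-contained route you sketch does not work as written.
\begin{itemize}
\item From $K_Y+B_Y+\mu^*D\sim_{\bQ}0$ one gets $\mu^*D-tE\sim_{\bQ}-(K_Y+B_Y+tE)$, not $-(K_Y+B_Y-tE)$.
\item $\mathrm{coeff}_E(B_Y)=1-\ord_E(D)$, so $E\subset B_Y^{=1}$ only if $c_X(v)\not\subset\Supp(D)$. For $t>\ord_E(D)$ the divisor $B_Y+tE$ has coefficient $>1$ along $E$, so these are not anti-log-canonical classes of log CY pairs.
\item Even granting the extraction $\mu$, neither \cite{BCHM10} (klt only) nor \cite{HX13} gives finite generation of $\bigoplus_{m,\la}H^0(Y,m\mu^*rD-\la E)$ via an MMP on $Y$. \cite[Theorem 1.1]{HX13} produces a good minimal model over a base only when the images of all non-klt centers meet an open subset over which one is already known. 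Over $X$ this fails at $c_X(v)$, and over $\Spec\bk$ the hypothesis is the conclusion.
\end{itemize}

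The way to use \cite{HX13} is to work over $\bA^1$. Let $w$ be the $\bG_m$-invariant extension of $v$ to $K(X)(t)$ with $w(t)=1$. Then $A_{X\times\bA^1,(B+D)\times\bA^1+X\times 0}(w)=A_{X,B+D}(v)=0$, and $w=\ord_{E_w}$ for a prime divisor $E_w$ over $X\times\bA^1$. Now run, $\bG_m$-equivariantly, the argument of Proposition \ref{p:EinSKtodegen}, or equivalently Steps 1--4 of Proposition \ref{p:highranktheta} with $r=1$ and $E_w$ in place of the $E_{1j}$:
\begin{itemize}
\item take a dlt modification extracting $E_w$;
\item run the $(K+\cdots-E_w)$-MMP over $\bA^1$ from \cite[Theorem 1.6]{HX13}, which contracts all other components of the central fiber;
\item take the canonical model of $(Z,B_Z+D_Z+\varepsilon A_Z)$ over $\bA^1$ from \cite[Theorem 1.1]{HX13}, which applies because the lc centers of $(X\times\bA^1,(B+D)\times\bA^1)$ dominate $\bA^1$.
\end{itemize}
This outputs the test configuration directly. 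Its central fiber $\cX_0$ is the image of $E_w$, reduced since $w(t)=1$, and $v_{\cX_0}=w|_{K(X)}=v$. Finite generation of your $\mathcal{R}$ is then a consequence rather than an input.
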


\begin{proof}
See \cite[Theorem 4.8]{ABB+}.
\end{proof} 

\begin{defn}
Let $(\cX,\cB+\cD)$ be a test configuration of an lc boundary polarized CY pair $(X,B+D)$. 
For each non-negative integer $m$ such that $mD$ is a $\bZ$-divisor, there is a filtration $F$ of $H^0(X,mD)$ defined by 
\[
F^\la H^0(X,mD) = \{ s\in H^0(X,mD) \, \vert\, \overline{s} x^{-\la} \in H^0(\cX,m\cD)  \},
\]
where $\overline{s}$ is the unique $\bG_m$-invariant section of $H^0(\cX\setminus \cX_0 , m\cD)$ such that $\overline{s}\vert_{\cX_1} = s$ and $x$ is the parameter for $\bA^1$. 
\end{defn}

\begin{lem}\label{p:filttc}
If $(\cX,\cB+\cD)$ is a test configuration of an lc boundary polarized CY pair $(X,B+D)$ and $\cX_0$ is irreducible, then
\[
F^\la H^0(X,mD) = \{ s\in H^0(X,mD) \, \vert\, v_{\cX_0}(s) \geq \la \}.
\]
\end{lem}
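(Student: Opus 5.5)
We argue on the normal variety $\cX$, using that a section of $m\cD$ on $\cX$ is the same as a section over $\cX\setminus\cX_0$ that has no pole along $\cX_0$. By Remark (2) and (3), $\cX$ is normal and, since $\cX_0$ is irreducible, $\cX_0$ is a prime divisor. Because $\cX_0 = \operatorname{div}(x)$ as a Cartier divisor, and the valuation $\ord_{\cX_0}$ restricts to $v_{\cX_0}$ on $K(X)$ with $\ord_{\cX_0}(x)=1$, the fibre is reduced. Indeed, $\cX$ is normal and $\cX\setminus\cX_0 \cong X\times(\bA^1\setminus 0)$, so $\ord_{\cX_0}(x)=\mult_{\cX_0}(\cX_0)$. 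One checks this multiplicity is $1$: the central fibre of a family of slc pairs is reduced, because $\cX_0$ is slc and hence reduced. Thus $x$ is a uniformizer of the DVR $\cO_{\cX,\cX_0}$.

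Fix $s\in H^0(X,mD)$ and let $\overline{s}$ be its $\bG_m$-invariant extension to $\cX\setminus\cX_0$. Via the isomorphism $K(\cX)\cong K(X)(x)$ and a local trivialization of $m\cD$ near the generic point of $\cX_0$, the invariant section $\overline{s}$ corresponds to the rational section $p^*s$, where $p$ is the projection to $X$ on the open part. Since $m\cD$ is $\bQ$-Cartier and its restriction to the open part is $p^*(mD)$, we have $\cD = \overline{D\times\bG_m}$ with no $\cX_0$ component. Hence $\ord_{\cX_0}(\overline{s}) = v_{\cX_0}(s)$, computed by restricting the valuation to $K(X)$. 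Here one has to check that the local generator of $m\cD$ at the generic point of $\cX_0$ can be taken $\bG_m$-invariant and pulled back from $X$, so that the $\bG_m$-equivariant trivialization makes $\overline{s}$ a function in $K(X)\subset K(\cX)$. Then $\ord_{\cX_0}(\overline{s}x^{-\la}) = v_{\cX_0}(s)-\la$.

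Now, since $\cX$ is normal and $m\cD$ is a divisorial sheaf (reflexive of rank one), a rational section of $\cO_\cX(m\cD)$ that is regular on $\cX\setminus\cX_0$ extends to all of $\cX$ if and only if it has nonnegative order along the only remaining divisor $\cX_0$. This is the $S_2$/Hartogs property of reflexive sheaves on normal schemes. Therefore $\overline{s}x^{-\la}\in H^0(\cX,m\cD)$ if and only if $v_{\cX_0}(s)\ge \la$, which is the claimed equality.

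The main obstacle is the bookkeeping in the second step. First, one must identify the order of $\overline{s}$ along $\cX_0$ with $v_{\cX_0}(s)$, which uses that $\cD$ has no $\cX_0$ component and that invariant rational functions on $\cX$ are exactly $K(X)$. Second, one must ensure $\ord_{\cX_0}(x)=1$, that is, that $\cX_0$ is reduced. If that reducedness fails, the same argument gives the statement with $v_{\cX_0}$ normalized by $\ord_{\cX_0}(x)$, which is consistent with the definition of $v_{\cX_0}$ in the Remark.
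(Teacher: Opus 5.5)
Your proof is correct and follows the paper's argument. Both reduce the membership $\overline{s}x^{-\la}\in H^0(\cX,m\cD)$ on the normal total space to the inequality $\ord_{\cX_0}(\overline{s}x^{-\la})\ge 0$, and both compute this order as $v_{\cX_0}(s)-\la$. You also make explicit what the paper uses silently: $\cX_0$ is reduced because slc fibres are reduced, so $\ord_{\cX_0}(x)=1$, and $\cD$ has no component along $\cX_0$. Having shown reducedness, your closing hedge about a non-reduced fibre is unnecessary.
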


Above, we view $H^0(X,mD)\subset K(X)$ and $v_{\cX_0}(s)$ denotes value of the rational function along the valuation.

\begin{proof} 
Fix $s\in H^0(X,mD)$.
 Since $\overline{s} \in H^0(\cX\setminus \cX_0, m\cD)$,  $s\in F^\la H^0(X,mD)$ if and only if $\ord_{\cX_0} ( \overline{s} x^{-\la}) \geq 0$. 
 Since 
 \[
 \ord_{\cX_0} (\overline{s} x^{-\la}) = \ord_{\cX_0} (\overline{s}) - \la=  v_{\cX_0}  (s) -\la,
 \]
 we conclude that $s\in F^\la V_m$ if and only if $v_{\cX_0}(s) \geq \la$.  
 \end{proof}

\subsection{Higher rank degenerations}
We now construct higher rank degenerations of a boundary polarized CY pair that relate finite collection of test configurations with a family over $\bA^r$. 
A related construction appears in \cite{Xu21} using a different argument that is inductive. 

\begin{prop}\label{p:highranktheta}
If $(X,B+D)$ is an lc boundary polarized CY pair and 
\[
(\cX^1, \cB^1), \ldots, (\cX^r, \cB^r)
\]
are test configurations of $(X, B+D)$, 
then there exists a $\bT:= \bG_m^r$-equivariant family of boundary polarized CY pairs 
$(\fX , \fB+ \fD) \to \bA^r$
such that
\begin{enumerate}
\item  there is an isomorphism 
\[
(\fX,\fB+\fD)_{{\bf 1}} \cong (X,B+D) 
\]
 at ${\bf 1} = (1,\ldots, 1) \in \bA^r$  and
\item for each $1\leq i \leq r$, the  induced birational map
\[
(\fX , \fB+ \fD)_{U_i}\dashrightarrow  (\cX^i, \cB^i+\cD^i)\times \bG_m^{r-1}
\]
is an isomorphism, 
where $U_i = D(x_1\cdots \widehat{x_i} \cdots x_r) \cong \bA^1 \times \bG_m^{r-1}$.
\end{enumerate}
\end{prop}

\begin{proof}
To simplify notation, let  $S: = \bA^r$ and $U_i =  D(x_1\cdots \widehat{x_i} \cdots x_r)   \subset S$.
The latter has a natural isomorphism $U_i \cong \Spec k[x_i][x_1^{\pm1}, \ldots, \widehat{x}_{i}^{\pm1}, \ldots, x_r^{\pm1}]\cong \bA^1\times \bG_m^r$.
Now consider the trivial family of boundary polarized CY pairs
\[
(X_{S},B_{S}+D_{S}) : = (X,B+D) \times S
\to S
.
\]
Note that over $U:= D(x_1\cdots x_r) \subset S$, the family has the desired form.
We will construct $\fX$ using birational maps outlined in the following diagram of birational contractions.
\[
\begin{tikzcd}
&Y\arrow[ld,"f",swap]\arrow[r,dashed,"g"]& Z\arrow[rd,dashed,"h"]&\\
X_S&&& \fX
\end{tikzcd}
,\]

\emph{Step 1: We construct the morphism $f$}.
Let $\cX_{01}^{i},\ldots, \cX^{i}_{0n_i}$ denote the irreducible components of $\cX_0^{i}$. 
These are prime divisors on $\cX^i$. 
$E_{ij}:= \cX_{0j}^{i} \times (\bG_m^{r-1})$, which is a prime divisor on $\cX^i \times \bG_m^{r-1}$.
Using the birational map
\[
\cX^i \times \bG_m^{r-1} \dashrightarrow 
(X\times \bA^1) \times \bG_m^{r-1} \cong X\times U_i \hookrightarrow X_S
,\]
we may view each $E_{ij}$ as a prime divisor over $X_{S}$. 
Note that the center of $E_{ij}$ on $X_S$ is the generic point of  $V_{X_S}(x_i)$. 
Observe that 
\[
A_{X_S+B_S+D_S+ V(x_1\cdots x_r)}(E_{ij})
=
A_{X\times \bA^1, B\times \bA^1+ D\times \bA^1+ X\times 0}(\cX_0^i) 
=
A_{\cX^i,\cB^i+\cD^i+\cX^i_0}(\cX_0^i)
=
0
,\]
where the second to last equality holds by \cite[Lemma 2.10]{ABB+}.

Since $(X_S,B_S+D_S)\to S$ is a family of slc pairs, Proposition \ref{p:kollarsmoothslcfam} implies that $(X_S,B_S+D_S+V(x_1\cdots x_r))$ is lc and the  lc centers of $(X_S,B_S+D_S)$ dominate $S$. 
Now  \cite[Corollary 1.38]{Kol13} implies that there exists a $\bQ$-factorial dlt modification 
\[
f:(Y,B_Y+D_Y+Y_{0,\red}) \to (X_S,B_S+D_S+V(x_1\cdots x_r))
\]
such that $B_Y = f_*^{-1}(B_S)$ and each $E_{ij}$ is a prime divisor on $Y$.
Let $E:= \sum_{i=1}^r \sum_{j=1}^{n_i} E_{ij}$, which we view as a divisor on $Y$. 
Note that 
\[
D_Y+ Y_{0,\red} = f^*D_S - \sum_{ij} a_{ij}E_{ij} + G
,\]
where $a_{ij}:=A_{X_S,B_S+V(x_1\cdots x_r)}(E_{ij})$ and $\Supp(G)$ is a union of $f$-exceptional divisors that either dominate $S$ or are not contained in $\Supp(E)$. 
Furthermore as the $E_{ij}$ are $\bT$-invariant divisors over $X_S$, by choosing the log resolution in the proof of \cite[Corollary 1.38]{Kol13} to be $\bT$-equivariant and using that $\bT$ is a connected group scheme to see that the MMP in the proof is necessarily $\bT$-equivariant, we may assume that the $\bT$-action on $X_S$ extends to a $\bT$-action on $Y$. 

\emph{Step 2: We now construct the birational contraction $g$}.
Note that $(Y,B_Y+D_Y+Y_{0,\red})$ is dlt and 
\[
K_{Y}+ B_Y+ D_Y + Y_{0,\red}\sim_{\bQ}f^*(K_{X_S}+D_S + V(x_1\cdots x_r)) \sim_{\bQ,S}0
.\]
Thus \cite[Theorem 1.6]{HX13} implies that there exists a $K_Y+B_Y+D_Y+Y_{0,\red}-E$ MMP over $S$ that terminates with a good minimal model. 
Write $g:Y\dashrightarrow Z$ for the birational contraction to the minimal model, $B_Z= g_* B_Y$, $D_Z= g_* D_Z$, and $E_Z = g_* E$. 
Thus $K_{Z} +B_Z+ D_Z+ Z_{0,\red} -E_Z$ is nef. 
Since $(Y,B_Y+D_Y+Y_{0,\red})$ is lc and CY over $S$, $(Z,B_Z+D_Z+Z_{0,\red})$ is lc and CY over $S$. 

We claim that $g$ contracts precisely the prime divisors in the support of $Y_0-E$. 
Indeed, 
\[
K_{Y}+B_Y+D_Y+Y_{0,\red}-E \sim_{\bQ,S}- E \sim_{\bQ} Y_0 -E
.\]
Since the last divisor is effective, $g$ only contracts divisors contained in $Y_0- E$.
Since $-E_Z$ is nef over $S$ and $\im(\Supp(E_Z) \to S) = V_{S}(x_1\cdots x_r) $, all such divisors must get contracted. 
In particular, $E_Z = Z_{0,\red}=Z_0 =V_Z(x_1\cdots x_r)$. 

\emph{Step 3: We now construct a birational contraction $h$ to a canonical model.}
Since $D$ is ample and $(X,B+D)$ is lc, there exists a $\bQ$-divisor $0\leq H \sim_{\bQ} D $ such that $(X,B+D+H)$ is lc. 
Thus $(X_S,B_S+D_S+H_S) \to S$ is a family of lc pairs, where $H_S := H\times S$. 
Thus Proposition \ref{p:kollarsmoothslcfam} implies 
\[
(X_S,B_S+D_S+H_S + V(x_1\cdots x_r))
\]
is lc. In particular, $\Supp(H_S)$ contains no lc centers of $(X_S,B_S+D_S + V_{X_S}(x_1\cdots x_r))$. 
Let $H_Y := f^*H_S$ and $H_Z:=g_* H_Y=g_* f^* H_S$. 
Fix $a>0$ such that 
\[
A_Z= H_Z + a E_Z- \textstyle \sum_{ij} a_{ij} E_{ij}
\]
is effective. 
We claim that for $0<\varepsilon \ll1$, the pair
\[
(Z,B_Z+D_Z+\varepsilon A_Z)
\]
is (i) a $\bQ$-factorial dlt pair with no lc centers contained in $V_{Z}(x_1\cdots x_r)$ and (ii) its restriction to $U:=S\setminus V(x_1\cdots x_r) \subset S$ is a good minimal model. 
Indeed, (i) holds since $(Z,B_Z+D_Z+A_Z)_U \cong (Y,B_Y+D_Y+f^*H_Y)_U$, which is a $\bQ$-factorial dlt pair, and $(Z,B_Z+D_Z+ Z_0)$ is lc.
For (ii), note that the birational map
\[
(Z,B_Z+D_Z+\varepsilon A_Z)_U \dashrightarrow 
(X,B+D+\varepsilon H) \times \bT
\]
is a morphism, crepant, and a $K_X+B+D+\varepsilon H$ is ample.
Since (i) and (ii) hold, \cite[Theorem 1.1]{HX13} implies that $(Z,B_Z+D_Z+\varepsilon A_Z )$ admits a canonical model over $S$ for any $0<\varepsilon \ll1 $.
Write $Z\dashrightarrow \cX$ for the corresponding birational contraction, $\cB:= h_* B_Z$, $\cD:=h_* (D_Z) $, and $\cH:= h_* (H_Z)$. 
Observe that 
\begin{multline*}
h_* (K_Z+B_Z+D_Z +\varepsilon A_Z) \sim_{\bQ} 
h_* (K_Z+B_Z+D_Z +Z_0 +\varepsilon A_Z)\\
\sim_{\bQ,S}
h_* (\varepsilon A_{Z})
=
\textstyle h_* (g_* (f^*H_S ) - \sum_{ij}a_{ij}h_* (E_{ij}))
\sim_{\bQ}\varepsilon  \fD
\end{multline*}
Thus $\fD$ is ample over $S$. 

\emph{Step 4: We now verify that $(\fX,\fB+\fD) \to S$ is a $\bT$-equivariant family of boundary polarized CY pairs satisfying (i) and (ii).}
Since $\bT$ is a connected group, the MMP steps $Y \dashrightarrow Z$ and $Z\dashrightarrow \cX$ are $\bT$-equivariant. 
Thus there exists a  $\bT$-action on $\cX $  extending the standard $\bT$-action on $S$ and fixing $\cB$ and $
\cD$. 
Next, since $(Z,B_Z+D_Z + \varepsilon A_Z)_U \dashrightarrow  (X,B+D+\varepsilon H) \times U$ 
is the morphism to the canonical model, the induced birational map 
\[
(\fX,\fB+\fD) \dashrightarrow (X_S ,B_S+D_S)
\]
is an isomorphism over $U$. This in particular defines an isomorphism as in (i). 
To verify (ii),  consider the birational map
\[
h_i:Z_{U_i} \to \cX^i \times \bG_m^{r-1} 
,\]
which is a morphism over $U$ and is an isomorphism in codimension one by construction. 
Additionally, 
\[
(K_Z+B_Z+ D_Z + \varepsilon A_{Z})_{U_i}
\sim_{\bQ} \varepsilon h_i^{*}(\cD^i \times \bG_m^{r-1})
\]
by a similar computation as above. 
Thus $h_i$ is the morphism to the  canonical model of $(K_{Z}+D_Z+\varepsilon A_Z)_{U_i}$. 
Thus the birational map 
$\fX_{U_i} \dashrightarrow \cX^i \times \bG_m^{r-1}$ is an isomorphism.

It remains to verify that $(\fX,\fB+\fD)\to S$ is a family of boundary polarized CY pairs. 
Since $(Z,B_Z+D_Z+ V_{Z}(x_1\cdots x_r))$ is lc and CY over $S$ and $Z\dashrightarrow \fX$ is a birational contraction, $(\fX,\fB+\fD+ V_{\fX}(x_1\cdots x_r))$ is lc and CY over $S$. 
Thus $(\fX, \fB+\fD) \to S$ is a family of slc pairs in an open neighborhood of $0\in S$ by Proposition \ref{p:kollarsmoothslcfam}. 
As the locus of $S$ where $(\fX,\fB+\fD)\to S$ is a family of lc pairs is open  by \cite[Corollary 4.45]{Kol23}, $\bT$-equivariant, and contains $0$, the locus equals $S$. 
Thus $(\fX,\fB+\fD) \to S$ is a family of lc pairs. 
Since $K_{\fX}+\fB+\fD \sim_{\bQ,S}0$ and $\fD$ is  ample over $S$, it is a family of boundary polarized CY pairs. 
\end{proof}

\begin{prop}\label{p:familyArRees}
Keep the assumptions and notation from  Proposition \ref{p:highranktheta}, 
and let  $(\fX,\fB+\fD)\to \bA^r$  be a $\bT$-equivariant family of boundary polarized CY pairs satisfying the conclusion of the  proposition.  

If $m$ is a non-negative integer such that $mD$ is an integral divisor, then the following hold:
\begin{enumerate}
\item The $k[x_1, \ldots, x_r]$-module $H^0(\fX, m\fD)$ is flat and the natural map 
\[
H^0(\fX,m\fD)\otimes k(s) \longrightarrow H^0(\fX_s, m\fD_s) 
\]
is an isomorphism for all $s\in S$. 
\item  There is a commutative diagram of $\bZ^r$-graded $k[x_1,\ldots, x_r]$-modules 
\[
\begin{tikzcd}
{\rm Rees}_1(F_1,\ldots,F_r; H^0(X,mD))\arrow[r,hook] \arrow[d,"\cong"]  & \ H^0(X,mD)[x_1^{\pm1},\ldots, x_r^{\pm 1}]\arrow[d,"\cong"] \\
H^0(\fX,m\fD) \arrow[r,hook] & H^0(\fX_{D(x_1\cdots x_r)}, m\cD_{D(x_1\cdots x_r)})  
\end{tikzcd}
,\]
\end{enumerate}
where $F_i$ is the filtration of $H^0(X,mD)$ induced by $(\cX^i,\cB^i+\cD^i)$,  the horizontal maps are the natural inclusions, the vertical maps are isomorphisms, and the
right isomorphism is induced by pulling back via the isomorphism $\fX_{D(x_1 \cdots x_r)} \to X\times \bT$.
\end{prop}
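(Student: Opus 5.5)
For part (1), I will combine vanishing with cohomology and base change. Write $S=\bA^r$ and $p:\fX\to S$. Since $(\fX,\fB+\fD)\to S$ is a family of boundary polarized CY pairs, every fiber $(\fX_s,\fB_s+\fD_s)$ is slc. Moreover
\[
m\fD_s-(K_{\fX_s}+\fB_s)\sim_{\bQ}(m+1)\fD_s
\]
is ample. Kodaira-type vanishing for slc pairs (Fujino's vanishing theorem for slc pairs, as used in \cite{ABB+}) then gives $H^i(\fX_s,\cO_{\fX_s}(m\fD_s))=0$ for $i>0$ and all $s\in S$. One point needs care: $\cO_{\fX}(m\fD)$ should be flat over $S$ and its formation should commute with base change to fibers. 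Since $m\fD$ need not be Cartier, I would use the fact that $\fD$ is a relative Mumford divisor in a family of slc pairs, together with Koll\'ar's theory of families of pairs \cite{Kol23}. Concretely, for a family of boundary polarized CY pairs the reflexive powers $\cO(m\fD)$ are flat and compatible with base change; I expect this to be cited from \cite{ABB+}. Granting this, cohomology and base change gives that $p_*\cO_{\fX}(m\fD)$ is locally free and commutes with base change. Since $S$ is affine, this yields flatness of $H^0(\fX,m\fD)$ and the isomorphism $H^0(\fX,m\fD)\otimes k(s)\cong H^0(\fX_s,m\fD_s)$. I expect this base change property of the divisorial sheaf to be the main technical point of the argument.

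For part (2), I will identify both sides inside $H^0(X,mD)[x^{\pm1}]$. The right vertical isomorphism comes from condition (1) of Proposition \ref{p:highranktheta}, namely $\fX_U\cong X\times\bT$ with $U=D(x_1\cdots x_r)$, together with the Künneth decomposition. The $\bZ^r$-grading corresponds to $\bT$-weights, so an element of weight $-\la$ has the form $\overline{s}x^{-\la}$. The horizontal restriction map is injective because $\fX$ is integral: it is normal, and $\fX_U$ is dense. It therefore remains to show that, for $s\in H^0(X,mD)$, the section $\overline{s}x^{-\la}$ extends to $\fX$ if and only if $s\in F_i^{\la_i}$ for every $i$.

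Because $\fX$ is normal and $\fX\setminus\bigcup_i U_i$ lies over $\{x_i=x_j=0\}$ for some $i\neq j$, this locus has codimension at least $2$ in $\fX$. I will justify this using flatness over $S$: the preimage of a codimension-$2$ subset of $S$ has codimension $2$ in $\fX$. Hence a rational section of the reflexive sheaf extends to $\fX$ if and only if it extends over each $\fX_{U_i}$. By condition (2) of Proposition \ref{p:highranktheta}, $\fX_{U_i}\cong\cX^i\times\bG_m^{r-1}$. Under this identification, $\overline{s}x^{-\la}$ extends over $\fX_{U_i}$ exactly when $\overline{s}x_i^{-\la_i}$ extends to $\cX^i$, because the factors $x_j^{-\la_j}$ with $j\neq i$ are units on $U_i$. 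This is precisely the definition of $F_i^{\la_i}H^0(X,mD)$ given by the test configuration. Intersecting these conditions over $i$ gives
\[
H^0(\fX,m\fD)\cong\bigoplus_{\la\in\bZ^r}F^\la x^{-\la}.
\]
Finally, commutativity of the diagram and compatibility with the module structure over $k[x_1,\ldots,x_r]$ follow directly from the construction.
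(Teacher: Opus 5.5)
Your proposal is correct and follows the paper's proof essentially step for step. For (1) you use slc Kodaira vanishing, the base-change property $\cO_{\fX}(m\fD)\vert_{\fX_s}\cong\cO_{\fX_s}(m\fD_s)$ (which the paper takes from \cite[Corollary 4.33]{Kol23}) and cohomology and base change; for (2) you use the identification over $D(x_1\cdots x_r)$, the codimension-two extension via flatness, and the chart-by-chart comparison with $\cX^i\times\bG_m^{r-1}$. The one point you assert without justification is the normality of $\fX$, which the paper deduces from the fact that $(\fX,\fB+\fD+\sum_i\fX_{x_i=0})$ is slc while $\fX_{D(x_1\cdots x_r)}\cong X\times\bT$ is normal; alternatively, Serre's criterion applies, since $\fX$ is flat over a regular base with reduced $S_2$ fibers.
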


\begin{proof}
Since $(\fX,\fB+\fD) \to \bA^r$ is a family of slc pairs and $\fD$ is $\bQ$-Cartier, the natural map
\[\cO_{\fX}(m\fD)\vert_{\fX_s} \to \cO_{\fX_s}(m\fD_s)
\]
is an isomorphism for all $s\in \bA^r$ by \cite[Corollary 4.33]{Kol23}. 
Thus for $i\geq 1$ and $s\in \bA^r$ we have  that
\[
H^i(\fX_s, \cO_{\fX}(m\fD)\vert_{\fX_s} ) \cong H^i(\fX, \cO_{\fX_s}(m\fD_s) ) 
= 
0
,\]
where the last equality holds by the  slc version of Kodaira Vanishing in \cite[Theorem 11.34]{Kol23}, which applies as  $\fX_s$ is slc and $m\fD_s - K_{\fX_s} \sim_{\bQ}(m+1)\fD_s$. 
Thus the cohomology and base change theorem \cite[Theorem III.12.11]{Har77} implies  that (1) holds.

To prove (2), let $V_m:=H^0(X,mD)$, $U : = D(x_1\cdots x_r)$ and $U_i := D(x_1 \cdots \widehat{x}_i \cdots x_r)$. 
Observe that the isomorphism $(\fX_{U},\fD_{U})\to (X,D) \times \bT$ induces an isomorphism
\[
\phi:V_m [x_1^{\pm1},\ldots, x_r^{\pm1 }] 
\to
H^0(\fX_{U}, m\fD_{U}).
\]
Now we verify that $\cX$ is normal. 
Since $(\fX,\fB+\fD) \to \bA^r$ is a family of slc pairs and $\bA^r$ is regular,  $(\fX,\fB+\fD+\sum_{i=1}^r\fX_{x_i=0} )$ is slc by  \cite[Theorem 4.54 and Corollary 4.56]{Kol23}.
Since $\fX_{U} \cong X\times \bT$, which is normal, it follows that $\fX$ must be normal. 
Thus we may view
$H^0(\fX,m\fD)$ as a $\bk[x_1,\ldots, x_r]$-submodule of $H^0(\fX_{U}, m\fD_{U})$.
To finish the proof, it suffices to show that 
\[
\phi(H^0(\fX,m\fD))={\rm Rees}(F_1,\ldots, F_r;V_m).
\]		
Fix an element $s\in H^0(\fX_{U},m\fD_{U})$. 
By the isomorphism $\phi$, we can write
$s=\sum_{\la\in \bZ^d}s_{\la} x^{-\la}$ for some $s_\la \in H^0(X,mD)$. 
Since
$\fX\to \bA^r$ is flat and
\[
{\rm codim}_{\bA^r}(\bA^r \setminus (U_1 \cup \cdots U_r))\geq 2,
\]
it follows that 
\[
{\rm codim}_{\fX}(\fX\setminus (\fX_{U_1}\cup \cdots \cup \cX_{U_r})) \geq 2
.\]
Thus  $s\in H^0(\fX,m\fD)$ if and only if 
$s \in H^0(\fX_{U_i}, m\fD_{U_i})$ for all $1\leq i \leq r$.
Using the isomorphism $\fX_{U_i} \cong  \cX^i \times \bT$, we see that the latter holds if  and only if
\[
\sum_{\la \in \bZ^r}  s_\la  x^{-\la} \in H^0(\cX^i\times \bG_m^{r-1}, m(\cD^i  \times \bG_m^{r-1})) 
\]
for all $1\leq i \leq r$.
This is equivalent to the condition that 
\[
s_{\la} x^{-\la_i} \in H^0(\cX^i,m\cD^i)
\]
for all $\la \in \bZ^r$ and $1\leq i \leq r$.
The latter holds if and only if   $s_{\la} \in F_i^{\la_i}V_m$ for all $\la \in \bZ^r$ and $1\leq i \leq r$, which is equivalent to the condition that  $s\in {\rm Rees}_1(F_1,\ldots, F_r;V_m)$.
\end{proof}

We now combine the previous result with Proposition \ref{p:dival->tc}  to deduce the simultaneous diagonalizability of filtrations induced by divisorial valuations in the skeleton.

\begin{thm}\label{t:skfiltsdiag}
Let $(X,B+D)$ be an lc boundary polarized CY pair and $v_1,\ldots, v_r$ $\bZ$-valued divisorial valuations on $X$ with $A_{X,B+D}(v)=0$.
 Write $F_i$ for the filtration of $V_m:=H^0(X,mD)$ 
defined by
\[
F_i^\la V_m := \{s \in V_m\, \vert\, v_i(s) \geq \la \}.
\]
If $d$ is a positive integer such that $dD$ is a $\bZ$-divisor, then the following hold:
\begin{enumerate}
\item The  $\bk[x_1,\ldots, x_r]$-module  ${\rm Rees}(F_1,\ldots, F_r;V_m)$ is flat  for each  $m \in d\bN$. 
\item The graded ring 
\[
\bigoplus_{m \in d\bN} {\rm gr}_{F_1,\ldots, F_r} V_m
\]
is finitely generated and reduced. 
\item The    element $\overline{1} \in {\rm gr}_{F_1,\ldots, F_r}(V_d)$ is a nonzero divisor in the graded ring in (2). 
\end{enumerate}
\end{thm}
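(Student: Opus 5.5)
Each divisorial lc place $v_i$ gives, by Proposition \ref{p:dival->tc}, a test configuration $(\cX^i,\cB^i+\cD^i)$ of $(X,B+D)$ with irreducible central fiber and $v_{\cX^i_0}=v_i$. By Lemma \ref{p:filttc}, the filtration of $V_m$ induced by $\cX^i$ is exactly $F_i$. I would feed these $r$ test configurations into Proposition \ref{p:highranktheta} to obtain the $\bT$-equivariant family $(\fX,\fB+\fD)\to\bA^r$. Proposition \ref{p:familyArRees}(2) then identifies ${\rm Rees}(F_1,\ldots,F_r;V_m)$ with $H^0(\fX,m\fD)$ for $m\in d\bN$, and Proposition \ref{p:familyArRees}(1) says this module is flat. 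That proves (1).

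For (2), I would first note that the identification in Proposition \ref{p:familyArRees}(2) is compatible with multiplication of sections. Hence $\bigoplus_{m\in d\bN}{\rm Rees}(F_1,\ldots,F_r;V_m)\cong\bigoplus_m H^0(\fX,m\fD)$ as rings. Reducing modulo $(x_1,\ldots,x_r)$ and using Lemma \ref{l:Reestquotient=graded} in each degree identifies $\bigoplus_m {\rm gr}_{F}V_m$ with $\bigoplus_m H^0(\fX,m\fD)\otimes k(0)$. By the base change statement in Proposition \ref{p:familyArRees}(1), this equals $\bigoplus_{m\in d\bN}H^0(\fX_0,m\fD_0)$. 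The fiber $(\fX_0,\fB_0+\fD_0)$ is a boundary polarized CY pair, so it is slc and hence reduced, and $d\fD_0$ is a $\bQ$-Cartier ample Weil divisor on it. Its section ring $\bigoplus_m H^0(\fX_0,\cO_{\fX_0}(m\fD_0))$ therefore embeds in the ring of rational functions (via $\fD_0$) and is reduced. It is finitely generated because some multiple $ed\fD_0$ is Cartier and ample, so the ring is a finite module over the Veronese section ring of an ample line bundle. I need the relevant multiples of $d\fD$ to remain divisors whose reflexive restriction to fibers is well behaved; \cite[Corollary 4.33]{Kol23} (already used in Proposition \ref{p:familyArRees}) handles this.

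For (3), the element $\overline{1}\in{\rm gr}_F(V_d)$ corresponds, under the identification above, to the section of $\cO_{\fX_0}(d\fD_0)$ given by restricting the $\bT$-invariant section $\overline{1}\in H^0(\fX,d\fD)$ cutting out $d\fD$. Its restriction is the section whose divisor is $d\fD_0$. Since $\fD_0$ is a relative Mumford divisor, it contains no irreducible component of $\fX_0$, so this section is nonzero at every generic point of the reduced scheme $\fX_0$. Multiplication by it is therefore injective on the torsion-free reflexive sheaves $\cO_{\fX_0}(m\fD_0)$. Taking global sections, $\overline{1}$ is a nonzerodivisor.

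The main obstacle is the ring-level bookkeeping in (2)–(3). One must check that $1\in V_d$ has multi-order $0$, so that $\overline{1}$ sits in degree $\la=0$; this holds because $v_i(1)=0$ on the function $1$ viewed in $H^0(X,dD)$. One must also check that its image in ${\rm gr}$ is the restriction of $\overline{1}$, and that Lemma \ref{l:Reestquotient=graded} is multiplicative, all of which need care with the identifications.
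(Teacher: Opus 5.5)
Your proposal is correct and follows the paper's proof essentially step for step. You use the test configurations from Proposition \ref{p:dival->tc} and Lemma \ref{p:filttc}, the $\bA^r$-family of Proposition \ref{p:highranktheta}, and flatness with base change from Proposition \ref{p:familyArRees} for (1). For (2) you use the multiplicative identification of $\bigoplus_{m}{\rm gr}_{F_1,\ldots,F_r}V_m$ with the section ring of $(\fX_{\bf 0},\fD_{\bf 0})$, and for (3) the fact that the restriction of $1\in H^0(\fX,d\fD)$ does not vanish on any component of $\fX_{\bf 0}$. The extra details you supply only make explicit what the paper asserts tersely: torsion-freeness of $\cO_{\fX_{\bf 0}}(m\fD_{\bf 0})$ for reducedness and for injectivity of multiplication, and the Veronese argument for finite generation.
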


\begin{proof}
We first deduce (1) and (2) by combining the previous results. 
By Proposition \ref{p:dival->tc}, there exists a test configuration $(\cX^i,\cB^i+\cD^i)$ of $(X,B+D)$ such that $\cX^i_0$ is irreducible and $v_{\cX_0^{i}} =v_i$.
By Lemma \ref{p:filttc}, the filtration $F_i$  of $V_m$ defined above agrees with the filtration induced by $(\cX^i,\cB^i+\cD^i)$. 
By Proposition  \ref{p:highranktheta}, there exists a $\bT:=\bG_m^r$-equivariant family of boundary polarized CY pairs 
\[
(\fX,\fB+\fD) \to \bA^r
\]
satisfying the conclusion of the proposition.  
Now  Proposition \ref{p:familyArRees} implies that   (1) holds.
Furthermore, by 
Proposition \ref{p:familyArRees}  and Lemma \ref{l:Reestquotient=graded}, there is a natural $\bk$-vector space  isomorphism
\begin{align}\label{e:isomgrings}
\bigoplus_{m \in d\N}
H^0(\fX_{\bf 0}, m \fD_{\bf 0}) 
\overset{\simeq}{\longrightarrow}
\bigoplus_{m \in d\N}
{\rm gr}_{F_1,\ldots, F_r} (V_m).
\end{align}
Furthermore, as this isomorphism is compatible with multiplication, it is an isomorphisms of graded $\bk$-algebras. 
Since $\fX_{\bf 0}$ is reduced, and $\fD_{\bf 0}$ is ample, 
the $\bk$-algebra on the left hand side is  finitely generated and reduced. 
Therefore  (2) holds.

To verify (3), observe that $\overline{1}\in {\rm gr}_{F_1,\ldots, F_r}(V_{d})$ corresponds to the restriction of $1\in H^0(\fX, d\fD)$ to $H^0(\fX_{\bf 0}, d \fD_{\bf 0})$. 
As ${\rm div}_{\fX}(1) = \fD$, $1$ viewed as a global section of $\cO_{\fX}(d\fD)$ does not vanish on any irreducible component of $\fX_{\bf 0}$,
$1 \vert_{\fX_0} \in H^0(\fX, d \fD)$, viewed as a global section of $\cO_{\fX_{\bf 0}}(d \fD_{\bf 0})$, does not vanish on an irreducible component of $\fD_{\bf 0}$. 
Thus $1 \vert_{\fX_0}$ is a nonzero element of the left hand ring in  \eqref{e:isomgrings}. 
Hence $\overline{1}\in {\rm gr}_{F_1,\ldots, F_r}(V_d)$ is a nonzero divisor in the graded ring.
\end{proof}

\subsection{Valuative independence}

\subsubsection{Definition and basis properties}

\begin{defn}
Let $(X,B)$ be a projective lc CY pair and $W\subset K(X)$ a $\bk$-vector subspace . A basis $(\theta_i)_{i \in I}$ for $W$ is called \emph{valuatively independent} if 
\[
v\big( 
\textstyle \sum_{i \in I} a_i \theta_i
\big)
 = \min \left\{ v(\theta_i) \, \vert\, a_i \neq 0 \right\}
\]
for all $v\in {\rm LC}(X,B)$ and   sums 
$\textstyle \sum_{i \in I} a_i \theta_i$  with $a_i \in \bk$  and only finitely many coefficients nonzero.
\end{defn}

We will be primarily interested in the case when either $W = H^0(U, \cO_U)$ for some open set $U\subset X$ and when $W= H^0(X,G)$ for some $\bZ$-divisor $G$ on $X$. 
Thus we are interested in both when $W$ is finite and infinite dimensional. 

The following properties of valuatively independent bases follow easily from definitions.

\begin{lem}\label{l:tropthetaeasyprop}
Let $(X,B)$ be a projective lc CY pair and $W\subset K(X)$ a $\bk$-vector subspace. 
If $(\theta_i)_{i \in I}$ is a valuatively independent basis  for $W$, then the collection of functions 
\[
 \theta_i^{\rm trop}:{\rm LC}(X,B) \to \bR 
\quad \text{ defined by } \quad  v\longmapsto v(\theta_i) \]
do not depend on  the choice of valuatively independent basis $(\theta_i)_{i \in I}$ of $W$ up to reordering.
\end{lem}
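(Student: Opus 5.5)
The plan is to reduce the statement to Lemma \ref{l:diagbasisords}, one valuation at a time. Fix $v\in {\rm LC}(X,B)$ and let $F_v$ be the $\bR$-filtration of $W$ given by
\[
F_v^\la W=\{ s\in W \,\vert\, v(s)\geq \la\}.
\]
Since $v$ is a valuation, $\ord_{F_v}(s)=v(s)$. The axioms of an $\bR$-filtration hold directly when $W$ is finite dimensional. For infinite-dimensional $W$, only axiom (3) may fail, but that axiom is never used in the argument, so I will not need it. By the remark following Definition \ref{d:diagtriv}, a basis $(\theta_i)_{i\in I}$ is valuatively independent if and only if it diagonalizes $F_v$ for every $v\in{\rm LC}(X,B)$.

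Now let $(\theta_i)_{i\in I}$ and $(\theta'_{i'})_{i'\in I'}$ be two valuatively independent bases of $W$. I apply Lemma \ref{l:diagbasisords} to the whole collection $F=(F_v)_{v\in{\rm LC}(X,B)}$, with index set $J={\rm LC}(X,B)$. For every $\la\in\bR^{J}$, the lemma gives
\[
{\rm card}\{i \,\vert\, \ord_F(\theta_i)=\la\}=\dim{\rm gr}^\la_F(W)={\rm card}\{i' \,\vert\, \ord_F(\theta'_{i'})=\la\}.
\]
The proof of Lemma \ref{l:diagbasisords} goes through verbatim when $W$ is infinite dimensional, because it only uses spans of basis elements. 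Note that $\ord_F(\theta_i)\in\bR^J$ is exactly the function $\theta_i^{\rm trop}$. So the displayed equality says that each function ${\rm LC}(X,B)\to\bR$ occurs with the same multiplicity among the $\theta_i^{\rm trop}$ as among the $\theta_{i'}'^{\rm trop}$. Choosing a bijection between the fibres of the two maps $i\mapsto\theta_i^{\rm trop}$ and $i'\mapsto\theta_{i'}'^{\rm trop}$ gives a bijection $I\to I'$ matching the tropical functions, which is the desired reordering.

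The main point to check is the step $\dim {\rm gr}^\la_F(W)$ in the case where $J$ is infinite and $W$ is infinite dimensional. Here $F^{>\la}W$ is a sum of the subspaces $F^\mu W$ over $\mu>\la$. In the proof of Lemma \ref{l:diagbasisords}, each $F^\mu W$ is spanned by $\{\theta_i \,\vert\, \ord_F(\theta_i)\geq\mu\}$, so $F^{>\la}W$ is spanned by $\{\theta_i \,\vert\, \ord_F(\theta_i)>\la\}$, with no finiteness needed. The dimensions that appear may be infinite cardinals, but equality of cardinals is all that is needed for the bijection.
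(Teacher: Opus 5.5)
Your argument is correct and is essentially the paper's proof: define $F_v$ for each $v\in{\rm LC}(X,B)$, note that $\ord_{F_v}=v$ on $W$, and apply Lemma \ref{l:diagbasisords} to the full collection $(F_v)_{v\in{\rm LC}(X,B)}$. It is this joint application, not a valuation-by-valuation one as your opening sentence suggests, that gives equality of the multisets of functions; your extra remarks (only axiom (3) can fail for infinite-dimensional $W$ and it is never used, the proof of Lemma \ref{l:diagbasisords} needs no finiteness, and the reindexing bijection is built fibrewise) fill in details the paper leaves implicit.
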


\begin{proof}
For $v\in {\rm LC}(X,B)$, let $F_v$ be the filtration of $W$ defined by 
\[
F_v^\la W = \{ s\in W\, \vert\, v(s) \geq \la \}.
\] 
Since $\ord_{F_v}(\theta_i)= v(\theta_i) = \theta_i^{\rm trop}(v)$ for all  $s\in W$,   Lemma \ref{l:diagbasisords}
implies that $(\theta_i^{\rm trop})_{i \in I}$ is independent of choice of basis up to reordering.
\end{proof}

\subsubsection{Boundary polarized CY pairs}

\begin{thm}\label{t:valindbpcy}
If $(X,B+D)$ is a boundary polarized CY pair and 
 $U := X\setminus \Supp(D)$, then there exists a valuatively independent basis  $(\theta_i)_{i \in I}$ of $H^0(U, \cO_U)$.
 \end{thm}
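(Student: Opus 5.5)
The plan is to exhaust $H^0(U,\cO_U)$ by the finite-dimensional spaces $V_m:=H^0(X,mD)$, $m\in d\N$, build a valuatively independent basis of each $V_m$, and choose these bases compatibly so that their union works. Here $d$ is a positive integer with $dD$ a $\bZ$-divisor, and valuative independence is with respect to $\mathrm{LC}(X,B+D)$. Since $D$ is effective with support $X\setminus U$, we have $1\in V_d$, and multiplication by $1$ embeds $V_m\subset V_{m+d}\subset K(X)$. As $D$ is $\bQ$-Cartier, every regular function on $U$ has poles along $\Supp(D)$ bounded by some multiple of $D$, so $H^0(U,\cO_U)=\bigcup_{m\in d\N}V_m$. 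It therefore suffices to construct bases $\mathcal{B}_m$ of $V_m$ with $\mathcal{B}_m\subset\mathcal{B}_{m+d}$, each diagonalizing $F_v$ for every $v\in\mathrm{LC}(X,B+D)$.

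\emph{Step 1: reduction to finitely many divisorial valuations for fixed $m$.} By Lemma \ref{l:LCplace}, $\mathrm{LC}(X,B+D)=\mathrm{QM}(Y,(B_Y)^{=1})$ for a log resolution $Y\to X$. This is a finite union of cones $\mathrm{QM}_\eta\cong\bR_{\geq0}^J$.

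Fix a cone and a finite-dimensional $V_m$. The functions $\alpha\mapsto v_\alpha(s)$ for $s\in V_m$ are minima of linear forms determined by the expansions of $s$ in $k(\eta)\llbracket y\rrbracket$. Since $V_m$ is finite dimensional, a Gröbner-fan type argument gives a finite rational subdivision of the cone into relatively open cells on which the initial subspaces $\{\mathrm{in}_\alpha(s)\}$ are constant.

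I then aim to show that a basis diagonalizing $F_{v}$ for one rational point $v$ in each cell (including all rays) diagonalizes $F_{v'}$ for every $v'$ in that cell. The reason is that within a cell, equality $v(\sum a_i\theta_i)=\min_i v(a_i\theta_i)$ is detected by non-cancellation of the relevant initial forms, which is independent of the point in the cell. Rescaling, this produces a finite set $S_m$ of $\bZ$-valued divisorial lc places such that diagonalizing $\{F_v\}_{v\in S_m}$ on $V_m$ implies valuative independence on $V_m$. I may enlarge $S_m$ freely, so I take $S_d\subset S_{2d}\subset\cdots$ and require $S_m$ to also work for all $V_{m'}$ with $m'\leq m$.

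\emph{Step 2: inductive construction.} Apply Theorem \ref{t:skfiltsdiag} to $S_m=\{v_1,\ldots,v_r\}$. Part (1) gives flatness of $\mathrm{Rees}(F_1,\ldots,F_r;V_{m'})$, hence by Proposition \ref{p:Reesflat=diagtriv} simultaneous diagonalizability and $\dim\gr V_{m'}=\dim V_{m'}$ for all $m'$. Part (3) says multiplication by $\overline1\in\gr V_d$ is injective on $\bigoplus_{m'}\gr V_{m'}$.

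Suppose $\mathcal{B}_{m-d}$ has been constructed. It is valuatively independent, so it diagonalizes $S_m$ on $V_{m-d}$, and by (the proof of) Lemma \ref{l:diagbasisords} its leading terms form a basis of $\gr_{S_m}V_{m-d}$. Since $v_i(1)=0$, multiplication by $\overline1$ sends $\overline{s}$ to the leading term of $s$ viewed in $V_m$. By injectivity, the images of $\mathcal{B}_{m-d}$ remain linearly independent homogeneous elements of $\gr_{S_m}V_m$. Extend them to a homogeneous basis, lift the new elements to $V_m$, and let $\mathcal{B}_m$ be the union. By the last statement of Proposition \ref{p:Reesflat=diagtriv}, $\mathcal{B}_m$ diagonalizes $S_m$, hence by Step 1 it is valuatively independent on $V_m$. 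The union $\bigcup_m\mathcal{B}_m$ is then a basis of $H^0(U,\cO_U)$. Any finite linear combination lies in some $V_m$, so valuative independence holds for it.

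\emph{Main obstacle.} I expect Step 1 to be the delicate point: passing from the finitely many divisorial valuations controlled by Theorem \ref{t:skfiltsdiag} to all (real, quasi-monomial) lc places. I would first try the cell-decomposition argument above. If it proves awkward, the fallback is continuity. I would show that the failure locus of $v(\sum a_i\theta_i)=\min_i v(a_i\theta_i)$, over all coefficient choices, is a finite union of rational polyhedral pieces. Each such piece, if nonempty, contains rational points, and rescaling them gives divisorial valuations to include in $S_m$.
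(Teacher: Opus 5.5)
Your exhaustion argument and Step 2 are exactly the paper's proof: flatness from Theorem \ref{t:skfiltsdiag}(1), injectivity of multiplication by $\overline{1}$ from (3), extending the leading terms of $\mathcal{B}_{m-d}$ to a homogeneous basis of $\gr V_m$, and the last part of Proposition \ref{p:Reesflat=diagtriv}. Nesting the $S_m$ is unnecessary but harmless. The one place that needs repair is Step 1.

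\textbf{The cells in Step 1 are too coarse as stated.} You need cells on which \emph{every individual} $s\in V_m$ has constant initial form $\mathrm{in}_\alpha(s)$, i.e.\ common linearity domains of all $s^{\rm trop}$. Constancy of the initial subspace $\mathrm{in}_\alpha(V_m)$, which is what a Gr\"obner fan gives, is not enough for your per-cell claim. For example, take $V=\mathrm{span}(y_1,y_2)\subset \bk\llbracket y_1,y_2\rrbracket$. The whole open quadrant is a single cell of that kind. Yet the basis $y_1+y_2,\ y_1-y_2$ is adapted to $v_{(1,1)}$ and not to $v_{(1,2)}$.

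\textbf{What supplies the right cells.} A finite rational subdivision into common linearity domains exists because the set $\{s^{\rm trop}\,\vert\, s\in V_m\}$ is finite, by \cite[Lemma 2.5]{BLX22}; this is exactly what the paper invokes.

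\textbf{Finishing your per-cell argument.} With these cells your argument goes through, but you must handle the fact that which $\theta_i$ attain the minimum varies inside a cell. Take a minimal linear relation among the (constant) initial forms $p_i$ of the $\theta_i$. The $p_i$ occurring in it are chained together by shared monomials. Each $p_i$ is homogeneous for every weight in the cell, so these $\theta_i$ have equal values at every point of the cell. Hence the cancellation happens at every point of the cell or at none.

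\textbf{The paper's shorter route.} It needs only the rays. Suppose $f=(\sum a_i\theta_i)^{\rm trop}$ and all $\theta_i^{\rm trop}$ are linear on a cone spanned by $v_1,\dots,v_k$, and equality holds at each $v_j$. For $v=\sum_j c_jv_j$ with $c_j\geq 0$ you get $f(v)=\sum_j c_j\min_i v_j(\theta_i)\le \min_i v(\theta_i)\le f(v)$. Here the minima run over $i$ with $a_i\neq 0$, and the last inequality is the ultrametric inequality.

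\textbf{The fallback.} It is circular as stated. The failure locus depends on the basis, and the basis is constructed only after $S_m$ has been chosen.
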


 We will eventually deduce Theorem \ref{thm:val-indepaffineCY} from this special case.

\begin{proof}
Fix a positive integer $d$ such that $dD$ is a $\bZ$-divisor. 
Let $V_m:= H^0(X,md D)$ for $m \geq 1$.
Note that there are inclusions $V_0 \subset V_1 \subset V_2\subset \cdots $ 
and  
\[
H^0(U, \cO_U)  = \textstyle \bigcup_{m \geq 1}V_m,
.\]
Using this, we reduce to the following finite dimensional problem. 
\medskip

\noindent \emph{Claim:
for each integer $m\geq 0$, there exists a basis 
\[
(\theta_{1,m},\ldots, \theta_{N_m,m})
\] of $V_m$ satisfying valuative independence with respect to ${\rm LC}(X,B+D)$.
Furthermore, the bases can be chosen so that 
\[
(  \theta_{1,m-1}, \ldots,  \theta_{N_{m-1}, m-1 }) \subset ( \theta_{1,m}, \ldots, \theta_{N_{m}, m})
\]
when $m \geq 1$. 
}
\medskip 

\noindent \emph{Proof of claim}:
When $m=0$, $V_0= H^0(X, \cO_X) = \bk$, and the element $1\in V_0$ trivially gives a basis satisfying the first condition. 
Fix $m \geq 1$ and assume that such bases of $V_i$ exist for $i<m$.
Consider the set of functions 
\[
G:= \{ s^{\rm trop} \, \vert\,  s\in V_m   \}
\]
where $s^{\rm trop}: {\rm LC}(X,B+D) \to \bR$ is the function defined by $v\mapsto v(s)$. 
By \cite[Lemma 2.5]{BLX22},  $G$ is a finite set.  
By fixing a log resolution $Y\to X$ of $(X,B+D)$, we may endow ${\rm LC}(X,B+D)$ with the structure of a simplicial cone complex. 
As each such function $s^{\rm trop} \in G$ is piecewise integral linear on ${\rm LC}(X,B+D)$ by same argument as \cite[Proof of Lemma 1.10]{BFJ08},  there exists a decomposition of ${\rm LC}(X,B+D)$ into finitely many simplicial cones with rational vertices $\sigma_1,\ldots \sigma_\ell$ such that each $\varphi \in G$ is linear on each $\sigma_j$. 
Let $v_1,\ldots, v_r$ be $\bZ$-valued divisorial valuation in ${\rm LC}(X,B+D)$ corresponding to the vertices of the $\sigma_1,\ldots, \sigma_s$.
Let  $F_j$ be the filtration of $V_m$ defined by 
\[
F_j^\la V_m = \{s \in V_m \, \vert\, v_j(s) \geq \la \}.  
\]
By Theorem \ref{t:skfiltsdiag}, 
\begin{enumerate}
\item[(i)] ${\rm Rees}(F_1,\ldots, F_r; V_m)$ is a flat  $\bk[x_1,\ldots, x_r]$-module and
\item[(ii)]  the natural map ${\rm gr}_{F_1,\ldots, F_r}(V_{m-1}) \to {\rm gr}_{F_1,\ldots, F_r}(V_m)$ that sends $\overline{s} \in {\rm gr}_{F_1,\ldots, F_r}^\la (V_{m-1})$ to $ \overline{s} \in {\rm gr}_{F_1,\ldots, F_r}^\la (V_{m})$ is injective.  
\end{enumerate}
Since the basis $(\theta_{i,m-1})_{i=1}^{N_{m-1}}$ of $V_{m-1}$ is valuatively independent, it diagonalizes $F_1,\ldots, F_r$. 
Thus  Proposition  \ref{p:Reesflat=diagtriv} implies that the collection of elements  $(\overline{\theta}_{i,m-1})_{i=1}^{N_{m-1}}$ in  ${\rm gr}_{F_1,\ldots, F_r}(V_{m-1})$ form a basis.
By (ii),  the collection of elements  $(\overline{\theta}_{i,m-1}])_{i=1}^{N_{m-1}}$  viewed as elements in ${\rm gr}_{F_1,\ldots, F_r}(V_m)$ are  linearly independent. 
Thus they extend to a basis $(\overline{\theta}_{i,m})_{i=1}^{N_m}$ in ${\rm gr}_{F_1,\ldots, F_r}(V)$ with each $\theta_{i,m} \in V$ and $N_m = \dim {\rm gr}_{F_1,\ldots, F_r}(V)$.
By  (i) and Proposition  \ref{p:Reesflat=diagtriv}, $(\theta_{i,m})_{i=1}^{N_m}$  is a basis for $V_m$ that simultaneously diagonalizes $F_1, \ldots, F_r$. 

It remains to verify that the basis $(\theta_{i,m})_{i=1}^{N_m}$ is valuatively independent. 
Since the basis diagonalizes $F_1,\ldots, F_r$ and $\ord_{F_j} = v_j$, 
\[
v_j \big(  \textstyle \sum_{i=1}^{N_m} a_i \theta_{i,m} \big) 
= 
\min \{ v_j(\theta_{i,m}) \, \vert\, a_i \neq 0 \}
\]
for all $a_1, \ldots, a_{N_m} \in \bk$ and $1\leq j \leq r$. 
Since $( \textstyle \sum_{i=1}^{N_m} a_i \theta_{i,m} )^{\rm trop}$ is linear on each $\sigma_k$ and the $v_j$ span the extremal rays of these simplicial cones, 
\[
v\big(  \textstyle \sum_{i=1}^{N_m} a_i \theta_{i,m} \big) 
= 
\min \{ v(\theta_{i,m}) \, \vert\, a_i \neq 0 \}
\]
for all $a_1,\ldots, a_{N_m} \in \bk$ and $v\in {\rm LC}(X,B+D)$ as desired. \hfill \qed
\medskip

By the claim, there exist elements  $(\theta_i)_{i\geq 1}$ of $ H^0(U, \cO_U)$  and a sequence of integers $c_1 \leq c_2 \leq \cdots $ such that $(\theta_i)_{1\leq i \leq c_m}$ is a valuatively independent basis for $V_m$ for all  $ m \geq 1 $.
Since $H^0(U, \cO_U) = \cup_{m \geq1} V_m$, $(\theta_i)_{i\geq 1}$ is a valuatively independent basis for $H^0(U, \cO_U)$.
\end{proof}




\subsubsection{Positive boundary}

\begin{thm}\label{t:affineCYposbound}
If $(X,B)$ be a projective lc CY pair
 such that there exists an effective ample divisor $A$ on $X$ with $\Supp(A) \subset B$
 and 
 $U := X\setminus \Supp(A)$, then there exists a valuatively independent basis  $(\theta_i)_{i \in I}$ for  $H^0(U, \cO_U)$. 
\end{thm}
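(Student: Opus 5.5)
The plan is to reduce Theorem \ref{t:affineCYposbound} to Theorem \ref{t:valindbpcy} by rewriting $(X,B)$ as a boundary polarized CY pair whose polarizing divisor is supported on $\Supp(A)$.

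\emph{Step 1: rewrite the pair.} Since $\Supp(A)\subset\Supp(B)$ and both are effective, there is a rational $\varepsilon>0$ with $\varepsilon A\le B$. Set
\[
D:=\varepsilon A \quad\text{ and }\quad B':=B-\varepsilon A\ge 0 .
\]
Then $B'+D=B$, so $(X,B'+D)$ is lc, $K_X+B'+D\sim_{\bQ}0$, and $D$ is an ample effective $\bQ$-divisor. It is $\bQ$-Cartier because $A$ is ample, hence Cartier up to a multiple. Thus $(X,B'+D)$ is an lc boundary polarized CY pair with $\Supp(D)=\Supp(A)$. In particular
\[
X\setminus\Supp(D)=U .
\]

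\emph{Step 2: apply the earlier theorem.} Theorem \ref{t:valindbpcy} applied to $(X,B'+D)$ gives a basis $(\theta_i)_{i\in I}$ of $H^0(U,\cO_U)$ that is valuatively independent with respect to ${\rm LC}(X,B'+D)$. Since $B'+D=B$, the log discrepancy functions agree:
\[
A_{X,B'+D}=A_{X,B},\qquad\text{so}\qquad {\rm LC}(X,B'+D)={\rm LC}(X,B).
\]
Hence the same basis is valuatively independent in the sense of the definition for $(X,B)$, which is what Theorem \ref{t:affineCYposbound} asserts.

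\emph{Possible obstacle.} The only point needing care is the conventions in the definition of a boundary polarized CY pair. The boundary $B'$ must be effective, which holds by the choice of $\varepsilon$. Also, $X$ must be normal or slc, which holds because $(X,B)$ is lc by hypothesis. One should also check that the proof of Theorem \ref{t:valindbpcy} allows $D$ to share components with $B$, i.e.\ it does not need $B'$ and $D$ to have no common components. Its ingredients, Proposition \ref{p:dival->tc} and Proposition \ref{p:highranktheta}, only use that $(X,B'+D)$ is lc CY with $D$ ample, so I expect no real difficulty there.
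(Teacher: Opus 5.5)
Your proposal is correct and matches the paper's proof: the paper also takes $0<c\ll 1$ with $B-cA\ge 0$, views $(X,(B-cA)+cA)$ as an lc boundary polarized CY pair with $U=X\setminus\Supp(cA)$, and applies Theorem \ref{t:valindbpcy}. Your added remark that ${\rm LC}(X,B'+D)={\rm LC}(X,B)$ because $B'+D=B$ is exactly the (implicit) point that makes the reduction work.
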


\begin{proof}
Fix $0< c \ll1$ such that $B' := B- cA \geq 0$. Let $D'= cA$. 
Note that $(X,B'+D')$ is an lc boundary polarized CY pair such satisfying  $B'+D'= B$ and $U =X\setminus  \Supp(D')$. 
Thus the result follows immediately from Theorem \ref{t:valindbpcy}.
\end{proof}

\subsubsection{Maximal boundary}

\begin{defn}\label{d:maximal}
A projective lc CY pair $(X,B)$ has \emph{maximal boundary} if there exists  a dlt modification $(Y,B_Y) \to (X,B)$ such that  there exists distinct prime divisor $E_1,\ldots, E_{r} \subset \Supp((B_Y)^{=1})$  such that $r= \dim X$ and $\cap_{i=1}^{r} E_i\neq \emptyset$. 
\end{defn} 

\begin{rem}
If the condition Definition \ref{d:maximal} is satisfied for one dlt modification, it is satisfied for all dlt modifications by \cite[Proposition 11]{dFKX17}.  
Furthermore the maximal boundary condition is sometimes  phrased as saying that $(B_Y)^{=1}$ contains a $0$-stratum or, in \cite{FMM25}, that $(X,D)$ has coregularity zero.
\end{rem}

\begin{thm}\label{t:affinecymaxb}
If $(X,B)$ is a projective lc CY pair  with maximal boundary and $U \subset X$ is the klt locus of $(X,B)$, then there exists a valuatively independent basis  $(\theta_i)_{i \in I}$ of $H^0(U, \cO_U)$.
\end{thm}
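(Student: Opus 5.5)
We reduce Theorem \ref{t:affinecymaxb} to Theorem \ref{t:valindbpcy} by passing to a birational model on which a suitable boundary component is ample and its complement is the klt locus. The key input is that for a pair with maximal boundary, the non-klt locus of $(X,B)$ supports a big divisor after a crepant birational modification. The natural tool is a dlt modification followed by an MMP.

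\emph{Step 1.} Take a $\bQ$-factorial dlt modification $f:(Y,B_Y)\to(X,B)$ and set $E:=(B_Y)^{=1}$. Let $U_Y:=Y\setminus \Supp(E)$. Since $f$ is crepant and extracts only lc places, $f^{-1}(U)=U_Y$ and $f$ is an isomorphism over the klt locus $U$. We want to compare $H^0(U,\cO_U)$ with $H^0(U_Y,\cO_{U_Y})$. The two are equal if $X\setminus U$ has codimension $\ge 2$ away from the image of $E$; more precisely, $H^0(U,\cO_U)=H^0(U_Y,\cO_{U_Y})$ because $f$ is an isomorphism $U_Y\cong U$. The valuations also agree: ${\rm LC}(X,B)={\rm LC}(Y,B_Y)$.

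\emph{Step 2.} Show that, for pairs with maximal boundary, $E$ is big on $Y$; this is the main obstacle. I would first try the known statement that a log CY pair of coregularity zero has, after a crepant birational modification, $K_Y+B_Y-\epsilon E$ not pseudoeffective. Equivalently, $E$ is big, since $K_Y+B_Y\sim_\bQ0$ gives $-\epsilon E\sim_\bQ K_Y+B_Y-\epsilon E$, and one wants $E$ big. A sufficient argument is to run a $(K_Y+B_Y-\epsilon E)$-MMP. It terminates by \cite{HX13}-type results, with either a Mori fiber space or a minimal model. In the minimal-model case $-E$ is nef, which forces $E=0$, contradicting maximality. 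In the Mori fiber space case one argues inductively on the base, using that the $0$-stratum maps to a maximal stratum on the base, to show that $E$ dominates and is relatively ample. This yields bigness, perhaps after replacing $E$ by a positive combination $\sum c_iE_i$ of its components. This inductive argument, presumably via \cite{dFKX17}-type results on dual complexes being pseudomanifolds or spheres, is where I expect the real work.

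\emph{Step 3.} Given a big divisor $G=\sum c_iE_i$ supported on $E$, run a $(K_Y+B_Y+\epsilon G)$-MMP, which is a $G$-MMP, and take the ample model $Y\dashrightarrow X'$. This contracts only components of $\Supp(E)$ and divisors in the stable base locus. Any contracted non-boundary divisor $P$ would lie in the diminished base locus of $G$, but $P$ meets $U_Y$; we handle this by choosing $G$ so that ${\bf B}_-(G)\subset\Supp(E)$, which is achievable by Step 2's construction. On $X'$ we get an lc CY pair $(X',B')$, crepant birational to $(X,B)$, with an effective ample divisor $A'$, the pushforward of $G$, satisfying $\Supp(A')\subset\Supp(B')^{=1}$. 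Moreover $X'\setminus\Supp(A')$ agrees with $U_Y$ outside a closed subset of codimension $\ge 2$, and the CY pair guarantees the functions extend by normality. Hence $H^0(X'\setminus\Supp A',\cO)=H^0(U,\cO_U)$, and ${\rm LC}(X',B')={\rm LC}(X,B)$ by crepant birationality.

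\emph{Step 4.} Apply Theorem \ref{t:affineCYposbound} to $(X',B')$ and $A'$. This gives a valuatively independent basis of $H^0(U,\cO_U)$ with respect to ${\rm LC}(X',B')={\rm LC}(X,B)$, as required.
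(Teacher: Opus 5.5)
Your overall route is the paper's: pass to a dlt model, find a crepant model on which the boundary supports an effective ample divisor, and apply Theorem \ref{t:affineCYposbound}. The difference is in how the model is obtained. The paper does not build it by an MMP; it takes it from \cite[Theorem 49]{KX16}: a crepant $g\colon (Z,B_Z)\dashrightarrow (Y,B_Y)$ with an ample divisor supported on $(B_Z)^{=1}$, contracting only divisors in $(B_Z)^{=1}$, and an isomorphism over $Z\setminus (B_Z)^{=1}$.

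Your Step 2, which is meant to replace that citation, has a genuine gap.

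\emph{The ``equivalently'' is wrong.} $K_Y+B_Y-\epsilon E\sim_{\bQ}-\epsilon E$ is non-pseudoeffective for \emph{every} nonzero effective $E$, so it says nothing about bigness. Also, the $(K_Y+B_Y-\epsilon E)$-MMP is $E$-positive, so it can contract divisors meeting $U_Y$ into $\Supp(E)$. Positivity of $E$ on its output therefore says nothing about $Y$.

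\emph{The claim itself is false.} Let $Y\to\bP^1$ be the rational elliptic surface obtained by blowing up the nine base points of the pencil spanned by a general cubic and the triangle $xyz=0$. Let $D$ be the strict transform of the triangle, an $I_3$ fiber. Then $(Y,D)$ is log smooth with $K_Y+D\sim 0$, it has maximal boundary, and $U=Y\setminus D$. Every effective divisor supported on $D$ is bounded by a multiple of the fiber, so it has Iitaka dimension at most $1$ and is never big. Moreover, one run of your MMP simply contracts the nine exceptional curves, which meet $U$, and ends at $(\bP^2,\text{triangle})$.

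More seriously, no choice of $G$ or of model can rescue Step 3 in this example. Here $H^0(U,\cO_U)=\bk[s]$, with $s$ the affine coordinate on $\bP^1$ minus the point under $D$, and this ring has transcendence degree $1$. By contrast, for any crepant model $(X',B')$ carrying an effective ample $A'$ supported in the boundary, $X'\setminus\Supp(A')$ is a $2$-dimensional affine variety. So the identification $H^0(X'\setminus\Supp(A'),\cO)=H^0(U,\cO_U)$ at the end of Step 3 fails in general. Step 3 tacitly requires a big $G$ on $Y$ with $\Supp(G)=\Supp(E)$ and $\mathbf{B}_-(G)\subset\Supp(E)$. The example shows such a $G$ need not exist, so this is an extra hypothesis, not a consequence of maximality.

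The paper's proof makes the same codimension-one claim, namely that $g^{-1}$ contracts no prime divisor of $Y\setminus(B_Y)^{=1}$, and the same example shows that claim needs further justification. There, $(Z,B_Z)=(\bP^2,\text{triangle})$ satisfies (i)--(iv), yet $g^{-1}$ contracts the nine exceptional curves, which meet $U$, and $H^0(\bG_m^2,\cO)\neq\bk[s]$.

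The statement itself does hold in this example, since $\{s^k\}_{k\geq 0}$ is valuatively independent. A complete argument must therefore either add a hypothesis of the above kind, or treat $H^0(U,\cO_U)$ as a proper subspace of the functions on the affine model and diagonalize it directly.
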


We will deduce the result from Theorem   \ref{t:valindbpcy} by applying a key technical result from \cite{KX16}.

\begin{proof}
Fix a $\bQ$-factorial dlt modification $f:g(Y, B_Y)\to (X,B)$.
By \cite[Theorem 49]{KX16} and the assumption that $(Y, B_Y)$ is maximal, there exists a crepant birational map 
\[
g:(Z,B_Z) \dashrightarrow (Y, B_Y) 
\]
such that 
(i) $(Z,B_Z)$ is a lc CY pair, 
(ii) there exists an effective ample divisor $A$ on $Z$ such that $\Supp(A) = (B_Z)^{=1}$, 
(iii) the prime divisors contracted by $g$ are contained in $(B_{Z})^{=1}$, and (iv) $g$ is an isomorphism over $Z \setminus (B_Z)^{=1}$. 
By Theorem \ref{t:valindbpcy}, there exists a valuatively independent basis $(\theta)_{i \in I}$ for $H^0(U', \cO_{U'})$ with respect to ${\rm LC}(Z,B_Z)$, where $U':= Z \setminus (B_Z)^{=1}$. 
Using the isomorphism $K(X) \cong K(Z)$, we a bit abusively write $\Val_{X} = \Val_Z$. 
Since $f$ and $g$ are crepant birational, ${\rm LC}(X,B) = {\rm LC}(Z,B_Z)$
under this identification. 
Furthermore, observe that 
\[
Z\setminus (B_{Z})^{=1}  \dashrightarrow Y\setminus(B_Y)^{=1} \to  U
\]
are isomorphisms away from codimension two sets on the source and target. 
Indeed, the property holds for the first rational map, since $g$ does not contract prime divisors in $Z\setminus (B_{Z})^{=1}$  by (iii), while $g^{-1}$ does not contract prime divisors in $Y\setminus (B_Y)^{=1}$ by (iv) and using that the morphism is crepant.
The property holds for the second map as exceptional prime divisors of dlt modifications are contained in the non-klt locus of the pair. 
Therefore $H^0(U, \cO_U)=H^0(U', \cO_{U'})$ as subsets of $K(X)= K(Z)$. 
Thus $(\theta)_{i \in I}$ is a valuatively independent basis for $H^0(U, \cO_U)$. 
\end{proof}

\begin{lem}
Let $(X,B)$ be a projective lc CY pair and $W\subset K(X)$ a $\bk$-vector subspace. 
If $(X,B)$ is maximal and $(\theta_i)_{i\in I}$ is a basis for $W$ satisfying valuatively independence, then the $(\theta_{i})^{\rm trop})_{i \in I}$ are distinct.
\end{lem}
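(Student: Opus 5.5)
The plan is to argue by contradiction using the local toric structure near a $0$-stratum. Suppose two distinct basis elements, say $\theta_1\neq\theta_2$, satisfy $\theta_1^{\rm trop}=\theta_2^{\rm trop}$ on ${\rm LC}(X,B)$. I will produce a constant $c\in\bk^\times$ and a valuation $v\in{\rm LC}(X,B)$ with
\[
v(\theta_1-c\theta_2)>v(\theta_1)=\min\{v(\theta_1),v(\theta_2)\}.
\]
This contradicts valuative independence, since both coefficients are nonzero.

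\emph{Setting up local coordinates.} Fix a $\bQ$-factorial dlt modification $(Y,B_Y)\to(X,B)$ as in Definition \ref{d:maximal}. Choose distinct prime divisors $E_1,\ldots,E_n\subset(B_Y)^{=1}$ with $n=\dim X$, and let $p$ be a point of $\cap_i E_i$.
\begin{enumerate}
\item The intersection of $n$ divisors in an $n$-dimensional variety meeting in a stratum is zero-dimensional, so $p$ is a closed point, hence $k(p)=\bk$ because $\bk$ is algebraically closed.
\item Since $(Y,B_Y)$ is dlt, $(Y,(B_Y)^{=1})$ is snc at generic points of strata. Here $p$ is itself a stratum, so $Y$ is regular at $p$ and has local parameters $y_1,\ldots,y_n$ with $E_i=\{y_i=0\}$.
\item Then $\widehat{\cO}_{Y,p}\cong\bk\llbracket y_1,\ldots,y_n\rrbracket$, and by Lemma \ref{l:LCplace} every quasi-monomial valuation $v_\alpha$ with $\alpha\in\bR^n_{\geq0}$ lies in ${\rm LC}(X,B)$.
\end{enumerate}

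\emph{Comparing initial terms.} Write $\theta_1=g_1/h$ and $\theta_2=g_2/h$ with $g_1,g_2,h\in\cO_{Y,p}$ and $h\neq0$, using a common denominator. Expand $g_k=\sum_\beta c^{(k)}_\beta y^\beta$ in the completion. The hypothesis gives $v_\alpha(g_1)=v_\alpha(g_2)$ for all $\alpha\in\bR^n_{\geq0}$. Hence the two Newton polyhedra $\mathrm{conv}(\{\beta : c^{(k)}_\beta\neq0\})+\bR^n_{\geq0}$ have equal support functions on the positive orthant, and so coincide. In particular they have the same vertices.

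Now choose $\alpha$ with positive, $\bQ$-linearly independent coordinates. Then $\langle\alpha,\cdot\rangle$ is injective on $\bZ^n$, so it attains its minimum on each support at a single exponent, namely the same vertex $\beta_0$ for both $g_1$ and $g_2$. Thus the initial forms are $\mathrm{in}_\alpha(g_k)=c^{(k)}_{\beta_0}y^{\beta_0}$ with both coefficients nonzero. Set $c=c^{(1)}_{\beta_0}/c^{(2)}_{\beta_0}$. The $y^{\beta_0}$ term cancels in $g_1-cg_2$, which gives $v_\alpha(g_1-cg_2)>v_\alpha(g_1)$. Subtracting $v_\alpha(h)$ yields the desired strict inequality for $v=v_\alpha$, which is the contradiction.

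\emph{Main obstacle.} The step needing the most care is the passage from equality of the tropical functions to equality of the minimizing monomials for generic $\alpha$. My plan is to argue via the Newton polyhedra as above: two such polyhedra with identical support functions on $\bR^n_{\geq0}$ are equal, because a closed convex set plus the orthant is the intersection of its supporting half-spaces with normals in the orthant. Maximality is used exactly to obtain a full $n$-dimensional cone of ${\rm LC}(X,B)$ with residue field $\bk$. Without it, the initial forms would have coefficients in a larger residue field, and they could fail to be proportional over $\bk$.
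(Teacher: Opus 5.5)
Your proof is correct and follows the paper's argument. Both work in local coordinates at a $0$-stratum of a dlt modification, take a quasi-monomial valuation $v_\alpha$ with $\bQ$-linearly independent weights, and cancel the common initial monomial to contradict valuative independence. The Newton polyhedron comparison you flag as the main obstacle is not needed: since $\beta\mapsto\langle\alpha,\beta\rangle$ is injective on $\bZ^n$, the single equality $v_\alpha(g_1)=v_\alpha(g_2)$ already forces the minimizing exponents to coincide, and that is all the paper uses.
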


\begin{proof}
Fix a dlt modification $\mu:Y \to X$ of $(X,B)$ and write $B_Y$ for the $\bQ$-divisor on $Y$ such that $K_Y+ B_Y = \mu^*(K_X+B)$. Since $(X,B)$ has maximal boundary, there exist distinct prime divisors  $E_1, \ldots, E_r $ and a point $y \in E_1 \cap \cdots \cap E_r$ such that $E_1 + \cdots + E_r \subset \Supp( (B_Y)^{=1})$ and $r= \dim X$. Choose local coordinates $y_1,\ldots, y_r \in \cO_{Y,y}$ such that $y_i$ is a defining equation for $E_i$ at $y$.  Note that $\widehat{\cO}_{Y,y} \cong \bk\llbracket y_1,\ldots, y_r\rrbracket $.
Fix $\alpha \in \bR^r_{\geq 0}$ such that $\alpha_1,\ldots, \alpha_r$ are linearly independent over $\bQ$ and consider the valuation $v:=v_{\alpha} \in \QM_{y}(Y,E_1+ \cdots + E_r) \subset {\rm LC}(X,B)$. 
If $\theta_i^{\rm trop} = \theta_j^{\rm trop}$, then $v(\theta_i) = v(\theta_j)$. 
Thus the images of $\mu^*(\theta_i)$ and $\mu^*(\theta_j)$ in $\bk(\!(y_1,\ldots, y_r)\!) $ have the same lowest order term with respect to the weight order on the monomials induced by $\alpha$. 
Since $\alpha_1,\ldots, \alpha$ are linearly independent over $\bQ$,
\[
v(\theta_i - c\theta_j ) > v(\theta_i ) = v(\theta_j)
\]
for some $c\in \bk$, which would contradict that the basis satisfies valuative independence. 
\end{proof}

\subsection{Polarized CY pairs}
Using the cone construction, Theorem \ref{t:valindbpcy} implies the existence of valuative independent bases for the sections of an ample line bundle on a CY pair. 

\begin{thm}
If $(X,B)$ is a projective lc CY pair, $L$ is an ample line bundle on $X$, and $m$ is a non-negative integer, then there exists a basis 
$\theta_1,\ldots, \theta_{N_m}$ of $H^0(X,L^m)$ such that 
\[
v(a_1 \theta_1 + \cdots + a_{N_m} \theta_{N_m}) 
=
\min \{ v(\theta_i) \, \vert\, a_i \neq 0 \}.
\]
for all $a_1,\ldots, a_{N_m} \in \bk$ and $v\in {\rm LC}(X,B)$.
\end{thm}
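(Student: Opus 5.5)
The plan is to reduce to Theorem \ref{t:valindbpcy} via the cone construction. Let $C:=C_p(X,L)$ be the projective cone $\Proj_{\bk}\bigoplus_{m\ge 0}\big(\bigoplus_{j=0}^m H^0(X,L^j)\big)$, i.e.\ the projective closure of the affine cone $C_a(X,L)=\Spec\bigoplus_{j\geq 0}H^0(X,L^j)$. Let $V_\infty\subset C$ be the divisor at infinity, which is isomorphic to $X$, and let $B_C$ be the cone over $B$. It is standard (e.g.\ \cite[Lemma 3.1]{Kol13}) that $(C,B_C+V_\infty)$ is lc and $K_C+B_C+V_\infty\sim_{\bQ}0$ when $(X,B)$ is lc CY. Moreover, $V_\infty$ is $\bQ$-Cartier and ample, since $\cO_C(V_\infty)=\cO_C(1)$. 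Therefore $(C,B_C+D)$ with $D:=V_\infty$ is an lc boundary polarized CY pair, and $U:=C\setminus \Supp(D)=C_a(X,L)$.

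Theorem \ref{t:valindbpcy} applied to this pair gives a valuatively independent basis of $H^0(U,\cO_U)=\bigoplus_{j\ge0}H^0(X,L^j)$ with respect to ${\rm LC}(C,B_C+D)$. The $\bG_m$-action on the cone does not immediately make this basis homogeneous, so I would run the argument of Theorem \ref{t:valindbpcy} equivariantly instead. The divisorial lc places on $C$ used in that proof can be taken $\bG_m$-invariant, since lc places of the form described below are invariant. The filtrations $F_j$ of $V_m=H^0(C,mdD)$ are then compatible with the $\bZ$-grading by degree $j$. In that case the graded pieces ${\rm gr}_{F_1,\ldots,F_r}(V_m)$ split by degree, and one can choose the extended basis in the proof of the claim to consist of homogeneous elements. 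The degree $m$ part then yields a basis of $H^0(X,L^m)$ that is valuatively independent with respect to those lc places of $(C,B_C+D)$.

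The remaining task is to compare lc places. For $v\in{\rm LC}(X,B)$ and $t\geq 0$, define a valuation $w_{v,t}$ on $C$ by
\[
w_{v,t}\Big(\textstyle\sum_j s_j\Big)=\min_j\big(v(s_j)+tj\big),\qquad s_j\in H^0(X,L^j)\subset K(X),
\]
where $v(s_j)$ is computed after trivializing $L$ at the center. This is the quasi-monomial combination of $v$ and $\ord_{V_0}$, the exceptional divisor of the blowup of the vertex. Since $A_{C,B_C+D}(\ord_{V_0})=0$ in the CY case and $A$ is linear on the relevant cone, $w_{v,t}\in{\rm LC}(C,B_C+D)$ whenever $v\in {\rm LC}(X,B)$. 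Evaluated on a homogeneous element $s\in H^0(X,L^m)$, it gives $w_{v,t}(s)=v(s)+tm$. Valuative independence for homogeneous sums of degree $m$ with respect to $w_{v,0}$ is therefore exactly the desired statement for $v$.

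I expect the main obstacle to be the verification that $w_{v,t}$ is indeed an lc place, i.e.\ the log discrepancy computation on the blowup of the vertex, which is the total space of $L^{-1}$ over $X$. It is also necessary to confirm that the simplicial cone decomposition and its vertices in the proof of Theorem \ref{t:valindbpcy} may be chosen $\bG_m$-invariant. Alternatively, one can avoid equivariance altogether: restrict to the finite-dimensional space $W=H^0(X,L^m)$, take finitely many invariant divisorial lc places $w_{v_i,0}$ given by vertices of a decomposition of ${\rm LC}(X,B)$, and apply Theorem \ref{t:skfiltsdiag} together with Proposition \ref{p:Reesflat=diagtriv} directly. In that route the relevant filtrations of $V_m$ are graded, so flatness of the Rees module descends to each graded summand. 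Choosing a basis of ${\rm gr}$ adapted to degree, the degree $m$ summand of the graded vector space then has the correct dimension, giving simultaneous diagonalization on $H^0(X,L^m)$, and the piecewise linearity argument extends it to all $v$.
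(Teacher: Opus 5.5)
Your proof is correct. The comparison of lc places, $v\mapsto w_{v,0}$, is the same as in the paper, where it is realized on the $\bG_m$-bundle over a log resolution of $(X,B)$. Only $t=0$ is needed there, so your expected main obstacle reduces to a crepant smooth morphism.

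You differ from the paper in how you extract a basis of the degree-$m$ piece. The paper uses no equivariance. It takes the valuatively independent basis $(\theta_i)$ of $H^0(Y,mD)=\bigoplus_{i=0}^m H^0(X,L^i)$ from the proof of Theorem \ref{t:valindbpcy}. It then observes that the vertex valuation $\ord_0$ (your $\ord_{V_0}$) is itself an lc place, with $F^m_{\ord_0}H^0(Y,mD)=H^0(X,L^m)$. Since the basis diagonalizes $F_{\ord_0}$, the $\theta_i$ with $\ord_0(\theta_i)\ge m$ already form a basis of $H^0(X,L^m)$. You had this valuation in hand, so your worry that the basis need not be homogeneous could have been settled this way without rerunning the argument.

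Your route instead uses $\bG_m$-invariance of the relevant lc places to make the filtrations graded, then restricts to the degree-$m$ summand. A direct summand of a flat Rees module is flat, so this step is sound. It is the same mechanism the paper uses later in the DVR setting (Lemma \ref{l:bpCYTaction} and the eigenvector statement in Proposition \ref{p:diagfiltbpcy}).

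Of your two variants, the second is cleaner, because invariance of $w_{v_i,0}$ is immediate from the formula $\min_j v_i(s_j)$. In the first variant, your reason that all vertices are invariant only covers $w_{v,t}$ with $t\ge 0$. It omits the lc places centered along $V_\infty$, which are $w_{v,t}$ with $t<0$. These are still invariant: $\bG_m$ preserves every stratum of the snc boundary on the $\bP^1$-bundle over a log resolution, so it fixes each quasi-monomial valuation there.

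The paper's version is shorter and treats Theorem \ref{t:valindbpcy} essentially as a black box. Yours works directly with a single $V_m$, needs no induction over $m$, and gives homogeneity for free.
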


\begin{proof}
To reduce the result to the boundary polarized CY pair case, we consider the affine and projectivized cones
\[ 
C_a(X,L) := \Spec\, \bigoplus_{i \in \bN} H^0(X,L^i)
\quad \text{ and } \quad 
C_p(X,L) := \Proj \, \bigoplus_{m \in \bN} \bigoplus_{i=0}^m H^0(X,L^i) t^{m-i}
.\]
Observe that $V(t) \subset C_p(X,L)$ is an ample Cartier divisor and $C_p(X,L)\setminus V(t) \cong C_a(X,L)$.
Write $0 \in C_a(X,L)$ for the cone point and $C_a(B,L)$ for the $\bQ$-divisor on $C_a(X,L)$  that is the cone over $B$ and  $C_p(B,L)$ for its closure in $C_p(X,L)$; see \cite[Section 3.1]{Kol13}.
Let
\[
Y:= C_p(X,L), \quad G:=  C_p(B,L), \quad \text{ and } \quad 
D:= V(t) 
.\]
By \cite[Proposition 2.20.3]{ABB+},  $(Y,G+D)$ is a boundary polarized CY pair. 
Thus the Proof of Theorem \ref{t:valindbpcy} implies that there exists a basis $\theta_1,\ldots,\theta_{N_m}$ of $H^0(Y,mD)$  satisfying valuative independence. 

Observe that for $f\in H^0(Y\setminus D, \cO_Y) = \bigoplus_{i \in \bN} H^0(X,iL^i)$, where $f= \sum_{i \geq 0} f_i$ with 
$f_i \in H^0(X,L^i)$, we have that
\[
\ord_{D}(f)= \min \{ -i \, \vert\, f_i 
\neq 0\}
\quad \text{ and }\quad 
\ord_0(f) = \min \{ i  \,\vert\, f_i \neq 0 \}
.\]
Therefore
 \[
 H^0(Y, mD) =  H^0(X,L^0) \oplus H^0(X,L^1) \oplus \cdots \oplus  H^0(X,L^m)
,
\]
and 
\[
F_{\ord_0}^{m}H^0(Y,mD)=H^0(X,L^m).
\]
Since $A_{Y,G+D}(\ord_{o})=0$ by  \cite[Proof of Lemma 3.1]{Kol13}, valuative independence implies that 
\[
F^{m}_{\ord_{_0}}H^0(Y,mD) = {\rm span}\langle \theta_i \, \vert\, \ord_{0}(\theta_i) \geq m \rangle
\]
Thus $(\theta_i \, \vert\, \ord_{0}(\theta_i) \geq m)$ is a basis for $H^0(X,L^m)$. 

To show that this basis also satisfies valuative independent with respect to  ${\rm LC}(X,B)$, it suffices to prove the following: for any $v\in {\rm LC}(X,B)$, there exists  $w\in {\rm LC}(Y,G+D)$ such that 
\[
w(f) = \min \{ v(f_i) \, \vert\, f_i \neq 0 \}
\]
for $f= \sum_{i \geq 0} f_i$ with $f_i \in H^0(X,L^i)$.
To proceed, fix a log resolution $g:Z \to X $ of $(X,B)$ and write $B_Z$ for the $\bQ$-divisor on $Z$ such that $K_Z+B_Z =g^*(K_X+B)$. 
By Lemma \ref{l:LCplace},  ${\rm LC}(X,B)= {\rm QM}(Z,(B_Z)^{=1})$.
Now consider the $\bG_m$-bundles 
\[
\pi_Z: Z_{g^*L} := \Spec_Z\, \bigoplus_{i \in \bZ} (g^*L)^{i} \to Z
\quad \text{ and } \quad 
\pi_X:X_{L}:= \Spec\, \bigoplus_{i \in \bZ} L^{i} \to X
\]
Note that we have a crepant proper birational morphism  and an open embedding
\[
(Z_{g^*L}, \pi_Z^{*}(B_Z)) 
\to 
(X_{L}, \pi_X^{*}(B))
\hookrightarrow
(Y, G+D)
\]
By trivializing the $\bG_m$-bundles on an open cover, we see that there is a set theoretic bijective map 
\[
\phi:{\rm LC}(X,B)
=
{\rm QM}(Z,(B_Z)^{=1}) 
\to 
{\rm QM}(Z_{g^*L}, \pi_Z^{-1}(B_Z))^{=1})
\subset {\rm LC}(Y,G+D)
\]
such that $\phi$ maps $v$ to a valuation $w$ with the desired property. 
Thus the basis $(\theta_i \, \vert\, \ord_{0}(\theta_i) \geq m)$ of  $H^0(X,L^m)$ satisfies the conclusion of the theorem.
\end{proof}

\section{Filtrations}\label{s:filts}
In this section, we discuss filtrations of free modules over a DVR and their diagonalizability.
\medskip

Throughout let $R$ be a DVR with fraction field $K$, residue field $\kappa$, and a uniformizer $\pi$. 
Let $V$ be a free $R$-module of finite rank.
Let  $V_{\kappa}:= V/\pi V$, which is a  $\kappa$-vector space, and $N:= \rk_R(V) = \dim_{\kappa}(V_\kappa)$.

\subsection{Definition}

\begin{defn}\label{d:filtration}
An $\bR$-\emph{filtration} $F$ of $V$ is the data of $R$-module subspaces $F^\la V \subset V$ for each $\la \in \bR$ satisfying 
\begin{enumerate}
\item $F^{\la} V\subset F^\mu V$ if $\la\geq \mu$,
\item $F^\la V = \bigcap_{\mu < \la } F^\mu V$
\item  $F^{-\la} V =  V$ for $\la \gg0$, 
\item $\bigcap_{\la \in \bR} F^\la V = 0$, and 
\item $F^{\la+1} V \cap \pi V = \pi F^\la  V $.\footnote{This condition is not always used in the K-stability literature and is needed here for Remark \ref{r:filttonorm}.2 to hold.}
\end{enumerate}
For $s\in V$, we set $\ord_{F}(s) := \max \{ \la \, \vert\, s\in F^\la V\} \in \bR \cup \{ + \infty\}$.
\end{defn}

A \emph{$\bZ$-filtration}  of $V$ is an $\bR$ filtration $F$ of $V$ such that $F^\la V = F^{\lceil \la \rceil }V$ for all $\la \in \bR$. 
More generally, for a positive integer $d$,  a   $\tfrac{1}{d} \bZ$-filtration $F$ of $V$ is an $\bR$-filtration such that  $F^{\lceil  \la d\rceil/ d  } V= F^{\la/d } V$ for all $\la \in \bR$. 
This is equivalent to the condition that $\ord_{F}(s)\in \tfrac{1}{d}\bZ$ for all $ s\in V\setminus 0$.

An $\bR$-filtration $F$ is  \emph{bounded} if $F^\la V\subset \pi V$ for $\la \gg0$. 
Using condition (5), this is equivalent to the condition that $F^{\la+1} V = \pi F^{\la}V$ for $\la \gg 0 $.

\begin{rem}
\label{r:filttonorm}
The data of an $\bR$-filtration of $V$ induces a  norm $\left\Vert \, \cdot \right\Vert : V \to \bR$  defined by 
\[
\| s\| = {\rm exp}(- \ord_{\cF}(s))
,\]
where
$\ord_{F}(s):=\max\{ \la \, \vert\, s \in \cF^\la V\}$.
This function satisfies 
\begin{enumerate}
\item[(1)]  $	\| s +s' \|  \leq \max\left\{ \| s\|  , \|s'\| 
\right\}$,
\item[(2)] $	\| as\|  = \| a\| \cdot \| s\|$, and
\item[(3)]  $\|s\| =0$ if and only if $s=0$
\end{enumerate}
for all $s, s'\in V$ and $a\in R$.
Above $\|a \| = {\rm exp}(- \ord_\pi(a))$, where $\ord_{\pi}(a) $ is the integer $n$ such that $a = u\pi^n$ with $u\in R$ a unit.

Such functions satisfying (1)--(3) are in bijection with $\bR$-filtrations of $V$ as such a norm function induces a filtration defined by 
$F^\la V= \{s \in V\, \vert\,  -\log( \left\Vert s\right\Vert)\geq \la  \}$.
\end{rem}

\begin{exa}\label{e:filtvalDVR}
Let $(X,B)$ be a projective lc CY pair over $R$ and  $L$ be a  line bundle on $X$. 
Fix an snc model $Y \to \Spec(R)$ with a proper birational morphism $\mu:Y_K\to L_K$ and a line bundle $M$ on $Y$ with an isomorphism $M_K \cong \mu^* L_K$.
For any positive integer $m$, 
\[
V_m:= H^0(X,L^{m})
\]
is a finitely generated torsion free $R$-module.
Hence $V_m$ is a finite rank free $R$-module. 
A valuation $v \in {\rm Sk}(X,B)$ induces an $\bR$-filtration of $V_m$ defined by 
\[
F^\la V_m := \{s \in V_m \, \vert\, v(s) \geq \la \}.
\]
Note that condition (5) in Definition \ref{d:filtration} holds, since $v(\pi)=1$ by the definition of the skeleton.
\end{exa}

\subsection{Diagonalization}

\begin{defn}
Let $F$ be an $\bR$-filtration of $V$. 
A free $R$-module basis $s_1,\ldots, s_N$ for $V$
\emph{diagonalizes}  $F$ if
\[
\ord_{F}(a_1 s_1 + \cdots a_N s_N) = \min \{ \ord_{F} (a_1 s_1),\cdots , \ord_F(a_N s_N)
\}
\]
for all $a_1,\ldots, a_N \in R$.
\end{defn}

\begin{rem}\label{r:diag}
It is straightforward to check that the condition 
that a basis $(s_1,\ldots, s_N)$ of $V$ diagonalizes $F$ is  equivalent to each of the following statements:
\begin{enumerate}
\item the associated norm $\| \, \cdot \, \| : = {\rm exp}( - \ord_F(\, \cdot\, ))$ satisfies 
\[
\|  a_1 s_1 + \cdots a_N s_N \| = \max \{ \|a_1\| \|s_1\|, \ldots, \|a_N \| \|s_N\| \}
\]
for any $a_1\,\ldots, a_n \in R$.
\item the equality
\[
F^\la V = \pi^{\max\{ 0 , \lceil \la-\ord_F(s_1)\rceil \}} R s_1 \oplus \cdots \oplus \pi^{\max \{ 0 , \lceil \la-\ord_F(s_N)\rceil \}}R s_N
\]
holds for all $\la \in \bR$.
\end{enumerate}
\end{rem}


\subsection{Multi-graded modules}

\begin{defn}\label{d:multrees}
Let $F_1,\ldots F_r$ be a collection of $\bR$-filtrations of $V$ and $d$ be positive integer.
The \emph{$d$-th multigraded Rees module} of  $F_1,\ldots, F_r$  is 
\[
{\rm Rees}_d(F_1,\ldots, F_r;V):=
\bigoplus_{\la \in \bZ^n} F^{\la/d} V t^{-\la}
\subset V[x_1^{\pm1},\ldots,x_r^{\pm1}], 
,\]
where $F^{\la/d} V:= F_1^{\la_1/d}V \cap \cdots F_r^{\la_r/d}V$ and  $x^{-\la}:=x_1^{-\la_1}\cdots x_r^{-\la_r}$.
It has the structure of a 
\[
R[x_0,x_1,\ldots, x_r]/(x_0x_1^d\cdots x_r^d-\pi)
\]
module where $x_0,\ldots, x_r$ act via
\[
x_0 \cdot  (s x^{-\la}) = \pi s x^{-\la-(d,\ldots, d)} \quad \text{ and } \quad 
x_1^{\mu_1}\cdots x_r^{\mu_r} \cdot (s x^{-\la} )= sx^{-\la+\mu}
\]
for $\mu = (\mu_1,\ldots, \mu_r )\in \bZ^r_{\geq 0}$.
\end{defn}

\begin{defn}\label{d:multgradedDVR}
Let $F=(F_j)_{j\in J}$ be a collection of $\bR$-filtrations of $V$.
The \emph{associated multigraded vector space} of $F$ is 
\[
{\rm gr}_{F} (V)
:= 
\bigoplus_{\la \in \bR^J} {\rm gr}_{F}^\la (V)
:= 
\bigoplus_{\la \in \bR^J} 
F^{\la }V/ (F^{>\la} V + \pi F^{\la-(1,\ldots, 1)}V)
,\]
where 
\[
F^{\la} V =\bigcap_{j \in J} F_j^{\la_j} V
\quad \text{ and } \quad 
F^{>\la }V= \sum_{\mu> \la} F^{\mu } V.
,\]
where  we write that $\la , \mu\in \bZ^J$ satisfies $\mu \geq \la$ if $\mu_j \geq \la_j$ for all $j\in J$ and strict inequality holds for some $j$. Using that $\pi F^\la V \subset F^{>\la}V$, we see that ${\rm gr}_{F}(V)$ has the structure of a $\kappa$-vector space.
When $J= \{1,\ldots, r\}$, we write ${\rm gr}_{F_1,\ldots, F_r}(V)$ for ${\rm gr}_{F}(V)$.
\end{defn}

The $d$-th multi-graded Rees module is most relevant  when $F_1,\ldots, F_r$ are all $\tfrac{1}{d}\bZ$-filtrations. 
In this case, we can relate the above two definitions.

\begin{lemma}
\label{l:Reestquotient=gradedDVR}
If $F_1,\ldots, F_r$ are $\frac{1}{d}\bZ$-filtrations of $V$, then there is an isomorphism of  $\kappa$-algebras
\[
{\rm Rees}_d(F_1,\ldots, F_r;V)/ (x_0,x_1,\ldots, x_r){\rm Rees}_d(F_1,\ldots, F_r;V) \overset{\cong}{\longrightarrow} {\rm gr}_{F_1,\ldots, F_r} V
\]
that sends an element $\overline{sx^{-\la}}$ in degree $\la $ to $\overline{s}$ in degree $\la/d$.
\end{lemma}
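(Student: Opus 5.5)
The plan is to imitate the proof of Lemma \ref{l:Reestquotient=graded}: first compute the submodule $(x_0,x_1,\ldots,x_r)\,{\rm Rees}_d(F_1,\ldots,F_r;V)$ one degree at a time, then identify each graded piece of the quotient with the corresponding graded piece of ${\rm gr}_{F_1,\ldots,F_r}V$.

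Fix $\la\in\bZ^r$ and write $e_i$ for the $i$-th unit vector and $\mathbf d=(d,\ldots,d)$. By Definition \ref{d:multrees}, $x_i$ for $1\le i\le r$ sends $F^{(\la+e_i)/d}V\,x^{-\la-e_i}$ into degree $\la$. Likewise $x_0$ sends $F^{(\la-\mathbf d)/d}V\,x^{-\la+\mathbf d}$ to $\pi F^{\la/d-(1,\ldots,1)}V\,x^{-\la}$; here the degree shift $-\mathbf d$ in $\la$ is exactly a shift by $-(1,\ldots,1)$ in $\la/d$. The degree-$\la$ piece of the ideal times the module is therefore
\[
\Big(\textstyle\sum_{i=1}^r F^{(\la+e_i)/d}V+\pi F^{\la/d-(1,\ldots,1)}V\Big)x^{-\la}.
\]
To match Definition \ref{d:multgradedDVR}, we need $\sum_i F^{(\la+e_i)/d}V=F^{>\la/d}V$. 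The inclusion $\subseteq$ is clear. For $\supseteq$, take $\mu>\la/d$. Because every $F_j$ is a $\frac1d\bZ$-filtration, $F^{\mu}V=F^{\lceil d\mu\rceil/d}V$, and $\lceil d\mu\rceil\in\bZ^r$ satisfies $\lceil d\mu\rceil\ge\la$ with at least one strict coordinate, say $i$. Hence $\lceil d\mu\rceil\ge\la+e_i$, so $F^\mu V\subset F^{(\la+e_i)/d}V$.

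The map $\overline{sx^{-\la}}\mapsto\overline s\in{\rm gr}^{\la/d}$ is therefore well defined and injective on each graded piece, and it is clearly surjective onto ${\rm gr}^{\la/d}$. Next, ${\rm gr}^{\nu}_{F}V=0$ whenever $\nu\notin\frac1d\bZ^r$. Indeed, set $\nu'=\lceil d\nu\rceil/d$; if $\nu'\neq\nu$ then $\nu'>\nu$, while $F^{\nu}V=F^{\nu'}V\subset F^{>\nu}V$. So the degreewise isomorphisms assemble into an isomorphism of the full direct sums.

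The quotient is a $\kappa$-vector space, since $\pi=x_0x_1^d\cdots x_r^d$ lies in the ideal. Multiplicativity, which is what the phrase ``$\kappa$-algebras'' refers to, only makes sense when $V$ comes from a graded ring; in that case it is immediate because both sides multiply representatives. The one genuinely delicate point is the equality $\sum_i F^{(\la+e_i)/d}V=F^{>\la/d}V$, which is exactly where the $\frac1d\bZ$ hypothesis is used. Everything else is bookkeeping of how $x_0$ shifts degrees.
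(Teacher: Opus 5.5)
Your proof is correct and follows the paper's argument. You compute the degree-$\lambda$ piece of $(x_0,\ldots,x_r)\,{\rm Rees}_d$ as $\bigl(\sum_i F^{(\lambda+e_i)/d}V+\pi F^{\lambda/d-(1,\ldots,1)}V\bigr)x^{-\lambda}$, and then use the $\frac{1}{d}\bZ$ hypothesis to identify the sum with $F^{>\lambda/d}V$. You also spell out two points the paper leaves implicit, namely the rounding argument for that identity and the vanishing of ${\rm gr}^{\nu}_F V$ for $\nu\notin\frac{1}{d}\bZ^r$, and both are handled correctly.
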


\begin{proof}
Observe that 
\begin{align*}
(x_0,x_1,\ldots, x_r) {\rm Rees}_d(F_1,\ldots, F_r;V) &= \bigoplus_{\la \in \bZ^r} \Big(\sum_{i=1}^r F^{(\la+e_i)/d}V + \pi F^{\la/d -(1,\ldots, 1)}V \Big)x^{-\la}\\
&= 
\bigoplus_{\la \in \bZ^r} \Big(F^{>\la/d}V + \pi F^{\la/d -(1,\ldots, 1)}V\Big)x^{-\la}
\end{align*}
where the second equality uses our assumption that each $F_i$ is a $\tfrac{1}{d} \bZ$-filtration.
Therefore the map is well defined and an isomorphism.
\end{proof}

\subsection{Simultaneous Diagonalization}

\begin{defn}\label{d:simdiag}
A collection $(F_j)_{j \in J}$ of $\bR$-filtrations of $V$ is said to be \emph{simultaneously diagonalizable} if there exists an $R$-module basis $(s_1,\ldots, s_N)$ of $V$  that  diagonalizes  $F_j$ for each $j \in J$.
\end{defn}

We now give a criterion for simultaneous diagonalizability in terms of the multi-graded Rees module and associated multi-graded vector space.

\begin{lem}\label{l:Reesmodflat->diag}
Let $F_1,\ldots, F_r$ be  bounded $\tfrac{1}{d}\bZ$-filtrations of $V$. 
The following are equivalent:
\begin{enumerate}
\item  The  $d$-th multi-graded Rees module
${\rm Rees}_d(F_1,\ldots, F_r;V)$
is  a flat  as a module over 
\[
A^d:=	R[x_0,x_1,\ldots, x_r]/(x_0x_1^d\cdots x_r^d-\pi)
.\]
\item The $\kappa$-vector space ${\rm gr}_{F_1,\ldots, F_r}(V)$ has dimension $N$. 
\item  The filtrations $F_1,\ldots, F_r$ are simultaneously diagonalizable.
	\end{enumerate}
Furthermore if the above equivalent conditions hold, then a collection of elements $s_1,\ldots, s_N$ of $V$ are a diagonalizing basis for $F_1,\ldots, F_r$ if and only if $\overline{s}\in {\rm gr}_{F_1,\ldots,F_r}^{\la^i}(V)$ form a basis for $\gr_{F_1,\ldots, F_r}(V)$, where $\la^i:= (\ord_{F_1}(s_i),\ldots, \ord_{F_r}(s_r) ) \in \bR^r$. 
\end{lem}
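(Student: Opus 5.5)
The plan is to mirror the proof of Proposition \ref{p:Reesflat=diagtriv}, replacing $\bk[x_1,\ldots,x_r]$ by $A^d$ and using Lemma \ref{l:Reestquotient=gradedDVR} in place of Lemma \ref{l:Reestquotient=graded}. Write $M:={\rm Rees}_d(F_1,\ldots,F_r;V)$ and $\cV$ for the associated quasi-coherent sheaf on $\Spec A^d$. The $\bZ^r$-grading gives a $\bT:=\bG_m^r$-action on $\Spec A^d$ (with $x_0$ of weight $(-d,\ldots,-d)$) for which $\cV$ is equivariant. The unique closed $\bT$-orbit lying in every nonempty $\bT$-invariant closed subset should be the point ${\bf 0}=V(\pi,x_0,\ldots,x_r)$.

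I will do the following in order.

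\emph{Step 1: $M$ is finitely generated.} Since $V$ has finite rank and each $F_i$ is bounded, $F_i^{\la}V=V$ for $\la\ll0$ and $F_i^{\la+1}V=\pi F_i^\la V$ for $\la\gg0$. Hence $M$ is generated by the finitely many graded pieces with $\la$ in a bounded box. The pieces above the box are obtained by multiplying by $x_0$ and monomials in the $x_i$; this uses condition (5) to identify $\pi F^{\la/d}$ with the relevant intersections. I expect this to be the main obstacle: in several variables the intersection $\bigcap_j F_j^{\la_j/d}$ must be handled when only some coordinates are large. I would first try to reduce to showing that $F^{\mu+(1,\ldots,1)}V=\pi F^{\mu}V$ once $\max_j\mu_j\gg0$. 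This follows from (5) applied to the coordinate that is large, which forces membership in $\pi V$, together with boundedness.

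\emph{Step 2: generic rank and fibers.} On $D(x_1\cdots x_r)$ the localization $M[x_i^{-1}]$ is $V\otimes_R R[x_1^{\pm1},\ldots,x_r^{\pm1}]$ with $x_0=\pi x^{-d}$. This is free of rank $N$, and it is dense, since $A^d$ is reduced and its irreducible components (the $x_i=0$ divisors, each mapping into $\pi=0$) are visible. By Lemma \ref{l:Reestquotient=gradedDVR}, $\cV({\bf 0})\cong{\rm gr}_{F_1,\ldots,F_r}(V)$.

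\emph{Step 3: (1)$\Leftrightarrow$(2).} Since $A^d$ is reduced and Noetherian and $\cV$ is coherent, flatness is equivalent to constant fiber dimension. The fiber dimension is $N$ on the dense open set from Step 2, and by semicontinuity the jump locus is a closed $\bT$-invariant set. Such a set contains ${\bf 0}$ if it is nonempty, which gives the equivalence. One should check that $A^d$ is in fact reduced; it is a normal crossing over a regular ring, since $x_0x_1^d\cdots x_r^d-\pi$ is part of a regular system of parameters. Possibly $A^d$ is even regular, but for $d>1$ it is not reduced along $x_i=0$ in the special fiber. That is harmless, because $A^d$ itself is regular, being $R[x]$ modulo an element with $\pi$-linear term; this makes flatness equivalent to constant fiber rank for finite modules.

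\emph{Step 4: (3)$\Rightarrow$(2) and (2)$\Rightarrow$(3), plus the final claim.} Given a diagonalizing basis $s_i$, Remark \ref{r:diag}(2) shows $M=\bigoplus_i A^d\, s_ix^{-d\la^i}$, so $M$ is free. Conversely, choose lifts $s_i$ of a basis of $\gr$ with homogeneous degrees $\la^i$. By Nakayama, the $s_ix^{-d\la^i}$ generate $\cV$ near ${\bf 0}$, and the generation locus is open and $\bT$-invariant, hence everything. Thus they generate $M$, so $F^{\la}V$ is spanned over $R$ by the $\pi^{a}s_i$ with the expected exponents. Restricting to $x=1$ shows that the $s_i$ span $V$, and by rank they form a basis. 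The displayed description of $F^\la V$ is exactly diagonalization, and these arguments also give both directions of the final statement.
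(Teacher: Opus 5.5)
Your proposal is correct and follows essentially the same route as the paper: finite generation from boundedness, then the $\bT$-equivariant coherent sheaf on $\Spec A^d$, which has generic rank $N$ on $D(x_1\cdots x_r)$ and fiber $\gr_{F_1,\ldots,F_r}(V)$ at ${\bf 0}$; semicontinuity plus $\bT$-invariance give (1)$\Leftrightarrow$(2), and Nakayama plus $\bT$-invariance of the generation locus give (2)$\Rightarrow$(3). The one difference is cosmetic: you get (3)$\Rightarrow$(1) from freeness of the Rees module, where the paper computes $\gr$ directly. One correction: $D(x_1\cdots x_r)$ is dense because $A^d$ is an integral domain (the element $x_1^d\cdots x_r^d\,x_0-\pi$ is primitive of degree one in $x_0$ over $R[x_1,\ldots,x_r]$, hence irreducible), not because the loci $x_i=0$ are irreducible components---they are not, and if they were, $D(x_1\cdots x_r)$ would fail to be dense.
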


\begin{proof}
We first verify that ${\rm Rees}_d(F_1,\ldots, F_r;V)$ is a finitely generated $A$-module.
First observe that 
$F_i^{-\la} V = V$ and $F_i^\la V = \pi F_i^{\la-1}$ for $\la \gg 0$,
where the second equality uses the assumption that $F$ is bounded.
Therefore ${\rm Rees}_d(F_1,\ldots, F_r;V)$ is generated by a finite collection of its summands as an $A$-module. 
As each summand $F^{\la/d}Vx^{-\la}$ is a submodule of a finite rank free $R$-module, each summand is a  finite rank free $R$-module. 
Therefore ${\rm Rees}_d(F_1,\ldots, F_r;V)$ a finitely generated $A$-module.

Next, let  $S:= \Spec(A^d)$. Write $\cV $ for the coherent $\cO_S$-module associated to the $A^d$-module ${\rm Rees}_d(F_1,\ldots, F_r;V)$. 
Note that $A$ has the structure of a $\bZ^r$-graded ring, where $R$ has weight $0$, and $x_0,x_1,\ldots, x_r$ have weights $(-d,\ldots, -d), e_1,\ldots, e_r$. The $\bZ^r$-grading on ${\rm Rees}_{d}(F_1,\ldots, F_r;V)$ endows it with the structure of a graded module over $A^d$.
These structures induce a $\bT:=\bG_m^r$-action on $S$ and a $\bT$-linearization of $\cV$.
Note that 
\begin{align*}
		\cV \otimes k(0) &\cong
		{\rm Rees}_d(F_1,\ldots, F_r;V) /(x_0,\ldots, x_r) {\rm Rees}_d(F_1,\ldots, F_r;V) \\
		&\cong  
		{\rm gr}_{F_1,\ldots, F_r}(V).
\end{align*}
by Lemma \ref{l:Reestquotient=gradedDVR}. Additionally, 
\[
H^0(D(x_1\cdots x_r),\cV)
\cong
{\rm Rees}(F_1,\ldots,F_r) \otimes_{A^d}  A^d_{x_1\cdots x_r} \cong V [x_1^{\pm1}, \ldots, x_r^{\pm1}],
\]
which is a free module of rank $N= {\rm rk}(V)$ over $A^d_{x_1\cdots x_d}\cong A^d[x_1^{\pm1},\ldots, x_r^{\pm1}]$. 
As $\cV$ is coherent and $\cV_{D(x_1\cdots x_r)}$ is a free of rank $N$,  statement (1) holds if and only if $\dim \cV(s) \leq n$  (or equivalently,  $\dim \cV(s) =n$) for all $s\in S$.
As the locus   $\{ s\in S \, \vert\, \dim(\cV(s)) >n\}$ is a $\bT$-invariant closed subset of $S$, this is equivalent to the statement that $\dim \cV(0)=N$.
Therefore (1) and (2) are equivalent.

We now show that (2) implies (3).
Assume that $\dim \cV(0)=N$.
Then  ${\rm gr}_{F_1,\ldots, F_r}(V)$ is generated by some elements $(\overline{s}_1,\ldots,\overline{s}_N )$ in degrees $\la^1/d,\ldots, \la^N/d$, where  $\la^i\in \bZ^r$ and  $s_i \in F^{\la^i/d}V$. 
Consider the corresponding elements 
\[
s_1 x^{-\la^1},\ldots, s_N x^{-\la^N} \in {\rm Rees}_d(F_1,\ldots, F_r;V)
.\]
Consider the subset of $S$ consisting of $s\in S$ at which these sections generate $\cV(s)$.
As this is a $\bT$-invariant open subset of $S$ and contains $0$, the subset equals $S$. Hence these elements generate ${\rm Rees}_d(F_1,\ldots, F_r;V)$ as an $A$-module.
Thus
\[
F^\la V x^{-\la}  =   \pi^{\max\{0, \la -\la^1\}} Rs_1 x^{-\la^1}  \oplus  \cdots \oplus  \pi^{\max\{0, \la-\la^N\}} Rs_N x^{-\la^n}
.\]
Therefore $s_1,\ldots, s_N$ diagonalize $F_1,\ldots, F_r$ by Remark \ref{r:diag}, which proves that (2) implies (3) and the reverse implication of the last statement.

It remains to check that (3) implies (2).  
If  the filtrations  are simultaneously diagonalizable, then we may fix an $R$-module basis $(s_1,\ldots, s_N)$ of $V$ that diagonalizes $F_1,\ldots ,F_r$.
Observe that 
\[
F_j^\la V_m =  \pi^{\mu_{1j}} Rs_1 \oplus \cdots \oplus \pi^{\mu_{Nj}} Rs_N
\]
for any $\la \in \bR$, where $\mu_{ij} =\max\{\ord_{F_j}(s_i) - \la ,0\}$. In this case, we can explicitly compute that $\overline{s}_1,\ldots,\overline{s}_N$ form a basis for ${\rm gr}_{F_1,\ldots, F_r}(V)$.
Thus $\dim \cV(0)=N$ as desired. Thus (3) implies (2) holds and so does the forward implication of the last statement.
\end{proof}

\begin{lem}\label{l:uniquenessbasisDVR}
Let $(F_j)_{j \in J}$ be a collection of $\bR$-filtrations of $V$ and $s_1,\ldots, s_N$ be a basis of $V$. 
Let 
\[
\ord_{F}(s_i) = (\ord_{F_j}(s_i))_{j\in J} \in \bR^{J}
.\]
If $s_1,\ldots, s_N$ simultaneously diagonalizes $(F_j)_{j\in J}$, then 
\[
{\rm card}\{ s_i \,\vert\, \ord_{F}(s_i) = \la \} = \dim {\rm gr}_{F}(V)
\]
for any $\la \in \bR^J$.
\end{lem}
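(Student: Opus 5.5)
The plan is to follow the proof of Lemma \ref{l:diagbasisords}, replacing spans over $\bk$ by $R$-lattices, and to compute the graded piece ${\rm gr}^\la_F(V)$ one basis vector at a time. Fix the simultaneously diagonalizing basis $s_1,\ldots,s_N$ and write $\la^i := \ord_F(s_i)\in \bR^J$.

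\emph{Step 1: describe each $F^\la V$ in the basis.} By Remark \ref{r:diag}.2, for each $j\in J$ and $\la_j\in\bR$,
\[
F_j^{\la_j}V=\textstyle\bigoplus_{i} \pi^{\max\{0,\lceil \la_j-\la^i_j\rceil\}}Rs_i .
\]
All these submodules are ``monomial'' in the same basis, so their intersection is the direct sum of the intersections of the coefficient ideals. This gives
\[
F^\la V=\textstyle\bigoplus_i \pi^{n_i(\la)}Rs_i,
\qquad n_i(\la):=\max_{j}\max\{0,\lceil \la_j-\la^i_j\rceil\}.
\]
If $J$ is infinite, $n_i(\la)$ could be infinite and $F^\la V$ would then be a monomial submodule with possibly zero components; this does not affect the argument.

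\emph{Step 2: describe the denominator in the basis.} Sums of monomial submodules are again monomial, so
\[
F^{>\la}V+\pi F^{\la-(1,\ldots,1)}V=\textstyle\bigoplus_i \pi^{a_i}Rs_i,
\qquad a_i:=\min\Bigl(\inf_{\mu>\la} n_i(\mu),\ 1+n_i(\la-(1,\ldots,1))\Bigr).
\]
Consequently
\[
{\rm gr}^\la_F(V)\cong\textstyle\bigoplus_i \pi^{n_i(\la)}R/\pi^{a_i}R .
\]
This module is killed by $\pi$, because $\pi F^\la V\subset F^{>\la}V$. So each summand is either $0$ or $\kappa$, and it is $\kappa$ exactly when $a_i=n_i(\la)+1$.

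\emph{Step 3: case analysis on $n_i(\la)$.}
\begin{enumerate}
\item If $n_i(\la)\geq 1$, then some $j$ has $\lceil\la_j-\la^i_j\rceil\geq 1$. Shifting $\la$ by $-(1,\ldots,1)$ lowers that maximum by exactly one, so $n_i(\la-(1,\ldots,1))=n_i(\la)-1$. Hence $a_i\leq n_i(\la)$ and the summand vanishes.
\item If $n_i(\la)=0$, i.e.\ $\la^i\geq\la$ componentwise, then $a_i\leq 1$. We have $a_i=1$ iff $n_i(\mu)\geq1$ for every $\mu>\la$, i.e.\ iff there is no $\mu>\la$ with $\la^i\geq\mu$. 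Since $\la^i\geq \la$, this happens iff $\la^i=\la$: if $\la^i\neq\la$, take $\mu=\la^i$ itself.
\end{enumerate}
Thus the summand for $s_i$ is $\kappa$ precisely when $\ord_F(s_i)=\la$, and $\dim_\kappa{\rm gr}^\la_F(V)$ equals the number of such $i$. This is the asserted equality, read with ${\rm gr}^\la_F(V)$ on the right, in parallel with Lemma \ref{l:diagbasisords}.

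The main obstacle is Step 1. There I must justify that diagonalization in the sense of the definition (the $\min$-formula with $R$-coefficients) gives the monomial description of each $F^{\la_j}_j V$, including the ceiling exponents. Condition (5) of Definition \ref{d:filtration} enters here: it guarantees $\ord_{F_j}(\pi^k s_i)=k+\la^i_j$. Once that description is in hand, the rest is bookkeeping on exponents. The only further subtlety is the infimum over $\mu>\la$ in Step 3, and it is handled by the choice $\mu=\la^i$.
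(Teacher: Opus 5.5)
Your proof is correct and is essentially the paper's argument. The paper just asserts that a direct computation in the diagonalizing basis shows ${\rm gr}^\la_F(V)$ is spanned by the $\overline{s}_i$ with $\ord_F(s_i)=\la$; your Steps 1--3 carry out that computation, and your direct-sum decomposition also gives the linear independence the paper leaves implicit. You are also right that the right-hand side should read $\dim_\kappa {\rm gr}^\la_F(V)$, as in Lemma \ref{l:diagbasisords}.
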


\begin{proof}
If  $s_1,\ldots, s_N$ simultaneously diagonalizes $(F_j)_{j\in J}$, then a direct computation shows that  ${\rm gr}_{F}^\la(V)$ is spanned by the elements $\overline{s}_i $ such that $\ord_{F}(s_i) = \la$. 
Thus the statement holds.
\end{proof}

\section{Degenerations of boundary polarized CY pairs}\label{s:bpcys}

In this section, we discuss degenerations of boundary polarized CY pairs over DVRs and certain multi-degenerations that relate them.
\medskip

Throughout, let $R$ be a DVR essentially of finite type over $\bk$.\footnote{The essentially of finite type assumption is needed to use results from the lc MMP program in \cite{HX13}.}
We write $K:= {\rm Frac}(R)$ for the fraction field, $\pi \in R$ for a fixed uniformizer, and $0 \in \Spec(R)$ for the closed point.

\subsection{Boundary polarized CY pairs over DVRs}

\begin{defn}
A \emph{boundary polarized CY pair $(X,B+D)$  over $R$}  is the data of 
a scheme $X$  that is projective over $R$ and effective $\bQ$-divisors $B$ and $D$ on $X$ such that
\begin{enumerate}
\item $(X,B+D+X_{0,\red})$ is an lc pair, 
\item $K_{X}+B+D+X_{0,\red}\sim_{\bQ}0$, and 
\item  $D$ is relatively ample. 
\end{enumerate}
\end{defn}

\begin{rem}
If $(X,B+D)$ is a boundary polarized CY pair over $R$ and $X_0$ is reduced, then Proposition \ref{p:kollarsmoothslcfam} implies that $(X,B+D) \to \Spec(R)$ is a family of slc pairs. Thus $(X, B+D) \to \Spec(R)$ is a family of boundary polarized CY pairs. 
\end{rem}

Extensions of boundary polarized CY pairs  are related to rational points in the essential skeleton. 
		
\begin{prop}\label{p:EinSKtodegen}
If $(X_K,B_K+D_K)$ is an lc boundary polarized CY pair over $K$ and $v\in {\rm Sk}(X_K,B_K+D_K)(\bQ)$, then there exists a boundary polarized CY pair $(X,B+D)$ over $R$ with an isomorphism 
\[
(X_K,B_K+D_K)\cong (X,B+D)\times_R K
\]
over $K$ such that 
$E:=X_{0,\red}$ is a prime divisor  and  
$ v = (b_E)^{-1} \ord_E$ with $b_E={\rm coeff}_{E}( X_0)$.
\end{prop}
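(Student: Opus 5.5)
Start from an lc model of $(X_K,B_K+D_K)$ given by Proposition \ref{lem:semistable-reduction}.1; call it $(X',B'+D')$, so that $(X',B'+D'+X'_{0,\red})$ is lc and CY over $R$. Since $v\in \Sk(X_K,B_K+D_K)(\bQ)$ is $\bQ$-valued and lies in $\mathrm{LC}(X',B'+D'+X'_{0,\red})$, it is a quasi-monomial valuation with rational weights, and hence divisorial: $v=c\cdot\ord_E$ for some prime divisor $E$ over $X'$ with $A_{X',B'+D'+X'_{0,\red}}(\ord_E)=0$. The normalization $v(\pi)=1$ forces $c=(b_E)^{-1}$ on any model on which $E$ is a divisor. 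By Proposition \ref{p:dltmodexist}, we may take a $\bQ$-factorial dlt modification
\[
(Y,B_Y+D_Y+Y_{0,\red})\to (X',B'+D'+X'_{0,\red})
\]
on which $E$ is a prime divisor. Here $D_Y$ is the pullback of $D'$; it is $\bQ$-Cartier, and its horizontal part is the strict transform of $D_K$.

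Next, run an MMP that contracts every component of $Y_0$ except $E$, mirroring Step 2 of Proposition \ref{p:highranktheta} with $r=1$. Write $Y_0=b_EE+G$, where $G\ge0$ has support equal to the other components. Then
\[
K_Y+B_Y+D_Y+Y_{0,\red}-E\sim_{\bQ,R}-E\sim_{\bQ,R}G/b_E.
\]
Using \cite{HX13}, which applies since $R$ is essentially of finite type, run this MMP over $R$ to a good minimal model $g:Y\dashrightarrow Z$. As in that proof, $G_Z$ is effective, nef over $R$, and its support is contained in the fiber. By the negativity lemma, or equivalently because a vertical effective nef divisor must be a multiple of the full fiber, every component of $G$ is contracted. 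Hence $Z_{0,\red}=E_Z$ is prime, and $(Z,B_Z+D_Z+E_Z)$ remains lc and CY over $R$, because $g$ is crepant for this CY pair.

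Finally, $D_Z$ need not be ample, so take the ample model of $D_Z$ over $R$. This is Step 3 of Proposition \ref{p:highranktheta}. Choose $0\le H_K\sim_{\bQ}D_K$ with $(X_K,B_K+D_K+H_K)$ lc, extend it to $H_Z$, and consider
\[
(Z,B_Z+D_Z+\varepsilon(H_Z+aE_Z-\textstyle\sum a_i E_i)).
\]
For $0<\varepsilon\ll1$ this pair has no lc centers in the fiber and its generic fiber is a good minimal model. Therefore \cite[Theorem 1.1]{HX13} gives a canonical model $h:Z\dashrightarrow X$, on which $D:=h_*D_Z$ is ample over $R$ and which is isomorphic to $X_K$ over $K$. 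Because $h$ is a birational contraction of a pair that is lc and CY over $R$, the pair $(X,B+D+X_{0,\red})$ is lc and CY. We must also check that $h$ does not contract $E_Z$. Since $E_Z$ is the entire fiber, it is numerically trivial over $R$, and a contraction of it would have to be a contraction of the whole fiber onto something of lower dimension, which the projectivity of $h$ over $R$ rules out. Hence $X_{0,\red}=h_*E$ is prime, and $v=(b_E)^{-1}\ord_E$ with $b_E=\mathrm{coeff}_E(X_0)$.

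The main obstacle is the middle step: verifying that the hypotheses of \cite{HX13} hold over a DVR essentially of finite type, which requires dlt and no lc centers in the appropriate locus, and checking carefully that the MMP contracts exactly the components of $G$ and no others.
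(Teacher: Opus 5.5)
Your proposal is correct and is essentially the paper's proof. The steps match: an lc model, a $\bQ$-factorial dlt modification extracting $E$, the $(K_Y+B_Y+D_Y+Y_{0,\red}-E)$-MMP from \cite{HX13} contracting the other fiber components, and then the canonical model of $K_Z+B_Z+D_Z+\varepsilon H_Z$ to make $D$ ample. The extra term $aE_Z-\sum a_iE_i$ is superfluous here, since $E_Z$ is the only vertical divisor and $E_Z\sim_{\bQ}0$.

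There is one bookkeeping slip: do not take $D_Y=f^*D'$. First, $D'$ need not be $\bQ$-Cartier. Second, crepancy would then give $B_Y$, and hence $B$, a negative coefficient along $E$ whenever $E$ is exceptional with center in $\Supp(D')$. Instead, set $B_Y=f^{-1}_*B'$ and let $D_Y$ be the crepant complement, as the paper does. Equivalently, you can discard the vertical part of $D$ at the end, which is harmless because $X_{0,\red}\sim_{\bQ}0$.
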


\begin{proof}
By Lemma \ref{lem:semistable-reduction}, $(X_K,B_K+D_K)$ extends to a projective lc CY pair $(X^{\lc},B^{\lc}+ D^{\rm lc} +X_{0,\red}^{\lc})$ over $R$.
Write $v= (\ord_E(\pi))^{-1} \ord_E$, where $E$ is a divisor over $X$.
Since $\ord_E$ has log discrepancy $0$ along $(X^{\rm lc},B^{\lc}+D^{\lc}+X^{\rm lc}_{0,\red})$,   \cite[Corollary 1.3.8]{Kol13} implies that there exists a  $\bQ$-factorial dlt modification
\[
f:(Y,B_Y+D_Y+Y_{0,\red}) \to (X^{\rm lc}, B^{\rm lc} +D^{\rm lc}+ X^{\lc}_0)
\]
such that $E$ appears as a prime divisor on $Y$, $B_Y:= f_*^{-1} B^{\lc}$, and $D_Y$ is the $\bQ$-divisor on $Y$ such that the above morphism of pairs is crepant.
			
Since $Y$ is $\bQ$-factorial and $(Y,B_Y+ D_Y+Y_{0,\red})$ is a dlt CY pair over $R$,  \cite[Theorem 1.6]{HX13} implies that a 
$K_{Y}+B_Y+D_Y+Y_{0,\red}-E$
-MMP with scaling terminates and produces a birational contraction to a good minimal model
\[
g:Y \dashrightarrow Z
\]
Write $B_Z:= g_* B_Y$, $D_{Z} : =g_* D_Y$, $E_Z:= g_* E$.
Since
\[
K_{Y}+B_Y+D_Y+Y_{0,\red}-E\sim_{\bQ} -E\sim_{\bQ}
(b_E)^{-1}Y_0
- E,
\]
the MMP can only contract divisors in $\Supp(Y_0-b_E E)$. 
Furthermore, since $-E_Z$ is nef over $R$,  the MMP must contract all divisors in $\Supp(Y_0- b_E E)$. 
Therefore $E_Z=Z_{0,\red}$. 
			
We now seek to construct a model on which $D_Z$ is ample. 
Since $(Y, B_Y+D_Y+Y_{0,\red} )$ is an lc CY pair and $g$ is a birational contraction, $(Z,B_Z+D_Z+Z_{0,\red})$ is an lc CY pair.
Since $D_{K}$ is ample and $(X_K,B_K+D_K)$ is lc, Bertini's Theorem implies that there exists an effective $\bQ$-divisor $H_K \sim_{\bQ}D_K$ such that $(X_K,B_K+D_K+H_K)$ is lc. In particular, $\Supp(H_K)$ does not contain lc centers of $(X_K,B_K+D_K)$. 
Let $H_Y$ denote the closure of  $f_K^* H_K $ in $Y$ and $H_{Z}: = g_* H_Y$.
Thus 
$(Y, B_Y+D_{Y}+ H_{Y}+Y_{0,\red})$ is lc away from $Y_0$.
Since $g$ is an isomorphism away from $Y_0$
$
(Z,  B_Z+D_{Z}+ H_{Z} +Z_{0,\red})
$
is lc away from $Z_0$. 
As $(Z, B_Z+D_{Z} +Z_{0,\red})$ is lc, $(Z,B_Z+D_Z)$ has no lc centers contained in $Z_0$. 
Thus
there exists $0<\varepsilon \ll1$ such that
\[
(Z, B_Z+D_{Z} +\varepsilon H_{Z} )
\]
is lc. 
Note that there is an isomorphism  and crepant morphism 
\[
(Z, B_Z+D_{Z}+ \varepsilon H_{Z})_K
\cong (Y, B_Y+D_{Y}+ \varepsilon H_Y)_K
\to 
(X_K,B_K+ D_K+ \varepsilon H_K)
,\]
where the right most pair is a canonical model.
Therefore \cite[Theorem 1.1]{HX13} implies that there exists a birational contraction
$h:Z\dashrightarrow X$
to a $K_{Z} +B_Z+ D_Z+ \varepsilon H_Z$-canonical model over $\Spec(R)$. 
Set $B:= h_*B_Z$, $D := h_* D_{Z}$ and $H:= h_* H_{Z}$. 
As $Z_K \to X_K$ is the morphism to a canonical  model, the birational map $(X^{\rm lc},B^{\lc}+D^{\lc})\dashrightarrow (X,B+D)$ is an isomorphism over $K$. As $Z_0 = b_E E_Z$, which is irreducible, $Z\dashrightarrow X$ does not contract $E_Z$. 
Thus $X_{0,\red}$ is irreducible and  $(b_E)^{-1}\ord_{X_{0,\red}} = v$.
			
It remains to verify that $(X,B+D)$ is a boundary polarized CY pair over $R$. Since $(Z,B_Z+D_Z+Z_{0,\red})$ is an lc CY pair and $h$ is a birational contraction, $(X,B+D+X_{0,\red})$ is an lc CY pair. 
Since $H\sim_{\bQ} D$ over $X\setminus X_0$ and  $X_0$ is a prime divisor with $X_0 \sim_{\bQ}0$,  $H\sim_{\bQ} D$. 
Therefore 
\[
\varepsilon D\sim_{\bQ}
h_* (K_Z+D_Z + \varepsilon H)
.\]
Since the last term is relatively ample by construction,  $D$ is relatively ample. 
\end{proof}

\subsection{Filtrations}\label{sss:bpcyfiltrations}

\begin{defn}\label{d:filtbpcy/R}
Let $(X,B+D)$ and $(X',B'+D')$ be boundary polarized CY pairs over $R$ with an isomorphism 
\[
(X_K,B_K+D_K) \cong (X'_K,B'_K+D'_K)
\]
over $K$.
For each integer $m>0$ such that $mD_K$ is integral and $\la \in \bR$, we define an $\bR$-filtration $F$  of $H^0(X,mD)$ by 
\[
F^\la H^0(X,mD) := H^0(X,mD) \cap H^0(X',mD'-  \la X_0    )
\]
for $\la \in \bR$, 
where the intersection is taken by viewing the terms as sub-modules of $H^0(X_K,mD_K)$.
\end{defn}

This filtration is an analogue of the filtration induced by a test configuration.  Similar to the test configuration case, it has a valuative interpretation.

\begin{lem}\label{l:inducedfilt}
Keep the notation from Definition \ref{d:filtbpcy/R}. The $\bR$-filtration filtration $F$ satisfies
\[
F^\la H^0(X,mD)
=\bigcap _{E\subset X'_0}\{ s\in H^0(X,mD)\, \vert\, (b_E)^{-1}\ord_{E}(s)\geq \la \},
\]
where $b_E : = {\rm coeff}_E(X'_0)$
Furthermore, the filtration is bounded and a $\tfrac{1}{d}\bZ$-filtration, where $d:={\rm lcm}(b_E \, \vert\, E\subset X'_0)$
\end{lem}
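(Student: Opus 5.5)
The plan is to unwind Definition \ref{d:filtbpcy/R} on the normal scheme $X'$, reading $X_0$ there as $X'_0$, since the subtracted divisor must live on $X'$. We regard $H^0(X,mD)$, $H^0(X',mD')$ and $H^0(X_K,mD_K)$ as $R$-, resp.\ $K$-, submodules of $K(X)$, with $s$ identified with the rational function $f$ satisfying $\mathrm{div}(f)+mD\geq 0$. Because $X'$ is normal, $s\in H^0(X',mD'-\la X'_0)$ if and only if
\[
\ord_P(s)+m\,\mathrm{coeff}_P(D')-\la\,\mathrm{coeff}_P(X'_0)\geq 0
\]
for every prime divisor $P$ on $X'$, where $\ord_P(s)$ is taken on the rational function.

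First we handle the horizontal primes $P$. For $s\in H^0(X,mD)$ the condition holds automatically there, because $(X',D')_K\cong(X,D)_K$ and $s$ is a section of $mD_K$ over $X_K$.

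Next we handle the components $E\subset X'_0$. We observe that $D'$ has no component supported on $X'_0$: the pair $(X',B'+D'+X'_{0,\red})$ is lc, so each component of $X'_{0,\red}$ already has coefficient $1$. Any extra coefficient from $D'$ would push it above $1$, contradicting lc. Hence the condition along $E$ reads $\ord_E(s)\geq \la b_E$, which gives the displayed description of $F^\la H^0(X,mD)$.

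The filtration axioms then follow from this description.
\begin{itemize}
\item Conditions (1) and (2) of Definition \ref{d:filtration} hold because the defining inequalities are non-strict in $\la$.
\item Condition (4) holds because a nonzero $s$ has finite $\ord_E(s)$.
\item Condition (3) holds because $V:=H^0(X,mD)$ is finitely generated: $\min_E b_E^{-1}\ord_E$ is bounded below on a finite generating set, and these functions satisfy $\ord_E(a+b)\geq\min$ and $\ord_E(rs)\geq \ord_E(s)$ for $r\in R$.
\item Condition (5) uses $\ord_E(\pi)=b_E$. If $s=\pi t$ with $t\in V$, then $b_E^{-1}\ord_E(s)=b_E^{-1}\ord_E(t)+1$ for every $E$, so $F^{\la+1}V\cap\pi V=\pi F^\la V$.
\end{itemize}
Since every $\ord_E(s)$ is an integer, $\ord_F(s)=\min_E b_E^{-1}\ord_E(s)$ lies in $\tfrac1d\bZ$ with $d=\mathrm{lcm}(b_E)$, so $F$ is a $\tfrac1d\bZ$-filtration.

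The main point is boundedness, which I would prove by comparing lattices. Both $V$ and $V':=H^0(X',mD')$ are finitely generated torsion-free $R$-modules spanning the same $K$-vector space $H^0(X_K,mD_K)$. Hence there is an integer $c\geq 0$ with $V'\subset\pi^{-c}V$.

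Now take $s\in F^\la V$ with $\la\geq k$ for an integer $k$. Then $\ord_E(\pi^{-k}s)\geq 0$ for all $E\subset X'_0$, and by the horizontal step $\pi^{-k}s\in V'$. It follows that $s\in\pi^{k}V'\subset\pi^{k-c}V$. Choosing $k>c$ gives $F^\la V\subset\pi V$ for $\la\gg0$, as required.
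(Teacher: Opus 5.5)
Your proof is correct. For the displayed equality you follow the paper: both arguments unwind $\mathrm{div}_{X'}(s)+mD'-\la X'_0\geq 0$ prime by prime. You also make explicit two points the paper uses silently. First, horizontal primes impose nothing beyond $s\in H^0(X_K,mD_K)$. Second, $D'$ has no component in $X'_0$, because $(X',B'+D'+X'_{0,\red})$ is lc. You further check axioms (1)--(5) of Definition~\ref{d:filtration}, which the paper takes for granted.

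You genuinely diverge on boundedness. The paper invokes \cite[Lemma 3.2]{BLXZ25}, a uniform statement over the whole section ring: roughly, $F^\la V_\ell\subset \pi V_\ell$ once $\la>\ell C$, with $C$ independent of $\ell$. It then transfers this to $V_m$ by taking $r$-th powers. You instead observe that $X'_0=\mathrm{div}(\pi)$ gives $F^kV=V\cap \pi^kV'$ for integers $k$. Comparing the two lattices $V$ and $V'$ inside $H^0(X_K,mD_K)$ then yields $F^kV\subset \pi^{k-c}V$.

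Your route is self-contained and uses only finite generation of $V'$. It also avoids the paper's step of passing from $s^r\in\pi V_{mr}$ back to $s\in\pi V_m$. As written, that step needs a further argument when $X_0$ is non-reduced, for example iterating condition (5) to gain a factor $\pi^r$. Your constant $c$ depends on $m$, which is harmless for this lemma. The paper's approach yields a bound linear in $m$, uniform over the section ring, but the lemma does not use that extra information.
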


In the above formula,  $\ord_{E}(s)$ denotes the valuation of $s$ along $v$, where we view $s\in K(X)$. 

\begin{proof}
Fix $s\in H^0(X,mD)$, which we view as an element of $K(X)$. 
We have that $s\in F^\la H^0(X,mD)$ if and only if 
${\rm div}_{X'}(s)+mD' -  \la X'_0 \geq 0 $. 
The latter holds if and only if 
\[
\ord_{E}(s)  - {{\rm coeff}_{E}}( \la X'_0 ) \geq 0
\]
for each irreducible component $E\subset X'_0$. 
This translates to the condition that  $\ord_{E}(s) \geq  b_E \la$ for all $E\subset X_0'$. Therefore the equality holds.

The equality implies that the $F$ is a $\tfrac{1}{d}\bZ$-filtration (alternatively, this can also be seen from the definition of $F$). 
Furthermore, \cite[Lemma 3.2]{BLXZ25} implies that there exist integers $r,C>0$  such that $F^{\la} V_{\ell r}  \subset \pi V_{\ell}$ for all $\la > \ell C$ and $\ell$ divisible by $r$, where $V_\ell:=H^0(X,\ell D)$.
Now
\[
(F^\la V_m)^r\subset  F^{r\la} V_{mr}\subset \pi V_{mr},
\]
where the last inclusion holds when  $\la> \ell C$. Thus the filtration $F$  on  $V_m$ is bounded.
\end{proof}

\subsection{Base change}\label{sss:basechange}
We now  analyze the base change of a boundary polarized CY pair defined by taking an extension of $R$ that adjoins a root of uniformizer.
Such covers  are needed to make the fiber over $0$ reduced.

Fix an integer $d>0$. We set 
\[
\tR:= R[\pi^{1/d}] = R 1 \oplus  R \pi^{1/d}\oplus \cdots  \oplus R[\pi^{(d-1)/d}]
,\]
which is a DVR essentialy of finite type over $\bk$, and $\tK := {\rm Frac}(\widetilde{R})= K [\pi^{1/d}]$.
There is a natural $\mu_d$-action on $\Spec(\tR)$ such that $\xi \in \mu_d(\bk)\subset \bk^\times$ acts on $\pi^{i/d} R$ as multiplication by $\xi^i$.

Given an lc boundary polarized CY pair $(X_K,B_K+D_K)$ over $K$, we write 
\[
(X_{\tK},B_{\tK}+D_{\tK}): = (X_{K},B_K+D_K)\times_K  \tK
,\]
which is an lc boundary polarized CY pair over $\widetilde{K}$ with a $\mu_d$-action. 
By the flat base change theorem, there are natural $\mu_d$-equivariant isomorphisms
\begin{equation}\label{e:isomH^0mD_K}
H^0(X_{\tK}, mD_{\tK}) \cong 
H^0(X_{K},mD_{K}) \times_{K} \tK 
\cong 
\bigoplus_{i=0}^{d-1} \pi^{i/d} H^0(X_K,mD_K)
\end{equation}
where $\xi \in \mu_d(\bk)\subset \bk^\times$ acts on the summand 
$\pi^{i/d} H^0(X_K,mD_K) $ by multiplication by $\xi^i$.

Given an lc boundary polarized CY pair $(X,B+D)$ over $R$, we define a lc boundary polarized CY pair $(\tX,\tB+\tD)$ over $\widetilde{R}$ as follows.
Let $\rho$
 denote  the composition 
\[
\begin{tikzcd}
\tX\arrow[r,"n"]\arrow[rr,bend left,"\rho"] & X\times_ R \tR \arrow[r]& X 
\end{tikzcd}
\]
of the base change and the normalization morphism.
We define $\bQ$-divisors 
by 
\[
\tB := \rho^* B \quad \text{ and } \quad 
\tD:= \rho^* D
\]
 as in \cite[2.40]{Kol13}.
Since $\mu_d$ is a normal group scheme, the $\mu_d$-action on $X\times_R R'$ lifts to a $\mu_d$-action on $\tX$ that fixes $\tB$ and $\tD$.
We call $(\tX,\tB+\tD)$ the base change of $(X,B+D)$ with respect to $R\hookrightarrow \tR$.
		
\begin{lem}\label{l:basechangebpcy}
If $(X,B+D)$ be a boundary polarized CY  pair over $R$, then base change $(\tX,\tB+\tD)$  is a boundary polarized CY pair over $\tR$.
Furthermore, 
\begin{enumerate}
\item the isomorphism  \eqref{e:isomH^0mD_K} restricts to an  isomorphism
\[
H^0(\tX, m\tD) \cong  \bigoplus_{i=0}^{d-1}  \pi^{i/d} H^0(X,mD- (i/d) X_0)
\]	
that is  $\mu_d$-equivariant, where $\xi \in \mu_d(\bk)\subset\bk^\times$ acts on the right as multiplication by $\xi^i$ on the $i$-th summand.
	
\item If all coefficients of $X_0$ divide $d$, then  $\tX_0$ is reduced. In particular, $(\tX,\tB+\tD)\to \Spec(\tR)$ is a family of boundary polarized CY pairs. 
\end{enumerate}
		\end{lem}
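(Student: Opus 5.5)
We treat the three assertions in turn: the pair is a boundary polarized CY pair, statement (1), and statement (2).

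\emph{Step 1: $(\tX,\tB+\tD)$ is a boundary polarized CY pair over $\tR$.} Since $\rho$ is finite, $\tD=\rho^*D$ is relatively ample over $\tR$. Next we use ramified base change for log canonical pairs. The morphism $\Spec \tR\to\Spec R$ is a cyclic cover totally ramified along $0$ with index $d$, so by \cite[2.41--2.42]{Kol13} the Hurwitz formula gives
\[
K_{\tX}+\tB+\tD+\tX_{0,\red}=\rho^*(K_X+B+D+X_{0,\red}).
\]
Here we use that $\rho^*X_{0,\red}$ has coefficients $1/e$ along components of $\tX_0$ with ramification index $e$, and that the ramification correction exactly accounts for this. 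In particular the left side is $\bQ$-linearly trivial. The lc property pulls back under finite morphisms with this crepant formula \cite[Corollary 2.43]{Kol13}, so $(\tX,\tB+\tD+\tX_{0,\red})$ is lc. Finally, $\tX$ is normal and projective over $\tR$.

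\emph{Step 2: statement (1).} Because $\tX$ is normal, a section of $m\tD$ on $\tX_{\tK}$ extends over $\tX$ if and only if it has nonnegative order along every component $\tE$ of $\tX_0$. Each such $\tE$ lies over a component $E$ of $X_0$, and
\[
\ord_{\tE}=e_E\,\ord_E \text{ on } K(X), \qquad \ord_{\tE}(\pi^{1/d})=e_E\,b_E/d,
\]
where $e_E=\ord_{\tE}(\pi)/b_E$. Write an element in the form $\sum_i\pi^{i/d}s_i$ with $s_i\in H^0(X_K,mD_K)$ using \eqref{e:isomH^0mD_K}. The $\mu_d$-action separates the summands, because $\mu_d$ acts on $\tX$ and on $\tD$ and the isotypic components of the invariant subspace $H^0(\tX,m\tD)$ are preserved. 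Hence the section is regular if and only if each $\pi^{i/d}s_i$ is regular. That condition reads
\[
e_E\bigl(\ord_E(s_i)+\ord_E(mD)\bigr)+e_E\,i\,b_E/d\ge 0 \quad\text{for all } E,
\]
which is exactly $s_i\in H^0(X,mD-(i/d)X_0)$. We must also check that $\rho^*D$ has coefficient $e_E\,\mathrm{coeff}_E(D)$ along $\tE$, which holds because $D$ is horizontal. This proves (1).

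\emph{Step 3: statement (2).} The question is local at the generic point of each component $E$. There $\cO_{X,E}$ is a DVR in which $\pi=u t^{b}$, with $b=b_E$ and $u$ a unit. Adjoin $\pi^{1/d}$ and normalize. Since $b\mid d$, we have $(\pi^{1/d})^{d/b}=u^{1/b}\,t\cdot\zeta$ up to roots, so normalization adjoins $u^{1/b}$, which is étale because we are in characteristic $0$ and $u$ is a unit. Then $t':=\pi^{1/d}$ is a uniformizer on each component above $E$, so $\tX_0$ is reduced there. Since reducedness of the fiber is checked at generic points, using that $\tX$ is $S_2$ and $\tX_0$ is Cartier, $\tX_0$ is reduced everywhere. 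The family statement then follows from Step 1 and the remark after the definition, via Proposition~\ref{p:kollarsmoothslcfam}.

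The main obstacle is Step 1, namely the crepant pullback and the lc property under a finite cover that is ramified along $X_0$ while also being non-normal before normalization. The plan is to work at codimension-one points as above, where the coefficient bookkeeping is the same computation used in Step 3, and then invoke \cite[2.43]{Kol13}.
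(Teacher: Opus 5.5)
Your argument follows the paper's proof. The first claim comes from the crepant formula $K_{\tX}+\tB+\tD+\tX_{0,\red}=\rho^*(K_X+B+D+X_{0,\red})$ together with \cite[Corollary 2.43]{Kol13}, and (1) from the $\mu_d$-isotypic splitting followed by a divisor computation.

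The only difference is in (2). The paper cites \cite[Lemma 2.53]{Kol23}, while you check it by hand: at a generic point of $X_0$, adjoining $u^{1/b}$ is \'etale, after which $\pi^{1/d}$ is a uniformizer, and the $S_1$ property of the Cartier divisor $\tX_0$ in the normal $\tX$ gives reducedness. That is correct and self-contained.

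There is a small slip in Step 1. Along a component with ramification index $e$, $\rho^*X_{0,\red}$ has coefficient $e$, not $1/e$. The formula you display is nonetheless right.

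The real problem is the last sentence of Step 2, and you inherited it from the statement itself; the paper's proof makes the identical slip. Since $D$ is horizontal, your inequality says $\ord_E(s_i)+i b_E/d\ge 0$ for every component $E\subset X_0$. Together with $s_i\in H^0(X_K,mD_K)$, this means ${\rm div}_X(s_i)+mD+\tfrac{i}{d}X_0\ge 0$, i.e.\ $s_i\in H^0(X,mD+\tfrac{i}{d}X_0)$ with a plus sign. It is not equivalent to $s_i\in H^0(X,mD-\tfrac{i}{d}X_0)$.

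In fact the minus-sign version is false for $d\ge 2$. Since $D\ge 0$, we have $\pi^{1/d}\in H^0(\tX,m\tD)$, and it lies in the summand $i=1$ with $s_1=1$. But $1\notin H^0(X,mD-\tfrac1d X_0)$, because $D$ has no components in $X_0$.

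So what your computation, and the paper's, actually proves is
$H^0(\tX,m\tD)\cong\bigoplus_{i=0}^{d-1}\pi^{i/d}H^0(X,mD+\tfrac{i}{d}X_0)$, with $\mu_d$ acting by $\xi^i$ on the $i$-th summand. You should state and prove (1) in that form. This does not affect the later uses of the lemma, which only need the weight-zero summand $H^0(X,mD)$, where the sign plays no role.
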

		
\begin{proof} 
Since $(X,B+D+X_{0,\red})$ is an lc pair and CY over $R$ and 
\[
K_{\tX}+\tB+\tD+\tX_{0,\red}
= 
\rho^*(K_X+B+D+X_{0,\red})
,\]
$(\tX,\tB+\tD+\tX_{0,\red})$ is lc by \cite[Corollary 2.43]{Kol13} and CY over $\tR$.
Since $\rho$ is finite and $D$ is ample over $R$, $\tD=\rho^*D$ is relatively ample over $R$. Thus $(\tX,\tB+\tD)$ is a boundary polarized CY pair over $\tR$.

For statement (1), fix $s=\sum_{i=0}^{d-1} \pi^{i/d}s_i$, where $s_i \in H^0(X_K,mD_K)$. Using the isomorphism \eqref{e:isomH^0mD_K}, we may view $s$ as an element of $H^0(\tX_{\tK},m\tD_{\tK})$. 
By the $\mu_d$-action on $H^0(\tX,m\tD)$, $s\in H^0(\tX,m\tD)$ if and only if $\pi^{i/d}s_i \in H^0(\tX,m\tD)$ for all $0 \leq i \leq d-1$.
Since 
\[
{\rm div}_{\tX}(\pi^{i/d}s_i)= i\tX_0+ \rho^* {\rm div}_{X}(s_i)
\]
 and $\rho^* X_0 = d \tX_0$, we see that $\pi^{i/d} s_i\in H^0(\tX,m\tD)$ if and only if $s_i \in H^0(X,mD-(i/d)X_0)$. 
Thus the isomorphism in (1) holds. Statement (2) follows immediately from \cite[Lemma 2.53]{Kol23}.
\end{proof}

We now analyze the filtration in Section \ref{sss:bpcyfiltrations} under base change.

\begin{lem}\label{l:basechangefilt}
Let  $(X,B+D)$ and $(X',B'+D')$  be  boundary polarized CY pairs over $R$ with an isomorphism 
$(X_K,B_K+D_K)\cong (X'_K,B'_K+ D'_K)$
 over $K$.
Write 
\[
(\tX,\tB+\tD)\quad \text{and } \quad (\tX',\tB'+\tD')
\]
 for their base changes via $R\hookrightarrow \tR := R[\pi^{1/d}]$ and  $F$ and $\tF$ for the filtrations of 
$H^0(X,mD)$ and $H^0(\tX,m\tD)$
induced by $(X',B'+D')$ and $(\tX',\tB'+\tD')$.
Then the induced filtration $\tF$ of $H^0(\tX,m\tD)$ is $\mu_d$-invariant and the isomorphism  in Lemma \ref{l:basechangebpcy} sends the weight 0-part of $\tF^\la H^0(\tX,m\tD)$ to  $F^{\la/d} H^0(X,mD)$. 
\end{lem}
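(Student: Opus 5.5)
We will work entirely inside $H^0(X_{\tK},m D_{\tK})\cong \bigoplus_{i=0}^{d-1}\pi^{i/d}H^0(X_K,mD_K)$ from \eqref{e:isomH^0mD_K} and compare the defining conditions of $\tF$ and $F$ divisor by divisor. First, $\mu_d$-invariance: by Definition \ref{d:filtbpcy/R},
\[
\tF^\la H^0(\tX,m\tD)=H^0(\tX,m\tD)\cap H^0(\tX',m\tD'-\la \tX_0)
\]
(with $\tX_0$ read as $\tX'_0$ on $\tX'$, as in Lemma \ref{l:inducedfilt}). Both $\tX$ and $\tX'$ carry $\mu_d$-actions fixing $\tD,\tD'$. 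The divisor $\tX'_0=\tX'_{0,\red}$-multiple is the pullback of the closed point of $\Spec \tR$, which $\mu_d$ fixes. Moreover, the identification $\tX_{\tK}\cong\tX'_{\tK}$ is the base change of the given isomorphism over $K$, hence $\mu_d$-equivariant. Consequently both modules in the intersection are $\mu_d$-stable subspaces of $H^0(X_{\tK},mD_{\tK})$, and so is $\tF^\la$.

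For the weight-$0$ part, the isomorphism of Lemma \ref{l:basechangebpcy}(1) identifies the weight $0$ part of $H^0(\tX,m\tD)$ with $H^0(X,mD)$ (the $i=0$ summand), i.e. with $\mu_d$-invariant sections $s\in H^0(X_K,mD_K)$ viewed via $\rho^*$. It then remains to show that for such $s\in H^0(X,mD)$ one has $s\in H^0(\tX',m\tD'-\la\tX'_0)$ iff $s\in H^0(X',mD'-(\la/d)X'_0)$.

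Write $\rho':\tX'\to X'$. Since $\rho'^*X'_0=d\,\tX'_0$, the condition becomes
\[
{\rm div}_{\tX'}(\rho'^*s)+m\rho'^*D'-(\la/d)\rho'^*X'_0\ge0 .
\]
This equals $\rho'^*\big({\rm div}_{X'}(s)+mD'-(\la/d)X'_0\big)$, and for a finite surjective morphism of normal schemes a $\bQ$-divisor $G$ is effective iff $\rho'^*G$ is effective. Here every prime divisor of $\tX'$ maps onto a prime divisor of $X'$ with positive ramification index, and every prime divisor of $X'$ is dominated by some prime divisor of $\tX'$. This gives the equivalence, hence weight-$0$ part of $\tF^\la$ equals $F^{\la/d}$.

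Alternatively, one can check this valuatively via Lemma \ref{l:inducedfilt}. Each component $\tE\subset\tX'_0$ lies over some $E\subset X'_0$ with $\ord_{\tE}\circ\rho'^*=e\,\ord_E$, and $b_{\tE}=e\,b_E/d$, so that $(b_{\tE})^{-1}\ord_{\tE}(s)=d\,(b_E)^{-1}\ord_E(s)$.

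The main point needing care is this ramification bookkeeping (and the $\bQ$-divisor pullback convention of \cite[2.40]{Kol13} for non-Cartier $D'$). I expect the effectivity-under-finite-pullback argument to handle it cleanly, since $\rho'$ is finite and both sides are normal.
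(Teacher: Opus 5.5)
Your proposal is correct and follows the paper's proof: $\mu_d$-invariance comes from the compatibility of the two $\mu_d$-actions over $\tK$, the weight-zero elements are identified with $\rho^*s$ for $s\in H^0(X,mD)$, and the divisor conditions are compared using $\rho'^*X'_0=d\,\tX'_0$ and $\mathrm{div}_{\tX'}(\rho'^*s)=\rho'^*\mathrm{div}_{X'}(s)$. You also supply details the paper's write-up omits: you justify that effectivity is detected after pullback by the finite surjective $\rho'$ between normal schemes, you keep the $m\tD'$ term, and you read the $X_0$ in Definition \ref{d:filtbpcy/R} as $X'_0$.
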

		
\begin{proof}
Since the $\mu_d$-action on $(\tX',\tB'+\tD')$  agrees with the $\mu_d$-action on $(\tX,\tB+\tD)$ over $K$, the filtration $\tF$ of $H^0(\tX,m\tD)$ is $\mu_d$-invariant. In particular, 
\[
\tF^{\la} H^0(\tX,m\cD) = \bigoplus_{i=0}^{d-1} (\tF^\la H^0(\tX,m\tD)\cap H^0(\tX,m\tD)_0 )
\]
Now fix an element  $\widetilde{s} \in H^0(\tX,m\tD)$ of pure weight 0. 
By   \eqref{e:isomH^0mD_K}, there exists $s\in H^0(X,mD)$ such that $\widetilde{s}= \rho^*s$. 
Now $\widetilde{s} \in F^\la H^0(\tX, m\tD)$ if and only if 
\[
{\rm div}_{\tX'}(\widetilde{s})    \geq \la \tX'_0
.\] 
Since ${\rm div}_{\tX'} (\tilde{s}) =\rho'^* {\rm div}_{X'}(s)$ and $\rho'(X'_0 )= d \tX'_0$, where $\rho': \tX' \to X' $ is the composition of the projection and normalization, 
the latter holds if and only if  
\[
{\rm div}_X(s) \geq (\la/d) X'_0
,\] 
which is equivalent to the condition that $s\in F^{\la/d} H^0(X,mD)$ as desired. 
\end{proof}
		
\subsection{Torus actions}
	
The next lemma will be useful when considering boundary polarized CY pairs that are projectivized cones over CY pairs polarized by an ample line  bundle.

\begin{lem}\label{l:bpCYTaction}
If $(X,B+D)$ is a boundary polarized CY pair over $R$ and there exists a $\bT:=\bG_m^r$-action on $(X_K,D_K)$, then it extends to a $\bT:=\bG_m^r$-action on $(X,B+D)$ fixing $R$. 
\end{lem}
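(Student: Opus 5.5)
The plan is to extend the $\bT$-action birationally and show that the induced birational self-maps of $X$ are automorphisms. We use the fact that $D$ is relatively ample and that $(X,B+D+X_{0,\red})$ is lc and CY over $R$. The statement says the action is on $(X_K,D_K)$. We will in any case need $\bT$ to preserve $B_K$ as well. Since $\bT$ is connected and the components of $\Supp(B_K)$ are finitely many, an action preserving $D_K$ and the CY structure should preserve $B_K$, or we simply read the hypothesis as an action on $(X_K,B_K+D_K)$.

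\emph{Step 1: reduce to an equivariant section ring.} Choose $d>0$ with $dD$ Cartier, and write $X = \Proj_R \bigoplus_{m} H^0(X,mdD)$. It then suffices to show that the $\bT$-action on $\bigoplus_m H^0(X_K,mdD_K)$ preserves the $R$-lattices $H^0(X,mdD)$. The action on these $K$-vector spaces is well defined: the action preserves $D_K$, so it acts on the rational functions $s$ with ${\rm div}(s)+mdD_K\ge 0$, which are viewed inside $K(X_K)$. Once the lattices are preserved, the comodule structure $H^0(X_K,mdD_K)\to H^0(X_K,mdD_K)\otimes \bk[\bT]$ restricts to a map $H^0(X,mdD)\to H^0(X,mdD)\otimes_{\bk}\bk[\bT]$. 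This gives a graded $\bT$-action on the section ring over $R$, hence an action on $X$ over $R$ that fixes $D$ and, by closure, fixes $B$.

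\emph{Step 2: valuative description of the lattice.} Since $X$ is normal, $s\in H^0(X_K,mdD_K)$ lies in $H^0(X,mdD)$ if and only if $\ord_E(s)+mdc_E\ge 0$ for each component $E$ of $X_0$, where $c_E={\rm coeff}_E(D)$. The task is therefore to show that for each $t\in\bT(\bk)$, and more generally scheme-theoretically, the map $s\mapsto t^*s$ preserves these inequalities. Because $(X,B+D+X_{0,\red})$ is lc CY, each $\ord_E$ satisfies $A_{X,B+D+X_{0,\red}}(\ord_E)=0$. So each $b_E^{-1}\ord_E$ is a vertex of $\Sk(X_K,B_K+D_K)$ in the sense of Section \ref{ss:defskel}.

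\emph{Step 3: the key point.} We must show that $\bT$ fixes each such valuation, i.e.\ $\ord_E(t^*s)=\ord_E(s)$. The group $\bT(\bk)$ acts on $\Val$ by continuous maps preserving $\Sk(X_K,B_K+D_K)$, since $\bT$ preserves $(X_K,B_K+D_K)$ and $\Sk$ is intrinsic (independent of the lc model). The skeleton is a finite simplicial complex (Proposition on dual complexes). Because $\bT$ is connected, the orbit of a point of a finite complex under a connected algebraic group acting continuously should be trivial. To make this rigorous, I would instead argue with the finite set $G=\{s^{\rm trop}\}$ of tropical functions used in the proof of Theorem \ref{t:valindbpcy}. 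Decompose $s=\sum_\chi s_\chi$ into $\bT$-weight vectors. For a divisorial valuation $v$ that is $\bT$-invariant one has $v(s)=\min_\chi v(s_\chi)$. So it suffices to prove that $\ord_E$ is $\bT$-invariant. For this, consider the $\bT$-orbit map $\bT\times X_K\to X_K$, extended as a rational map $\bT\times X\dashrightarrow X$. The image of the generic point of $\bT\times E$ is a divisorial valuation $w_t$ depending algebraically on $t$. The set $\{w_t\}$ lies in the finite set of $\bQ$-points of $\Sk$ with bounded denominator $b_E$. Since $\bT$ is connected and irreducible, $t\mapsto w_t$ is constant, and it equals $\ord_E$ at $t=1$.

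\emph{Main obstacle.} I expect Step 3 to be the main obstacle: making precise that $t\mapsto w_t$ takes finitely many values and is locally constant in the Zariski topology. The cleanest route seems to be to extend the action to $\bT\times X\dashrightarrow X$ and to check that the valuation $\ord_{\bT\times E}$ pushes forward to a divisorial lc place that is $\bT$-invariant. Alternatively, one can apply the uniqueness of lc models up to crepant birational maps (Proposition \ref{lem:semistable-reduction}.2) to the family over $\bT$ and use the negativity lemma.
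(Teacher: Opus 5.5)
\textbf{There is a genuine gap at the key step.} Your Steps 1 and 2 agree with the final part of the paper's proof, as does the remark in Step 3 that a $\bT$-invariant valuation satisfies $v(\sum_\chi s_\chi)=\min_\chi v(s_\chi)$, so that the lattices $H^0(X,mdD)$ split into weight pieces. What is missing is a proof that each $\ord_E$, for $E\subset X_0$, is $\bT$-invariant. That is the actual content of the lemma, and neither of your arguments for it works.

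First, the orbit $\{t_*\ord_E\}$ need not lie in a finite set. Definition \ref{d:skeleton} does not cut out only the vertical dual complex. When $(X_K,B_K+D_K)$ is not klt, $\Sk$ also contains the quasi-monomial lc places at strata where a horizontal coefficient-one divisor (on a dlt model) meets the special fiber. So the skeleton is non-compact and contains infinitely many divisorial points with a given denominator. For example, take $X=\bP^1_R$ and $B=D=\tfrac12(\{x=0\}+\{y=0\})$. Then $\Sk$ is a line containing the monomial valuations with $v(\pi)=1$ and $v(x/y)=n$ for every $n\in\bZ$. This is exactly the situation in which the paper needs the lemma: the projectivized cone with $D=V(t)$ reduced, via Proposition \ref{p:diagfiltbpcy}.

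Second, even granting finiteness, ``$\bT$ connected, hence $t\mapsto w_t$ constant'' requires the orbit map to be Zariski-locally constant, which you do not show. Pushing $\ord_{\bT\times E}$ forward along the action map gives one valuation, not a family $w_t$. On $H^0(X_K,mD_K)$ it is $s\mapsto\min_\chi\ord_E(s_\chi)$, which is $\bT$-invariant by construction, and showing it equals $\ord_E$ is the original problem.

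Your fallback via Proposition \ref{lem:semistable-reduction}(2) and the negativity lemma only shows that each $t$ acts crepantly, i.e.\ preserves $\Sk$ as a set. Since lc models are far from unique, this does not fix individual points. (When $\Sk$ is compact, you could close your route without any continuity argument. The group $\bT(\bk)=(\bk^\times)^r$ is divisible, so it has no proper finite-index subgroups and therefore fixes every point of any finite set it permutes.)

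The missing idea is to build a $\bT$-equivariant model and read off invariance there; this is what the paper does.
\begin{itemize}
\item It reduces to $r=1$ and $X_0$ reduced, by base change to $R[\pi^{1/d}]$ and descent along the $\mu_d$-action.
\item It glues $(X_K,B_K+D_K)\times_K\bA^1_K$ (given action times the standard one) and $(X,B+D)\times\bG_m$ (trivial action times the standard one). This gives a $\bG_m$-equivariant family of boundary polarized CY pairs over $\bA^1_R\setminus o$, where $o=\{\pi=x=0\}$.
\item It extends this family over $o$ by \cite[Theorem 6.3]{ABB+}. Restricting to $x=0$ gives a model $(X',B'+D')$ over $R$ with a $\bG_m$-action extending the given one.
\item On a $\bG_m$-equivariant log resolution of $(X',B'+D'+X'_0)$, the connected group preserves each of the finitely many components and strata of the coefficient-one part. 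Hence it fixes every quasi-monomial lc place.
\item Since the skeleton does not depend on the model, $b_E^{-1}\ord_E$ is one of these places, so it is invariant.
\end{itemize}
Connectedness is used legitimately here, on finitely many strata, rather than on the skeleton itself.
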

		
\begin{proof}
We first reduce to proving the result when $r=1$ and $X_0$ is reduced.
Indeed, since the data of a  $\T:=\bG_m^r$-action on $(X,B+D)$ is equivalent to the data of $r$ many $\bG_m$-actions on $(X,B+D)$ that commute and commutativity can be checked on the general fibers, it suffices to prove the result when $r=1$.
Next, let $d$ be a positive integer that is a multiple of the coefficients of $X_0$. 
Let $\tR:= R[\pi^{1/d}]$ and $\tK:= K[\pi^{1/d}]$. 
Write $(\tX,\tB+\tD)$ for the base change of $(X,B+D)$ via $R\hookrightarrow \tR$.
By Lemma \ref{l:basechangebpcy}, $\tX'_0$ is  reduced. 
Furthermore, the $\bG_m$-action on $(X_K,B_K+D_K)$ inducs a $\bG_m$-action on  $(\tX_{\tK},\tB_{\tK}+\tD_{\tK})$ that fixes $\tK$ and commutes with the $\mu_d$-action. If it extends to a $\bG_m$-action on $(\tX,\tB+\tD)$, then it necessarily fixes $\tR$ and commutes with the $\mu_d$-action. Hence it induces a $\bG_m$-action on $(X,B+D)$,

For the remainder of the proof, we may assume that $X_0$ is reduced and $\bT=\bG_m$.
The former assumption implies that  $(X,B+D)\to \Spec(R)$ is a family of boundary polarized CY pairs.  
We now show that there exists a $\bG_m$-equivariant family of boundary polarized CY pair $(X',B'+D')\to \Spec(R)$ such that there is a $\bG_m$-equivariant isomorphism 
\[
(X_K,B_K+D_K)\sim (X'_K,B'_K+D'_K) 
\]
over $K$. 
Let $o \in \bA^1_R := \Spec(R[x])$ denote the point given by the vanishing of $\pi$ and $x$. 
By gluing, there exists a $\bG_m$-equivariant family of boundary polarized CY pairs
\[
(\cX^\circ,\cB^\circ+\cD^\circ)\to \bA^1_R \setminus o
\]
such that there exists equivariant isomorphisms
\[
(\cX^\circ,\cB^\circ+\cD^\circ)_{\bA^1_K} \cong (X_K, B_K+D_K) \times_K \bA^1_K
\quad \text{and } \quad
(\cX^\circ, \cB^\circ \cD^\circ)_{\Spec\,R[x^{\pm1}]}
\cong (X,B+D) \times \bG_m,
\]
where $\bG_m$-acts on $(X_K,B_K+D_K) \times \bA^1_K$ as the product of the given action and the standard action, while $\bG_m$-acts on $(X,B+D) \times \bG_m$ as a product of the trivial action and the standard action. 
By \cite[Theorem 6.3]{ABB+}, $(\cX^\circ, \cB^\circ+\cD^\circ)\to \bA^1_R$ extends to a $\bG_m$-equivariant family of boundary polarized CY pairs $(\cX,\cB+\cD) \to \bA^1_R$. 
Restricting to $\{0\}\times \bA^1_R$, we get a family of boundary polarized CY pairs $(X',B'+D') \to \Spec(R) $ with a $\bG_m$-action such that there is a $\bG_m$-equivariant isomorphism $(X_K+B_K,D_K)\cong (X'_K,B'_K+D'_K)$ over $K$ as desired. 
			
We now show that, for each irreducible component $E\subset X_0$, $\ord_{E}$ is a $\bG_m$-equivariant. 
Fix a $\bG_m$-equivariant log resolution $f:Y \to X'$ of $(X',B'+D'+X'_0)$ and write $\Gamma$ for the $\bQ$-divisor on $Y$ such that 
\[
K_{Y}+ \Gamma_v+\Gamma_h=f^*(K_{X'}+D'+X'_0),
\]
where $\Gamma_v$ is vertical and $\Gamma_h$ is horizontal.
As the $\bG_m$-action on $Y$ fixes the irreducible components of $\Gamma$, the valuations in $\mathcal{D}(\Gamma^{=1}_v)$ are $\bG_m$-equivariant. 
Since $\ord_E$ is in ${\rm Sk}(X_K,B_K+D_K)$, we conclude that $\ord_{E}$ is $\bG_m$-equivariant. 
			
We are now ready to complete the proof. 
The $\bG_m$-action on $(X_K,D_K)$ induces a $\bG_m$-action on $V_{K,m}:=H^0(X_K,mD_K)$ that induces  a direct sum decomposition  $V_{m,K} = \bigoplus_{\la\in \bZ} V_{K,m,\la}$,
where $V_{K,m,\la}$ is the $\la$-th weight space. 
Now
\[
V_m = H^0(X,mD) = \{ f\in V_{K,m}\, \vert\ \ord_{E}(f) \geq 0 \text{ for all } E\subset X_0\}, 
\]
where we view $f$ as an element of $K(X)\cong K(X_K)$.
If $E\subset X_0$, then $\ord_E$ is $\bG_m$-invariant and so 
\[
\ord_E( f_1+ \cdots +f_\ell) = \min \{\ord_E(f_\la) \, \vert\, f_\la \neq 0 \}
\]
for any  $f_i \in V_{K,m,i}$ and $\ell\geq 0$. 
Thus $V_m$ admits an $R$-module direct sum decomposition
\[
V_m = \bigoplus_{\la \in \bZ} V_{m,\la}
\quad \text{ where } \quad 
V_{m,\la} = V_m \cap V_{K,m,\la}
.\]
This induces a $\bZ\times \bN$-grading on $V:= \bigoplus_{m \in \bN} V_m$.
Since $D$ is relatively ample, 
$X\cong \Proj (\bigoplus_{m \in \bN} V_m)$. 
Thus
we get an induced $\bG_m$-action on $X$ extending the $\bG_m$-action on $X_K$. 
Since the $\bG_m$-action on $X_K$ fixes $B_K$ and  $D_K$, the $\bG_m$-action on $X$ fixes $B$ and $D$.
\end{proof}

	\subsection{Higher rank degenerations}
We now construct certain multi-degenerations that relate a collection of extensions of boundary polarized CY pairs over $R$ that agree over $K$.

For an integer $r>0$, we let
\[
\mathrm{S}^r
:= \Spec R[x_0 \ldots, x_r]/(x_0 x_1^d \cdots x_r^d - \pi)
,\]
which admits a $\bT:= \bG_m^r$-action that fixes $R$ and acts on $x_0$ with weight $(-d,\ldots, -d)$ and $x_i$ with weight $e_i$ for $1\leq i \leq r$, where $e_i\in \bZ^r$ is the $i$-th unit vector.
 For $0\leq i \leq r$, we consider the distinguished open set
\[
 U_i:= D(x_1\cdots \widehat{x_{i}} \cdots x_r) \cong \Spec(R[x_0^{\pm1},\ldots, \widehat{x}_{i}^{\pm1}, \ldots, x_r^{\pm1}])
\]
which is isomorphic to $\Spec(R)\times \bT$. 
Additionally for $i\neq j$
\[
U_i \cap U_j = D(x_0\cdots x_r) \cong \Spec(K[x_1^{\pm1},\ldots, x_r^{\pm1}])
\cong \Spec(K)\times \bT.
\]

\begin{thm}\label{t:highrankScomplete}
Fix a collection of boundary polarized CY pairs over $R$
 \[
(X^0,B^0+D^0),\ldots, (X^r, B^r+D^r)
\]
such that each $X^i_0$ is reduced and there is  a fixed isomorphism
$
(X^0_K,B^0_K+D^0_K) \cong (X^i_K,B^i_K+D^i_K),
$
over $K$.
Then there exists a $\bT:= \bG_m^r$-equivariant family of boundary polarized CY pairs 
\[
(\cX,\cB+\cD) \to \mathrm{S}^r
\]
 with a $\bT$ equivariant isomorphism 
\[
\phi_i:(\cX,\cB+\cD)_{U_i}\to (X^i,B^i+ D^i)\times_R U_i 
\]
over $U_i$ for each $0\leq i \leq r$ such that the birational map 
$\phi_j \circ \phi_{i}^{-1}$
 is the product over $R$ of the  birational map $X^i\dashrightarrow X^j$ and the birational map $U_i \dashrightarrow U_j$.
 \end{thm}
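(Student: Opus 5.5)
The plan mirrors the proof of Proposition \ref{p:highranktheta}, with the trivial family $X\times\bA^r$ replaced by a family over $\mathrm{S}^r$. Start from the trivial extension of the generic fiber: let $(X^0_{S},B^0_S+D^0_S):=(X^0,B^0+D^0)\times_R \mathrm{S}^r$. Since $X^0_0$ is reduced, $(X^0,B^0+D^0)\to\Spec R$ is a family of boundary polarized CY pairs, so the base change is a family of slc pairs. The base $\mathrm{S}^r$ is regular: it is isomorphic to $\Spec R[x_1,\ldots,x_r][x_0]/(x_0x_1^d\cdots x_r^d-\pi)$ and is smooth over $\bk$ after base change when $R$ is essentially of finite type. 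Its boundary $V(x_0x_1\cdots x_r)$ is a reduced snc divisor whose components are $V(x_i)$, $0\le i\le r$; at the $\bT$-fixed point these all meet. Proposition \ref{p:kollarsmoothslcfam} then shows that
\[
(X^0_S,B^0_S+D^0_S+V(x_0\cdots x_r))
\]
is lc and CY over $\mathrm{S}^r$. Over $U_0$ this pair is already the desired family. Over $U_i$ for $i\ge1$, the pair $(X^i,B^i+D^i)\times_R U_i$ is a second model that agrees with $X^0_S$ over $U_0\cap U_i=\Spec(K)\times\bT$. Each component $E^i$ of $X^i_0$ (irreducible and reduced after restricting to components) gives a $\bT$-invariant divisor $E^i\times\bT$ over $X^0_S$ centered at the generic point of $V(x_i)$. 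Crepancy, as in the computation in Step 1 of Proposition \ref{p:highranktheta}, gives
\[
A_{X^0_S,B^0_S+D^0_S+V(x_0\cdots x_r)}(E^i\times\bT)=A_{X^i,B^i+D^i+X^i_0}(E^i)=0 .
\]
This uses the identification of the valuation on $U_i$ with a valuation over $X^0\times_R U_i$ through the birational map $X^i\dashrightarrow X^0$. The point to check is that, since $U_i\cong \Spec(R)\times\bT$ with coordinate $x_i$ playing the role of a uniformizer, the pair $(X^i\times_R U_i,\ldots+V(x_i))$ is lc and CY by Proposition \ref{p:kollarsmoothslcfam}.

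Next run the three-step MMP construction. First take a $\bT$-equivariant $\bQ$-factorial dlt modification $f\colon Y\to X^0_S$ of the boundary pair extracting all the divisors $E_{ij}$ (the components over $V(x_i)$ coming from $X^i_0$, together with the components of $X^0_0\times\bT$ over $V(x_0)$). This uses \cite[Corollary 1.38]{Kol13} in the form extended via \cite{LM22}, together with the connectedness of $\bT$ to keep the MMP equivariant. Then run a $(K_Y+B_Y+D_Y+Y_{0,\red}-E)$-MMP over $\mathrm{S}^r$ using \cite[Theorem 1.6]{HX13}, where $Y_{0,\red}$ denotes the reduced preimage of $V(x_0\cdots x_r)$ and $E=\sum E_{ij}$. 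Because $Y_{0,\red}-E$ is effective and, via $x_0\cdots x_r$ on $\mathrm{S}^r$, relatively $\bQ$-linearly trivial up to the coefficients, the negativity argument in Step 2 of Proposition \ref{p:highranktheta} shows that this MMP contracts exactly the vertical divisors not in $E$. Finally pass to the canonical model of $(Z,B_Z+D_Z+\varepsilon A_Z)$ for a suitable $A_Z$ built from a general $H\sim_{\bQ}D^0$, using \cite[Theorem 1.1]{HX13}. The output $(\cX,\cB+\cD)$ has $\cD$ relatively ample, and $(\cX,\cB+\cD+V(x_0\cdots x_r))$ is lc and CY.

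To prove the isomorphisms $\phi_i$, restrict to $U_i$. The map $\cX_{U_i}\dashrightarrow X^i\times_R U_i$ is an isomorphism in codimension one: both have vertical divisors exactly $E^i\times\bT$, and they agree over $\Spec(K)\times\bT$. On both sides the polarization is the pushforward of the same divisor, namely $\cD$ and $D^i\times U_i$, and both are relatively ample. Uniqueness of ample models therefore gives $\phi_i$, and it is automatically compatible with the prescribed birational maps. Flatness and the slc property of fibers then follow from Proposition \ref{p:kollarsmoothslcfam} near the torus-fixed locus, combined with the openness of the family-of-slc-pairs locus \cite[Corollary 4.45]{Kol23} and $\bT$-invariance. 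The reason is that every $\bT$-orbit closure in $\mathrm{S}^r$ meets the closed stratum $V(x_0,\ldots,x_r)$.

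I expect the main obstacle to be the divisor bookkeeping in the MMP step over the nontrivial base $\mathrm{S}^r$. One has to show that $-E_Z$ being nef over $\mathrm{S}^r$ forces all non-$E$ vertical divisors to be contracted. This requires writing $Y_0=\sum c_{ij}E_{ij}+G$ using the relations $x_0x_1^d\cdots x_r^d=\pi$, and checking that each $V(x_i)$ pulls back with the correct multiplicities. The reducedness hypothesis on $X^i_0$ is what makes these multiplicities one, so that the fibers over $U_i$ are reduced and the family is slc. A second delicate point is that the divisors extracted for different $i$ might coincide as valuations on $X^0_S$. This cannot happen, because their centers $V(x_i)$ are distinct.
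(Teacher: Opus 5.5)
Your proposal follows the paper's proof essentially step for step. You base change $(X^0,B^0+D^0)$ to $\mathrm{S}^r$, extract the $\bT$-invariant lc places $E_{ij}$ coming from the components of the $X^i_0$ via a $\bT$-equivariant $\bQ$-factorial dlt modification, and run the $(K_Y+B_Y+D_Y+Y_{0,\red}-E)$-MMP over $\mathrm{S}^r$ from \cite[Theorem 1.6]{HX13}, which contracts exactly the vertical non-$E$ divisors because $Y_0-E\geq 0$ with $Y_0=f^*{\rm div}(\pi)$; this is where reducedness of the $X^i_0$ enters. You then pass to the canonical model of $(Z,B_Z+D_Z+\varepsilon A_Z)$, identify $\cX_{U_i}$ with $X^i\times_R U_i$ by uniqueness of ample models, and obtain the slc-family property near $0$ together with openness and $\bT$-invariance.

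The one step to tighten is your claim that $\cX_{U_i}\dashrightarrow X^i\times_R U_i$ is an isomorphism in codimension one, which presupposes that the canonical-model map $h$ contracts no $E_{ij}$. The paper gets this, and the isomorphism itself, by observing that $Z_{U_i}\dashrightarrow X^i\times_R U_i$ is a birational contraction with $(K_Z+B_Z+D_Z+\varepsilon A_Z)|_{Z_{U_i}}\sim_{\bQ}\varepsilon$ times the pullback of $D^i\times\bT$. Hence $X^i\times_R U_i$ is itself the ample model over $U_i$.
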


 The proof is similar to that of Proposition \ref{p:highranktheta}.
	
\begin{proof}
To simplify notation, let $S:= \mathrm{S}^r$ and  $(X,B+D) := (X^0,B^0+D^0)$. 
Since $X_0$ is reduced, $(X,B+D) \to \Spec(R)$ is a family of boundary polarized CY pairs.
Consider the $\bT$-equivariant family of boundary polarized CY pairs
\[
(X_S,B_S+D_S)  = (X,B+D)\times_{R}S \to S
\]	
Note that over $U_0$ the family has the desired form.

\emph{Step 1. We  construct an appropriate dlt modification $f:Y \to X_S$ .}
We first describe the  divisors over $X_S$ that need to be on $\cX$.
The  isomorphism  $(X_K,B_K+D_K)\cong (X_K^i,B_K^i+ D_K^i)$ induces a birational map
\[
(X,B+D) \dashrightarrow (X^i,B^i+ D^i)
\]
over $R$. 
Let $X^i_{01} ,\ldots, X^i_{0n_{i}}$ 
denote the irreducible components of $X^i_0$.
These are prime divisors in $X^i$ and, hence,  induce prime  divisors over $X$ with center contained in $X_0$.
Let $E_{ij}:= X^i_{0j}\times_R \bT$.
Using the rational maps
\[
X^i \times \bT 
\cong 
X^i \times_R U_i 
\dashrightarrow X\times_R U_i 
 \hookrightarrow X_{S} 
,
\]
we may  view each $E_{ij}$ as a divisor over $X_S$. 
Note that $\im(E_{ij} \to S) = V_S(x_i)$ and 
\begin{multline*}
A_{X_S,B_S+D_S+ V(x_0\cdots x_r)}(E_{ij})
= 
A_{X\times \bT,B\times \bT+ D\times \bT+ X_0 \times \bT}(E_{ij})\\
=
A_{X,B+D+X_0}(X^i_{0j})
=
A_{X^i,B^i+D^i+X^i_0}(X^i_{0j})
=
0,
\end{multline*}
where second to last equality holds by \cite[Lemma 2.10]{ABB+}. 
Note that for $i=0$, $E_{01}, \ldots, E_{0n_0}$ are simply the prime divisors in the support of $V_{X_S}(x_0)$.
		
Since $(X_S,B_S+D_S) \to S$ is a family of slc pairs,
\cite[Theorem 4.54]{Kol23} implies that 
\[
(X_S,B_S+D_S+V(x_0 \cdots x_r))
\]
 is lc and the lc centers of $(X_S,B_S+D_S)$ dominate $S$.
Now \cite[Corollary 1.38]{Kol13} implies that there exists a $\bQ$-factorial dlt modification 
\[
f:(Y, B_Y+D_Y+Y_{0,\red}) \to (X_S, B_S+D_S +V(x_0 \cdots x_r))
,\]
such that $B_Y:= f^{-1}_* B_S$ and  each $E_{ij}$ is a prime divisor on $Y$. 
Let $E:=\sum_{i=0}^r \sum_{j=1}^{n_i} E_{ij}$.
Note that   
\[
D_Y +Y_{0,\red} =f^*D_S - \textstyle \sum_{i,j} A_{X_S,B_S}(E_{ij})(E_{ij})+ G,
\]
where $\Supp(G)$ is a union of exceptional divisors that either dominate $S$ or are not contained in $\Supp(E)$. 
Furthermore, by choosing the log resolution in the proof of \cite[Corollary 1.38]{Kol23} to be $\bT$-equivariant and using that $\bT$ is a connected group scheme to see that the MMP in the proof is necessarily $\bT$-equivariant, we may assume that the  $\bT$-action on $X_S$ extends to a $\bT$-action on $Y$.
\medskip
		
\emph{Step 2: We now construct a birational contraction $g: Y \dashrightarrow Z$}.
Note that $(Y,B_Y +D_Y+Y_{0,\red})$ is dlt and 
\[
K_{Y} +B_Y+D_{Y}+Y_{0,\red}
\sim_{\bQ} = f^*(K_{X_S}+B_S+D_S+ V(x_0 \cdots x_r)\sim_{\bQ,S} 0 
\]
Therefore \cite[Theorem 1.6]{HX13} implies that there exists a   $K_{Y}+B_Y+D_Y+ Y_{0,\red}-E$-MMP over $S$ that terminates with a good minimal model. Write $g:Y \dashrightarrow Z$ for the birational contract to the minimal model, $B_Z:= g_* B_Y$,  $D_{Z} = g_* (D_Y)$, and $E_Z : = g_* E$. 
Thus  $K_{Z}+B_Z+D_{Z} +Z_{0,\red}-E_Z$ is nef. 
Since $(Y,B_Y+ D_{Y}+Y_{0,\red} )$ is lc and CY over $S$, $(Z,B_Z+D_Z+Z_{0,\red})$ is lc and CY over $S$.
		
We claim that $g$ contracts precisely the prime divisors in the support of $Y_{0} - E$.
Indeed,
\[
K_{Y} +B_Y+ D_{Y} +Y_{0,\red}-E\sim_{\bQ,S}-E \sim_{\bQ} Y_0 - E
\]
Since the last divisor is effective, $g$ only contracts  divisors contained in $Y_0 -E$.
Since $-E_Z$ is nef over $S$ and $\Supp(E) \to  S$ surjects on $V_S(x_0\cdots x_r)$, all such divisors must get contracted. 
In particular, $E_Z = Z_{0,\red} =Z_0=V_{Z}(x_0\cdots x_r)$.
\medskip

\emph{Step 3: We now construct $h: Z\dashrightarrow \cX$.}
Since $D$ is ample and $(X,B+D+X_0)$ is lc, there exists a $\bQ$-divisor $0 \leq H\sim_{\bQ}D$ such that $(X,B+D+H+X_{0})$  is lc. 
		Thus $(X,B+D+H)\to \Spec(R)$ is a family of lc pairs. 
		Therefore  $(X_S,B_SD_S+H_S)\to S$
		is a family of lc pairs, where $H_{S}:= H\times_R S$. 
		Thus  \cite[Theorem 4.54]{Kol23} implies that
		\[
		(X_S, B_S+D_S+ H_S +V(x_0\cdots x_r))
		\]
		is lc. In particular,  $\Supp(H_S)$ contains no lc centers of $(X_S,B_S+D_S+ V_{X_S}(x_0\cdots x_r))$.
Therefore 
\[
(Y, B_Y+D_{Y} +  \varepsilon f^*H_{S}+Y_{0,\red} ) \to (X_S,B_S+D_S+\varepsilon H_S+V_{X_S}(x_0\cdots x_r))
\]
is  crepant  for any $0<\varepsilon \leq1$. 
Let $H_{Z}:= g_* f^*H_S$.
Fix $a>0$ such that 
\[
A_Z:= g_* f^*(H_S) + a E_Z -\textstyle \sum_{i,j} a_{ij} g_*E_{ij}
\]
is effective, where $a_{ij}:=A_{X,B}(E_{ij})$.
We claim  that  for $0<\varepsilon \ll 1$, the pair 
\[
(Z,B_Z+D_Z+ \varepsilon A_Z)
\]
(i) is a  $\bQ$-factorial dlt pair with no lc centers contained in $V_{Z}(x_0\cdots x_r)$
and  (ii) its restriction to $U:= U_0 \cap \cdots \cap  U_r$  is a good minimal model over $U$.
Indeed, (i) holds since
$(Z,B_Z+D_{Z})_U$ is dlt with no lc centers contained in $\Supp(H_Z\vert_U)$ and $(Z,B_Z+D_{Z} +Z_0)$ is $\bQ$-factorial lc.
For (ii), note that the birational map
\[
(Z ,B_Z+ D_Z + \varepsilon A_Z)_U 
\dashrightarrow 
(X,B+D+  \varepsilon H) \times \bT
\]
is a morphism, crepant, and $K_{X}+B+D+\varepsilon H$ is ample.
Since (i) and (ii) hold, 
\cite[Theorem 1.1]{HX13} implies that   $(Z,B_Z+D_Z+ \varepsilon A)$ admits a canonical model over $S$ for any $0<\varepsilon \ll 1$. 
Write
$h:Z \dashrightarrow \cX$ for the birational contraction,  $\cB:= h_* B_Z$,  $\cD:= h_* (D_{Z})$, and $\cH := h_* H_{Z}$.
Observe that
\begin{multline}\label{e:Dequiv}
h_* (K_{Z}+B_Z+D_Z+ \varepsilon A_Z)
\sim_{\bQ}
h_* (K_{Z}+B_Z+D_Z +Z_{0}+ \varepsilon A_Z)\\
\sim_{\bQ,S}
h_*(\varepsilon A_Z)
\sim_{\bQ} h_* (g_* (f^*D_S) - \textstyle\sum_{ij} a_{ij} h_*(E_{ij}))
\sim_{\bQ} \varepsilon \cD.
\end{multline}
Thus
$\cD$ is ample over $S$.
		
\medskip

\emph{Step 4. We now verify that  the constructed $(\cX,\cB+\cD) \to S$ is a $\bT$-equivariant family of boundary polarized CY pairs of the desired form.}
To begin, we check that $\bT$ acts on $(\cX,\cB+\cD)$.
Since $\bT$ is a connected group scheme, the MMP $Y \dashrightarrow Z$ is $\bT$-equivariant. In particular there is an induced $\bT$-action on $(Z,B_Z+D_Z)$. 
Furthermore, by the description of the canonical model as the proj of the section ring,
the map $Z\dashrightarrow \cX$ is $\bT$-equivariant. 
As $\cD= h_* D_Z$, the $\bT$-action fixes $\cD$.

We now verify the existence of the isomorphism $\cX\vert_{U_i} \cong X^i \times \bT$. 
Consider the birational contraction 
\[
h_i:
Z\vert_{U_i} 
\dashrightarrow X^i \times \bT
\]
induced by the birational morphism $Z\vert_{U}  \to X\times_R U \cong  X_K \times \bT$. 
We know that $h_i$  is a morphism over $U$, $h_i$ does not contract divisors contained in $t_i=0$, and the birational pullback satisfies
\[(K_Z+B_Z+ D_Z + \varepsilon A_Z)_{U_i}
\sim_{\bQ}
\varepsilon
(h_i)^* ( D^i \times \bT)
\]
by the same computation as in \eqref{e:Dequiv}. 
Thus $h_i$ is the canonical model of $(K_Z+ D_Z + \varepsilon A_Z)\vert_{U_i}$. 
By the uniqueness of canonical models, the birational map  $\cX\vert_{U_i} \dashrightarrow X^i\times \bT \cong X^i \times_R U_i$ is an isomorphism.
		
Finally, we verify that $(\cX,\cB+\cD) \to S$ is a family of boundary polarized CY pairs. Since $(Z,B_Z+D_{Z})$ is an lc CY pair over $S$  and $h_*(B_Z+D_{Z}+Z_{0})=\cB+ \cD+\cX_0$, the pair
$(\cX,\cB+\cD+\cX_0)$ is lc and CY over $S$.
Thus $(\cX, \cB+\cD) \to S$ is a family of slc pairs in an open neighborhood of $0\in S$ by Proposition \ref{p:kollarsmoothslcfam}. 
As the locus of $S$ where $(\cX,\cB+\cD)\to S$ is a family of lc pairs is open  by \cite[Corollary 4.45]{Kol23}, $\bT$-equivariant, and contains $0$, the locus equals $S$. 
Thus $(\cX,\cB+\cD) \to S$ is a family of lc pairs. 
Since $K_{\cX}+\cB+\cD \sim_{\bQ}0$ and $\cD$ is relatively ample, it is a family of boundary polarized CY pairs. 
\end{proof}
	
	We now describe the relative section ring of the family over $S^r$ constructed in Theorem \ref{t:highrankScomplete} as a multi-graded Rees module.

	\begin{corollary}\label{c:bpcyReesmodule}
Keep the notation from the assumptions and conclusion of Theorem \ref{t:highrankScomplete}. 
Let $(X,D) := (X^0,D^0)$ and $F_i$ denote the filtration of $V_m:= H^0(X,mD)$ induced by $(X^i,D^i)$ as defined in Section \ref{sss:bpcyfiltrations}.

For each positive integer $m$ such that $mD_K$ is a $\bZ$-divisor, the following holds: 
\begin{enumerate}
\item The $R[x_0,\ldots, x_r]/(x_0 \cdots x_r-\pi)$-module $H^0(\cX,m\cD)$ is flat and the natural map 
\[
H^0(\cX,m\cD)\otimes k(s)\to  H^0(\cX_s, m\cD_s) 
\]
is an isomorphism for all $s\in \mathrm{S}^r$.
\item There is a commutative diagram of $\bZ^r$-graded $R[x_1^{\pm1},\ldots, x_r^{\pm1}]$-modules
\[
\begin{tikzcd}
{\rm Rees}_1(F_1,\ldots, F_r;V_m)\arrow[r,hook] \arrow[d,"\cong"]  & V_m[x_1^{\pm1},\ldots, x_r^{\pm 1}]\arrow[d,"\cong"] \\
H^0(\cX,m\cD) \arrow[r,hook] & H^0(\cX_{U_0}, m\cD_{U_0})  
\end{tikzcd}
,\]
			where the horizontal maps are the natural inclusions, the vertical maps isomorphisms, and right isomorphism is induced by pulling back via the isomorphism $\cX_{U_0} \to X \times \bT$.
		\end{enumerate}
	\end{corollary}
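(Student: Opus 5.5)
The plan follows the proof of Proposition \ref{p:familyArRees}, with $\bA^r$ replaced by $\mathrm{S}^r$ (here with $d=1$, since each $X^i_0$ is reduced). Write $U:=D(x_0\cdots x_r)=U_i\cap U_j$ and $V_m:=H^0(X,mD)$.

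For (1), first use that $(\cX,\cB+\cD)\to \mathrm{S}^r$ is a family of slc pairs and $\cD$ is $\bQ$-Cartier. By \cite[Corollary 4.33]{Kol23}, the restriction $\cO_\cX(m\cD)|_{\cX_s}\to \cO_{\cX_s}(m\cD_s)$ is then an isomorphism for every $s\in\mathrm{S}^r$. Each fiber $(\cX_s,\cB_s+\cD_s)$ is a projective slc boundary polarized CY pair, so $m\cD_s-K_{\cX_s}\sim_{\bQ}(m+1)\cD_s$ is ample. Slc Kodaira vanishing \cite[Theorem 11.34]{Kol23} therefore gives $H^i(\cX_s,m\cD_s)=0$ for $i>0$. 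Since $\cX\to\mathrm{S}^r$ is flat and projective over the Noetherian affine base, cohomology and base change \cite[Theorem III.12.11]{Har77} shows that $H^0(\cX,m\cD)$ is flat and that its formation commutes with base change, which is (1).

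For (2), I would first show that $\cX$ is normal. The isomorphism $\phi_0$ identifies $\cX_{U_0}$ with $X\times_R U_0\cong X\times\bT$, which is normal. The base $\mathrm{S}^r$ is regular, and $x_0\cdots x_r=\pi$ cuts out an snc divisor whose components are the $V(x_i)$. Hence \cite[Theorem 4.54]{Kol23} makes $(\cX,\cB+\cD+\cX_{0})$ slc, so $\cX$ is $S_2$ and regular in codimension one and therefore normal. This lets us regard $H^0(\cX,m\cD)$ as a submodule of $H^0(\cX_{U_0},m\cD_{U_0})$, and $\phi_0$ identifies the latter with $V_m[x_1^{\pm1},\ldots,x_r^{\pm1}]$. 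Indeed, $U_0=\Spec R[x_1^{\pm1},\ldots,x_r^{\pm1}]$ because $x_0=\pi x^{-(1,\ldots,1)}$ there.

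Next, $\mathrm{S}^r\setminus(U_0\cup\cdots\cup U_r)$ lies in the locus where at least two coordinates vanish, which has codimension $2$. Flatness of $\cX\to\mathrm{S}^r$ then shows that the complement of $\bigcup_i\cX_{U_i}$ in $\cX$ has codimension at least $2$. By normality, a section $s=\sum_\la s_\la x^{-\la}$ of $m\cD$ over $\cX_{U_0}$ extends to $\cX$ if and only if it extends over each $\cX_{U_i}$ for $1\le i\le r$.

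I would then check the condition on each $\cX_{U_i}$ using $\phi_i$ and $\bT$-equivariance. The section is $\bT$-semi-invariant degree by degree, so it lies in $H^0(\cX_{U_i},m\cD_{U_i})$ if and only if each graded piece $s_\la x^{-\la}$ does. Under $\phi_i$, the coordinate $x_i$ on $U_i\cong\Spec(R)\times\bT$ plays the role of a uniformizer. Concretely, on $U_i$ we have $\pi=x_0x_1\cdots x_r$, so $x_i=\pi\cdot(\text{unit})$. A degree-$\la$ piece therefore corresponds to $s_\la$ viewed on $X^i$ twisted by $\pi^{-\la_i}$, up to units on $\bT$. The condition becomes $\mathrm{div}_{X^i}(s_\la)+mD^i-\la_iX^i_0\ge0$, that is, $s_\la\in F_i^{\la_i}V_m$ by Definition \ref{d:filtbpcy/R}. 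Intersecting over $i$ gives exactly ${\rm Rees}_1(F_1,\ldots,F_r;V_m)$, which establishes the diagram.

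The main obstacle is the bookkeeping in this last identification. One has to pin down, under the composite $\phi_i\circ\phi_0^{-1}$, exactly which monomial in the trivialization of $U_i$ corresponds to $x^{-\la}$, and check that the exponent of the local uniformizer is $\la_i$ and not some other combination. One must also confirm that the $\bT$-weights match the grading convention in Definition \ref{d:multrees}, so that the $x_0$-action is multiplication by $\pi x^{-(1,\ldots,1)}$.
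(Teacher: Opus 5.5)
Your proposal is correct and follows the paper's proof essentially step for step. For (1) it uses the same combination of \cite[Corollary 4.33]{Kol23}, slc Kodaira vanishing, and cohomology and base change; for (2) it uses normality of $\cX$, the codimension-two complement of $\cX_{U_0}\cup\cdots\cup\cX_{U_r}$, and the fact that $x_i=\pi\cdot(\text{unit})$ on $U_i$, which turns the condition on $\cX_{U_i}$ into $\pi^{-\la_i}s_\la\in H^0(X^i,mD^i)$, i.e.\ $s_\la\in F_i^{\la_i}V_m$.

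The only step to tighten is normality. Being slc by itself allows nodes in codimension one, so you should add that every codimension-one point of $\cX$ outside $\cX_{U_0}$ is a generic point of a component of the boundary divisor $\cX_0$, where $\cX$ is regular by the definition of a pair. The paper argues this instead by saying the non-normal locus is a union of lc centers, all of which dominate $\mathrm{S}^r$, while $\cX$ is normal over the open set where $x_0\cdots x_r\neq 0$.
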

	
\begin{proof}
Statement (1) follows from exactly the same argument as Proposition \ref{p:familyArRees}.
To prove (2), first observe that the isomorphism $(\cX_{U_0},\cD_{U_0})\to (X,D) \times \bT$ induces an isomorphism
\[
\phi:V_m[x_1^{\pm1},\ldots, x_r^{\pm1 }] 
\to
H^0(\cX_{U_0}, m\cD_{U_0}).
\]
Next, since $(\cX,\cB+\cD) \to \mathrm{S}^r$ is a family of slc pairs and $\mathrm{S}^r$ is regular,  $(\cX,\cB+\cD)$ is slc and every lc center of $(\cX,\cB+\cD)$ dominates $S$ \cite[Theorem 4.54 and Corollary 4.56]{Kol23}.
Thus the non-normal locus of $\cX$ dominates $S$. As $\cX_{U} \cong X_K \times \bT$, which is normal, $\cX$ must be normal. 
Thus we may view $H^0(\cX,m\cD)$ as an $R[x_0,\ldots, x_r]/(x_0\cdots x_r-\pi)$-submodule of $H^0(\cX_{U_0}, m\cD_{U_0})$
It remains to verify  that $\phi(H^0(\cX,m\cD))={\rm Rees}_1(F_1,\ldots, F_r;V_m)$.

Fix an element $s\in H^0(\cX_{U_0},m\cD)$. 
By the isomorphism $\phi$, there exists $s_\la \in H^0(X,mD)$ for $\la \in \bZ^r$ such that 
$s:=\sum_{\la\in \bZ^r}s_{\la} t^{-\la}$, where  $t^{-\la}:=t_1^{-\la_1}\cdots t_r^{-\la_r}$.
Since
$X\to \mathrm{S}^r$ is flat and 
\[
{\rm codim}_{ \mathrm{S}^r}(\mathrm{S}^r \setminus (U_0 \cup \cdots U_r))\geq 2,
\]
it follow that 
\[
{\rm codim}_{\cX}(\cX\setminus (\cX_{U_0}\cup \cdots \cup \cX_{U_r})) \geq 2
.\]
Thus  $s\in H^0(\cX,m\cD)$ if and only if 
$s \in H^0(\cX_{U_i}, m\cD_{U_i})$ for all $1\leq i \leq r$.
Using the isomorphism $\cX_{U_i} \cong X^i\times_R U_i \cong X^i\times \bT$, we see that the latter holds if 
\[
\sum_{\la \in \bZ^r} \pi^{-\la_i} x_0^{-\la_i} x_1^{-\la_1}\cdots \widehat{x}_i^{-\la_i} \cdots x_r^{-\la_r} \in H^0(X^i\times \bT, mD^i \times \bT) 
\]
for all $1\leq i \leq r$.
This is equivalent to the condition that 
\[
\pi^{-\la_i} s_{\la} \in H^0(X^i,mD^i)
\]
for all $\la \in \bZ^r$ and $1\leq i \leq r$.
The latter holds if and only if   $s_{\la} \in F_i^{\la_i} H^0(X,mD)$ for all $\la \in \bZ^r$ and $1\leq i \leq r$ or, equivalently, $s\in {\rm Rees}_1(F_1,\ldots, F_r;V_m)$.\end{proof}
	
\subsection{Diagonalizability}
We now apply the previous results to simultaneously diagonalize the filtrations in Section \ref{sss:bpcyfiltrations}.
	
\begin{prop}\label{p:diagfiltbpcy}
Let $(X^0,B^0+D^0),\ldots, (X^r,B^r +D^r)$ be boundary polarized CY pairs over $R$ with an isomorphism
\[
(X_K^0,B^0_K+D^0_K) \cong (X^i_K,B^i_K+D^i_K)
\]
over $K$ for each $1\leq i \leq r$ and write $F_i$ for the filtration of $H^0(X^0,mD^0)$ induced by $(X^i,D^i)$. 

If $m$ is a positive integer such that $mD^0$ is a $\bZ$-divisors, then the filtrations
$F_1,\ldots F_r$ are  simultaneously diagonalizable.  
Furthermore, if there exists a $\bT:=\bG_m^n$-action on $(X_K,B_K+D_K)$ that fixes $K$, then there exists a  simultaneously diagonalizing basis composed of $\bT$-eigenvectors.
\end{prop}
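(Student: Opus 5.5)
The plan is to reduce to the situation of Theorem \ref{t:highrankScomplete}, where all special fibers are reduced, by a base change $R\hookrightarrow \tR:=R[\pi^{1/d}]$. Then Corollary \ref{c:bpcyReesmodule} and Lemma \ref{l:Reesmodflat->diag} give the diagonalization, and a $\mu_d$-invariance argument descends it back to $R$.

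\emph{Step 1 (reduction to reduced fibers).} Choose $d>0$ divisible by all coefficients of $X^i_0$ for $0\le i\le r$. Let $(\tX^i,\tB^i+\tD^i)$ be the base changes to $\tR$. By Lemma \ref{l:basechangebpcy}, these are boundary polarized CY pairs over $\tR$ with reduced special fibers. They carry compatible $\mu_d$-actions, and the isomorphisms over $K$ induce isomorphisms over $\tK$. Let $\tF_i$ be the filtration of $\tV_m:=H^0(\tX^0,m\tD^0)$ induced by $\tX^i$. Lemma \ref{l:basechangefilt} shows that each $\tF_i$ is $\mu_d$-invariant, and that on the weight-$0$ summand $V_m=H^0(X^0,mD^0)$ the filtration $\tF_i^\la$ corresponds to $F_i^{\la/d}$.

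\emph{Step 2 (diagonalize upstairs).} Apply Theorem \ref{t:highrankScomplete} over $\tR$, using the equation $x_0x_1\cdots x_r=\pi^{1/d}$. Then Corollary \ref{c:bpcyReesmodule} shows that ${\rm Rees}_1(\tF_1,\ldots,\tF_r;\tV_m)\cong H^0(\cX,m\cD)$ is flat. The $\tF_i$ are $\bZ$-filtrations by Lemma \ref{l:inducedfilt}, since $\tX^i_0$ is reduced, and they are bounded. So Lemma \ref{l:Reesmodflat->diag} applies: they are simultaneously diagonalizable, and $\dim_\kappa {\rm gr}_{\tF}(\tV_m)=N$.

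\emph{Step 3 (descent; the main obstacle).} We need a diagonalizing basis lying in the weight-$0$ part, and we must match the graded pieces correctly. The $\mu_d$-action on $\tV_m$ preserves the filtrations, so $\mu_d$ acts on ${\rm gr}_{\tF}(\tV_m)$. Since $\mu_d$ is linearly reductive, we can choose a homogeneous basis of ${\rm gr}_{\tF}(\tV_m)$ made of $\mu_d$-eigenvectors, and lift each element to an eigenvector $\tilde s$ of the same weight in the relevant $\tF^\la$. By the final clause of Lemma \ref{l:Reesmodflat->diag}, the lifts form an $\tR$-basis diagonalizing the $\tF_i$.

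By Lemma \ref{l:basechangebpcy}(1), an eigenvector of weight $i$ has the form $\pi^{i/d}s$ with $s\in V_m$. Since $\ord_{\tF_j}(\pi^{i/d}s)=i+\ord_{\tF_j}(s)$, we may replace $\pi^{i/d}s$ by $s$ and still have a diagonalizing family. This gives $N$ elements of $V_m$ that form a $\tR$-basis of $\tV_m$ and diagonalize the $\tF_j$.

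Next we check that these elements form an $R$-basis of $V_m=(\tV_m)^{\mu_d}$. Write $v=\sum a_k s_k$ with $a_k\in\tR$. Taking $\mu_d$-invariants forces $a_k\in R$. The diagonalization identity for $F_j$ on $R$-combinations then follows from the identity for $\tF_j$, using $\ord_{F_j}=\tfrac1d\ord_{\tF_j}$ on $V_m$ and compatibility with multiplication by $R$.

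\emph{Step 4 (torus action).} If $\bT$ acts on $(X_K,B_K+D_K)$, Lemma \ref{l:bpCYTaction} extends the action to every $X^i$. The action then passes to the $\tX^i$ and commutes with $\mu_d$, so the filtrations are $\bT\times\mu_d$-invariant. We repeat Step 3 with joint eigenvectors of the diagonalizable group $\bT\times\mu_d$ acting on ${\rm gr}$. Alternatively, we can use that the $\bT$-action extends to $\cX$ by construction, so the Rees module is $\bT$-graded.
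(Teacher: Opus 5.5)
Your Step~3 rests on the claim that a $\mu_d$-eigenvector of weight $i$ in $\tV_m$ has the form $\pi^{i/d}s$ with $s\in V_m$. This is false, precisely in the non-reduced situation the base change is meant to handle.

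Lemma~\ref{l:basechangebpcy}(1) carries a sign misprint; as printed it would exclude $\pi^{i/d}\cdot 1\in\tV_m$. Its own proof uses ${\rm div}_{\tX}(\pi^{i/d}s_i)=i\tX_0+\rho^*{\rm div}_X(s_i)$ and $\rho^*X_0=d\tX_0$. This shows that the weight-$i$ summand of $\tV_m$ is $\pi^{i/d}H^0(X,mD+(i/d)X_0)$. So along a component $E\subset X_0$ of multiplicity $b_E$ one only needs $\ord_E(s)\geq-\lfloor ib_E/d\rfloor$.

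For instance, suppose $X_0=2E$, $d=2$, and $s\in V_m$ has $\ord_E(s)=1$. Then $\pi^{-1/2}s=\pi^{1/2}(s/\pi)$ is a weight-one eigenvector of $\tV_m$ with $s/\pi\notin V_m$. It does not lie in $V_m\otimes_R\tR=V_m\oplus\pi^{1/2}V_m$, so $\tV_m\neq V_m\otimes_R\tR$. Consequently every $\mu_d$-eigenbasis of $\tV_m$ contains vectors of nonzero weight. For those vectors, your replacement $\pi^{i/d}s\mapsto s$ produces elements lying outside $V_m$ and outside $\tV_m$, so $\ord_{\tF_j}(s)$ is not even defined. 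In fact each such $s$ is $\pi^{-1}$ times the element that actually belongs in a basis of $V_m$. The family you end up with is therefore not an $R$-basis of $V_m$.

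The repair is to multiply instead of divide. Suppose $\tilde s_k$ has weight $i_k\in\{1,\ldots,d-1\}$. Then $\tR\tilde s_k=\bigoplus_{j=0}^{d-1}R\,\pi^{j/d}\tilde s_k$, with $\pi^{j/d}\tilde s_k$ of weight $j+i_k$. Hence $(\tR\tilde s_k)^{\mu_d}=R\,s_k'$ for $s_k':=\pi^{(d-i_k)/d}\tilde s_k$; put $s_k':=\tilde s_k$ if $i_k=0$. Therefore $V_m=\tV_m^{\mu_d}=\bigoplus_k R\,s_k'$. Now take $a_k\in R$. Apply the $\tF_j$-diagonality of $(\tilde s_k)$ with coefficients $a_k\pi^{(d-i_k)/d}\in\tR$, and use $\ord_{F_j}=\tfrac1d\ord_{\tF_j}$ on $V_m$ (Lemma~\ref{l:basechangefilt}). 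This gives $\ord_{F_j}(\sum_k a_ks_k')=\min_k\ord_{F_j}(a_ks_k')$.

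With this change your argument is a correct alternative to the paper's. The paper does not descend a basis. It takes $\mu_d$-invariants of the flat module ${\rm Rees}_1(\tF_1,\ldots,\tF_r;\tV_m)$ and identifies them with ${\rm Rees}_d(F_1,\ldots,F_r;V_m)$. That module is then flat over $\tA^{\mu_d}\cong A^d$, and the paper applies the $\tfrac1d\bZ$ case of Lemma~\ref{l:Reesmodflat->diag}. Your route needs only the $\bZ$ case of that lemma plus the weight bookkeeping above, which is exactly where the care is needed.

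Step~4 then goes through as you describe. Since $\bT$ fixes $\tR$ and commutes with $\mu_d$, multiplying by powers of $\pi^{1/d}$ preserves $\bT$-weights.
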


\begin{proof}
If $X_0^1,\ldots, X^r_0$ are all reduced divisors, then the first statement follows immediately from Lemma \ref{l:Reesmodflat->diag} and Corollary \ref{c:bpcyReesmodule}. 
To prove the full result, we will take a ramified base change. 
		
Fix a positive integer $d$ that divides all coefficients of $X^0_0,\ldots, X^r_0$.
Following Section \ref{sss:basechange}, let $\tR := R[\pi^{1/d}]$ and $\tK:={\rm Frac}(R')= K[\pi^{1/d}]$,
which admit $\mu_d$-actions.  
Let 
\[
(\tX^0, \tB^0+\tD^0),\ldots, (\tX^r,\tB^r+ \tD^r) 
\]
denote the base changes of the original collection of pairs over $R$ via  $R\hookrightarrow \tR$.
These are boundary polarized CY pairs over $\tR$ admitting $\mu_d$-actions that extend the action on $\tR$. 
Furthermore the isomorphisms over $K$ induce isomorphisms
\[
(\tX^0, \tB^0+\tD^0)_{\tK}
\cong (\tX^i,\tB^i+ \tD^i)_{\tK}
.\]
To simplify notation let $(X,B+D):= (X^0,B^0+D^0)$ and $(\tX,\tB+\tD):= (\tX^0,\tB^0+\tD^0)$.
Let 
\[
V_m :=H^0(X,mD)\quad \text{ and } \quad \tV_m = H^0(\tX,m\tD). 
\]
Write $F_i$ for the filtration of $V_m$ induced by $(X^i,B^i+D^i)$  and $\tF_i$ for the filtration of $\tV_m$ induced by $(\tX^i,\tB^i+\tD'^i)$. 
By Lemmas \ref{l:basechangebpcy} and \ref{l:basechangefilt}, the 
$\mu_d$-action on $(\tX,\tB+\tD)$ induces a decomposition into weight spaces
$\tV_{m} = \oplus_{i=0}^{d-1} \tV_{m,0}$
such that
\[
\tF_i^\la \tV_m = (\tF_i^\la \tV_{m} \cap \tV_{m,0}) \oplus \cdots \oplus (\tF_i^\la\cV_m \cap  \tV_{m,d-1})
.\]
Thus there exists a direct sum decomposition
\[
{\rm Rees}_1(\tF_1,\ldots, \tF_r;\tV_m) = \bigoplus_{\la \in \bZ^r}  \tF^{\la} \tV_m x^{-\la} 
= 
\bigoplus_{\la \in \bZ^r} \bigoplus_{i=0}^{d-1} (\tF^{\la} \tV_{m} \cap \tV_{m,i}) x^{-\la} .
\]
and the degree $0$ pieces is isomorphic to 
${\rm Rees}_{d}(F_1,\ldots, F_r)
$ by Lemma \ref{l:basechangefilt}.
Furthermore the  decomposition endows  ${\rm Rees}_1(\tF_1,\ldots, \tF_r;V_m) $ with the structure of  a $\mu_d$-equivariant module over
\[
\tA:= \tR[x_0 ,\ldots, x_r]/\big(x_0 x_1 \cdots x_r -\pi^{1/d}\big) 
,\]
where $\zeta \in \mu_d(\bk)\subset \bk^\times$ acts on the $i$-th summand  ${\rm Rees}_1(\tF_1,\ldots, \tF_r)$ of as multiplication by $\zeta^i$ and $\pi^{i/d}x_0^{b_0}\cdots x_r^{b_r}\in \tA$ as multiplication by $\zeta^{i+b_0}$.
		
We now analyze the flatness of the Rees modules. 
Since $\tX_0$ and $\tX^i_0$ are reduced  by Lemma \ref{l:basechangebpcy}, 
 ${\rm Rees}_1(\tF_1,\ldots, \tF_r;\tV_m)$ is flat Corollary \ref{c:bpcyReesmodule}.
As $\mu_d$ is a linearly reductive, the module of $\mu_d$-invariants 
\[
{\rm Rees}_1(\tF^1_1,\ldots, \tF_r;\tV_m)^{\mu_d}\cong {\rm Rees}_d(F_1,\ldots, F_r;V_m)
\]
is a flat $\widetilde{A}^{\mu_d}$-algebra. 
Observe that there are $\mu_d$-equivariant isomorphisms
\begin{align*}
\tA 
	&\cong 
		R[t,x_0,\ldots, x_r]/ (x_0 \cdots x_r -t, t^d- \pi) \\
		&\cong 
		R[x_0,\ldots, x_d]/(x_0^d\cdots x_r^d-\pi),
\end{align*}
		where $\mu_d$ acts on the last ring by fixing $R[x_1,\ldots, x_r]$ and  with weight 1 on $x_0$. 
		Thus 
		\[
	\widetilde{A}^{\mu_d} \cong A[x_0^d, x_1,\ldots, x_d]/(x_0^d x_1^d \cdots x_r^d-\pi) 
		\cong 
		A[x_0,x_1,\ldots, x_r]/(x_0x_1^d\cdots x_r^d -\pi)
		=:A^d,
		\]
		which implies that ${\rm Rees}_d(F_1,\ldots, F_r;V_m)$ is a flat $A^d$-module, where the action is as in Definition \ref{d:multrees}. 
Since $F_1,\ldots, F_r$ are bounded $\tfrac{1}{d} \bZ$-filtrations  by Lemma \ref{l:inducedfilt}, Lemma    \ref{l:Reesmodflat->diag} implies that $F_1,\ldots, F_r$ are simultaneously diagonalizable. 

Now assume that there is a $\bT:=\bG^n$-action $(X_K,B_K+D_K)$ that fixes $K$. 
By Lemma \ref{l:bpCYTaction}, the $\bT$-action on $(X_K,B_K+D_K)$ extends to a $\bT$-action on $(X,B+D)$ and $(X^i,B^i+ D^i)$ fixing $R$. 
This induces $\bT$-actions on $V_m: =H^0(X,mD)$ and $V^i_m:H^0(X^i,mD^i)$,
which induce direct sum decompositions into weight spaces
\[
V_m = \bigoplus_{\mu \in \bZ^n} V_{m,\mu}
\quad \text{ and } \quad 
V^i_{m}= \bigoplus_{\mu \in \bZ^n} V^i_{m,\mu}
.\]
Thus the filtrations $F_1,\ldots, F_r$ are $\bT$-invariant, i.e. $F^\la _i V_m = \bigoplus_{\mu \in \bZ^r}  F^\la_i V_{m,\mu}$, where $F^\la_i V_{m,\mu}:= F^\la_i V_{m} \cap V_{m,\mu}$.
Thus there is a direct sum decomposition 
\[
{\rm Rees}_d(F_1, \ldots, F_r;V_m) = \bigoplus_{\mu \in \bZ^n} {\rm Rees}_d(F_{1},\ldots, F_{r};V_{m,\mu}),
\]
of $A^d$-modules. 
Since ${\rm Rees}_d(F_1, \ldots, F_r;V_m)$ is a flat $A^d$-module by Lemma \ref{l:Reesmodflat->diag}, 
  $ {\rm Rees}_d(F_{1},\ldots, F_{r}; V_{m,\mu})$ is a flat $A^d$-module for each $\mu\in \bZ^n$. 
Hence Lemma \ref{l:Reesmodflat->diag} implies that the filtrations $F_{1},\ldots, F_{r}$ of $V_{m,\mu}$ are simultaneously diagonalizable. 
Thus there exists free $R$-module  basis  for $V_{m,\mu}$ diagonalizing the filtrations $F_{1},\ldots, F_{r}$ of $V_{m,\mu}$  for each $\mu \in \bZ^n$. 
The union of such bases over  $\mu \in \bZ^n$ is a basis for $V_m$ by $\bT$-eigenvectors that diagonalizes the filtrations $F_{1},\ldots, F_{r}$ of $V_m$. 
\end{proof}

We now deduce a valuative statement from Proposition \ref{p:diagfiltbpcy}.

\begin{prop}\label{p:diagvalsbpcyDVR}
Let $(X_K,B_K+D_K)$ be a boundary polarized CY pair over $K$ and $v_1,\ldots, v_r \in {\rm Sk}(X_K,B_K+D_K)(\bQ)$.
If $m$ is a non-negative integer such that $mD_K$ is integral, then there exists a basis $s_1,\ldots, s_{N_m}$ of $H^0(X_K,mD_K) $ such that 
\[
v_j(a_1 s_1 + \cdots a_{N_m}s_{N_m}) = \min \{v_j(a_i s_i) \,\vert\, a_i \neq 0 \}
\]
for all $a_1,\ldots, a_{N_m}\in K$ and $ 1\leq j \leq r$. 
Furthermore, if there exists a $\bT:=\bG_m^n$-action on $(X_K,B_K+D_K)$ that fixes $K$, then the base can be chosen so that each $s_i$ is a $\bT$-eigenvector. 
\end{prop}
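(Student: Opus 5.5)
The plan is to reduce to Proposition \ref{p:diagfiltbpcy} by realizing each $v_j$ as the filtration induced by a suitable extension of $(X_K,B_K+D_K)$ over $R$. First I fix one reference extension: by Proposition \ref{p:EinSKtodegen} applied to any rational point of the skeleton (for instance $v_1$, or an arbitrary one if $r=0$), there is a boundary polarized CY pair $(X^0,B^0+D^0)$ over $R$ extending $(X_K,B_K+D_K)$. Then, for each $1\le j\le r$, Proposition \ref{p:EinSKtodegen} applied to $v_j\in \Sk(X_K,B_K+D_K)(\bQ)$ gives a boundary polarized CY pair $(X^j,B^j+D^j)$ over $R$ with a fixed isomorphism over $K$, such that $X^j_{0,\red}=E_j$ is prime and $v_j=(b_j)^{-1}\ord_{E_j}$, where $b_j={\rm coeff}_{E_j}(X^j_0)$. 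Since $X^j_0$ has a single component, Lemma \ref{l:inducedfilt} identifies the induced filtration $F_j$ of $V_m:=H^0(X^0,mD^0)$:
\[
F_j^\la V_m=\{s\in V_m \,\vert\, v_j(s)\ge \la\},
\]
so $\ord_{F_j}=v_j$ on $V_m$.

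Next I apply Proposition \ref{p:diagfiltbpcy}. It yields an $R$-basis $s_1,\ldots,s_{N_m}$ of the free module $V_m$ that simultaneously diagonalizes $F_1,\ldots,F_r$, and it can be taken to consist of $\bT$-eigenvectors when a $\bT$-action exists. Here Lemma \ref{l:bpCYTaction} is used implicitly to extend the action to each $X^j$. I will check that $mD^0$ being a $\bZ$-divisor can be arranged: since $mD_K$ is integral, I can replace $D^0$ by its divisorial part agreeing with $mD_K$ on the generic fiber. If needed, I will instead note that the argument only uses $H^0(X^0,mD^0)$ as a lattice in $H^0(X_K,mD_K)$. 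Diagonalization gives $v_j(\sum a_is_i)=\min_i v_j(a_is_i)$ for all $a_i\in R$.

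Finally I pass from $R$ to $K$. Since $V_m\otimes_R K=H^0(X_K,mD_K)$, the $s_i$ form a $K$-basis. Given $a_1,\ldots,a_{N_m}\in K$, I choose $n$ with $\pi^n a_i\in R$ for all $i$. Each $v_j$ lies in the skeleton, so $v_j(\pi)=1$, and both sides of the desired equality shift by exactly $n$. The identity over $R$ then yields the identity over $K$.

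The main obstacle I expect is the bookkeeping of integrality. One issue is ensuring that $mD^0$ is integral on the chosen model, as Proposition \ref{p:diagfiltbpcy} requires. The other is confirming that the filtration of Definition \ref{d:filtbpcy/R} (defined using $X^j_0$) matches $v_j$ with the normalization $b_j^{-1}$. I would handle the first by choosing $m$-divisible models, or by the lattice remark above, and the second is exactly Lemma \ref{l:inducedfilt}.
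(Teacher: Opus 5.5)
Your proposal is correct and follows the paper's proof essentially step for step. The steps are the same: a reference extension and one extension per $v_j$ from Proposition \ref{p:EinSKtodegen}, the identification $\ord_{F_j}=v_j$ via Lemma \ref{l:inducedfilt}, simultaneous diagonalization by $\bT$-eigenvectors from Proposition \ref{p:diagfiltbpcy}, and clearing denominators using $v_j(\pi)=1$. Your integrality worry does not actually arise: since $(X^0,B^0+D^0+X^0_{0,\red})$ is lc, $D^0$ cannot contain a component of $X^0_0$, so $D^0=\overline{D_K}$ and $mD^0$ is automatically a $\bZ$-divisor, with no need to modify the model.
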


\begin{proof}
By Proposition \ref{p:EinSKtodegen} applied to an arbitrary rational point of the skeleton, there exists an extension of $(X_K,B_K+D_K)$ to a boundary polarized CY pair $(X,B+D)$ over $R$. 
By Proposition \ref{p:EinSKtodegen} applied to each $v_i$,  
there exists a boundary polarized CY pair $(X^i,B^i+D^i)$ over $R$  with an isomorphism
\[
(X^i_K,B^i_K+D^i_K) \cong (X_K,B_K+D_K)
\]
over $K$ such that $E_i := X_{0,\red}^i$ is irreducible and $v_i = (b_{E_i})^{-1} \ord_{E_i}$, where $b_{E_i} := {\rm mult}_{E_i}(X_0)$. 
Let $F_i$ denote the filtration of $H^0(X,mD)$ induced by $(X^i,B^i+D^i)$. 
By Proposition \ref{p:diagfiltbpcy}, there exists a free $R$-module basis $(s_1,\ldots, s_{N_m})$  for $H^0(X,mD)$ that diagonalizes $F_1,\ldots, F_r$.
Since 
\[
F_i^{\la}H^0(X,mD) := \{s \in H^0(X,mD) \,\vert\, v_i(s) \geq \la \}
\]
by Lemma \ref{l:inducedfilt}, $\ord_{F_i} = v_i$. 
Thus, using Remark \ref{r:filttonorm}, we see that
\[
v_i(b_1 s_1 + \cdots b_r s_{N_m}) = \min \{ v_i(b_1 s_1), \ldots, v_i(b_{N_m} s_{N_m})\}
\]
for all $b_1,\ldots, b_{N_m} \in R$ and $1\leq i \leq r$. 
Since $H^0(X_K,mD_K)= H^0(X,mD) \otimes_R K$ by flat base change, $s_1,\ldots, s_{N_m}$ is a $K$-vector space basis for $H^0(X_K,mD_K)$. 
To see it satisfies the desired property, fix $a_1,\ldots, a_{N_m}\in K$. 
Let $\ell = -\min \{\ord_{\pi}(a_1),\ldots, \ord_{\pi}(a_r) \}$. 
Since $\pi^{\ell} a_i \in R$ and   $v_i(\pi^{\ell} f)= v_i(f)+ \ell$ for any $f\in K(X_K)$, the previous equation implies that 
\[
	v_i(a_1 s_1 + \cdots a_r s_{N_m}) = \min \{ v_i(a_1 s_1), \ldots, v_i(a_{N_m} s_{N_m})\}
\]
as desired. 
Furthermore, if there exists a $\bT$-action on $(X_K,B_K+D_K)$ that fixes $K$, then Proposition \ref{p:diagfiltbpcy} further implies that the $s_i$ can be chosen to be $\bT$-eigenvectors.
\end{proof}

\section{Valuative Independence}\label{s:valind}

In this section, we prove the first part of Theorem \ref{thm:val-indep}, which states  the existence of valuatively independent bases for polarized CY pairs over the fraction field of an essentially of finite type DVR. 
\medskip

Throughout, let $R$ be a DVR containing $\bk$. 
Let $K:= {\rm Frac}(R)$ denote the fraction field, $\pi \in R$ denote a uniformizer, and $0 \in \Spec(R)$ denote the closed point. 

\subsection{Definition and Properties}
Below, let $(X_K,B_K)$ be a projective lc CY pair over $K$ and $L_K$ an ample line bundle on $X_K$. 
Fix an snc model $Y\to \Spec(R)$ with a proper birational morphism $\mu:Y_K \to X_K$ and an extension of $\mu^*L_K$ to a line bundle $M$ on $Y$.

\begin{defn}\label{d:valindDVR}
For a non-negative integer $m$,  
a $K$-vector space basis $\theta_1,\ldots,\theta_{N_m} $
of $H^0(X_K,mL_K)$ is \emph{valuatively independent} if 
\[
v( a_1 \theta_1 + \cdots + a_{N_m} \theta_{N_m})
= \min \{ v(a_i \theta_i) \, \vert\, 1\leq i \leq N_m \}
\]
for all $a_1,\ldots, a_{N_m} \in K$ and $v\in {\rm Sk}(X_K,B_K)$.
\end{defn}

Note that the choice of the snc model $Y$ and line bundle $M$ is needed to define $s\in H^0(X_K,mL_K)$.
Though, the following lemma says it does not affect whether a basis is valuatively independent.

\begin{lem}
The definition of valuative independence does not depend on the choice of  snc model $Y \to \Spec(R)$ and the line bundle $M$ on $Y$. 
\end{lem}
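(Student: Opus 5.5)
The key observation is that changing the metric $(Y,M)$ changes the value $v(s)$ by an additive constant that depends on $v$ but not on the section $s$. Valuative independence is invariant under such a shift, since the defining equality has the same kind of expression on both sides.

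First I would set up a comparison. Let $(Y,M)$ and $(Y',M')$ be two choices as in the definition, with $\mu\colon Y_K\to X_K$ and $\mu'\colon Y'_K\to X_K$. I would take $W$ to be a normalization of the closure of the graph of the birational map $Y\dashrightarrow Y'$ over $R$; it is normal, proper and flat over $R$, and carries proper birational morphisms $p\colon W\to Y$ and $p'\colon W\to Y'$ that agree over $K$ with the given maps to $X_K$. Every $v\in\Sk(X_K,B_K)$ is a valuation on $K(X_K)$ with $v(\pi)=1$, so it has a center on $W$ by the valuative criterion for properness over $R$. Its centers on $Y$ and $Y'$ are the images of that center.

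Next I would compare the two line bundles. The line bundles $p^*M$ and $p'^*M'$ on $W$ both restrict to $(\mu\circ p)^*L_K$ on $W_K$. Hence $p^*M\otimes (p'^*M')^{-1}$ is trivialized over $W_K$, and this trivialization gives a rational section $\sigma$ of it. Write $\xi=c_W(v)$. Choose trivializations of $p^*M$ and $p'^*M'$ at $\xi$; these are compatible with chosen trivializations of $M$ at $c_Y(v)$ and of $M'$ at $c_{Y'}(v)$. Under them, a section $s\in H^0(X_K,mL_K)$ corresponds to local functions $f$ and $f'$ with $f=g^m f'$, where $g\in K(W)^\times$ is the local function representing $\sigma$. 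It follows that $v_{M}(s)=v_{M'}(s)+m\,v(g)$, and the constant $c(v):=m\,v(g)$ does not depend on $s$. The choice of trivialization changes $f$ only by a unit at the center, so it does not affect $v$.

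With this formula the conclusion is immediate. Since $c(v)$ is independent of $s$, both sides of
\[
v(a_1\theta_1+\cdots+a_{N_m}\theta_{N_m})=\min_i v(a_i\theta_i)
\]
shift by the same constant when $(Y,M)$ is replaced by $(Y',M')$. Here we use $v(a_i\theta_i)=v_K(a_i)+v(\theta_i)$, and the term $v_K(a_i)$ is unaffected by the metric. So the equality holds for one metric exactly when it holds for the other, for every $v$ in the skeleton.

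The only real point to check is the one about centers in the comparison step. We need the evaluations on $Y$, on $Y'$ and on $W$ to be computed at compatible centers, and the pullback of a trivialization at $c_Y(v)$ to remain a trivialization at $c_W(v)$. Both follow because $p$ maps $c_W(v)$ to $c_Y(v)$ and pullbacks of local generators are local generators. The snc hypothesis on $Y$ is never used; the argument works for any proper flat integral model.
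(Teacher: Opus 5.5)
Your proof is correct and follows the paper's own argument. You pass to a model dominating both $Y$ and $Y'$; there the two pulled-back line bundles agree over $K$, so they differ by a twist supported on the special fiber (your rational section $\sigma$, the paper's Cartier divisor $D$ on the common snc model $Y''$), and this shifts $v(s)$ by a constant independent of $s$. The differences are cosmetic: you use the normalized graph instead of an snc model, so no resolution is needed, and you write out the factor $m$ and the shift in $\min_i v(a_i\theta_i)$, which the paper leaves implicit.
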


\begin{proof}
Let $Y' \to \Spec(R)$ be  another snc model with a proper birational morphism $\mu':Y'_K \to X_K$ and $M'$  a line bundle that is an extension of $\mu'^*(L_K)$. Then there is an snc model $Y"$ with proper birational morphisms $\rho:Y'' \to Y$ and $\rho': Y''\to Y'$. 
Since there is an isomorphism $(\rho^*M) \vert_K   = (\rho'^*M')\vert_K$, there exists a Cartier divisor $D$ supported on $Y_0$ such that $\rho^*M (D)= \rho'^*M'$. 
Then $v_M(s) +v(D) = v_{M'}(s)$ for all $s\in H^0(X_K,mL_K)$, where the subscript denotes the choice of metric used when evaluating the section.
Hence Definition \ref{d:valindDVR} does not depend on the choice of $Y \to X$ and $M$. 
\end{proof}

For a section $s\in H^0(X_K,L_K)$, we define function 
$s^{\rm trop}: \Sk(X_K,B_K)$  by setting  
$s^{\rm trop}(v) = v(s)$.

\begin{lem}\label{l:valindDVRbasicprops}
Fix a non-negative integer $m$ and a valuatively independent basis $\theta_1,\ldots, \theta_{N_m}$ of $H^0(X_K,mL_K)$.
The following hold:
\begin{enumerate}
    \item The collection of functions $\theta_1^{\rm trop},\ldots, \theta_{N_m}^{\rm trop}$ is independent of the choice of valuatively independent basis up to reordering and replacing
 $\theta_i^{\rm trop}$ with $\theta_i^{\rm trop}  + d_i$ for some $d_i \in \bZ$.
 \item If the degeneration is maximal (i.e. $\dim({\rm Sk}(X_K,B_K)) = \dim X_K$) and $R/\pi R = \bk$, then $\theta_i^{\rm trop} - \theta_j^{\rm trop}$ is not a constant $\bZ$-valued function for all $1\leq i <j \leq N_m$.
\end{enumerate}
\end{lem}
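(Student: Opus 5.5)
For (1), I will reduce to the trivially valued argument of Lemma \ref{l:diagbasisords}, with the DVR structure forcing the integer shifts. Let $\theta_1,\ldots,\theta_{N}$ and $\theta'_1,\ldots,\theta'_{N}$ be two valuatively independent bases of $W_K:=H^0(X_K,mL_K)$. First I normalize: replacing each $\theta_i$ by $\pi^{k_i}\theta_i$ changes $\theta_i^{\rm trop}$ by the constant $k_i\in\bZ$ (since $v(\pi)=1$ for $v\in\Sk$) and preserves valuative independence. So it suffices to show that the multisets $\{\theta_i^{\rm trop} \bmod \bZ\}$ agree and that the integer shifts can be matched.

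Fix finitely many $v_1,\ldots,v_r\in\Sk(X_K,B_K)$. The filtrations $F_{v_j}$ on $W_K$ are defined by $F^\la W_K=\{s : v_j(s)\ge\la\}$, and each basis diagonalizes all of them in the $K$-sense of Definition \ref{d:valindDVR}. I would consider the lattice $V$ spanned over $R$ by the $\theta_i$, or rather work directly over $K$ with the $\kappa$-graded space
\[
\gr^{\la}(W_K)=F^{\la}W_K/\bigl(F^{>\la}W_K\bigr),\qquad \la\in\bR^r,
\]
where multiplication by $\pi$ identifies $\gr^\la$ with $\gr^{\la+(1,\ldots,1)}$. The computation of Lemma \ref{l:uniquenessbasisDVR} shows that $\gr^\la$ has a $\kappa$-basis given by the images of the $\pi^k\theta_i$ with $\ord(\pi^k\theta_i)=\la$. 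Hence for each class $[\la]\in\bR^r/\bZ(1,\ldots,1)$, the number of $i$ with $(v_j(\theta_i))_j\in[\la]$ is $\dim_\kappa\gr^\la$, which is intrinsic.

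Applying this to every finite set of valuations shows that $i\mapsto\theta_i^{\rm trop}$, modulo adding constant integers, has the same multiset of values on every finite subset of $\Sk$. Since there are only finitely many functions, I then choose a finite set of points separating any two distinct functions among the $\theta_i^{\rm trop},\theta_i'^{\rm trop}$ that are not integer translates of each other. Matching the multisets on this finite set matches the functions up to reordering and $\bZ$-shifts. Independence of the metric follows from the preceding lemma, since changing $M$ adds the same function $v(D)$ to every $\theta_i^{\rm trop}$; this also fits the statement, whose translation freedom is by $d_i$.

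For (2), I would mimic the maximal-boundary lemma of Section \ref{s:affineCYpairs}. Suppose $\theta_i^{\rm trop}-\theta_j^{\rm trop}=c\in\bZ$. Replace $\theta_j$ by $\pi^{c}\theta_j$, so the two functions agree. Take a dlt model $Y$ with a $0$-stratum $y$ of $Y_{0,\red}$ of maximal dimension, so that $\dim X_K+1$ components meet at $y$, and pick coordinates with $\widehat{\cO}_{Y,y}\cong\bk\llbracket y_0,\ldots,y_n\rrbracket$. This uses $R/\pi=\bk$ and the algebraically closed residue field at the closed point $y$. Choose $\alpha$ with $\bQ$-linearly independent coordinates, normalized so that $v_\alpha(\pi)=1$. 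Then the initial terms of $\theta_i,\theta_j$ (in a local trivialization of $M$) are single monomials of equal weight, hence the same monomial. Therefore $v_\alpha(\theta_i-c'\theta_j)>v_\alpha(\theta_i)$ for some $c'\in\bk^\times$, contradicting valuative independence.

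The main obstacle is in (2): ensuring that $\pi$'s expansion does not interfere, since $\pi$ is itself a product of the $y_k$ times a unit. This is harmless because we only compare leading monomials of $\theta_i,\theta_j$. A second concern is the separation step in (1); I would handle it by using the piecewise-linearity and finiteness of the functions.
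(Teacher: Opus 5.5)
Your argument is correct. For (2) it is the paper's argument: both transplant the maximal-boundary lemma from Section \ref{s:affineCYpairs}. You absorb the integer constant into $\pi^{c}\theta_j$, then use a quasi-monomial $v_\alpha$ at a $0$-stratum whose weights are $\bQ$-linearly independent to find $c'\in\bk^\times$ with $v_\alpha(\theta_i-c'\pi^{c}\theta_j)>v_\alpha(\theta_i)$.

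For (1) your route differs from the paper's.

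\emph{The paper's route.} It fixes one $v\in\Sk(X_K,B_K)(\bQ)$ and passes to the intrinsic lattice $V_m=\{s : v(s)\geq 0\}$. Valuative independence shows that the elements $\pi^{\lceil -v(\theta_i)\rceil}\theta_i$ form an $R$-basis of $V_m$ diagonalizing every $F_w$, $w\in\Sk$. Lemma \ref{l:uniquenessbasisDVR}, applied to the whole infinite family $\{F_w\}_{w\in\Sk}$, then determines the multiset of normalized functions exactly. The constants $d_i$ come from undoing the normalization.

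\emph{Your route.} You stay over $K$ with the $\pi$-periodic pieces $F^\la W_K/F^{>\la}W_K$. Their dimensions count the basis elements with $\ord(\theta_i)\in\la+\bZ(1,\ldots,1)$. This avoids choosing a lattice, and it sees functions only modulo integer constants, which is exactly the ambiguity allowed in the statement.

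Your separation step is valid. It needs only that the lists of functions are finite, not piecewise linearity. It can also be dropped entirely: your computation of $\gr^\la(W_K)$ works verbatim for the infinite index set $J=\Sk$, just as Lemma \ref{l:uniquenessbasisDVR} does.

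One aside is inaccurate. Changing the metric adds the common function $v\mapsto v(D)$, which is in general non-constant on $\Sk$, so it is not absorbed by the $d_i$. This is harmless because the lemma fixes the metric. Relatedly, in (2) it is cleaner to run the initial-term argument on the metric-independent ratio $\theta_i/(\pi^{c}\theta_j)\in K(X_K)$. A local trivialization of $M$ does not directly make sense at $y$, since $M$ lives on a possibly different model that need not contain $y$.
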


The condition in statement (1) is necessary, as multiplying a basis element by a power of a uniformizer results in  $(\pi^{d_i}\theta_i)^{\trop} = \theta_i^{\rm trop} + d_i$.

\begin{proof}
Fix a valuatively independent basis $\theta_1,\ldots , \theta_{N_m}$ of $H^0(X_K,mL_K)$. Fix $v\in {\rm Sk}(X_K,B_K)(\bQ)$. 
Consider the $R$-module
\[
V_m := \{s \in H^0(X_K, mL_K) \, \vert\,  v(s) \geq 0\}
.\]
By valuative independence, there is a direct sum decomposition
\[
V_m:= R  \pi^{-\lfloor (\theta_i)\rfloor }\theta_1\oplus \cdots \oplus R
\pi^{-\lfloor (\theta_{N_m})\rfloor }
\theta_{N_m}.
\]
Thus $V_m$ is a free $R$-module of rank $N_m$ and has a basis given by  $\theta'_i = \pi^{\lceil -v(\theta_i)\rceil} \theta_i$ for $1\leq i \leq N_m$.
In particular $V_m$ is a free $R$-module of rank $N_m$. 

For each valuation $w\in \Sk(X_K,B_K)$, consider the  filtration  $F_w $ of $V_m$ defined in Example \ref{e:filtvalDVR}.
Since $\ord_{F_w} (s)= w(s)$ for all $s\in V_m$,
$\theta'_1,\ldots, \theta'_{N_m}$ is a basis of $V_m$ that diagonalizes $F_w$. 
Thus Lemma \ref{l:uniquenessbasisDVR} implies that the collection of functions $\theta'^{\trop}_i,\ldots, \theta'^{\trop}_{N_m}$ 
is independent of the choice of the valuatively independent basis $\theta_1,\ldots, \theta_{N_m}$. 
As $\theta'^{\trop}_i  = -\lfloor v(s_i)\rfloor + \theta^{\trop}_i$, statement (1) holds. 

Assume that $\dim {\rm Sk}(X_K,B_K) = \dim X_K$. 
If $\theta_i^{\rm trop} = \theta_j^{\trop} +d $ for some $d\in \bZ$, then $\theta_i ^{\rm trop} = (\pi^d \theta_j)^{\trop}$. 
Now the same argument as proof of Lemma \ref{l:tropthetaeasyprop}.2 shows that if $\theta_i^{\rm trop}= \theta_j^{\rm trop} +d$ for $d\in \bZ^d$, then $\theta_i - c \pi^{d}\theta_j =0$ for some $c\in \bk$.
As $\theta_1,\ldots, \theta_{N_m}$ is a basis for $H^0(X_K,mL_K)$, this implies that $i = j$.
\end{proof}

\subsection{Algebraic case}

In this section, we prove the existence of valuatively independent bases when the DVR $R$ is  essentially of finite type over $\bk$. This covers the first part of Theorem \ref{thm:val-indep}.

\begin{thm}
Assume that $R$ is essentially of finite type over $\bk$. 
If $(X_K,B_K)$ is a projective CY pair over $K:= {\rm Frac}(R)$ and $L_K$ is an ample line bundle on $X_K$, then there exists a basis $\theta_1,\ldots, \theta_{N_m}$ of $H^0(X_K,mL_K)$ that satisfies valuative independence.
\end{thm}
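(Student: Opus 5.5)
Fix $m\geq 0$. The plan has three steps: reduce to the boundary polarized setting via the projectivized cone, diagonalize finitely many skeleton valuations using Proposition \ref{p:diagvalsbpcyDVR}, and pass to the whole skeleton by piecewise linearity, as in the proof of Theorem \ref{t:valindbpcy}.

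\emph{Step 1: cone reduction.} Mirroring the last theorem of Section \ref{s:affineCYpairs}, set
\[
Y_K := C_p(X_K,L_K), \qquad G_K := C_p(B_K,L_K), \qquad D_K := V(t).
\]
By \cite[Proposition 2.20.3]{ABB+}, $(Y_K,G_K+D_K)$ is an lc boundary polarized CY pair over $K$. It carries a $\bG_m$-action fixing $K$, and
\[
H^0(Y_K,mD_K)=\bigoplus_{i=0}^m H^0(X_K,iL_K),
\]
with the summand $H^0(X_K,mL_K)$ being a weight space. We will also need a map $\phi:\Sk(X_K,B_K)\to \Sk(Y_K,G_K+D_K)$ with $\phi(v)(\sum f_i)=\min_i v(f_i)$. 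Evaluation of $v(f_i)$ uses a fixed metric, so this should be compatible up to the additive constants allowed by the definition. To construct $\phi$, trivialize the $\bG_m$-bundle $X_L\to X$ over a dlt model of $(X_K,B_K)$, exactly as in the affine argument; the pulled-back lc places of the cone pair over $R$ then lie in the skeleton.

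\emph{Step 2: finitely many valuations.} Choose a dlt model, so that $\Sk(Y_K,G_K+D_K)=\mathcal D(\cdot)$ is a finite simplicial complex. For each $s\in H^0(Y_K,mD_K)$, the function $s^{\rm trop}$ is piecewise affine with integral slopes. The key claim is that only finitely many such functions arise up to adding integer constants. I would deduce this from the DVR analogue of \cite[Lemma 2.5]{BLX22}, or from the uniform Lipschitz bound of \cite{BFJ16} mentioned in the introduction. Given the claim, subdivide into rational simplices $\sigma_1,\ldots,\sigma_\ell$ on which every $s^{\rm trop}$ is affine, and let $v_1,\ldots,v_r\in \Sk(\bQ)$ be their vertices.

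\emph{Step 3: diagonalize and extend.} Apply Proposition \ref{p:diagvalsbpcyDVR} with the $\bG_m$-action to get a basis of $H^0(Y_K,mD_K)$ consisting of eigenvectors that is valuatively independent for $v_1,\ldots,v_r$. The eigenvectors of weight $m$ form a basis $\theta_1,\ldots,\theta_{N_m}$ of $H^0(X_K,mL_K)$. Now fix $s=\sum a_i\theta_i$. Both $s^{\rm trop}$ and $\min_i (a_i\theta_i)^{\rm trop}$ are affine on each $\sigma_k$. The first is always $\geq$ the second, and they agree at the vertices. On each simplex, the minimum is attained by some $i$ at a given vertex, but this $i$ may vary from vertex to vertex. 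To handle this, use the equivalent norm description: along a segment, $\max_i$ of affine functions is convex while $s^{\rm trop}$ is affine and dominated. I would instead refine the subdivision so that, for each pair $i,j$, the function $\theta_i^{\rm trop}-\theta_j^{\rm trop}$ has constant sign on each cell. These are finitely many functions, as in the affine case. Then the minimizing index is constant on each cell, and equality at the vertices gives equality on the whole cell.

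\emph{Main obstacle.} The main obstacle is the finiteness in Step 2 over a DVR, where sections can be scaled by powers of $\pi$. I would normalize each $s$ so that $\min_{v\in\Sk} v(s)\in[0,1)$ and use the bounded integral slopes to get finiteness. Defining $\phi$ carefully is routine but fiddly.
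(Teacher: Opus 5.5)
Your architecture matches the paper's: projectivized cone, Proposition \ref{p:diagvalsbpcyDVR} with the $\bG_m$-action, the weight-$m$ eigenvectors, and finitely many rational vertices from a BFJ-type subdivision as in Lemma \ref{l:decomplinear}. However, the step you call ``routine but fiddly'' is where the paper needs a genuine extra ingredient. To know that your lifted valuations lie in $\Sk(Y_K,G_K+D_K)$, you must realize them as lc places of an lc model over $R$ of the cone pair. The affine argument you are copying works because $L$ is ample on the base, so $X_L$ sits as an open subset of $C_p(X,L)$. Over a dlt model of $(X_K,B_K)$ with an arbitrary extension of $L_K$, there is no ``cone pair over $R$'' to embed into. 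The paper supplies this with Lemma \ref{l:lcCYpolextensionviaBPCY}. It applies Proposition \ref{p:EinSKtodegen} to the cone over $K$, then runs an $S_2$/adjunction argument on the closure of $D_K$. This produces an lc model $(X,B)$ on which the closure $H$ of a Cartier divisor $H_K$ in the class of $L_K$ is $\bQ$-Cartier and relatively ample. Then $C_p(X,H)$ is an lc model of $(Y_K,G_K+D_K)$. Moreover, the Seifert $\bG_m$-bundle over a dlt modification containing the $E_j$ maps crepantly into it. This gives $w_j\in\Sk(Y_K,G_K+D_K)(\bQ)$ with $w_j(f_0+\cdots+f_m)=\min_i v_j(f_i)$. (You could instead compactify to $\bP(\cO\oplus M)$ over a dlt model and show that the skeleton depends only on the crepant birational class of the generic fiber, but that also needs an argument.) Note too that the paper subdivides $\Sk(X_K,B_K)$ rather than the cone's skeleton. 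It therefore only has to lift the finitely many vertices $v_j$, instead of defining $\phi$ on the whole skeleton.

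Second, drop the refinement in Step 3. As ordered, it is circular: the refined cells have new vertices that are not among the $v_j$ at which the basis was chosen to be diagonalizing, so you do not know equality there. Also, the relevant comparisons are between $\ord_\pi(a_i)+\theta_i^{\rm trop}$ and $\ord_\pi(a_j)+\theta_j^{\rm trop}$, not the sign of $\theta_i^{\rm trop}-\theta_j^{\rm trop}$. The convexity remark you make just before the refinement is the correct argument, and it is what the paper means by ``the affine property.'' On a cell, $g:=\min_i(a_i\theta_i)^{\rm trop}$ is concave, while $s^{\rm trop}$ is affine with $s^{\rm trop}\geq g$, and the two agree at the vertices. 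At any convex combination of vertices, concavity gives that $g$ is at least the same combination of the vertex values, which equals $s^{\rm trop}$ there. Hence $g=s^{\rm trop}$ on the whole cell.
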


We will eventually deduce  the theorem  Proposition \ref{p:diagvalsbpcyDVR}.
First, we prove the following lemmas.

\begin{lem}\label{l:lcCYpolextensionviaBPCY}
Assume that $R$  is essentially of finite type over $\bk$. 
If $(X_K,B_K)$ is a projective lc CY pair over $K$ and $H_K$ is an ample Cartier divisor on $X_K$, then $(X_K,B_K)$ admits an lc CY modification  $(X,B)$ over $R$ such that the closure of $H_K$ in $X$ is $\bQ$-Cartier and ample. 
\end{lem}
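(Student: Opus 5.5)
Starting from $(X_K,B_K)$ and the ample Cartier divisor $H_K$, I will build the desired model by the same two-stage procedure as in the proof of Proposition \ref{p:EinSKtodegen}: first an lc model, then an MMP with scaling, then a canonical model. Take any lc model $(X^{\lc},B^{\lc})$ of $(X_K,B_K)$; it exists by Proposition \ref{lem:semistable-reduction}. Then take a $\bQ$-factorial dlt modification
\[
f:(Y,B_Y+Y_{0,\red})\to (X^{\lc},B^{\lc}+X^{\lc}_{0,\red})
\]
from Proposition \ref{p:dltmodexist}, so that $(Y,B_Y+Y_{0,\red})$ is dlt and CY over $R$. By Bertini, choose $0\le H'_K\sim_{\bQ}H_K$ general with $(X_K,B_K+H'_K)$ lc. We can in fact take $H'_K$ with small coefficient times a general member of $|mH_K|$.

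Next, let $H_Y$ be the closure of $f_K^*H_K$ in $Y$; this is legitimate since $Y$ is $\bQ$-factorial. Because $(Y,B_Y+Y_{0,\red})$ is CY over $R$, we have
\[
K_Y+B_Y+Y_{0,\red}+\varepsilon H_Y\sim_{\bQ,R}\varepsilon H_Y.
\]
Running a $(K_Y+B_Y+Y_{0,\red}+\varepsilon H_Y)$-MMP over $R$ amounts to running an $H_Y$-MMP. The subtlety is that $H_Y$ itself need not avoid lc centers on $Y_0$. To handle this, I will replace $H_K$ by a general $H'_K\sim_{\bQ}H_K$ and work with the closure $H'_Y$. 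It is still true that the closure of $H_K$ in the final model is $\bQ$-linearly equivalent to the closure of $H'_K$ up to vertical divisors. Those vertical divisors are multiples of $X_0$ once the central fiber is irreducible; in general they are handled as in the last paragraph of Proposition \ref{p:EinSKtodegen}.

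Concretely, I will apply \cite[Theorem 1.1]{HX13}, which needs $R$ essentially of finite type. Two inputs are needed. First, $(Y,B_Y+\varepsilon H'_Y)$, or its variant with a small multiple of $Y_{0,\red}$ removed, is dlt with no lc centers inside $\Supp H'_Y$. Second, over $K$ its canonical model is $X_K$, since $(Y,B_Y+\varepsilon H'_Y)_K\to (X_K,B_K+\varepsilon H'_K)$ is crepant and $K_{X_K}+B_K+\varepsilon H'_K\sim_{\bQ}\varepsilon H_K$ is ample. Together these give a canonical model $h:Y\dashrightarrow X$ over $R$. Set $B:=h_*B_Y$. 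Since $h$ is a birational contraction of an lc CY pair over $R$, the pair $(X,B+X_{0,\red})$ is lc and CY over $R$, and $X_K\cong X_K$ compatibly. So $(X,B)$ is an lc model. Moreover $h_*H'_Y$ is $\bQ$-Cartier and ample over $R$, because it is $\bQ$-linearly equivalent over $R$ to the pushforward of the ample log canonical divisor.

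It remains to pass from $H'$ to the closure $\overline{H}$ of $H_K$. Over $K$ we have $\overline{H}_K\sim_{\bQ}H'_K$, so $\overline{H}-h_*H'_Y\sim_{\bQ}G$ for some $\bQ$-divisor $G$ supported on $X_0$. The main obstacle is that $G$ need not be $\bQ$-Cartier on $X$, or need not be numerically trivial over $R$, when $X_0$ is reducible. My first attempt is to arrange $X_0$ irreducible by contracting all but one component in the MMP step, as in Proposition \ref{p:EinSKtodegen}, via a $(K+B_Y+Y_{0,\red}-E)$-MMP with $E$ a single lc place. Then $G$ is a rational multiple of $X_0\sim 0$, so $\overline{H}\sim_{\bQ}h_*H'_Y$ is $\bQ$-Cartier and ample. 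If that step fails, the fallback is to first pick $H'_K$ such that the closure of $H_K$ and $H'$ differ by $\operatorname{div}$ of a rational function on $X_K$, and to track the vertical part explicitly.
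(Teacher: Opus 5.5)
Your argument works, but only in the form of your ``first attempt''. Moreover, the irreducibility of the central fiber is needed earlier than you realize.

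In your fourth paragraph you correctly drop $Y_{0,\red}$ (or all but $(1-\delta)Y_{0,\red}$) from the boundary before applying \cite[Theorem 1.1]{HX13}. Otherwise the components of $Y_0$ would be lc centers missing the generic fiber. But then $K_Y+B_Y+\varepsilon H'_Y\sim_{\bQ,R}\varepsilon H'_Y-Y_{0,\red}$ (resp.\ $\varepsilon H'_Y-\delta Y_{0,\red}$), not $\varepsilon H'_Y$. So on the canonical model the ample divisor is $\varepsilon h_*H'_Y-X_{0,\red}$ (or $\varepsilon h_*H'_Y-\delta X_{0,\red}$). When $X_0$ has components of different multiplicities, $h_*H'_Y$ need not be ample, or even $\bQ$-Cartier, so your sentence justifying its ampleness is false as stated.

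The fix is the one you propose, but it must come before the canonical-model step:
\begin{enumerate}
\item take the dlt modification;
\item run the $(K_Y+B_Y+Y_{0,\red}-E)$-MMP as in Proposition \ref{p:EinSKtodegen} to get $Z$ with $Z_0=bE_Z$;
\item take the canonical model of $(Z,B_Z+\varepsilon H'_Z)$.
\end{enumerate}
Then $Z_{0,\red}=\tfrac1b Z_0\sim_{\bQ}0$. The same reasoning gives both that $h_*H'_Z$ is $\bQ$-Cartier and ample, and that the closure $\overline{H}$ of $H_K$ differs from $h_*H'_Z$ up to $\bQ$-linear equivalence by a multiple of the irreducible $X_{0,\red}$, hence $\overline{H}\sim_{\bQ}h_*H'_Z$. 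The ``fallback'' is unnecessary.

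Organized this way, your proof is correct and takes a different route from the paper. The paper does not rerun the MMP. It forms the projective cone $(C_p(X_K,H_K),G_K+D_K)$, a boundary polarized CY pair with $(D_K,\mathrm{Diff}_{D_K}(G_K))\cong(X_K,B_K)$. It applies Proposition \ref{p:EinSKtodegen} as a black box to extend this to $(Y,G+D)$ over $R$ with $Y_0$ irreducible. It then shows $D$ is $S_2$, using that $\cO_Y(-D)$ is $S_3$ along $Y_0$, and uses adjunction to see that $(D,\mathrm{Diff}_D(G))$ is an lc model of $(X_K,B_K)$. Finally, the closure of $H_K$ is $\bQ$-Cartier and ample because $m\overline{H}$ is linearly equivalent to $\Gamma|_D$ for some $\Gamma\in|mD|$ with $mD$ very ample.

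Your route is more direct. It is essentially the proof of Proposition \ref{p:EinSKtodegen} with $D_K=0$ and the general $H'_K\sim_{\bQ}H_K$ playing the role of the ample boundary. It avoids the cone, the depth estimates and adjunction, and produces an irreducible central fiber directly on $X$. The paper's route instead reduces cleanly to a statement already proved for boundary polarized pairs, and matches the cone construction used immediately afterwards in the proof of Theorem \ref{thm:val-indep}. Both rely on \cite{HX13}, hence on $R$ being essentially of finite type.
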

	
\begin{proof}
Let $(Y_K,G_K+D_K)$ denote the projectivized cone over $(Y_K,G_K)$ with respect to $H_K$.
In particular, 
\[
Y_K :=C_p(X_K,H_K):=  \Proj  \, \bigoplus_{m \in \bN} \bigoplus_{i=0}^m H^0(X_K,iH_K) t^{m-i} 
,\]
$D_K=V(t)$  and $G_K$ the cone divisor over $B_K$; see \cite[Section 2.6.2]{ABB+} for details. 
By \cite[Proposition 2.20]{ABB+}, $(Y_K,G_K+D_K)$ is a boundary polarized CY pair over $K$ and 
\[
(D_K, {\rm Diff}_{D_K}(G_K)) \cong (X_K,B_K)
\]
over $K$.
By Proposition \ref{p:EinSKtodegen}, $(Y_K,G_K+D_K)$ extends to a boundary polarized CY pair $(Y,G+ D)$ over $R$ with $Y_0$ irreducible. 
Since $(Y,G+D+ Y_{0,\red})$ is lc, 
$(Y,G+D)$ has no lc centers contained in $Y_0$. 
Using that  $D$ is reduced and $\bQ$-Cartier,  \cite[Theorem 7.20]{Kol13}  implies that $\cO_{Y}(-D)$ is $S_3$  along $Y_{0}$. 
Thus $D$ is $S_2$ along $Y_0$ by \cite[Lemma 2.60]{Kol13}.
As $D_K \cong X_K$, which is normal, we conclude that $D$ is $S_2$. 
Since $(Y, G+D+ Y_{0,\red})$ is lc and CY over $R$, adjunction \cite[Theorem 11.17]{Kol23} implies that $(X,B+  X_{0,\red}):=(D,{\rm Diff}_{D}(G)+ D_{0,\red})$ is an slc CY pair over $R$. 
 Since $X_K$ is normal and $(X, B+X_{0,\red})$ is slc, $X$ is normal.
Thus $(X,B+X_{0,\red})$ is a projective lc CY pair over $R$.

Let $H$ denote the closure of $H_K$ in $X$. 
Fix an integer $m>0$ such that $mD$ is a very ample Cartier divisor. 
Thus there exists a Cartier divisor $\Gamma\in |mD|$  such that $D \not \subset \Supp(\Gamma)$.
As $mH$ and $\Gamma\vert_{G}$ are linearly equivalent over $K$ and $X_0$ is a prime divisor that is principal, $mH\sim \Gamma\vert_{G}$. 
Thus $H$ is $\bQ$-Cartier and ample. 
\end{proof}

\begin{lem}\label{l:decomplinear}
Let $Y\to \Spec(R)$ be an integral flat projective scheme over $R$ such that $Y_0$ is snc and  $Y_K$ is geometrically integral. 
If $M$ is a line bundle on $Y$, then there exists a decomposition of $\cD(Y_0)$ into finitely many polytopes with rational vertices $P_1, \ldots, P_l$ such that, for each $s\in H^0(Y,L)$, the function 
\[
\cD(Y_0) \ni v\mapsto v(s) \in \bR
\]
is affine on each $P_i$.
\end{lem}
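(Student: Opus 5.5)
The plan is to follow the argument in \cite[Proof of Lemma 1.10]{BFJ08}, already invoked in the proof of Theorem \ref{t:valindbpcy}, and to add one finiteness input: only finitely many affine functions on each simplex can occur.

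\emph{Step 1: local formula.} Fix a stratum of $Y_0$. Concretely, let $Z$ be an irreducible component of $E_J=\bigcap_{j\in J}E_j$ with generic point $\eta$, and let $\sigma_Z\subset\cD(Y_0)$ be the corresponding simplex. Choose a trivialization of $M$ at $\eta$ and local equations $y_j$ of the $E_j$, $j\in J$. Then $\pi = u\prod_{j\in J} y_j^{b_j}$ with $u$ a unit. Write the image of $s$ in $\widehat{\cO}_{Y,\eta}\cong k(\eta)\llbracket y_J\rrbracket$ as $\sum_\beta c_\beta y^\beta$. A point of $\sigma_Z$ is the valuation $v=(\sum_j\alpha_jb_j)^{-1}v_\alpha$ with $\sum_j\alpha_j=1$. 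Reparametrize by $w_j:=\alpha_j/\sum_k\alpha_kb_k$. Then $\sigma_Z$ is identified with the rational simplex $\{w\in\bR_{\ge0}^J : \sum_j b_jw_j=1\}$, and
\[
v(s)=\min\{\langle w,\beta\rangle \mid c_\beta\neq 0\}.
\]
This is the minimum of linear functions with integer coefficients, so it is concave and piecewise linear on $\sigma_Z$. It is affine on the regions where a fixed $\beta$ attains the minimum. The problem is that $s$ ranges over an infinite set, so these regions could a priori accumulate.

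\emph{Step 2: uniform finiteness (main obstacle).} I need a finite set $\Lambda_Z\subset\bZ_{\ge0}^J$, independent of $s$, such that for every $s$ the minimum is attained on $\Lambda_Z$-type data. Equivalently, I need the set of functions $\{s^{\rm trop}|_{\sigma_Z}\}$ to be finite up to adding constants. The approach I would try first is the following.

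The function $s\mapsto v(s)$ is unchanged if $s$ is multiplied by a unit. Also, $\pi^k s$ shifts $s^{\rm trop}$ by the constant $k$, which does not affect affineness. So it suffices to consider $s\in H^0(Y,M)\setminus \pi H^0(Y,M)$. For such $s$, the restriction $s|_{Y_0}$ is nonzero, so $s$ does not vanish identically on some component $E_i$. It follows that $\min_{\cD(Y_0)} s^{\rm trop}=0$ at some vertex.

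Next, use $H^0(Y,M)\subset H^0(Y_{0}, M|_{Y_0})$-type bounds via $M$ and $Y_0$ as follows. By \cite[Lemma 3.2]{BLXZ25}-style boundedness, or more directly by finite generation, $H^0(Y,M)$ is a finite free $R$-module. The vanishing orders $\ord_{E_i}(s)$ modulo $\pi$-divisibility, and the orders along the strata, are therefore bounded by a constant $C$ depending only on $(Y,M)$. To see this, fix an $R$-basis $e_1,\dots,e_N$. For any $s=\sum a_ie_i$ not divisible by $\pi$, some coefficient $a_i$ is a unit. I then claim that $v(s)\le \max_i v(e_i)$ for every $v\in\cD(Y_0)$, and more strongly that $v(s)=v(\bar s)$ is computed by the reduction in the finite-dimensional space $V\otimes\kappa$ together with bounded higher corrections.

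From this, each $s^{\rm trop}$ is a minimum of linear forms $\langle w,\beta\rangle$ with $|\beta|\le C'$: any $\beta$ with $\langle w,\beta\rangle> C$ for all $w\in\sigma_Z$ never attains the minimum. Since $w$ is bounded below on the simplex, $\langle w,\beta\rangle\ge c|\beta|$ with $c>0$, so only $\beta$ with $|\beta|\le C/c$ matter. This yields a finite set $\Lambda_Z$.

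\emph{Step 3: assembling the decomposition.} Subdivide $\sigma_Z$ by all hyperplanes $\langle w,\beta-\beta'\rangle=0$ for $\beta,\beta'\in\Lambda_Z$. These hyperplanes have rational coefficients, so the common refinement consists of finitely many polytopes with rational vertices, and on each polytope the ordering of the forms $\langle w,\beta\rangle$ is fixed. Each $s^{\rm trop}$ is therefore affine on every cell. The simplices $\sigma_Z$ glue to form $\cD(Y_0)$, and restricting to faces is compatible. Taking a common refinement along shared faces, which is again finite and rational, gives the required decomposition $P_1,\dots,P_l$.
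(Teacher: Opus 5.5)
Your Steps 1 and 3 agree with the paper: locally each $s^{\rm trop}$ is a minimum of linear forms with integral slopes, and you subdivide by finitely many rational hyperplanes $\langle w,\beta-\beta'\rangle=0$. Step 2, however, has a genuine gap.

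The inequality $\langle w,\beta\rangle\ge c|\beta|$ with $c>0$ is false on the closed simplex $\sigma_Z$. On a proper face some $w_j$ vanish, so exponents concentrated in those coordinates have small weight however large $|\beta|$ is. Indeed $\min_{w\in\sigma_Z}\langle w,\beta\rangle=\min_j\beta_j/b_j$. So the set of $\beta$ that could attain a value $\le C$ somewhere is infinite, and a uniform bound $v(s)\le C$ (even granting it) does not give a finite $\Lambda_Z$. For instance, take $J=\{1,2\}$, $b=(1,1)$ and local expansions $y_1^a+y_2^N$. The functions $\min(aw_1,Nw_2)$ are all bounded by $a$ on $\sigma_Z$, yet their break points $w_1=N/(a+N)$ accumulate at the vertex $w=(1,0)$ as $N\to\infty$. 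What must be bounded uniformly is the slope of the active linear pieces, i.e.\ $|\beta|$ for those $\beta$ that actually realize the minimum somewhere, not the values of $s^{\rm trop}$.

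That slope bound is a global input and cannot come from a finite-rank or purely local argument at $\eta$. In the example, $N$ is the order of vanishing of $s|_{E_1}$ along $E_1\cap E_2$. It is bounded only because $s|_{E_1}$ is a section of the fixed line bundle $M|_{E_1}$ on the projective variety $E_1$, via a degree argument.

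The paper gets this bound as follows. It passes to $\widehat{R}$, which changes neither $\QM_\eta(Y,E)$ nor the functions. It then invokes the uniform Lipschitz estimate of Boucksom--Favre--Jonsson: the integral slopes of all $s^{\rm trop}$ on $\QM_\eta(Y,E)\cong\bR^r_{\ge0}$ have norm bounded independently of $s$. Hence only finitely many slopes, and so finitely many hyperplanes, occur.

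Two smaller errors in Step 2:
\begin{itemize}
\item If $Y_0$ is non-reduced, $s\notin\pi H^0(Y,M)$ only gives $\min_i\ord_{E_i}(s)/b_i<1$, not $=0$.
\item The claim $v(s)\le\max_i v(e_i)$ fails in general, since initial forms of basis elements can cancel (e.g.\ $s=e_1-e_2$ when $e_1,e_2$ have the same initial form at $v$).
\end{itemize}
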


This is also observed in \cite{Li25} and follows from a  uniform lipschitz continuity result in \cite{BFJ16}.

\begin{proof}
To prove the result, it suffices to show that for an snc divisor $E:= E_1+ \cdots +  E_r \subset \Supp(Y_0)$ with the $E_i$ distinct and a generic point $\eta$ of an irreducible component of $E_1\cap \cdots \cap E_r$, there exists a decomposition of ${\rm QM}_{\eta}(Y, E)\simeq \bR_{ \geq 0}^r$ into finitely many rational polyhedral cones satisfying: for each $s\in H^0(Y,M)$, the function $\varphi_s:{\rm QM}_\eta(Y_0 ) \to \bR$ defined by $\varphi_s(v) = v(s)$ is  linear on each $\sigma_i$.
Indeed, intersecting the cones with $\{v\in \Val_{X}\, \vert\,  v(\pi ) = 1\} $ gives a decomposition of the corresponding polytope into rational polytopes for which the functions are affine. 

We now reduce to the case when $R$ is complete. 
Let $\hat{R}$ denote the completion of $R$ with respect to the maximal ideal and $\rho:\widehat{Y}:= Y\times_{R} \hat{R} \to Y $ denote the base change and $\widehat{M} = \rho^* M$.
By the proofs of  \cite[Lemma 1.10 and Proof of Proposition 5.13]{JM12} and using that $E\subset \Supp(Y_0)$, restricting valuations via  $K(X) \to K( \widehat{X})$ induces a bijection $\psi:{\rm QM}_{\eta}(\widehat{Y},\widehat{E}) \to {\rm QM}_{\widehat{\eta}}(Y,E)$. 
Since $\varphi_{\rho^*(s)}$ and $ \varphi_{\rho_s}$ agree under this identification, we may replace $Y$, $M$, and $R$ with $\widehat{Y}$, $\widehat{M}$, and $\widehat{R}$.

We now assume that $R$ is complete.
Fix any $s\in H^0(X,M)$. 
By \cite[Proof of Lemma 1.10]{BFJ08}, each $\varphi_s$ is a minimum of a  finite collection of linear functions with integral slope. 
Using that $R$ is complete, \cite[Theorem 6.1]{BFJ08} implies that there exists a constant $C>0$, independent of $s$,  such that the slopes of the linear part of $\varphi_s$ have euclidean norm $\leq C$. 
Since the slopes are integral and have bounded norm, there are finitely many such values $u_1,\ldots, u_d \in \bZ^r$. 
Thus the intersection of any two regions of linearity for $\varphi_s$ is contained in a hyperplane of the form $H_{ij}:=\{x\in \bR^r \, \vert\, \langle h_i-h_j, x\rangle \}$. 
The hyperplane $H_{ij}$ for $1\leq i, j \leq $ divide $\bR_{\geq 0}^r$ into finitely many rational polyhedral cones $\sigma_1,\ldots, \sigma_d$ such that $\varphi_s$ is linear on $\sigma_i$ for each $s\in H^0(Y,M)$ and $1\leq i \leq d$. 
\end{proof}

\begin{proof}[Proof of Theorem \ref{thm:val-indep} when $R$ is essentially of finite type over $\bk$]
Fix a non-negative integer $m$. 
Fix a Cartier divisor $H_K$ in the linear equivalence class of $L_K$.
By Lemma \ref{l:lcCYpolextensionviaBPCY}, $(X_K,B_K)$ extends to  a lc model $(X,B)$ over $R$ such that $H := \overline{H_K}$ is a $\bQ$-Cartier relatively ample divisor.
Fix a log resolution $g:Z\to X$ of $(X,B+X_{0,\red})$ and write $B_Z$ for the $\bQ$-divisor on $Z$ such that $K_{Z}+B_{Z}=g^*(K_X+B+X_{0,\red})$. Thus ${\rm Sk}(X_K,B_K)= \cD(\Gamma)$, where $\Gamma$ is the vertical part of $(B_Z)^{=1}$.
By Lemma \ref{l:decomplinear}, there is a decomposition of $\cD(\Gamma)$ into finitely many rational polytopes $P_1,\ldots, P_l$ such that  $s^{\rm trop}$ is affine on each $P_i$ for each $s\in H^0(X,mL)$. 
Let $v_1,\ldots, v_r \in \Sk(X_K,B_K)(\bQ)$ be the valuations corresponding to the vertices of these polytopes. 
If a basis $\theta_1,\ldots, \theta_{N_m}$ of $H^0(X_K,mL_K)$ satisfies 
\begin{equation}\label{e:valindv_j}
v_j(a_1\theta_1 + \cdots + a_{N_m}\theta_{N_m})
=
\min \{ v_j(a_1 \theta_1) ,\ldots, v_j(a_{N_m} \theta_{N_m})\}
\end{equation}
for all $a_1,\ldots, a_{N_M} \in K$ and $1\leq j \leq r$, then the basis is valuatively independent by the affine property.

Now consider the relative affine and projective orbifold cones
\[
C_a(X,H) := \Spec\, \bigoplus_{i \in \bN} H^0(X,iH)
\quad \text{ and }\quad 
 C_p(X,H) := \Proj\, \bigoplus_{m \in \bN} \bigoplus_{i=0}^m H^0(X,iH) x^{m-i}
.\]
Observe that $V(t) \subset C_p(X,H)$ is an ample $\bQ$-Cartier divisor, $V(t) \cong X$, and $C_p(X,H) \setminus V(t) \cong C_a(X,H)$.
Write $C_a(B,H)$ for the $\bQ$-divisor on $C_a(X,H)$ that is the cone over $B$ and $C_p(B,H)$ for its closure in $C_p(X,H)$. 
Let 
\[
Y:= C_p(X,H) , \quad G:= C_p(B,H), \quad \text{ and } \quad D:= V(t).
\]
By a simple extension of the argument in  \cite[Proof of Proposition 2.20]{ABB+} to the DVR case, $(Y,G+D+Y_{0,\red})$ is an lc CY pair over $R$. 
Observe that for $f\in H^0(Y\setminus D , \cO_Y)$, where $f= \sum_{i \geq 0} f_i$ with $f_i \in H^0(X,iH)$, 
\[
\ord_{D}(f) = \min \{ - i \, \vert\, f_i \neq 0 \}.
\]
Therefore 
\[
H^0(X,mD) = H^0(X,0H) \oplus H^0(X,1H) \oplus \cdots \oplus H^0(X,mH).
\]

We claim that there exist valuations $w_1,\ldots, w_r \in {\rm Sk}(Y_K,G_K+D_K)(\bQ)$ such that 
\begin{equation}\label{e:w_jf}
w_j(f_0 + \cdots +f_m) =\min \{v_j(f_i)\}
\end{equation}
for any  $f_i\in H^0(X,iH)$ and $1\leq j \leq r$.
By \cite[Corollary 1.38]{Kol13}, there exists a $\bQ$-factorial dlt modification $g:(X',B'+X'_{0,\red}) \to (X,B+X_{0,\red})$ such that there exists prime divisors $E_1,\ldots, E_r$ on $X'$ and $v_i = (b_i)^{-1} \ord_{E_i}$, where $b_i = \mult_{E_i}(X'_0)$.
Let $H'=f^*H$.
Consider the Seifert $\bG_m$-bundles
\[
\pi: X_{H} := \Spec_X\, \bigoplus_{i\in \bZ} \cO_{X}(iH) \to X \quad \text{ and  } \quad 
\pi': X'_{H} := \Spec_{X'}\, \bigoplus_{i\in \bZ} \cO_{X'}(H') \to X'
,\]
see \cite[Section 2.6]{ABB+}.
Now we have a crepant birational morphism and an open embedding 
\[
(X'_{H'}, \pi'^*(B')+ (X'_{H'})_{0,\red})
\to 
(X_H, \pi^*(B) + (X_H)_{0,\red})
\hookrightarrow
(Y,G+D).
\]
Let $F'_j =(\pi'^{-1}(E_j))_{\red}$.
Since the morphism is crepant,  $w_j =(c_j)^{-1} (\ord_{F'_j})  \in {\rm Sk}(Y_K,G_K+D_K)(\bQ)$, where $c_j:= {\rm mult}_{F'_j}( (X'_{H'})_0)$. By construction,  \eqref{e:w_jf} holds.

We are now ready to finish the proof.
By Proposition \ref{p:diagvalsbpcyDVR}, there exists a basis $s_1,\ldots, s_{C_m}$ of $H^0(Y_K,mD_K)$ by $\bG_m$-eigenvectors 
such that
\[
w_j(a_1s_1 + \cdots +a_{C_m}s_{C_m})
= \min \{ w_j(a_1 s_1),\ldots, w_{j}(a_{C_m}s_{C_m})\}.
\]
for all $a_1,\ldots, a_{C_m} \in K$ and $1\leq j \leq r $.
A subset of these elements form a basis  $\theta_1,\ldots, \theta_{N_m}$ of $H^0(X,mH)$.
By \eqref{e:w_jf}, the basis satisfies \eqref{e:valindv_j}. 
Thus $\theta_1,\ldots, \theta_{N_m}$ is a valuatively independent basis of $H^0(X,mH)$.
\end{proof}

\section{Artin approximation}\label{s:approx}

In this section we prove the following result using Artin approximation and KSBA moduli spaces.

\begin{thm}\label{thm:complete-local}
Let $(X_K, D_K)$ be an lc log CY pair over $K =  \bk(\!(t)\!)$. Let $L_K$ be a big and semiample line bundle on $X_K$. Suppose that we have an lc model $(X,D) \to \Spec R$ with $R = \bk \llbracket t\rrbracket$ such that $L$ is a semiample line bundle on $X$ that extends $L_K$. Then for any $m\in \bN$ there exists a basis of $H^0(X, mL)$ that is valuatively independent with respect to $\Sk(X_K, D_K)$. 
\end{thm}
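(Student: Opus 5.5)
We will deduce the statement from the essentially-of-finite-type case (Section \ref{s:valind}) by algebraizing the family over $R=\bk\llbracket t\rrbracket$. The plan has three steps: algebraize, prove a diagonalizability statement over the algebraic DVR, and transfer it back.

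\emph{Step 1 (algebraization).} As in the proof of Proposition \ref{lem:semistable-reduction}, fix an ample $H_K$ such that $(X_K,D_K+\varepsilon H_K)$ is KSBA stable. After a finite base change $t\mapsto t^{1/d}$ and the normalization of Section \ref{sss:basechange}, the family becomes a $\mu_d$-equivariant family of KSBA stable pairs over $\Spec \bk\llbracket t\rrbracket$. This family is classified by a morphism $\Spec \bk\llbracket t\rrbracket\to \cM$ to a finite type KSBA moduli stack; we may enlarge the marking to also record $L$, for instance through a divisor in $|mL|$ or the cone construction. By Artin approximation applied to this morphism, together with the finitely many additional data we need (the closed point, a dlt modification, the divisors $E_i$ realizing the vertices of the skeleton, and the decomposition from Lemma \ref{l:decomplinear}), there is a DVR $R'$ essentially of finite type over $\bk$ with completion $\widehat{R'}\cong \bk\llbracket t\rrbracket$. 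Over $R'$ we get an lc model $(X',D')$ with semiample $L'$ whose base change to $\widehat{R'}$ agrees with $(X,D,L)$ to arbitrarily high order $N$ in $t$.

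\emph{Step 2 (diagonalizability over $R'$).} Over $R'$, run the argument of the algebraic case. Lemma \ref{l:decomplinear} reduces valuative independence to the finitely many rational vertices $v_1,\dots,v_r$. Proposition \ref{p:diagvalsbpcyDVR}, applied after the cone construction to the boundary polarized pair built from $L'$, then gives that the filtrations $F_{v_j}$ of $H^0(X',mL')$ are simultaneously diagonalizable. Semiampleness plus bigness is handled by passing to the ample model, which is crepant and so does not change the skeleton. By Lemma \ref{l:Reesmodflat->diag}, this is equivalent to $\dim_\bk {\rm gr}_{F_{v_1},\dots,F_{v_r}}=N_m$.

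\emph{Step 3 (transfer).} Because $H^0$ commutes with the flat base change $R'\to\widehat{R'}$, the space $H^0(X,mL)$ equals $H^0(X',mL')\otimes\widehat{R'}$. The skeleta agree, via the bijection of quasi-monomial valuations under completion used in the proof of Lemma \ref{l:decomplinear}, and so do the filtrations: $F_{v}(\widehat V)=F_v(V)\otimes\widehat{R'}$, because the value $v(s)$ is determined by the image of $s$ in the completed local ring. Hence the associated graded pieces agree, and any diagonalizing basis over $R'$ stays diagonalizing over $\bk\llbracket t\rrbracket$. Finally, Lemma \ref{l:decomplinear} applied over $\bk\llbracket t\rrbracket$ extends diagonalizability from the vertices to all of $\Sk(X_K,D_K)$.

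\emph{Main obstacle.} The hardest step is Step 1: making Artin approximation produce an algebraic model whose skeleton and associated filtrations match those of the given formal model exactly, not merely up to order $N$. I would handle this with boundedness. The slopes in Lemma \ref{l:decomplinear} are uniformly bounded (via \cite{BFJ16}) and the filtrations are bounded (Lemma \ref{l:inducedfilt}), so the values $v_j(s)$ that matter lie in a finite range. Agreement modulo $t^N$ for $N$ large then determines the relevant graded pieces and the dimension of ${\rm gr}$. An alternative is to exploit that the moduli stack is of finite type and approximate the $\bk\llbracket t\rrbracket$-point so that the pulled-back family is isomorphic to the original one, not merely congruent to it. If that works it makes Step 3 immediate, but it is the point where the \emph{sufficiently divisible} hypothesis on $m$ in Theorem \ref{thm:val-indep} enters.
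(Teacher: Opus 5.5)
Your overall strategy matches the paper's. You reduce to finitely many rational points of the skeleton, approximate a $\mu_d$-equivariant dlt model over $\bk\llbracket t\rrbracket$ to finite order by a family over an essentially-finite-type DVR, and invoke the algebraic case.

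\textbf{The gap in Step 3.} As written, Step 3 does not work. Artin approximation only identifies $N$-th order thickenings, so $X'\times_{R'}\widehat{R'}$ is not $X$ and $H^0(X,mL)\neq H^0(X',mL')\otimes\widehat{R'}$. A diagonalizing basis over $R'$ therefore diagonalizes the filtrations on the wrong lattice.

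Your ``exact algebraization'' alternative is not available in general. The fraction field of an essentially-finite-type DVR with residue field $\bk$ has transcendence degree one. The moduli map of the given family, by contrast, can be a transcendental arc whose image has higher-dimensional closure. The sufficiently-divisible hypothesis also plays no role here. It enters only when deducing Theorem \ref{thm:val-indep}: there $L_K$ must first be extended to a line bundle on an lc model, which the paper achieves for $L_K^{\otimes m_0}$ with $m_0G$ Cartier. Theorem \ref{thm:complete-local} itself holds for every $m$.

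\textbf{How the paper closes it, and why your fix also works.} The paper uses Proposition \ref{prop:formal-group}. It approximates to order $i$ for \emph{every} $i$ and obtains simultaneous diagonalizability of the $F_{v_j}V/t^iV$. It then lifts diagonalizing bases up the tower, using that they form torsors under group schemes cut out by divisibility conditions, whose reduction maps are surjective, and passes to the inverse limit.

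Your boundedness remedy is a genuinely different and valid route. Make it the actual Step 3 and state it precisely, as follows.
\begin{itemize}
\item Fix $c$ with $F_j^{-c}V=V$, and $\lambda_0$ with $F_j^{\lambda+1}V=tF_j^{\lambda}V$ for $\lambda\ge\lambda_0$. Over the complete ring this follows, e.g., from Chevalley's lemma, since $\bigcap_\lambda F_j^\lambda V=0$.
\item Let $W:=H^0(X',mL')\otimes\widehat{R'}$ with its filtrations $G_j$. The approximation gives $V/t^NV\cong W/t^NW$, carrying images of $F_j^\lambda$ to images of $G_j^\lambda$ for $\lambda\le N-c$. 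This holds because, after the base change $t=\tau^d$, the $E_j$ are reduced components of the special fiber, so $v_j$ below level $N$ is read off from the thickening.
\item By Nakayama and condition (5) of Definition \ref{d:filtration}, the same $c$ and $\lambda_0$ work for the $G_j$.
\item For $N>c+\lambda_0+1$, any lift $V\to W$ of the identification maps $F_j^\lambda V$ onto $G_j^\lambda W$ for all $j$ and $\lambda$. For $\lambda\le N-c$ both sides are full preimages; beyond that, use multiplication by powers of $t$.
\end{itemize}
So a single approximation suffices and no inverse limit is needed. The paper's route can handle an infinite family of filtrations at once. You avoid needing that by applying Lemma \ref{l:decomplinear} over $\bk\llbracket t\rrbracket$ first.

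\textbf{What Step 1 takes for granted.} Step 1 treats as automatic what the paper spends most of Section \ref{s:approx} on.
\begin{itemize}
\item Marking a divisor in $|mL|$ does not give a KSBA-stable pair, since $K+D+\varepsilon G\sim_{\bQ}\varepsilon G$ is only big and semiample. The paper instead writes $\oL\sim\oL_1-\oL_2$ with both $\oL_i$ very ample and marks both.
\item The approximation must be $\mu_d$-equivariant (Lemma \ref{lem:artin-approx}).
\item You must still show the approximant has lc generic fiber and satisfies $K+D\sim_{\bQ}0$; the latter uses $\varepsilon\ll1$ and \cite{KX20}.
\item You must show $L'$ is relatively semiample with $\pi_*L'^{m}$ locally free and commuting with base change. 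Semiampleness is not an open condition; the paper obtains it by landing in the open locus $\mathcal{U}^\circ$ and applying Lemma \ref{lem:HX}.
\end{itemize}
The base-change statement in the last item is also what justifies $V/t^NV\cong W/t^NW$, which your argument uses implicitly.
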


Let $v_1,\cdots, v_l\in \Sk(X_K,D_K)(\bQ)$ be a finite collection of valuations. Write $v_i = \frac{1}{b_i}\ord_{E_i}$. By Proposition \ref{p:dltmodexist}, there exists a $\bQ$-factorial dlt modification  $(\oX,\oD+\oX_{0,\red}) \to (X,D+X_{0,\red})$ such that  $E_1,\cdots, E_l$ are prime divisors on $\oX$.
Let $X'$ be the normalization of $\oX^{(d)}:= \oX \times_{\Spec(R)} \Spec(R[t^{1/d}])$ so that $X'_\kappa$ is reduced. Denote by $\oL$ and $L'$ the pullback of $L$ to $\oX$ and $X'$, respectively. Then both $\oL$ and $L'$ are relative big and semiample line bundles.

Choose  two very ample line bundles $\oL_1$ and $\oL_2$ on $\oX$ such that $\oL \sim \oL_1 - \oL_2$. By Bertini's theorem, there exist effective Cartier divisors $\oG_1\sim \oL_1$ and $\oG_2\sim \oL_2$ such that $(\oX, \oD + \oX_{\kappa, \red}+\epsilon (\oG_1+\oG_2))$ is lc for $0<\epsilon \ll 1$. Pulling back to $X'$, we have a $\mu_d$-equivariant KSBA stable family $\pi':(X', D'+ \epsilon G_1'+ \epsilon G_2')\to \Spec R[\tau]$ where $\tau := t^{1/d}$. 

\begin{prop}\label{prop:KSBA-approx}
Notation as above. Then for any $i\in \bN$ there exists a DVR $(R_i,\fm_i)$ essentially of finite type over $\bk$ with residue field $R_i/\fm_i = \bk$ equipped with a $\mu_d$-action, and a $\mu_d$-equivariant KSBA stable family $\pi_i: (X_i', D_i' + \epsilon G_{1,i}' + \epsilon G_{2,i}')\to \Spec R_i$ for some $\epsilon>0$, such that the following hold.
\begin{enumerate}
    \item The generic fiber of $\pi_i$ is lc.
    \item $K_{X_i'}+D_i' \sim_{\bQ} 0$.
    \item Both $G_{1,i}'$ and $G_{2,i}'$ are relative ample effective Cartier divisors and $L_{i}':= G_{1,i}'-G_{2,i}'$ is relative big and semiample.
    \item There exists a $\mu_d$-equivariant isomorphism between the base change of $\pi'$ over $R/\tau^iR$ and the base change of $\pi_i$ over  $R_i/\fm_i^i$ under an identification between $R/\tau^i$ and $R_i/\fm_i^i$. 
\end{enumerate}
\end{prop}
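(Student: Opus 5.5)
We use the KSBA moduli stack and Artin approximation. Fix $\epsilon>0$ small as in the construction of $\pi'$, and let $\cM$ be the finite-type algebraic (Deligne--Mumford) stack over $\bk$ parametrizing KSBA stable pairs $(Y,\Delta+\epsilon\Gamma_1+\epsilon\Gamma_2)$ with the same numerical invariants as the fibers of $\pi'$, where $\Gamma_1,\Gamma_2$ are marked as separate divisors (Koll\'ar's moduli theory \cite{Kol23}). We also record the closed conditions we need to keep. To make the conditions in (2) and (3) part of the moduli problem, we work with the closed substack where $K+\Delta\sim_{\bQ}0$ fiberwise and $\Gamma_1,\Gamma_2$ are relative Cartier. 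Relative Cartierness is open, and $K+\Delta\sim_\bQ 0$ on fibers is closed in flat families with $H^1(\cO)$ behaving well. Alternatively, we add the extra data of a trivialization of $\omega^{[N]}(N\Delta)$ as a torsor, which is representable over $\cM$.

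The $\mu_d$-equivariant family $\pi'$ over $\Spec R$, with $R=\bk\llbracket\tau\rrbracket$, gives a $\mu_d$-equivariant morphism $\Spec R\to \cM'$, where $\cM'$ is this finite-type stack (with its data). Pick a smooth atlas $W\to\cM'$ by a finite-type $\bk$-scheme, and lift $\Spec R\to \cM'$ to $\Spec R\to W$; this is possible since $R$ is complete local with residue field $\bk$. By Artin approximation \cite{Art69}, there exist a $\bk$-scheme $T$ of finite type, a $\bk$-point $t_0\in T$, and a morphism $\Spec \cO_{T,t_0}\to W$ that agrees with the original map modulo $\tau^i$. We refine this to a curve. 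First, cut $T$ down to a curve $C$ through $t_0$ whose formal completion agrees with $\Spec R$ modulo $\tau^i$. Then let $R_i$ be the local ring of the normalization of $C$ at a point over $t_0$, whose completion maps isomorphically to $R$ mod $\tau^i$. For equivariance, we apply approximation to the quotient stack $[\Spec R/\mu_d]\to[\cM'/\text{trivial}]$, or equivalently approximate the $\mu_d$-invariant data $R^{\mu_d}=\bk\llbracket t\rrbracket$ together with the cover $\tau^d=t$. Concretely, we approximate the map $\Spec \bk\llbracket t\rrbracket\to[\cM'/\mu_d]$ coming from the equivariant family, and then set $R_i:=R_i^0[\tau]/(\tau^d-t_i)$ for a uniformizer $t_i$ of the approximating DVR $R_i^0$. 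Pulling back the universal family gives $\pi_i$, and (4) holds by construction.

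Properties (2) and (3) then follow. Condition (2) holds because it is built into $\cM'$. For (3), ampleness of $G_{1,i},G_{2,i}$ on the special fiber is inherited through (4) and spreads out over $\Spec R_i$. Bigness and semiampleness of $L'_i$ are checked on the special fiber: semiampleness on the closed fiber plus vanishing $H^1$ (Kawamata--Viehweg for the lc CY fibers) gives lifting of sections, hence relative semiampleness, and bigness is open. For (1), the generic fiber being lc follows because $X'_i$ has reduced special fiber (by (4), as $X'_\kappa$ is reduced) and $(X'_i,D'_i+X'_{i,0})$ is slc by inversion of adjunction as in Proposition \ref{p:kollarsmoothslcfam}; the normality of the generic fiber is open. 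The main obstacle is the equivariant approximation combined with getting a genuine DVR whose $i$-th infinitesimal neighborhood matches exactly. We plan to handle this by approximating to order $di$ on the invariant level and then normalizing the curve, which at worst raises the needed order.
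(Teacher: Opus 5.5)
The genuine gap is the $\mu_d$-equivariant approximation. You flag this as the main obstacle, but the proposal does not resolve it.

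Your atlas $W$ is a scheme, so any lift of $[\Spec R/\mu_d]\to\cM'$ to $W$ would factor through the coarse space $\Spec\bk\llbracket t\rrbracket$. Hence your lift $\Spec R\to W$ is not equivariant, and Artin-approximating it gives a family over $R_i$ with no $\mu_d$-action. Then (4) fails, and so does the later passage to the $\mu_d$-quotient in the proof of Theorem~\ref{thm:complete-local}.

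The proposed repair via a map $\Spec\bk\llbracket t\rrbracket\to[\cM'/\mu_d]$ does not work, for two reasons:
\begin{enumerate}
\item $\pi'$ does not come from $\Spec\bk\llbracket t\rrbracket$: its quotient $X'/\mu_d=\oX$ has non-reduced special fiber, so it is not a KSBA family.
\item With the trivial action, $[\cM'/\mu_d]=\cM'\times B\mu_d$. A map from $\Spec\bk\llbracket t\rrbracket$ into it is just a family over $\bk\llbracket t\rrbracket$ plus a trivial $\mu_d$-torsor, with no trace of the cover $\tau^d=t$.
\end{enumerate}

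The missing ingredient is Lemma~\ref{lem:artin-approx}. By the local structure theorem of Alper--Hall--Rydh \cite{AHR20}, near the image $x$ of the closed point $\cM'$ is \'etale locally $[U/G]$, with $U$ affine and $G=\rho(\mu_d)\subset G_x$. Henselianity lets you lift $[\Spec R/\mu_d]$ to $[U/G]$. Trivializing the $\mu_d$-equivariant $G$-torsor $\Spec R\times_{[U/G]}U$ then produces a $\mu_d$-equivariant $f:\Spec R\to U$, with $\mu_d$ acting on $U$ through $\rho$. Only at this point does your idea of approximating on invariants and adjoining $\tau$ become correct. In $\mu_d$-eigengenerators of $\cO(U)$ the coordinates of $f$ have the form $\tau^{a_j}g_j(\tau^d)$. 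Ordinary Artin approximation of the $g_j$ over $\bk\llbracket t\rrbracket$ then gives an equivariant approximation over $R_i^0[\tau]/(\tau^d-t)$.

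Several of your verifications also fail as written.

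\textbf{Property (1).} A reduced special fiber plus slc-ness of $(X'_i,D'_i+X'_{i,0})$ does not make the generic fiber normal; a trivial family of a non-normal slc pair has both properties. Openness of normality does not help either, because $X'_\kappa$ is reducible. You must force the generic point of $\Spec R_i$ into the open locus of normal fibers, which is the last clause of Lemma~\ref{lem:artin-approx}. Concretely, $f$ pulls the closed non-normal locus back to $\Spec R/\tau^j$ for some $j$, so no approximation agreeing with $f$ modulo $\fm_i^i$ with $i>j$ can factor through it.

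\textbf{Semiampleness in (3).} Lifting sections from the special fiber needs a vanishing for $mL'_i$ on the slc fiber $X'_{i,0}$. There $L'$ is semiample but not big on all components, so Kawamata--Viehweg does not apply. The paper instead puts the generic fiber into the open set $\cU^\circ$ where $L$ is big and semiample, and then uses Lemma~\ref{lem:HX}, which rests on \cite{HX13}.

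\textbf{Property (2).} $K+\Delta$ need not be $\bQ$-Cartier on the universal family of the marked KSBA stack. The conditions you impose are only locally closed, not closed, and relative Cartierness is not open in general. This could be repaired by working on the stratum through which $\Spec R\to\cM$ factors. The paper avoids the issue differently: it spreads out (Lemma~\ref{lem:spread-out}), passes to the normalized closure $\cM'_\epsilon$ of the image, and invokes \cite{KX20} with $\epsilon\ll1$, so the whole universal family there is slc log CY.
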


The following lemma is useful in the approximation process.

\begin{lem}\label{lem:artin-approx}
Let $\cM$ be a separated DM stack of finite type over $\bk$. Let $\phi:[\Spec R / \mu_d]\to \cM$ be a morphism. Then for any $i\in \bN$ there exists a DVR $(R_i, \fm_i)$ essentially of finite type over $\bk$ with residue field $R_i/\fm_i = \bk$ equipped with a $\mu_d$-action, and  a morphism $\phi_i: [\Spec R_i/\mu_d]\to \cM$ such that under an identification between $R/t^i$ and $R_i/\fm_i^i$, we have $\phi|_{[\Spec (R/t^i)/\mu_d]}=\phi_i|_{[\Spec (R_i/\fm_i^i)/\mu_d]}$. Moreover, we can arrange that the image of $\phi_i$ for every $i$ meets any prescribed open neighborhood of $\phi(\Spec K)$.
\end{lem}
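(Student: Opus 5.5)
The plan is to apply Artin approximation to the $\mu_d$-equivariant finite type data attached to $\phi$. First I will reduce to the case where $\cM$ is affine near the image of the closed point. Write $\phi_0$ for the restriction of $\phi$ to the closed point $[\Spec\bk/\mu_d] = B\mu_d$; its image is a closed point $p\in\cM$ with stabilizer containing the image of $\mu_d$. Since $\cM$ is a separated DM stack of finite type over $\bk$ in characteristic $0$, the local structure theorem for DM stacks (or the étale slice theorem of Alper--Hall--Rydh) gives an étale neighborhood $[W/G_p]\to\cM$ of $p$, with $W=\Spec A$ affine of finite type and $G_p$ the finite stabilizer. Because $\Spec R$ is strictly henselian local, the morphism $\phi$ lifts, after restricting the group via the homomorphism $\mu_d\to G_p$ determined by $\phi_0$, to a $\mu_d$-equivariant morphism $\Spec R\to W$. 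The problem is then reduced to the following: given a $\mu_d$-equivariant $\bk$-algebra map $\psi\colon A\to R=\bk\llbracket t\rrbracket$, with $\mu_d$ acting linearly on $t$, approximate $\psi$ equivariantly by maps to DVRs essentially of finite type.

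Next I will carry out the approximation itself. Choose $\mu_d$-eigengenerators $a_1,\dots,a_n$ of $A$, so that $A=\bk[y_1,\dots,y_n]/I$ with $\mu_d$ acting diagonally. The image $\psi(a_j)$ is a power series $f_j(t)$ in the corresponding weight space. If $\psi$ is not injective modulo torsion, I will replace $W$ by the closure of the image of $\Spec K$, which is an integral curve $C\subset W$; this closure is $\mu_d$-stable because the generic point is fixed up to the $\mu_d$-action. The map $\Spec R\to C$ then factors through the completion of the normalization $\tilde C$ at a point $c$ that lies over the image of the closed point. Take $R_i:=\cO_{\tilde C,c}$, which is $\mu_d$-stable, a DVR essentially of finite type, and has residue field $\bk$. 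Its completion is $\bk\llbracket s\rrbracket$. The induced map $\bk\llbracket s\rrbracket\to R$ is a finite extension of complete DVRs, $s\mapsto u t^e$. If $\phi$ is representable at the generic point, then $e=1$, so $R\cong\widehat{R_i}$ equivariantly and the statement holds for every $i$ with $\phi_i$ the restriction. If $e>1$, I will instead use ramified base change: take $R_i$ to be the localization of the normalization of $\tilde C$ in the degree $e$ cover extracting $s^{1/e}$. This can be done with a $\mu_d$-action compatible with the one on $t$, since $\mu_d$ acts on $s$ with weight $e$ times that of $t$. If these weight bookkeeping steps become awkward, the fallback is to apply Artin's approximation theorem, in its equivariant form, directly to the system $y\mapsto f(t)$ inside the henselization $\bk[t]^h_{(t)}$. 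One solves the equations of $A$ with algebraic power series agreeing with $f_j$ modulo $t^i$, keeping $\mu_d$-weights by averaging over $\mu_d$ (the Reynolds operator), which preserves solutions because the equations are equivariant. The henselization is a filtered colimit of étale $\bk[t]_{(t)}$-algebras, so the solution already lives in some essentially finite type DVR $R_i$ with residue field $\bk$ and $R_i/\fm_i^i\cong R/t^i$.

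Finally, for the open neighborhood condition, let $\cU\ni\phi(\Spec K)$ be open. The complement $Z=\cM\setminus\cU$ is closed, and $\phi(\Spec K)\notin Z$. In the Artin approximation route, the condition that the generic point avoids $Z$ is that some element $g$ of the ideal of $Z$ has nonzero image. If the image $\psi(g)$ has $t$-adic order $N$, then any approximation agreeing modulo $t^{N+1}$ also has nonzero image, so taking $i>N$ suffices, and smaller $i$ are handled by those same larger approximations. I expect the main obstacle to be making the equivariant Artin approximation compatible with $\mu_d$ while keeping the residue field exactly $\bk$ and obtaining a genuine DVR rather than a henselian local ring of higher dimension. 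I will handle this by working with the henselization of the one-dimensional ring $\bk[t]_{(t)}$, on which $\mu_d$ acts through $t$, and by averaging solutions with the Reynolds operator.
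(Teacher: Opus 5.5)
Your reduction to a $\mu_d$-equivariant morphism $\Spec R\to W$ matches the paper's argument. Both use the Alper--Hall--Rydh slice, take a section of the \'etale representable map $[\Spec R/\mu_d]\times_{\cM}[W/G]\to[\Spec R/\mu_d]$ through the lift of $B\mu_d$, and then trivialize the $G$-torsor equivariantly. Your open-neighborhood argument, via the $t$-adic order of $\psi(g)$, is a correct and more quantitative version of the paper's contradiction argument. The gap is in the approximation step itself, which is exactly where the paper invokes $\mu_d$-equivariant Artin approximation.

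\textbf{First route.} It fails in general. The closure of the image of $\Spec K$ in $W$ has dimension equal to the transcendence degree over $\bk$ of the residue field of the image point, and this need not be $1$. Already for $\bk[y_1,y_2]\to\bk\llbracket t\rrbracket$, $y_1\mapsto t$, $y_2\mapsto e^t$, the closure is all of $\bA^2$. In the paper's own application (proof of Proposition \ref{prop:KSBA-approx}), $\phi'_\epsilon$ is even dominant onto $\cM'_\epsilon$. So there is no curve to normalize.

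\textbf{Fallback route.} You make a non-equivariant Artin approximation equivariant by applying the Reynolds operator, but this does not preserve solutions. The weight projections are $\bk$-linear, not ring maps, and equivariance of the equations only says that $\mu_d$ permutes the solution set. For example, let $\mu_2$ act by $t\mapsto -t$ and trivially on $A=\bk[y_1,y_2]/(y_1y_2-1)$. The approximation $y_1=1+t^i$, $y_2=(1+t^i)^{-1}$ (with $i$ odd) of $y_1=y_2=1$ has even parts $1$ and $(1-t^{2i})^{-1}$, whose product is not $1$. You also never arrange that the \'etale neighborhood containing the solution is $\mu_d$-stable.

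\textbf{The fix.} Build equivariance into the unknowns by passing to invariants, rather than averaging afterwards.
\begin{enumerate}
\item With $\mu_d$ acting by $t\mapsto\zeta t$, the weight-$w$ part of $R$ is $t^{w}\bk\llbracket s\rrbracket$, where $s:=t^d$ and $0\le w<d$.
\item Choose eigengenerators $a_j$ of $A$ of weight $w_j$ and eigengenerators $g$ of $I$. An equivariant map then has the form $a_j\mapsto t^{w_j}h_j(s)$.
\item Each relation $g$ of weight $\chi$ (with $0\le\chi<d$) becomes $t^{\chi}G(s,h)=0$, where $G$ is a polynomial over $\bk[s]$.
\item Ordinary Artin approximation over $\bk[s]_{(s)}$ gives solutions $h^{(i)}$ in some \'etale neighborhood $B$ of $\bk[s]_{(s)}$ with residue field $\bk$, with $h^{(i)}\equiv h$ modulo $s^{\lceil i/d\rceil}$.
\item Set $R_i:=B[t]/(t^d-s)$. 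It is a DVR (Eisenstein over $B$), essentially of finite type over $\bk$, with residue field $\bk$ and the $\mu_d$-action $t\mapsto\zeta t$. It satisfies $R_i/\fm_i^i\cong R/t^i$ equivariantly.
\item The map $a_j\mapsto t^{w_j}h^{(i)}_j$ is then the required equivariant approximation of $f$, and your open-neighborhood argument applies unchanged.
\end{enumerate}
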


\begin{proof}
Suppose $x\in \cM$ is the image of the closed point under $\phi$. Let $\rho: \mu_d \to G_x$ be the  map on stabilizers induced by $\phi$. Denote by $G$ the image of $\rho$.
Since separated DM stacks have finite and hence affine diagonal, by \cite[Theorem 1.1 and Proposition 3.2]{AHR20} there exists an affine scheme $U$ with a closed point $u$ equipped with a $G$-action fixing $u$ and an affine (and hence representable) \'etale morphism $[U/G]\to \cM$ sending $u$ to $x$. Let $\cX:= [\Spec R/\mu_d]\times_\cM {[U/G]}$. Hence $\psi:\cX \to [\Spec R/\mu_d]$ is a representable \'etale morphism.
Since the closed gerbe $B\mu_d \to \cM$ lifts to $B\mu_d \to BG \xhookrightarrow{u} [U/G]$, we know that $B\mu_d$ lifts to $\cX$. Thus by the fact that $[\Spec R/\mu_d]$ is henselian local  we have a section of  $\psi$ which then gives a morphism $\phi:[\Spec R/\mu_d]\to [U/G]$ as abuse of notation. Thus we may replace $\cM$ by $[U/G]$.

Next, consider the $\mu_d$-equivariant $G$-torsor $P:=\Spec R \times_{[U/G]} U$ over $\Spec R$ with the projection map $p: P\to U$. Clearly, the $\mu_d$-action on $P_{\bk}$ is given by the group homomorphism $\rho: \mu_d\to G$. Since $R$ is henselian local, the $G$-torsor $P$ is $\mu_d$-equivariantly isomorphic to the trivial $G$-torsor $\Spec R \times G$ with the $\mu_d$-action given by $\zeta\cdot (x, g) = (\zeta x, \rho(\zeta)g)$ where $\zeta$ is a generator of $\mu_d$. Since the map $p:P\to U$ is $G$-equivariant and $\mu_d$-invariant, we have 
\[
p(\zeta x, e) =  p((\zeta x, \rho(\zeta)) \cdot \rho(\zeta)^{-1}) = \rho(\zeta)\cdot  p(\zeta x, \rho(\zeta)) = \rho(\zeta) \cdot p(x, e). 
\]
Let $f$ be the composition $\Spec R \xrightarrow{(\id, e)}  P \xrightarrow{p} U$. Then the above equality implies that $f$ is $\mu_d$-equivariant where the $\mu_d$-action on $U$ is induced by $\rho$ and the $G$-action on $U$. Thus $f: \Spec R \to U$ is a lifting of $\phi$. 
Then we can apply $\mu_d$-equivariant Artin approximation to $f$ which gives a sequence of $\mu_d$-equivariant morphisms $f_i: \Spec R_i \to U$ satisfying that $f_i|_{\Spec R_i/\fm_i^i} = f|_{\Spec R/t^i}$. Then $\phi_i$ is simply the $f_i$-induced morphism on quotient stacks.


For the last statement, let $\cM^{\circ}\subset \cM$ be a prescribed open neighborhood of $\phi([\Spec K/\mu_d])$. Pulling back to the affine \'etale neighborhood $[U/G]$, we have a $G$-equivariant open neighborhood $U^{\circ}\subset U$ of $f(\Spec K)$. Denote by $V:= U\setminus U^{\circ}$ with reduced scheme structure. Assume to the contrary that for infinitely many $i\in \bN$, the image $f_i: \Spec R_i\to U$ is disjoint from $U^{\circ}$. Since $R_i$ is reduced, we know that $f_i$ factors through $\Spec R_i\to V\to U$. This implies that $f|_{\Spec (R/t^i)}$ factors through $\Spec (R/t^i) \to V \to U$, which then implies that $f$ factors through $\Spec R \to V\to U$, a contradiction. 
\end{proof}

\begin{lem}\label{lem:spread-out}
Let $(X, D + G) \to \Spec R$ be a KSBA stable family such that $K_{X/R} + D \sim_{\bQ} 0$. Then there exists a regular finitely generated $\bk$-subalgebra $A\subset R$ and a KSBA stable family $(X_A, D_A + G_A)\to \Spec A$ whose base change to $\Spec R$ is $(X,D+G)\to \Spec R$, such that $K_{X_A/A}+D_A \sim_{\bQ, A} 0$.
\end{lem}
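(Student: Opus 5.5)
The plan is a standard ``spreading out'' argument: the family is of finite presentation over $R$, so it descends to a finitely generated $\bk$-subalgebra, and openness of the relevant conditions on the base then lets us shrink to a regular one. First, $X\to \Spec R$ is projective and flat, $D$ and $G$ are finite sums of relative Mumford divisors with rational coefficients, and there is an integer $N>0$ with $N(K_{X/R}+D)\sim 0$ and $N(K_{X/R}+D+G)$ Cartier and relatively ample. All of these data are finitely presented. Writing $R=\varinjlim A_\alpha$ as the filtered union of its finitely generated $\bk$-subalgebras, standard limit arguments (EGA IV, \S8, \S11) give some $A_0$ and a flat projective scheme $X_{A_0}\to \Spec A_0$ with closed subschemes $D_{A_0},G_{A_0}$ whose base change to $R$ recovers $(X,D+G)$. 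After enlarging $A_0$ we may also assume:
\begin{enumerate}
\item the fibers are $S_2$ and reduced;
\item the divisors are relative Mumford divisors;
\item $\omega_{X_{A_0}/A_0}^{[N]}(ND_{A_0})$ is invertible and has a nowhere vanishing section, so that $N(K+D)\sim 0$;
\item $\omega^{[N]}(N(D+G))$ is invertible and relatively ample.
\end{enumerate}
For (3) and (4), the reflexive powers are formed compatibly with base change on the locus where they commute with base change. The isomorphism $\cO\cong \omega^{[N]}(ND)$ over $R$ is given by a single section and its inverse, so it descends.

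Next, I use that the image of $\Spec R\to \Spec A_0$ contains the generic point $\eta$ of the image of the closed point; since $R/\fm=\bk$, this point is a closed point $p\in \Spec A_0$. The locus where the fibers are slc and where the hull and Mumford conditions hold (so that the family is a family of slc pairs in the sense of \cite{Kol23}) is locally closed, and on a suitable open set it is the locus where a KSBA stable family is defined. I would use the existence of the proper moduli space $\cM$ of KSBA stable pairs, whose functor is locally of finite presentation. Then $(X,D+G)\to \Spec R$ corresponds to a morphism $\Spec R\to \cM$, which factors through some $\Spec A_0\to \cM$ because $\cM$ is limit preserving. This gives a KSBA stable family over $A_0$ whose pullback is the original one, with no pointwise analysis needed. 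The CY condition (3) descends as above.

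Finally, for regularity: the image of $\Spec R$ is the spectrum of a domain, so replace $A_0$ by its image in $R$. This is a finitely generated $\bk$-domain contained in $R$, and it is generically regular. The regular locus is open, but the closed point $p$ may be singular, and I expect this to be the main obstacle. To fix it, I would enlarge $A_0$ inside $R$: choose a regular system of parameters in the local ring obtained by adjoining elements of $R$ so that the image point becomes regular. Concretely, $\Spec R\to \Spec A_0$ factors through the normalization of the blowups, and eventually through a resolution. Since $R$ is a DVR, its center on a resolution $W\to \Spec A_0$ (Hironaka) is a point whose local ring is regular. Take an affine open of $W$ containing that center, replace $A_0$ by its coordinate ring $A$ (which is still contained in $K$, and contained in $R$ since $R$ dominates the center), and shrink to the regular locus by inverting an element not in $\fm_R$. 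This last shrinking needs care, since localizing might leave the category of subalgebras of $R$ only if the inverted element is a unit of $R$; choosing elements outside $\fm_R$ keeps $A\subset R$. The base change of the descended family to this $A$ is the desired $(X_A,D_A+G_A)$.
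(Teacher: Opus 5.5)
Your outline is correct, but it takes a different route from the paper. The paper descends only the flat projective scheme and the relative Mumford divisors, and makes $A$ regular at once by a resolution plus localization. You spell out that regularization step more carefully, via the center of $v_R$ on a resolution. The paper then applies Koll\'ar's representability of (local) KSBA stability \cite[Theorem 4.9, Corollary 4.10]{Kol23}. This gives a locally closed stratum through which $\Spec R\to\Spec A$ factors, and the stratum is open because $A\subset R$. Over it $(X_A,D_A+G_A)$ is stable and $(X_A,D_A)$ is locally stable, so $K_{X_A/A}+D_A$ is $\bQ$-Cartier. Relative triviality is then a separate cohomological step. The number $h^0$ of $L_A=\cO_{X_A}(m(K_{X_A/A}+D_A))$ equals $1$ at the generic point and at the image of the closed point, so after shrinking, Grauert makes $\pi_{A*}L_A$ invertible and $\pi_A^*\pi_{A*}L_A\to L_A$ an isomorphism.

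You instead descend the stable family in one stroke, using limit preservation of the KSBA moduli stack. What matters there is that the stack, not a coarse space, is locally of finite presentation; properness plays no role. You also descend the trivialization $N(K+D)\sim 0$ directly as finitely presented data. This avoids the semicontinuity argument and gives the stronger $N(K_{X_A/A}+D_A)\sim 0$. The price is that you use algebraicity of the whole stack rather than the representability theorem behind it.

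The one step to tighten is (3). Your justification, forming reflexive powers ``on the locus where they commute with base change'', is circular. The stack of stable pairs only controls $K+D+G$, and $A\to R$ is in general not flat (already for $R=\bk\llbracket t\rrbracket$ and $\dim A\geq 2$). So there is no a priori reason that $\omega^{[N]}_{X_A/A}(ND_A)$ base changes to $\omega^{[N]}_{X/R}(ND)$, which is exactly what descending the nowhere vanishing section needs. You can fix this in either of two ways:
\begin{enumerate}
\item Invoke \cite[Corollary 4.10]{Kol23} for $(X_A,D_A)$, as the paper does.
\item Descend an invertible sheaf $M_A\cong\cO_{X_A}$ together with an isomorphism $M_A|_{U_A}\cong\omega_{U_A/A}^{\otimes N}(ND_A|_{U_A})$. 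Here $U_A$ is an open set where the fibers are Gorenstein, $D_A$ is Cartier, and the complement has codimension $\geq 2$ in every fiber; all of this holds after enlarging $A$. Since the fibers are $S_2$, this identifies $M_A|_{X_s}$ with $\omega^{[N]}_{X_s}(ND_s)$ for every $s$.
\end{enumerate}
Finally, your last shrinking to the regular locus is unnecessary, since the resolution is already regular.
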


\begin{proof}
Write
$ D=\sum_i a_iD_i$ and $G=\sum_j b_jG_j$
with fixed rational coefficients. We spread out the flat projective $R$-scheme $X$ and the relative Mumford divisors
$D_i$ and $G_j$.
By the standard limit theorem (see \cite[\S 8]{EGA}), there exists a finitely generated
$\bk$-subalgebra $A\subset R$, a  flat projective $A$-scheme $X_{A}$, and 
relative Mumford divisors $D_{i,A}$ and $G_{j,A}$
whose base-change to $R$ recovers
$(X, D+G)\to \Spec R$. 
After replacing $A$ by a resolution followed by a localization, we may assume that $A$ is regular.
Set 
$D_A:=\sum_i a_iD_{i,A}$ and $
  G_A:=\sum_j b_jG_{j,A}$.

Denote by $a\in \Spec A$ the image of the closed point in $\Spec R$. Let $\eta\in \Spec A$ be the generic point. 
By the representability of (local) KSBA stability \cite[Theorem 4.9 and Corollary 4.10]{Kol23}, there exists a locally closed partial decomposition  $\sqcup_i U_i \to \Spec A$ such that $\Spec R \to \Spec A$ factors through some $U_i=: U$ and that the family $(X_U, D_U +G_U) \to U$ is KSBA stable and that $(X_U, D_U)\to U$ is locally KSBA stable. Since $\Spec R\to \Spec A$ is dominant, we know that $U$ is open in $\Spec A$ and contains $a$.
Thus we may replace $\Spec A$ by an affine open neighborhood of $a$ in $U$, which then implies that $(X_A, D_A+G_A)\to \Spec A$ is KSBA stable and $K_{X_A/A} + D_A$ is $\bQ$-Cartier. 

It remains to check the relative $\mathbb Q$-linear triviality of
$K_{X_A/A}+D_A$. Choose $m\in \bN$ sufficiently divisible such that $L_A = \cO_{X_A}(m(K_{X_A/A}+D_A))$ is a line bundle and that $L_R = \cO_{X}(m(K_{X/R}+D))$ is trivial. Let $\pi_A: X_A \to A$ be the projection morphism. Then we know that $\Spec A\ni t\mapsto h^0(X_t, L_t)$ is upper semicontinuous. Since $h^0(X_{\eta}, L_{\eta}) = h^0(X_a, L_a) = 1$ by the triviality of $L_R$, after shrinking $\Spec A$ to an affine open neighborhood of $a$ we may assume that $h^0(X_t, L_t) = 1$ for every $t\in \Spec A$. Thus $\pi_{A, *} L_A$ is a line bundle on $\Spec A$ by Grauert's theorem. Now we look at the natural map $\pi_A^* \pi_{A,*} L_A \to L_A$. Since $\pi_R^* \pi_{R,*} L_R \to L_R$ is an isomorphism, we conclude that $\pi_A^* \pi_{A,*} L_A \to L_A$ is an isomorphism in a neighborhood of $a$. Thus after shrinking $\Spec A$ to an affine open neighborhood of $a$ we have $L_A \cong \pi_A^* \pi_{A,*} L_A $ which implies $K_{X_A/A}+D_A \sim_{\bQ, A} 0$. The proof is finished.
\end{proof}

\begin{proof}[Proof of Proposition \ref{prop:KSBA-approx}]
Let $0 < \epsilon \ll 1$ be a rational number. 
By Lemma \ref{lem:spread-out}, there exists a regular finite type  $\bk$-subalgebra $A\subset R[\tau]$ and a KSBA stable family $(Y_A, B_A + \epsilon H_{1,A} + \epsilon H_{2,A})\to \Spec A$ whose base change is the family $(X', D'+ \epsilon G_1'+ \epsilon G_2')\to \Spec R[\tau]$, such that $m(K_{Y_A/A}+ B_A) \sim_{A} 0 $ for some $m\in \bN$.

Let $\cM_{\epsilon}$ denote an irreducible component with reduced structure of a KSBA moduli stack with three marked divisors of coefficients $(\frac{1}{m}, \epsilon, \epsilon)$ such that $(Y_A, \frac{1}{m}(m B_A) + \epsilon H_{1,A} + \epsilon H_{2,A})\to \Spec A$ is a pullback of the universal family under a morphism $\phi_{A, \epsilon}: \Spec A\to \cM_{\epsilon}$. Let  $\cM_{\epsilon}'$ denote the normalization of the closure of the image  of $\phi_{A, \epsilon}$. Hence $\cM_{\epsilon}'$ is an irreducible normal proper DM stack. Moreover, there is a dominant morphism $\phi_{A,\epsilon}': \Spec A\to \cM_{\epsilon}'$ as a lifting of $\phi_{A,\epsilon}$. Let $\phi_{\epsilon}'= \phi_{A,\epsilon}'|_{\Spec R[\tau]}: \Spec R[\tau]\to \cM_\epsilon'$ which is also dominant. 
By pulling back the universal family over $\cM_{\epsilon}$ to $\cM_{\epsilon}'$, we obtain a KSBA stable family $(Y_{\epsilon} + B_{\epsilon} + \epsilon H_{1,\epsilon} + \epsilon H_{2, \epsilon})\to \cM_{\epsilon}'$. Since $0<\epsilon\ll 1$, by \cite{KX20} we know that $(Y_{\epsilon} + B_{\epsilon})\to \cM_{\epsilon}'$ is a family of slc log CY pairs. 


Next, we claim that there exists an open substack $\cU\subset \cM_{\epsilon}'$ containing the image of $\phi_{\epsilon}'$ such that the pullback KSBA stable family $(Y_{\cU}, B_{\cU} + \epsilon H_{1, \cU} + \epsilon H_{2, \cU})\to \cU$ satisfies that both $H_{1, \cU}$ and $H_{2, \cU}$ are relative ample effective Cartier divisors over $\cU$.

By the representability of Cartier pull-backs \cite[Corollary 4.36]{Kol23}, there exists a locally closed partial decomposition $\sqcup_i \cU_i \to \cM_{\epsilon}'$ such that $\phi_{\epsilon}':\Spec R[\tau] \to \cM_{\epsilon}'$ factors through some $\cU_i=:\cU$ and that the both $H_{1, \cU}$ and $H_{2, \cU}$ are relative effective Cartier divisors over $\cU$. Since $\phi_{\epsilon}'$ is dominant, we know that $\cU\hookrightarrow \cM_{\epsilon}'$ is an open immersion. Then by openness of amplenness we may shrinking $\cU$ to an open neighborhood of the image of $\phi_{\epsilon}'$ so that both $H_{1, \cU}$ and $H_{2, \cU}$ are relatively ample. Thus the claim is proved.

Next, let $\cU^{\circ}\hookrightarrow \cU$ be a dense open substack so that the fibers of $Y_{\cU^{\circ}}\to \cU^{\circ}$ are normal and the line bundle $L_{\cU^{\circ}}:=\cO_{Y_{\cU^{\circ}}}(H_{1,\cU^{\circ}}-H_{2,\cU^{\circ}})$ is relative big and semiample. This can be achieved as the same is true for the generic fiber of $(Y_A,L_A)\to \Spec A$ and $\Spec A$ dominates $\cM_{\epsilon}'$. 

Since the KSBA stable family  $(X', D'+ \epsilon G_1'+ \epsilon G_2')\to \Spec R[\tau]$ is $\mu_d$-equivariant, we know that $\phi_{\epsilon} = \phi_{A,\epsilon}|_{\Spec R[\tau]}: \Spec R[\tau]\to \cM_\epsilon$ descends to a morphism $\phi: [\Spec R[\tau]/\mu_d]\to \cM_{\epsilon}$. This then lifts to a morphism $\phi': [\Spec R[\tau]/\mu_d]\to \cM_{\epsilon}'$ which is the descent of $\phi_{\epsilon}'$. From the above construction of $\cU$, we have that $\phi': [\Spec R[\tau]/\mu_d] \to \cU$ whose image meets $\cU^{\circ}$. Then we apply Lemma \ref{lem:artin-approx} to the map $\phi'$ yields a sequence of morphisms $\phi_i: [\Spec R_i/\mu_d] \to \cU $ for $i\in \bN$ whose image meets $\cU^{\circ}$ such that $\phi_i|_{[\Spec (R_i/\fm_i^i)/\mu_d]} = \phi'|_{[(\Spec R[\tau]/\tau^i]/\mu_d]}$. Pulling back the universal family over $\cU$ under $\phi_i$, we obtain a sequence of $\mu_d$-equivariant KSBA stable family $(X_i', D_i' + \epsilon G_{1,i}' + \epsilon G_{2,i}')\to \Spec R_i$. Now (1) follows from the fact that fibers over $\cU^{\circ}$ are normal. (2) is a consequence of the fact that $(Y_\cU, B_{\cU})\to \cU$ is a family of slc log CY pairs. For (3), both $G_{1,i}'$ and $G_{2,i}'$ are relative ample effective Cartier divisors as the same holds for $H_{1, \cU}$ and $H_{2,\cU}$, while the statement for $L_i'$ follows from Lemma \ref{lem:HX}. (4) is a consequence of the approximation process from Lemma \ref{lem:artin-approx}. Thus the proof is finished.
\end{proof}

\begin{lem}\label{lem:HX}
Let $\pi: (X,D) \to C$ be a family of slc log CY pairs over a  curve $C$ such that a general fiber is lc. Let $L$ be a line bundle on $X$ such that $L$ is big and nef over $C$ and $L_{c}$ is semiample for a general $c\in C$. Then $L$ is big and semiample over $C$ and $\pi_* L^{\otimes m}$ is locally free on $C$ for every $m\in \bN$. 
\end{lem}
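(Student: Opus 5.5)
The plan is to reduce semiampleness over $C$ to finite generation of the relative section ring. The tool is the abundance-type results for lc log CY pairs in \cite{HX13} (or the slc version via normalization). Local freeness of $\pi_*L^{\otimes m}$ will then come from cohomology vanishing and flatness.

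\emph{Step 1: reduce to the normal case.} Since the question is local on $C$, I would work over the local ring at a point $c\in C$. Because the general fiber is lc, $X$ is normal away from finitely many fibers. By inversion of adjunction (Proposition \ref{p:kollarsmoothslcfam}), $(X,D+X_c)$ is slc near $X_c$. Since the generic fiber is normal and irreducible, $X$ is in fact normal, and $(X,D+X_{c,\red})$ is an lc pair with $K_X+D+X_c\sim_{\bQ,C}0$.

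\emph{Step 2: semiampleness.} $L$ is nef and big over $C$, and $L-(K_X+D+X_c)\sim_{\bQ,C}L$, so $L$ is nef and big relative to the lc CY pair. I would like to invoke basepoint freeness for lc pairs, but the lc basepoint-free theorem needs $L-(K+\Delta)$ to be nef and log big, meaning big on every lc center. The lc centers here include components of $X_c$ and strata of $D^{=1}$. Checking bigness on those is exactly the obstacle, and I expect this to be the main difficulty.

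To get around it, I would use \cite[Theorem 1.1]{HX13} in the following form: a good minimal model exists when the restriction to the generic fiber (over $C$) is semiample and no lc center is contained in $X_c$. Concretely, I would pick $0<\varepsilon\ll1$ and consider $(X,D+\varepsilon L')$ with $L'\sim_{\bQ}L$ effective and general over the generic point. Semiampleness of $L_c$ for general $c$ supplies such an $L'$ with $(X_\eta,D_\eta+\varepsilon L'_\eta)$ lc, and $K_X+D+\varepsilon L'\sim_{\bQ,C}\varepsilon L$. Dropping $X_c$ from the boundary, the lc centers of $(X,D)$ dominate $C$, because $(X,D+X_c)$ is lc. So \cite[Theorem 1.1]{HX13} applies and shows $\varepsilon L$ is semiample over $C$, since $L$ is already nef and the pair is already its own minimal model. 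Alternatively, I would apply Fujino's lc basepoint-free theorem to the pair $(X,D)$ without $X_c$. In that case only the horizontal lc centers matter, and on those $L$ is big because their restriction to the generic fiber is semiample and big there.

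\emph{Step 3: local freeness.} $\pi_*L^{\otimes m}$ is torsion free on a smooth curve, hence locally free, since $X$ is integral and flat over $C$. This needs no vanishing. For later use I would also note that $h^0(X_c,L^m_c)$ is constant. This follows from slc Kodaira vanishing $H^i(X_c,L^m_c)=0$ for $i>0$ (\cite[Theorem 11.34]{Kol23}), using that $L_c^m-K_{X_c}-D_c\sim_{\bQ}L_c^m$ is nef and big on an slc pair, together with the fact that nef and big suffices once semiampleness is known.
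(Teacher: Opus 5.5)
Your semiampleness argument is essentially the paper's. You perturb by $\varepsilon G$ with $G\sim_{\bQ}L$ effective and containing no lc center of $(X,D)$. This is possible because, after shrinking $C$ around $c$, every lc center dominates $C$, since $(X,D+X_c)$ is lc. You then note that $K_X+D+\varepsilon G\sim_{\bQ,C}\varepsilon L$ is nef and invoke \cite[Theorem 1.1]{HX13}.

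One technical point: that theorem is stated for ($\bQ$-factorial) dlt pairs. As the paper does, you should first pass to a dlt modification $(Y,B)\to(X,D)$, choose $G\sim_{\bQ}L_Y$ avoiding the lc centers of $(Y,B)$, conclude that $L_Y$ is semiample over $C$, and descend to $L$.

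Discard your ``alternative'' via the lc basepoint-free theorem. Bigness of $L_\eta$ on $X_\eta$ does not give bigness of $L$ on horizontal lc centers, because the morphism defined by $L_\eta$ may contract them. This log-bigness issue is exactly what the HX13 route sidesteps.

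For local freeness you take a different and more elementary route than the paper, and it is correct. $\pi_*L^{\otimes m}$ is coherent and torsion free on the regular curve $C$, hence locally free. Regularity of $C$ is implicit, e.g.\ in the use of Proposition \ref{p:kollarsmoothslcfam}. This suffices for the statement as written.

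The paper instead passes to the ample model $X\to Z$ of $L$ over $C$. It observes that $(Z,D_Z)\to C$ is again a family of slc log CY pairs with $L_Z$ ample, and uses the vanishing of $R^i\pi_{Z,*}L_Z^{[m]}$ for $i\geq1$ \cite[Theorem 1.7]{Fuj14}. What this buys is compatibility of $\pi_*L^{\otimes m}$ with base change. That stronger property is what is actually used later, in the identification $V_i/t^iV_i\cong V/t^iV$ in the proof of Theorem \ref{thm:complete-local}.

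Your sketched justification of that stronger property (slc vanishing for $L_c^m$ nef and big, ``once semiampleness is known'') is false. Vanishing on lc or slc pairs needs $L-(K+D)$ to be ample, or nef and log big, and semiampleness does not help. For example, let $S$ be a smooth projective surface with an elliptic curve $E\in|-K_S|$, and let $L$ be nef, big and semiample with $L|_E\cong\cO_E$. Such $S$ and $L$ exist on the blowup of $\bP^2$ at ten suitable points of a smooth cubic. The sequence $0\to\cO_S(K_S+L)\to\cO_S(L)\to\cO_E\to0$ together with Kawamata--Viehweg vanishing gives $H^1(S,L)\cong H^1(E,\cO_E)\neq0$, even though $(S,E)$ is an lc log CY pair. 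Passing to the ample model, where $L_Z$ is genuinely ample, is precisely how the paper avoids this.
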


\begin{proof}
Since the statement is local on $C$, we may assume that $0\in C$ is a pointed curve such that $(X, D+X_0)$ is lc and $K_X + D\sim_{\bQ} 0$. Let $(Y,B) \to (X, D)$ be a dlt modification. Then $(Y, B+ Y_0)$ is lc which implies that no lc center of $(Y,B)$ is contained in $Y_0$. Thus after shrinking $C$ we may assume that every lc center of $(Y, B)$ dominates $C$.  Let $L_Y$ be the pullback of $L$ on $Y$. Since $L_Y$ is big and nef over $C$ and whose restriction to the generic fiber is semiample, by Bertini's theorem there exists an effective $\bQ$-Cartier $\bQ$-divisor $G\sim_{\bQ} L_Y$ on $Y$ such that $G$ does not contain any lc center of $(Y,B)$. As a result, we have that $(Y, B+\epsilon G)$ is lc for $0<\epsilon\ll 1$. Applying \cite[Theorem 1.1]{HX13} to $(Y, B+\epsilon G)\to C$ implies that $(Y, B+\epsilon G)$ is a good minimal model over $C$. In particular, $G$ and hence $L_Y$ is semiample over $C$. Thus $L$ is also semiample over $C$. Let $X\to Z$ be the ample model of $L$ over $C$. 
Since $(X,D+X_0)$ is lc and CY, $(Z,D_Z+X_0)$ is lc and CY. 
Thus $\pi_Z:(Z, D_Z) \to C$ is a family of slc log CY pairs where $L_Z$ is a $\bQ$-Cartier Weil divisor on $Z$ as the pushforward of  $L$. 
Then the flatness of $\pi_* L^{\otimes m}\cong \pi_{Z,*} L_Z^{[m]}$ follows from the  flatness of $L_Z^{[m]}$ over $C$ and the vanishing of $R^i \pi_{Z,*} L_Z^{[m]}$ for $i\geq 1$ that holds by \cite[Theorem 1.7]{Fuj14}.
\end{proof}

\begin{prop}\label{prop:formal-group}
Let $V$ be a finite free $R$-module. Let $(F_{\alpha} V)_{\alpha \in I}$ be a collection of $\bR$-filtrations on $V$. Suppose that for any finite subset $J\subset I$ and any $i\in \bN$ there exists a diagonalizing basis for $(F_{\alpha} V/t^i V)_{\alpha\in J}$. Then $(F_{\alpha} V)_{\alpha \in I}$ is simultaneously diagonalizable.
\end{prop}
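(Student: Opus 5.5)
The plan is to reduce to finitely many filtrations, and then to pass from diagonalizability modulo each $t^i$ to diagonalizability over $R$ by a compactness (inverse limit) argument. Here $R=\bk\llbracket t\rrbracket$ is complete and $\bk=R/tR$ is the residue field. Throughout, ``diagonalizing basis for $(F_\alpha V/t^iV)_{\alpha\in J}$'' means an $R/t^i$-basis $\bar s_1,\ldots,\bar s_N$ of $V/t^iV$ such that each induced filtration $F_\alpha^\la V/t^iV:=(F_\alpha^\la V+t^iV)/t^iV$ equals $\bigoplus_k t^{\max\{0,\lceil \la-\la^k_\alpha\rceil\}}(R/t^i)\bar s_k$ (truncated), which is the analogue of Remark \ref{r:diag}.2.

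\emph{Step 1: reduction to a finite subset.} Since $V$ is free of finite rank $N$, we form the $\kappa$-vector spaces ${\rm gr}_{F_J}(V)$ for finite $J\subset I$. I expect to show that diagonalizability of $(F_\alpha)_{\alpha\in I}$ is equivalent to finding a basis that diagonalizes every finite subfamily simultaneously. The natural route is to work with the set $\mathcal{B}_{J,i}$ of bases of $V/t^iV$ diagonalizing $(F_\alpha V/t^iV)_{\alpha\in J}$, viewed as a subset of $\GL_N(R/t^i)$ modulo the choice of identification. The key observation is that $\mathcal{B}_{J,i}$ is a nonempty constructible (in fact closed, once the orders $\la^k_\alpha$ are fixed) subset of the finite type $\bk$-scheme $\GL_N(R/t^i)$ (Weil restriction). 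Also, the order vectors $(\ord_{F_\alpha}(s_k))$ are determined up to reordering by Lemma \ref{l:uniquenessbasisDVR}, once we argue that they stabilize for $i\gg0$ by boundedness. Because $F_\alpha^\la V\supset t^{\lceil\la\rceil}V$ stuff, only finitely many $\la$-jumps modulo $\bZ$ are seen modulo $t^i$.

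\emph{Step 2: inverse limit over $i$ and directed limit over $J$.} Reduction $V/t^{i+1}\to V/t^i$ maps $\mathcal{B}_{J,i+1}\to\mathcal{B}_{J,i}$, and enlarging $J$ shrinks the sets. I will apply the standard Mittag-Leffler / Chevalley argument: the images of $\mathcal{B}_{J',i'}$ in $\mathcal{B}_{J,i}$ form a decreasing family of nonempty constructible subsets of a Noetherian space. Their intersection is therefore nonempty, because $\bk$ is algebraically closed, hence uncountable-free arguments are unnecessary: constructible sets plus Noetherianity give stabilization. This produces a compatible system $(b_{J,i})$, and its limit in $\GL_N(R)=\varprojlim \GL_N(R/t^i)$ gives a basis $s_1,\ldots,s_N$ of $V$. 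For infinite $I$, I instead use that only finitely many distinct filtrations matter modulo each $t^i$, or that the stabilized image sets are indexed by a directed system with nonempty intersection by Noetherianity of $\GL_N(R/t^i)$.

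\emph{Step 3: the limit basis diagonalizes.} For each $\alpha$ and $\la$, we have $F^\la_\alpha V=\varprojlim (F^\la_\alpha V+t^iV)/t^iV$, because $F^\la_\alpha V$ is a closed $R$-submodule of the finite free module $V$ and $R$ is complete (Krull). The diagonal description holds modulo every $t^i$ with the same orders, so it holds in the limit, and Remark \ref{r:diag} gives diagonalization. The main obstacle is Step 2: making the sets $\mathcal{B}_{J,i}$ genuinely algebraic (constructible) and controlling the orders $\la^k_\alpha$ uniformly in $i$. I would first try to fix the order data via Lemma \ref{l:uniquenessbasisDVR}, so that each $\mathcal{B}_{J,i}$ becomes the set of tuples lying in prescribed subspaces $F^{\la}_\alpha V/t^i$ and spanning them, which is visibly locally closed.
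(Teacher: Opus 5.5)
Your overall architecture matches the paper's: closed loci of diagonalizing bases inside $\GL$ over $R/t^i$, Noetherianity to pass to infinite $I$, and an inverse limit plus $t$-adic separatedness at the end. Your Step 3 is fine. The gap is in Step 2, which is where the content of the proposition lies.

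\textbf{The false principle in Step 2.} You assert that a decreasing family of nonempty constructible subsets of a Noetherian space stabilizes, or at least has a common $\bk$-point, and that algebraic closedness of $\bk$ makes this work. Neither holds. Descending chains of constructible sets need not stabilize. Over a countable algebraically closed field such as $\bk=\overline{\bQ}$, which the standing assumptions allow, take an enumeration $p_1,p_2,\ldots$ of $\bk$. The sets $\bA^1\setminus\{p_1,\ldots,p_n\}$ are nonempty and open, but have no common $\bk$-point. A nonempty intersection would not suffice anyway. To build a compatible sequence $b_i$, the stable images must map onto one another. That requires the images of $\mathcal{B}_{J,i'}\to\mathcal{B}_{J,i}$ to stabilize at a finite stage, and Chevalley only tells you these images are constructible. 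So the Mittag-Leffler step, which you yourself flag as the main obstacle, is not established.

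\textbf{The missing idea: the group structure.} Fix an (ordered) choice of order data. Then the diagonalizing bases of $V/t^iV$ form a coset $G_i(J)\cdot b$, where $G_i(J)\subset\GL(V/t^iV)$ is the stabilizer of all the submodules $F^\la_\alpha(V/t^iV)$. In a diagonalizing basis, $G_i(J)$ is cut out by conditions $g_{ab}\in t^{c_{ab}}(R/t^i)$.

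The paper uses this to prove directly that $\mathcal{D}_{i+1}(J)(R/t^{i+1})\to\mathcal{D}_i(J)(R/t^i)$ is surjective:
\begin{enumerate}
\item Take any $c_{i+1}\in\mathcal{D}_{i+1}(J)$ and write $b_i=g_i\bar c_i$ with $g_i\in G_i(J)$.
\item Lift $g_i$ to $g_{i+1}\in G_{i+1}(J)$; this is possible because the divisibility conditions lift.
\item Take $b_{i+1}:=g_{i+1}c_{i+1}$.
\end{enumerate}
This means no stabilization argument is needed at all in the finite case.

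Alternatively, you can repair your own route. The reduction map $\GL(V/t^{i'}V)\to\GL(V/t^iV)$ is a homomorphism of algebraic groups, so it sends a coset of a closed subgroup to a coset of a closed subgroup. Hence your images are finite unions of closed sets, and DCC on closed sets gives the stabilization you want.

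\textbf{Infinite $I$.} Here too only closed sets may be used. The paper fixes $b_i$ and considers the fibers $E_J$ of $\mathcal{D}_{i+1}(J)\to\mathcal{D}_i(J)$ over it. These are closed, nonempty by the finite case, and closed under finite intersections, so Noetherianity gives a common point. Your first alternative, that only finitely many distinct filtrations matter modulo $t^i$, is false in general: already $V/tV$ can contain infinitely many distinct subspaces occurring as filtration pieces.
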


Above, we write $F_\alpha V/t^iV$ for the filtration of $V/t^iV$ defined by $F^\la_\alpha (V/t^iV) := \im (F^\la V \to V/t^iV) $. 
We say that $(F_{\alpha} V/t^i V)_{\alpha\in J}$ are simultaneously diagonalizable if there exists a free $R$-module basis $s_1,\ldots, s_n$ of $V$ such that each $F_{\alpha}^\la (V/t^iV)$ is spanned by elements of the form $t^{\max\{0 ,\lceil -\ord_{F_{\alpha}}(s_1)\rceil \}}  s_1,\ldots, t^{\max\{0,\lceil -\ord_{F_\alpha}(s_n)\rceil\}}  s_n$ for all $\la \in \bR$ and $\alpha \in J$. 

\begin{proof}
We first treat the case where $J \subset I$ is finite. For each $i \ge 1$, set
\[
V_i := V/t^iV,
\]
and let $\mathcal{B}_i$ be the scheme of ordered bases of $V_i$, so $\mathcal{B}_i \cong \GL_{n,R/t^i}$ where $n:=\rk_R(V)$. Let $\mathcal{D}_i(J) \subset \mathcal{B}_i$ be the locus of bases of $V_i$ that diagonalize all filtrations $\{F_\alpha V_i\}_{\alpha\in J}$.

We first note that $\mathcal{D}_i(J)$ is a closed subscheme of $\mathcal{B}_i$. Indeed, fix a basis of $V_i$ that diagonalizes all filtrations in $J$, which exists by assumption. In this basis, for each $\alpha \in J$ and each $\lambda$, the submodule $F_\alpha^\lambda V_i$ is of the form
\[
\bigoplus_{j=1}^n t^{m_{\alpha,j,\lambda}} (R/t^i)e_j
\]
for suitable integers $0\le m_{\alpha,j,\lambda}\le i$. Thus an automorphism $g=(g_{ab})\in \GL(V_i)$ preserves all the submodules $F_\alpha^\lambda V_i$ if and only if its matrix entries satisfy finitely many divisibility conditions
\[
g_{ab}\in t^{c_{ab}}(R/t^i),
\]
for suitable integers $c_{ab}\ge 0$. Since over $R/t^i$ the condition $x\in t^c(R/t^i)$ is algebraic, this defines a closed subgroup scheme
\[
G_i(J)\subset \GL(V_i).
\]
Moreover, once one chooses an $R/t^i$-point of $\mathcal{D}_i(J)$, the scheme $\mathcal{D}_i(J)$ is a left torsor under $G_i(J)$.

We claim that the reduction map
\[
\mathcal{D}_{i+1}(J)(R/t^{i+1}) \to \mathcal{D}_i(J)(R/t^i)
\]
is surjective. Fix $b_i \in \mathcal{D}_i(J)(R/t^i)$ and choose any $c_{i+1}\in \mathcal{D}_{i+1}(J)(R/t^{i+1})$. Let $\bar c_i$ be its reduction modulo $t^i$. Since $b_i,\bar c_i\in \mathcal{D}_i(J)(R/t^i)$, there exists $g_i\in G_i(J)(R/t^i)$ such that
\[
b_i=g_i\cdot \bar c_i.
\]
By the description above, $G_i(J)$ is cut out by divisibility conditions on matrix entries, and these conditions lift from $R/t^i$ to $R/t^{i+1}$. Hence the reduction map
\[
G_{i+1}(J)(R/t^{i+1}) \to G_i(J)(R/t^i)
\]
is surjective. Choose a lift $g_{i+1}\in G_{i+1}(J)(R/t^{i+1})$ of $g_i$, and set
\[
b_{i+1}:=g_{i+1}\cdot c_{i+1}.
\]
Then $b_{i+1}\in \mathcal{D}_{i+1}(J)(R/t^{i+1})$ and reduces to $b_i$. This proves the claim.

It follows that we may choose a compatible system
\[
b_i\in \mathcal{D}_i(J)(R/t^i), \qquad b_{i+1}\equiv b_i \pmod{t^i}.
\]
Since $V \cong \varprojlim_i V_i$, this determines an $R$-basis $b=(s_1,\dots,s_n)$ of $V$. By construction, the reduction of $b$ modulo $t^i$ diagonalizes all filtrations $F_\alpha V_i$ for every $\alpha\in J$ and every $i$. Hence, for each $\alpha\in J$ and each $\lambda$, the submodule $F_\alpha^\lambda V$ is generated by the corresponding powers of $t$ times the basis vectors $s_j$. Indeed, both sides have the same image in $V_i$ for all $i$, and $V$ is $t$-adically separated. Thus $b$ diagonalizes all filtrations $F_\alpha$ for $\alpha\in J$.

We now treat the general case. For each $i\ge 1$ and each finite subset $J\subset I$, we have the closed subscheme $\mathcal{D}_i(J)\subset \mathcal{B}_i$. Since $\mathcal{B}_i$ is noetherian and
\[
\mathcal{D}_i(J_1)\cap \mathcal{D}_i(J_2)=\mathcal{D}_i(J_1\cup J_2),
\]
the collection $\{\mathcal{D}_i(J)\}_{J\subset I,\ J\text{ finite}}$ stabilizes. Hence there exists a finite subset $J_i\subset I$ such that
\[
\bigcap_{J\subset I,\ J\text{ finite}} \mathcal{D}_i(J)=\mathcal{D}_i(J_i).
\]
In particular, this intersection is nonempty, since $\mathcal{D}_i(J_i)(R/t^i)\neq \emptyset$ by the finite case. Denote this intersection by $\mathcal{D}_i(I)$.

We claim that the reduction map
\[
\mathcal{D}_{i+1}(I)(R/t^{i+1}) \to \mathcal{D}_i(I)(R/t^i)
\]
is surjective. Fix $b_i\in \mathcal{D}_i(I)(R/t^i)$. For each finite subset $J\subset I$, let
\[
E_J \subset \mathcal{B}_{i+1}\times_{\mathcal{B}_i}\Spec(R/t^i)
\]
be the fiber over $b_i$ of the reduction map
\[
\mathcal{D}_{i+1}(J)\to \mathcal{D}_i(J).
\]
By the finite case, $E_J(R/t^{i+1})\neq \emptyset$ for every finite $J$. Moreover,
\[
E_{J_1}\cap E_{J_2}=E_{J_1\cup J_2}.
\]
Since the fiber $\mathcal{B}_{i+1}\times_{\mathcal{B}_i}\Spec(R/t^i)$ is noetherian, it follows that
\[
\bigcap_{J\subset I,\ J\text{ finite}} E_J \neq \emptyset.
\]
Any point in this intersection gives a lift of $b_i$ to an element of $\mathcal{D}_{i+1}(I)(R/t^{i+1})$. This proves the claim.

Thus we may choose a compatible system
\[
b_i\in \mathcal{D}_i(I)(R/t^i).
\]
Passing to the inverse limit yields an $R$-basis of $V$ whose reduction modulo $t^i$ diagonalizes every $F_\alpha V_i$ for every $i$. By $t$-adic separatedness, this basis diagonalizes every $F_\alpha$. The proof is complete.
\end{proof}
\begin{proof}[Proof of Theorem \ref{thm:complete-local}]
By the proof of Lemma \ref{lem:HX} we know that $V:=H^0(X,mL)$ is a finite free $R$-module. Thus  by the continuity of evaluation map on $\Sk(X_K, D_K)$ the statement reduces to showing valuative independence with respect to $\Sk(X_K, D_K)(\bQ)$. 
Then by Proposition \ref{prop:formal-group} it reduces to showing that for any finite collection of valuations $v_1,\cdots, v_l \in \Sk(X_K, D_K)(\bQ)$ and any $i\in \bN$, the filtrations $\{F_{v_j} V/t^i V\}_{1\leq j\leq l}$ are simultaneously diagonalizable. By Proposition \ref{prop:KSBA-approx}, there exists a $\mu_d$-equivariant family of slc log CY pairs $(X_i', D_i')\to \Spec R_i$ with a big and semiample line bundle $L_i'$ that approximates the family $(X', D'; L')\to \Spec R[\tau]$ up to order $di$. By taking $\mu_d$-quotient of $(X_i', D_i'; L_i')$ we get a family $(X_i, D_i; L_i)\to \Spec R_i^{\mu_d}$ that approximates the family $(X,D; L)$ up to order $i$.

Denote by $V_{i}:= H^0(X_i, mL_i)$. Then we know that $V_{i}/t^i V_{i} \cong V/ t^i V$. Moreover, we know that 
\[
F_{v_j}^{\lambda} V/ t^i V = \{s\in V/t^i V\mid \ord_{E_j}(s) \geq b_j\lambda \} \cong \{s\in V_i/t^i V_i\mid \ord_{E_{j,i}}(s) \geq b_j\lambda \} = F_{v_{j,i}}^{\lambda} V_i/ t^i V_i. 
\]
Here $E_{j,i}$ is the corresponding irreducible component in the special fiber of $X_{i}$ that corresponds to $E_j$ under the approximation. Since $\{F_{v_{j,i}} V_i\}_{1\leq j\leq l}$ are simultaneously diagonalizable by Theorem \ref{thm:val-indep} in the case when the DVR is essentially of finite type over $\bk$, the same holds for $\{F_{v_{j,i}} V_i/t^i V_i\}_{1\leq j\leq l}$ and hence for $\{F_{v_j} V/ t^i V \}_{1\leq j\leq l}$. The proof is finished.
\end{proof}

\begin{proof}[Proof of Theorem \ref{thm:val-indep} in the complete DVR case]
By Bertini's Theorem, there exists an effective $\bQ$-Cartier $\bQ$-divisor 
$G_K\sim_{\bQ} L_K$ such that $(X_K, D_K +  G_K)$ is lc. 
By the Proof of Proposition \ref{lem:semistable-reduction}, there exists an lc model $(X,D)$ of $(X_K,D_K)$ such that $G$, which denotes the closure of $G_K$ in $X$, is an ample $\bQ$-Cartier divisor. 
Choose $m_0\in \bN$ such that $m_0 G$ is Cartier. Then we know that $L_K^{\otimes m_0}$ extends to the ample line bundle $\cO_X(m_0 G)$. Applying Theorem \ref{thm:complete-local} to the line bundle $L_K^{\otimes m_0}$ implies the existence of a basis for $H^0(L^{m})$ satisfying valuative independence for  all $m\in m_0 \bN$. The proof is finished.
\end{proof}

\bibliographystyle{alpha}
\bibliography{ref}

\end{document}